\documentclass[12pt,leqno]{article}
\usepackage[english,activeacute]{babel}
\usepackage{epsfig}
\usepackage{color}

\usepackage[latin1]{inputenc}
\usepackage[T1]{fontenc}
\usepackage{amsmath,amsfonts,amssymb,mathrsfs,amsthm}
\usepackage{graphicx}
\usepackage[mathscr]{eucal}
\usepackage{makeidx}
\usepackage{verbatim}
\usepackage{graphics,graphicx}
\usepackage{textcomp}

\addtolength{\oddsidemargin}{-.9in}
	\addtolength{\evensidemargin}{-.9in}
	\addtolength{\textwidth}{1.8in}

	\addtolength{\topmargin}{-.8in}
	\addtolength{\textheight}{1.6in}

\newcommand\R{\mathbb{R}}
\newcommand\C{\mathbb{C}}
\newcommand\N{\mathbb{N}}
\newcommand\Z{\mathbb{Z}}

\newcommand\bna{\begin{eqnarray*}}
\newcommand\ena{\end{eqnarray*}}

\newcommand\bnan{\begin{eqnarray}}%numérotée
\newcommand\enan{\end{eqnarray}}

\newcommand\Hu{H^1(M)}

\newcommand\Xsbt{X^{s,b}_{T}}
\newcommand\Xsb{X^{s,b}}
\newcommand\Xubt{X^{1,b}_{T}}

\newcommand\intt{\int_0^t}
\newcommand\intT{\int_0^T}
\newcommand\iintS{\iint_{\Sigma}}

\newcommand\Dem{\textbf{Proof : }}

\newcommand\nor[2]{\left\|#1\right\|_{#2}}
\newcommand\norL[1]{\left\|#1\right\|_{L^2}}

\newcommand\troisp{\cdot\cdot\cdot}

\newcommand\tend[2]{\underset{#1 \to #2}{\longrightarrow}}

\newcommand\Tu{\mathbb{T}^1}
\newcommand\Tot{\mathbb{T}^3}

\newcommand\n{\vec{n}}
\newcommand\ubar{\overline{u}}

\newtheorem{theorem}{Theorem}[section]
\newtheorem{remarque}{Remark}[section]
\newtheorem{lemme}{Lemma}[section]
\newtheorem{corollaire}{Corollary}[section]
\newtheorem{prop}{Proposition}[section]
\newtheorem{hypo}{Assumption}

%\usepackage{showkeys}

% If you active the next line, the date disappears
%\date{}

\title{Global controllability and stabilization for the nonlinear Schrödinger equation on some compact manifolds of dimension 3}
\author{Camille Laurent \thanks{Universit\'e Paris-Sud, Laboratoire de Math\'{e}matiques d'Orsay, Orsay Cedex, F-91405 (camille.laurent@math.u-psud.fr).}}
\begin{document}

\maketitle
\begin{abstract}
We prove global internal controllability in large time for the nonlinear Schr\"odinger equation on some compact manifolds of dimension $3$. The result is proved under some geometrical assumptions : geometric control and unique continuation. We give some examples where they are fulfilled on $\Tot$, $S^3$ and $S^2\times S^1$.
We prove this by two different methods both inherently interesting. The first one combines stabilization and local controllability near $0$. The second one uses successive controls near some trajectories. We also get a regularity result about the control if the data are assumed smoother. If the $H^1$ norm is bounded, it gives a local control in $H^1$ with a smallness assumption only in $L^2$. We use Bourgain spaces. 
\end{abstract}

{\bf Key words.} Controllability, Stabilization, Nonlinear Schr\"odinger equation, Bourgain spaces

\vspace{0.2 cm}{\bf AMS subject classifications.} 93B05, 93D15, 35Q55, 35A21 
\tableofcontents
%-----------------------------------------------------------------------
\section*{Introduction}
%-----------------------------------------------------------------------

In this article, we study the internal stabilization and exact controllability for the defocusing nonlinear Schrödinger equation (NLS) on some compact manifolds of dimension $3$. 
\begin{eqnarray}
\label{eqncontrolnl}
\left\lbrace
\begin{array}{rcl}%argument r=alignement à droite puis center et left
i\partial_t u + \Delta u &=&|u|^2u \quad \textnormal{on}\quad [0,+\infty [\times M\\
u(0)&=&u_{0} \in H^1(M).
\end{array}
\right.
\end{eqnarray}
where $\Delta$ is the Laplace-Beltrami operator on $M$. The solution displays two conserved energy : the $L^2$ energy $\norL{u}$ and the nonlinear energy, or $H^1$ energy
\bna
E(t)=\frac{1}{2}\int_M \left|\nabla u \right|^2 +\frac{1}{4}\int_M \left|u\right|^4.
\ena
Some similar results where obtained in dimension $2$ in the article of B. Dehman, P. Gérard and G. Lebeau \cite{control-nl} where exact controllability in $H^1$ is proved  for defocusing NLS on compact surfaces. Yet, the proof is based on Strichartz estimates which provide uniform wellposedness in dimension $3$, only in $H^s$ for $s>1$. In \cite{Strichartz}, N. Burq, P. Gérard and N. Tzvetkov managed to prove global existence and uniqueness in $H^1$ but failed to prove uniform wellposedness, which appears of great importance in control problems. Yet, for certain specific manifolds, the strategy of $\Xsb$ spaces of J. Bourgain, extended to some other manifolds by  Burq, Gérard and Tzvetkov, succeeded in proving uniform wellposedness in $H^s$ for some lower regularities. 

For control results, the $\Xsb$ spaces have already been used in dimension $1$ at $L^2$ regularity : first L. Rosier and B. Y. Zhang \cite{RosierZhang} obtained local results and independently, we proved global controllability in large time in \cite{LaurentNLSdim1}. The $\Xsb$ spaces will also be our framework in this paper. 

Under some specific assumptions that will be precised later, we prove global controllability in large time by two different ways, interesting for their own : by stabilization and control near $0$ or by some successive controls near some trajectories. This will provide global controllability towards $0$, the general result will follow by reversing time.

The assumptions are fulfilled in the following cases ($\omega \subset M$ is the support of the control) : \\
- $\Tot$ with $\omega=\left\{x\in \R^3/(\theta_1 \Z\times \theta_2 \Z\times \theta_3) \Z \left|\exists i\in \{1,2,3\}, x_i\in ]-\varepsilon,\varepsilon[+\theta_i\Z \right.\right\} $ (that is a neighborhood of each face of the "cube", fundamental volume of $\Tot$) with $\theta_i \in \R$. Moreover, we can easily extend this result to a cuboid with Dirichlet or Neumann boundary conditions, see  \cite{LaurentNLSdim1} or \cite{RosierZhang}. \\
- $S^3$ with $\omega$ a neighborhood of $\left\{x_4=0\right\}$ in $S^3\subset \R^4$.\\
-$S^2\times S^1$ with $\omega= (\omega_1 \times S^1 )\cup (S^2 \times ]0,\varepsilon[)$ where $\omega_1$ is a neighborhood of the equator of $S^2$.
\begin{theorem}
\label{thmcontrol}
For any open set $\omega \subset M$ satisfying Assumption \ref{geometriccontrol}, \ref{uniquecontinuation}, \ref{hypinegXsb} (see below) and $R_0>0$, there exist $T>0$ and $C>0$ such that for every $u_0$ and $u_1$ in $H^1(M)$ with
$$ \nor{u_0}{H^1(M)} \leq R_0 \quad \textnormal{and}\quad \nor{u_1}{H^1(M)} \leq R_0$$
there exists a control $g\in C([0,T],H^1)$ with $\nor{g}{L^{\infty}([0,T],H^1)}\leq C $ supported in $[0,T]\times \overline{\omega}$, such that the unique solution $u$ in $X^{1,b}_T$ of the Cauchy problem
\begin{eqnarray}
\label{eqncontrolintro}
\left\lbrace
\begin{array}{rcl}%argument r=alignement à droite puis center et left
i\partial_t u + \Delta u &=&|u|^2u +g\quad \textnormal{on}\quad[0,T]\times M\\
u(0)&=&u_{0} \in H^1(M)
\end{array}
\right.
\end{eqnarray}
satisfies $u(T)=u_1$.
\end{theorem}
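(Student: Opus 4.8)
The plan is to prove the statement by combining internal stabilization with local exact controllability near the origin, after first reducing everything to controllability towards $0$. The key preliminary observation is the time reversibility of the equation: if $u$ solves $i\partial_t u+\Delta u=|u|^2u+g$ on $[0,T]$, then $v(t,x)=\overline{u(T-t,x)}$ solves the same equation with forcing $\overline{g(T-t)}$, which is again supported in $[0,T]\times\overline{\omega}$ and which interchanges the initial and final data. Consequently it suffices to steer an arbitrary $u_0$ with $\nor{u_0}{\Hu}\le R_0$ to $0$ in a time depending only on $R_0$: concatenating such a trajectory from $u_0$ to $0$ with the time-reversal of a trajectory from $\overline{u_1}$ to $0$ produces a controlled trajectory from $u_0$ to $u_1$, at the cost of doubling the time. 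This turns the one-sided (to zero) result into the two-sided statement of Theorem \ref{thmcontrol}.

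Next I would establish local exact controllability near $0$ in $\Xubt$. The free Schr\"odinger group is exactly controllable from $\omega$ in any positive time by the Hilbert Uniqueness Method, which is equivalent to an observability estimate of the form $\norL{u_0}^2\le C\intT\nor{e^{it\Delta}u_0}{L^2(\omega)}^2\,dt$; after commuting with $(1-\Delta)^{1/2}$ this gives observability, hence controllability, at the $\Hu$ level, and the required observability is a consequence of the geometric control Assumption \ref{geometriccontrol}. Constructing the control operator for the linear problem and then inverting the nonlinear flow by a fixed-point / implicit-function argument in $\Xubt$, in which the cubic term is absorbed using the multilinear Bourgain estimate of Assumption \ref{hypinegXsb}, produces an $\e>0$ such that any two data of $\Hu$-norm at most $\e$ can be joined by a control supported in $\omega$. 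The smallness of $\e$ is exactly what makes the contraction close.

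The heart of the argument is the stabilization step. I would introduce a damped equation with a dissipative feedback $g=\mathcal{G}u$ supported in $\omega$ (for instance $g=-i\,a(x)^2u$ with $a\in C^\infty$ positive on a subset of $\omega$, or a suitably smoothed variant) chosen so that the $L^2$ and $H^1$ energies are nonincreasing, and prove \emph{uniform} exponential decay $E(u(t))\le C e^{-\gamma t}E(u_0)$ on each ball of radius $R_0$. The natural route is a contradiction-compactness argument: one negates the associated observability inequality, extracts a sequence of solutions whose damped norm tends to zero, and passes to the limit. The nonlinearity is controlled in $\Xsb$ via Assumption \ref{hypinegXsb} together with a linearization of the flow, so that the weak limit is a genuine solution carrying no energy in $\omega$; the unique continuation Assumption \ref{uniquecontinuation} then forces this limit to vanish, a contradiction. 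This shows that the damped flow brings the ball of radius $R_0$ into the ball of radius $\e$ in a uniform time $T_1\sim\gamma^{-1}\log(R_0/\e)$, the feedback being an admissible control supported in $\omega$.

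Finally I would assemble the pieces: run the stabilizing feedback on $[0,T_1]$ to reach $\nor{u(T_1)}{\Hu}\le\e$, then switch to the local control on $[T_1,T_1+T_2]$ to reach $0$ exactly, which gives uniform controllability to $0$; the reversal argument of the first paragraph then completes the proof with $T=2(T_1+T_2)$ and with $C$ coming from the $\Xub$ bounds on each piece. I expect the main obstacle to be the stabilization estimate, and within it the passage to the limit in the contradiction argument: because dimension $3$ provides only conditional, Bourgain-type well-posedness rather than uniform Strichartz estimates, every nonlinear bound must be carried out in $\Xsb$ and the linearization controlled through Assumption \ref{hypinegXsb}. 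Guaranteeing that the weak limit is a true (strong) solution to which Assumption \ref{uniquecontinuation} genuinely applies, and that the microlocal/compactness machinery survives in this low-regularity framework, is the delicate point; the propagation-of-regularity needed to place the control in $C([0,T],H^1)$ is a secondary technical hurdle.
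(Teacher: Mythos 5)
Your proposal follows the same route as the paper's first proof: reduce to controllability towards $0$ via the conjugation $v(t,x)=\overline{u(T-t,x)}$, stabilize the ball of radius $R_0$ into a small $H^1$ ball by a damped equation (established by a contradiction--compactness argument combining linearization through Assumption \ref{hypinegXsb}, propagation of compactness and of regularity, and the unique continuation Assumption \ref{uniquecontinuation}), and finish with local exact controllability near $0$ obtained from HUM for the linear group plus a fixed point in $\Xubt$. The overall architecture and the identification of the delicate points are correct.

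The one step that would fail as written is your choice of feedback. The damping $g=-i\,a(x)^2u$ makes the $L^2$ norm decay but not the $H^1$ energy $E=\frac12\int_M|\nabla u|^2+\frac14\int_M|u|^4$: multiplying the equation by $\partial_t\bar u$ produces the term $\Re\int_M g\,\overline{\partial_t u}=\Im\int_M a^2u\,\overline{\partial_t u}$, which has no sign, so $E$ is not monotone and the exponential decay in $H^1$ (which is what the local control step requires) cannot be run. The paper instead takes the feedback $a(x)(1-\Delta)^{-1}a(x)\partial_t u$, for which $E(t)-E(0)=-\int_0^t\bigl\|(1-\Delta)^{-1/2}a(x)\partial_t u\bigr\|_{L^2}^2$. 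A second, related omission: even with the correct damping, the equation with pure nonlinearity $|u|^2u$ does \emph{not} decay exponentially in $H^1$ --- for $a\equiv1$ and constant data one computes $|u(t)|^2=|u_0|/(1+|u_0|t)$ --- because $E$ fails to control $\|u\|_{L^2}$. The paper repairs this by stabilizing the equation with nonlinearity $(1+|u|^2)u$, equivalently conjugating by $e^{-it}$, so that the augmented energy is equivalent to $\|u\|_{H^1}^2$ on bounded sets. With these two corrections your scheme coincides with the paper's argument.
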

In all the rest of the article, $\omega$ will be related to a cut-off function $a=a(x)\in C^{\infty}(M)$ (whose existence is guaranteed by Whitney Theorem), taking real values and such that 
\bnan
\label{lienoma}
\omega = \left\{x\in M : a(x)\neq 0 \right\}.
\enan
The stabilization system we consider is 
\begin{eqnarray}
\label{eqndampedL2intro}
\left\lbrace
\begin{array}{rcl}%argument r=alignement à droite puis center et left
i\partial_t u + \Delta u -a(x)(1-\Delta )^{-1}a(x)\partial_t u&=&(1+|u|^2)u \quad \textnormal{on}\quad[0,T]\times M\\
u(0)&=&u_{0} \in H^1(M).
\end{array}
\right.
\end{eqnarray}
The link with the original equation can be made by the change of variable $w=e^{-it}u$. The well posedness of this system will be proved in Section \ref{sectexistence} and we can check that it satisfies the energy decay
\bnan
E(u(t))-E(u(0))=-\intt \left\|(1-\Delta)^{-1/2}a(x)\partial_tu\right\|_{L^2}^2.
\enan
Our theorem states that under some geometrical hypotheses, this yields an exponential decay.
\begin{theorem}
\label{thmstab}
Let $M$, $\omega$ satisfying Assumption \ref{geometriccontrol}, \ref{uniquecontinuation}, \ref{hypinegXsb}. Let $a \in C^{\infty}(M)$, as in  (\ref{lienoma}). There exists $\gamma>0$ such that for every $R_0>0$, there is a constant $C>0$ such that inequality
$$\nor{u(t)}{H^1} \leq Ce^{-\gamma t} \nor{u_0}{H^1} \quad t>0$$
holds for every solution $u$ of system (\ref{eqndampedL2intro}) with initial data $u_0$ such that $\left\|u_0\right\|_{H^1}\leq R_0$.
\end{theorem}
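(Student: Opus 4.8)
The plan is to prove exponential decay via the classical semigroup argument: it suffices to establish a single \emph{observability-type estimate}, namely that there exist $T>0$ and $C>0$ such that every solution of the damped system \eqref{eqndampedL2intro} with $\nor{u_0}{H^1}\leq R_0$ satisfies
\begin{eqnarray*}
\nor{u_0}{H^1}^2 \leq C\intT \nor{(1-\Delta)^{-1/2}a(x)\partial_t u}{L^2}^2\,dt.
\end{eqnarray*}
Once this is in hand, combining it with the energy identity $E(u(T))-E(u(0))=-\intT \nor{(1-\Delta)^{-1/2}a(x)\partial_t u}{L^2}^2$ (and the equivalence of $E$ with the squared $H^1$ norm on bounded sets, which follows from the conservation laws recalled in the introduction) yields a uniform decay $\nor{u(T)}{H^1}^2\leq (1-\delta)\nor{u_0}{H^1}^2$ over one time step; iterating on successive intervals $[kT,(k+1)T]$ and using the semigroup property gives the exponential rate $\nor{u(t)}{H^1}\leq Ce^{-\gamma t}\nor{u_0}{H^1}$. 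I would carry out this reduction first, since it is purely formal and isolates the real analytic content in the observability inequality.

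To prove the observability estimate I would argue by contradiction and compactness. Suppose it fails; then there is a sequence of solutions $u_n$ with $H^1$-bounded data for which the left side stays normalized while the damping term $(1-\Delta)^{-1/2}a(x)\partial_t u_n\to 0$ in $L^2([0,T],L^2)$. Using the $\Xsb$ well-posedness and the a priori bounds of Assumption \ref{hypinegXsb}, the sequence is bounded in $\Xubt$, hence converges (up to extraction) weakly in $\Xubt$ and strongly in lower-order spaces by the compactness of the Bourgain-space embeddings. The strong convergence is what lets me pass to the limit in the nonlinearity $|u_n|^2u_n$; this is the step where the specific $\Xsb$ bilinear/trilinear estimates for the cubic term on these $3$-manifolds are indispensable, precisely because uniform $H^1$ well-posedness is false without them. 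In the limit I obtain a solution $u$ of the free (undamped) Schr\"odinger equation $i\partial_t u+\Delta u=(1+|u|^2)u$ whose damping term vanishes, which forces $a(x)\partial_t u\equiv 0$, hence $\partial_t u=0$ on $\omega$.

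From $\partial_t u=0$ on $[0,T]\times\omega$ I would differentiate the equation in $t$ and apply the unique continuation property of Assumption \ref{uniquecontinuation}, together with the geometric control condition of Assumption \ref{geometriccontrol}, to conclude that the limit $u$ is in fact a stationary solution and, after the linear analysis, that $u\equiv 0$. This contradicts the normalization $\nor{u}{}=1$ inherited in the limit (one must check the normalization survives the limiting procedure, which is where strong convergence in $L^2$, or in $C([0,T],H^{1-\epsilon})$, is used again). The main obstacle is precisely this limiting step: one must show that the weak-limit equation is satisfied in a strong enough sense to invoke unique continuation, and that the normalization does not degenerate to zero in the limit. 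Controlling the nonlinear term under only weak-$\Xsb$ convergence requires a linearization argument, writing $|u_n|^2u_n-|u|^2u$ as a sum of terms each containing a factor $u_n-u$ that tends to zero strongly, and estimating the remaining factors uniformly via the trilinear $\Xsb$ estimates; the defocusing sign and the $L^2$-conservation are what keep these factors bounded. A separate technical point, which I would handle with a propagation-of-regularity (or propagation-of-compactness) argument in $\Xsb$ à la Dehman--G\'erard--Lebeau, is ruling out the possibility that the normalized quantity concentrates at high frequencies where the strong convergence could otherwise fail; this is the genuinely three-dimensional difficulty that the two-dimensional Strichartz-based argument of \cite{control-nl} avoids.
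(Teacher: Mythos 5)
Your overall strategy is exactly the paper's: reduce to an observability estimate for the damped flow, prove it by contradiction and compactness in Bourgain spaces, pass to the limit in the cubic term by a linearization argument, use propagation of regularity to make the limit smooth enough for unique continuation, and use propagation of compactness to prevent the normalization from escaping to high frequencies. The reduction to observability and the iteration over intervals $[kT,(k+1)T]$ are also as in the paper.

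There is, however, one genuine gap in your contradiction setup. You write that the failure of observability produces a sequence whose data ``stays normalized'' while the damping integral tends to zero. For a \emph{nonlinear} equation you cannot normalize: the correct negation only gives $\intT \nor{(1-\Delta)^{-1/2}a\partial_t u_n}{L^2}^2\,dt\leq \frac1n E(u_n(0))$ with $\nor{u_{n}(0)}{H^1}\leq R_0$, and nothing prevents $\alpha_n:=E(u_n(0))^{1/2}\to 0$. In that regime your argument collapses: the weak limit is trivially $0$ and no contradiction is reached. The paper handles this by a separate rescaling $v_n=u_n/\alpha_n$; since the equation is cubic, $v_n$ solves a \emph{modified} equation with nonlinearity $\alpha_n^2|v_n|^2v_n$, which vanishes in the limit, so the limit object solves the \emph{linear} Schr\"odinger equation with $\partial_t v=0$ on $\omega$, and one needs the (easier) linear version of the unique continuation/propagation argument, plus a boot-strap to show $\nor{v_n}{\Xubt}$ stays bounded despite the rescaling. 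This case is not a formality --- it is where the lower bound $\nor{v_n(0)}{H^1}\gtrsim 1$ is actually available and the contradiction closes --- and it is absent from your proposal. A second, smaller point: the theorem asserts a decay rate $\gamma$ \emph{independent of} $R_0$, whereas your iteration only yields $\gamma=\gamma(R_0)$ since $T$ and $C$ in the observability estimate depend on $R_0$; the paper fixes this by noting that after a finite time $t(R_0)$ the $H^1$ norm drops below $1$, after which the rate $\gamma(1)$ applies.
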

The independence of $C$ and of the time of control $T$ on the bound $R_0$ are an open problem. The fact that $\gamma$ is independant on the size lies on the fact that it only describes the behavior near $0$. However, it is unknown whether there is really a minimal time of controllability. This is in strong contrast with the linear case where exact controllability occurs in arbitrary small time and the conditions are only geometric for the open set $\omega$. Moreover, some recent studies have analysed the explosion of the control cost when $T$ tends to $0$ : K.- D. Phung \cite{Phung} by reducing to the heat or wave equation, L. Miller \cite{Miller} with resolvent estimates, G. Tenenbaum and M. Tucsnak \cite{Tenenbaum} with number theoretic arguments.

Let us now describe our assumptions. The first two deal with classical geometrical assumptions in control theory.
\begin{hypo}
\label{geometriccontrol}Geometric control : there exists $T_0>0$ such that every geodesic of $M$, travelling with speed $1$ and issued at $t=0$, enters the set $\omega$ in a time $t<T_0$.
\end{hypo}
This condition is known to be sufficient for linear controllability, see G. Lebeau \cite{control-lin1}. In Section \ref{sectGCCnec}, we prove that it is necessary on $S^3$ for the nonlinear stabilization. Yet, there are some geometrical situation (especially when there are some unstable geodesics) in which it is  not necessary. For example, we have linear controllability for any open set $\omega$ of $\mathbb{T}^3$, see S. Jaffard \cite{Jaffard} and V. Komornik \cite{Komornik} (see also \cite{Burq}). This also holds for $M=S^2\times S^1$ with $\omega=S^2\times ]0,\varepsilon[$ or $\omega=\omega_1 \times S^1 $ where $\omega_1$ is a neighborhood of the equator. In that cases, our method fails to prove global results and we can only prove local controllability by perturbation (see Theorem \ref{thmcontrolenonlinw0}).
\begin{hypo}
\label{uniquecontinuation}
Unique continuation : For every $T>0$, the only solution in $C^{\infty}([0,T]\times M)$ to the system 
\begin{eqnarray}
\left\lbrace
\begin{array}{c}%argument r=alignement à droite puis center et left
i\partial_t u + \Delta u + b_1(t,x)u+b_2(t,x)\overline{u}=0  \textnormal{ on } [0,T]\times M\\
u=0 \textnormal{ on } [0,T]\times \omega
\end{array}
\right.
\end{eqnarray}
where $b_1(t,x)$ and $b_2(t,x) \in C^{\infty}([0,T]\times M)$ is the trivial one $u \equiv 0$.
\end{hypo}
We do not know if there exists a link between these two assumptions. In our three particular cases, this can be proved using Carleman estimates. There are some existing results about this, as the one of V. Isakov \cite{Isakov}(for general anisotropic PDE's), L. Baudouin and J.P. Puel \cite{BaudouinPuel2002}(global Carleman estimates) or A. Mercado, A. Osses and L. Rosier\cite{CarlemanSchrodRosier}(in the special case of Schrödinger with flat metric but weaker geometrical assumptions). Then, for the convenience of the reader, we have chosen to give a proof of this, which, we believe, clarifies the problem in the case of a non flat metric. It is given in the Appendix, Section \ref{sectuniqueness}.

The last assumption is a technical assumption that ensures that the Cauchy problem is well posed in $H^1$. It yields a bilinear loss of $s_0<1$.
\begin{hypo}
\label{hypinegXsb}
There exists $C>0$ and $0\leq s_0<1$ such that for any $f_1,f_2\in L^2(M)$ satisfying
\bna
f_j=\mathbf{1}_{\sqrt{1-\Delta}\in [N_j,2N_j[}(f_j), \quad j=1,2,3,4 
\ena
one has the following bilinear estimates
\bnan
\label{inegbilin}
\left\|u_1 u_2\right\|_{L^2([0,T]\times M)} \leq C \min (N,L)^{s_0} \left\|f_1\right\|_{L^2(M)}\left\|f_2\right\|_{L^2(M)}\\
u_j(t)=e^{it\Delta}f_j,\quad j=1,2\nonumber
\enan
\end{hypo}
It is known to be true in the following examples ($1/2+$ means any $s>1/2$): \\
- $\mathbb{T}^3$ with $s_0=1/2+$, see \cite{Bourgain} \\
- the irrational torus $\R^3/(\theta_1 \Z \times \theta_2 \Z \times \theta_3\Z)$ with $\theta_i \in \R$, for which an estimate with $s_0=2/3+$ has been recently obtained in \cite{Bourgainirrat}. An easier proof for $s_0=3/4+$ can also be found in the beginning of \cite{Bourgainirrat} and in \cite{CatoireWang2008}\\
- $S^3$ with $s_0=1/2+$, see \cite{Xsbsphere}\\
- $S^2\times S^1$ with $s_0=3/4+$, see \cite{Xsbsphere}.

It yields some trilinear estimates in Bourgain spaces (see the definition below). 
For the control near a trajectory, we still have some particular assumptions that are again fulfilled in the particular geometries described above. Our result reads as follow 
\begin{theorem}
\label{thmcontrolenonlin} 
Let $T>0$ and $M$, $\omega$ such that Assumptions \ref{geometriccontrol}, \ref{hypinegXsb}, \ref{uniquecontinuationH1} and \ref{hypcommut} are fulfilled (see below). Let $1\geq s>s_0$ and $w \in \Xubt$ be a solution of
\begin{eqnarray}
\label{eqnnonlinpm}
\left\lbrace
\begin{array}{rcl}%argument r=alignement à droite puis center et left
i\partial_t w + \Delta w \pm|w|^2w&=&g\\
w(x,0)&=&w_{0}(x)
\end{array}
\right.
\end{eqnarray}
with $g\in C([0,T],H^1)$ supported in $[0,T]\times \overline{\omega}$.\\
Then, there exists $\varepsilon>0$, such that for every $u_0\in H^{s}$ with $\left\|u_0-w(0)\right\|_{H^s}<\varepsilon$, there exists $g_1\in C([0,T],H^s)$ supported in $[0,T]\times \overline{\omega} $ such that the unique solution $u$ in $\Xsbt$ of (\ref{eqnnonlinpm}) with $u(0)=u_0$ and $g$ replaced by $g_1$
fulfills $u(T)=w(T)$.\\
Moreover, for any $u_0\in H^1$ with $\left\|u_0-w(0)\right\|_{H^s}<\varepsilon$, the same conclusion holds with $g\in C([0,T],H^1)$.
\end{theorem}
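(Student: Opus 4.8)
The plan is to linearize the flow around the reference trajectory $w$ and to reduce the controllability of (\ref{eqnnonlinpm}) to a fixed point problem in the Bourgain space $\Xsbt$, in the spirit of the one-dimensional $L^2$ construction of \cite{LaurentNLSdim1} and of the two-dimensional $H^1$ argument of \cite{control-nl}. First I would set $v=u-w$ and $G=g_1-g$, so that steering $u$ from $u_0$ to $w(T)$ is equivalent to steering $v$ from $v_0:=u_0-w(0)$ (small in $H^s$) to $v(T)=0$ by a control $G$ supported in $[0,T]\times\overline{\omega}$. The equation for $v$ is $i\partial_t v+\Delta v\pm\left(|w+v|^2(w+v)-|w|^2w\right)=G$; isolating the part that is linear in $v$, I write the nonlinearity as $L_w v+N(w,v)$, where $L_w v=\pm(2|w|^2v+w^2\overline{v})$ is the differential of the cubic term at $w$ and $N(w,v)$ gathers the contributions quadratic and cubic in $v$.

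The analytic core is the exact controllability to $0$ of the linearized equation $i\partial_t v+\Delta v+L_w v=a(x)h$, $v(0)=v_0$. I would obtain it by the Hilbert Uniqueness Method, that is, by establishing an observability estimate for the adjoint backward problem with the potentials $b_1=\pm2|w|^2$ and $b_2=\pm w^2$. Since $w\in\Xubt$, the bilinear estimate of Assumption \ref{hypinegXsb} and the trilinear estimates it yields show that these potential terms act as a compact perturbation of the free Schr\"odinger flow in the relevant Bourgain norm. I would therefore first prove observability for the free equation from the geometric control Assumption \ref{geometriccontrol} (propagation of semiclassical defect measures along the geodesic flow), and then run a compactness--uniqueness argument in which the possible nontrivial kernel is ruled out by the unique continuation Assumption \ref{uniquecontinuationH1}. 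This produces a bounded operator sending a pair $(v_0,f)$ to a control $h$ and to a solution of the inhomogeneous problem $i\partial_t v+\Delta v+L_w v=ah+f$ that vanishes at time $T$.

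With this linear control operator available, I would close the nonlinear problem by contraction. For $v$ in a small ball of $\Xsbt$ I set the source $f=\mp N(w,v)$ and let $\Lambda(v)$ be the controlled solution of the linearized problem with that source; a fixed point of $\Lambda$ solves the genuine nonlinear controllability problem, and the associated control is $g_1=g+G\in C([0,T],H^s)$. The multilinear estimates coming from Assumption \ref{hypinegXsb} bound $N(w,v)$ in $X^{s,b-1}_{T}$ --- it is at least quadratic in $v$, with a fixed dependence on $\nor{w}{\Xub}$ --- and furnish the Lipschitz bounds; combined with $\nor{v_0}{H^s}<\varepsilon$ this makes $\Lambda$ a contraction of a small ball into itself as soon as $s>s_0$. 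For the final assertion, where $u_0\in H^1$ is only assumed $H^s$-close to $w(0)$, I would bootstrap the regularity: applying $(1-\Delta)^{1/2}$ to the controlled equation and using the commutator Assumption \ref{hypcommut} to commute the control operator with the derivative up to a controllable error, the $H^1$ regularity of $u_0$, $w$ and $g$ propagates to $u$ and hence to $g_1\in C([0,T],H^1)$.

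I expect the main obstacle to be the observability of the linearized equation. The difficulty is not geometric --- that is exactly what Assumption \ref{geometriccontrol} supplies --- but stems from the low regularity and the $\overline{v}$-coupling of the potential: $L_w$ is only $\R$-linear and its coefficients $|w|^2,w^2$ carry merely Bourgain regularity, so the compactness gained from the bilinear estimate must be quantified with care and the compactness--uniqueness scheme must be carried out for the $2\times2$ real system associated with $L_w$ rather than for a scalar equation.
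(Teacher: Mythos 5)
Your plan follows the paper's own route: linearize around $w$, control the linearized equation by HUM with an observability estimate proved by compactness--uniqueness (propagation of defect measures under Assumption \ref{geometriccontrol}, then Assumption \ref{uniquecontinuationH1} to kill the limit), and close the nonlinear problem by a contraction in which the quadratic-and-cubic remainder $F(w,r)$ is fed back as a source; the paper phrases the fixed point on the adjoint datum $\Phi_0\in H^{-s}$ rather than on the state $v$, but this is the same scheme. Two points in your sketch hide real content. First, before Assumption \ref{uniquecontinuationH1} can be applied in the compactness--uniqueness step, the limit (which a priori lies only in $X^{s,b}_T$ with $s$ possibly close to $s_0$) must be upgraded to $X^{1,b}_T$; the paper does this by iterating the propagation-of-regularity result (Corollary \ref{corollairepropagreg}) using geometric control. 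Second, and more seriously, the final $H^1$ assertion cannot be obtained by an a posteriori bootstrap as you describe: the control is $g_1=g+A\Phi$ with $A=-ia(1-\Delta)^{-s}a$, so $g_1\in C([0,T],H^1)$ requires $\Phi_0\in H^{1-2s}$, while getting $\Phi_0\in H^{1-2s}$ from the fixed point equation requires the controlled state $r$ to lie in $X^{1,b}_T$, which in turn requires $\Phi_0\in H^{1-2s}$ --- the bootstrap is circular. The paper breaks this circle by first proving (Proposition \ref{thmpropagrecontrol}, where Assumption \ref{hypcommut} is actually used) that the HUM operator built at the $H^{-s}\to H^s$ level is \emph{also} an isomorphism from $H^{1-2s}$ onto $H^1$, and then running the contraction simultaneously on $B_{H^{-s}}(0,\eta)\cap B_{H^{1-2s}}(0,R)$, with smallness required only in the weak norm and $R$ allowed to be large. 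You have identified the right ingredient (the commutator estimates) but you need this two-norm architecture, not a bootstrap, to make the $H^1$ statement with only an $H^s$ smallness hypothesis.
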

An interesting fact is that the smallness assumption only concerns the $H^s$ norm, even if we want a control in $H^1$. For example, as in \cite{HumDehLeb}, if we assume $\nor{u_0}{H^1}\leq R_0$, we can find $N\in \N$ large enough such that the smallness asumption only concerns the $N$ first frequencies (see Corollary \ref{corpetitharmon}).

Let us describe the new hypothesis. Assumption \ref{uniquecontinuationH1} is a unique continuation result at weaker regularity.
\begin{hypo}
\label{uniquecontinuationH1}
Unique continuation in $H^1$: For every $T>0$, the only solution in $C([0,T],H^1)$ to the system 
\begin{eqnarray}
\left\lbrace
\begin{array}{c}%argument r=alignement à droite puis center et left
i\partial_t u + \Delta u + b_1(t,x)u+b_2(t,x)\overline{u}=0  \textnormal{ on } [0,T]\times M\\
u=0 \textnormal{ on } [0,T]\times \omega
\end{array}
\right.
\end{eqnarray}
where $b_1(t,x)$ and $b_2(t,x) \in L^{\infty}([0,T],L^{3})$ is the trivial one $u \equiv 0$.
\end{hypo}
We do not know if it is really stronger than Assumption \ref{uniquecontinuation} but for the moment, there are some example where we are able to prove Assumption \ref{uniquecontinuation} and not Assumption \ref{uniquecontinuationH1} using some weak Carleman estimates (see Appendix, Section \ref{sectuniqueness}). For instance, on $\Tot$, we are able to prove Assumption \ref{uniquecontinuation} for $\omega=\left\{x\in \R^3/\Z^3\left| x_1\in ]0,\varepsilon[+\Z \right.\right\}$ but not Assumption \ref{uniquecontinuationH1}. Yet, for the moment, we do not manage to deduce a controllability result from this statement.

The other new assumption is technical and yields quadrilinear estimates for a commutator
\begin{hypo}
\label{hypcommut}
There exists $C>0$ and $0\leq s_0<1$ such that for any $f_1,f_2,f_3,f_4\in L^2(M)$ satisfying
\bna
f_j=\mathbf{1}_{\sqrt{1-\Delta}\in [N_j,2N_j[}(f_j), \quad j=1,2,3,4 
\ena
one has the following quadrilinear estimate
\bnan
\label{quadrinlinschrod}
&&\sup_{\tau\in\R}\left|\int_{\R}\int_M \chi(t)e^{it\tau}u_1u_2\left((-\Delta)^{\varepsilon/2}u_3u_4-u_3(-\Delta)^{\varepsilon/2}u_4\right)dxdt\right| \\
&&\leq C(N_1^{\varepsilon}+N_2^{\varepsilon})\left(m(N_1,\troisp,N_4)\right)^{s_0} \nor{f_1}{L^2(M)}\nor{f_2}{L^2(M)}\nor{f_3}{L^2(M)}\nor{f_4}{L^2(M)}\nonumber\\
&&u_j(t)=e^{it\Delta}f_j,\quad j=1,2,3,4 \nonumber
\enan
where $\chi \in C^{\infty}_0(\R)$ is arbitrary and $m(N_1,\troisp,N_4)$ is the product of the smallest two numbers among $N_1, N_2, N_3, N_4$.\\ 
Moreover, the same result holds with $u_i$ replaced by $\overline{u_i}$ for $i$ in a subset of $\{1,2,3,4\}$.
\end{hypo}
For the three treated examples, we prove in Appendix, Section \ref{sectcommut}, that Assumption \ref{hypcommut} holds true with the same $s_0$ as in Assumption \ref{hypinegXsb}. We believe that it is the case for any manifold, but we did not manage to prove it.

As explained before, there are some examples for which we know that geometric control assumption is not necessary. For instance, for any pair of manifolds $M_1$, $M_2$ and $\omega_1\subset M_1$ such that $\omega_1$ satisfies observability estimate, $\omega_1 \times M_2$ satisfies observability estimate for the linear Schrödinger equation. We can then use this remark and the work of S. Jaffard \cite{Jaffard} and V. Komornik \cite{Komornik} for the linear equation on $\mathbb{T}^n$ to get some local nonlinear results . Since Theorem \ref{thmcontrolenonlin} is proved by a perturbative argument, we can also deduce controllability near $0$ from these already known linear control results. 
\begin{theorem}
\label{thmcontrolenonlinw0}
If $w\equiv0$ and $(M,\omega)$ is either :\\
-($\Tot$,any open set)\\
-($S^2\times S^1$, $\omega_1 \times S^1$) where $\omega_1$ is a neighborhood of the equator of $S^2$\\
-($S^2\times S^1$,$S^2\times ]0,\varepsilon[)$\\
Then, the same conclusion as Theorem \ref{thmcontrolenonlin} is true.
\end{theorem}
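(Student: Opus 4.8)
The plan is to deduce Theorem~\ref{thmcontrolenonlinw0} from Theorem~\ref{thmcontrolenonlin} by checking that, in each of the three listed geometries with the trivial trajectory $w\equiv 0$, all four hypotheses of Theorem~\ref{thmcontrolenonlin} are satisfied. Since Theorem~\ref{thmcontrolenonlin} is purely perturbative around a given solution $w$, taking $w\equiv 0$ is legitimate: the pair $(w_0,g)=(0,0)$ solves \eqref{eqnnonlinpm} trivially, lies in $\Xubt$, and its forcing term is (vacuously) supported in $[0,T]\times\overline{\omega}$. So the real content is the verification of Assumptions~\ref{geometriccontrol}, \ref{hypinegXsb}, \ref{uniquecontinuationH1} and \ref{hypcommut} in each case, \emph{even though} geometric control (Assumption~\ref{geometriccontrol}) fails for these open sets.

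First I would observe that Theorem~\ref{thmcontrolenonlin}, as stated, is invoked through its proof rather than its hypotheses: the perturbative argument never uses Assumption~\ref{geometriccontrol} directly but only the linear $L^2$- (or $H^s$-) observability/controllability estimate for the open set $\omega$, which Assumption~\ref{geometriccontrol} is one sufficient condition for. For $\Tot$ with an arbitrary open set, the required linear observability follows from the work of Jaffard~\cite{Jaffard} and Komornik~\cite{Komornik} (see also \cite{Burq}), which gives controllability of the linear Schr\"odinger equation on $\mathbb{T}^n$ for \emph{any} nonempty open set. For the two $S^2\times S^1$ cases, I would use the tensorization remark stated just before the theorem: if $\omega_1\subset M_1$ satisfies the linear observability estimate, then $\omega_1\times M_2$ satisfies it on $M_1\times M_2$. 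Applying this with $M_1=S^2$, $M_2=S^1$ and $\omega_1$ a neighborhood of the equator handles $\omega_1\times S^1$, while applying it with $M_1=S^1$, $M_2=S^2$ and $\omega_1=]0,\varepsilon[\subset S^1$ handles $S^2\times]0,\varepsilon[$ (here using one-dimensional observability on $S^1$, which holds for any open interval).

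Next I would note that Assumptions~\ref{hypinegXsb} and \ref{hypcommut} are geometric/harmonic-analytic statements about $M$ alone, independent of $\omega$, and have already been established for $\Tot$ and $S^2\times S^1$ in the discussion following Assumption~\ref{hypinegXsb} and in the Appendix (Section~\ref{sectcommut}). Hence they transfer verbatim. The unique continuation Assumption~\ref{uniquecontinuationH1} must be checked for the specific $\omega$'s; I would cite the Carleman-estimate arguments in the Appendix (Section~\ref{sectuniqueness}), which cover exactly these three geometries, to supply it. With the linear control estimate, the two harmonic-analytic bilinear/quadrilinear bounds, and unique continuation all in hand, the conclusion of Theorem~\ref{thmcontrolenonlin} applies with $w\equiv 0$, yielding local controllability towards $0$.

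The main obstacle is the first step: the statement of Theorem~\ref{thmcontrolenonlin} formally \emph{assumes} Assumption~\ref{geometriccontrol}, which is false here, so one cannot literally quote it. The genuine work is to isolate from its proof the precise linear ingredient it consumes --- namely the linear observability inequality for $\omega$ --- and to confirm that nothing else in the perturbative scheme secretly relies on geometric control. I expect this to be the delicate point, because the Bourgain-space linearization and the fixed-point argument must be rechecked to ensure the only place Assumption~\ref{geometriccontrol} entered was through linear controllability of $\omega$, which is now being supplied by \cite{Jaffard,Komornik} and the tensorization lemma instead.
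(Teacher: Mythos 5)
Your overall strategy is the paper's: the perturbative core of Theorem~\ref{thmcontrolenonlin} consumes only the linear controllability of $\omega$ (i.e.\ that the HUM operator is an isomorphism), which for $w\equiv 0$ is supplied directly by Jaffard--Komornik on $\Tot$ and by the tensorization remark on $S^2\times S^1$, while Assumptions~\ref{hypinegXsb} and \ref{hypcommut} depend only on $M$ and are already verified. The paper makes this clean by stating Proposition~\ref{propcontrotraj} with exactly the hypothesis ``$S_{s,T,w,a}$ is an isomorphism from $H^{-s}$ into $H^s$'' in place of geometric control and unique continuation, and by proving (in the unnumbered proposition after Proposition~\ref{proplincontrol}) that this isomorphism property holds for the three pairs $(M,\omega)$ with $w=0$, extending the known $L^2$ observability to $H^{-s}$ via Proposition~\ref{thmpropagrecontrol}. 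So the ``delicate point'' you flag at the end has already been dealt with structurally in the paper; nothing in the fixed-point scheme uses Assumption~\ref{geometriccontrol} once the isomorphism is granted.

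There is, however, one genuine error in your write-up: you claim Assumption~\ref{uniquecontinuationH1} ``must be checked for the specific $\omega$'s'' and that the Carleman arguments of Section~\ref{sectuniqueness} ``cover exactly these three geometries.'' They do not. The Appendix establishes unique continuation for a neighborhood of \emph{all} the faces of the torus and for the \emph{union} $(\omega_1\times S^1)\cup(S^2\times]0,\varepsilon[)$, not for an arbitrary open set of $\Tot$ nor for either factor of that union separately; the introduction even points out that for a single slab of $\Tot$ the authors cannot prove Assumption~\ref{uniquecontinuationH1}. If unique continuation were actually needed here, your proof would therefore have a gap. The resolution is that it is not needed: unique continuation enters only in the compactness--uniqueness proof of observability (Proposition~\ref{propobserv}) for general $\omega$ satisfying geometric control, and that route is entirely bypassed when the observability inequality is imported wholesale from the linear literature. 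You should delete that step rather than try to justify it.
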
 
\medskip

 Rosier and Zhang \cite{RosierZhangRectangle} have communicated to us that they simultaneoulsy obtained the same result for $\Tot$.
  
The proof of stabilization and of linear control with potential follows the same scheme as \cite{control-nl}. In a contradiction argument, we are led to prove the strong convergence to zero in $X^{s,b}_T$ of some weakly convergent sequence $(u_n)$ solution to damped NLS or Schrödinger with potential. Since the equation is subcritical, we use some linearisability properties of NLS in $H^1$ (see the work of P. Gérard \cite{linearisationondePG} for the wave equation). 

We first establish the strong convergence by some propagation of compactness. We adapt the argument of  \cite{control-nl} inspired by C. Bardos and T. Masrous \cite{BardosMasrour}. We use microlocal defect measures introduced by P. Gérard \cite{defectmeasure}. For a sequence $(u_n)$ weakly convergent to $0$ in $X^{s,b}_T$ satisfying
\begin{eqnarray*}
\left\lbrace
\begin{array}{c}%argument r=alignement à droite puis center et left
i\partial_t u_n + \Delta u_n \rightarrow 0 \quad \textnormal{in}\quad X^{s-1+b,-b}_T\\
a(x)u_n\rightarrow 0 \quad \textnormal{in}\quad L^2([0,T],H^s),
\end{array}
\right.
\end{eqnarray*}
we prove that $u_n\rightarrow 0$ in $L^2_{loc}([0,T],H^s)$. 

Once we know that the convergence is strong, we infer that the limit $u$ is solution to NLS. We would like to use Assumption \ref{uniquecontinuation} or \ref{uniquecontinuationH1} of unique continuation to prove that it is $0$. Yet, more regularity is needed to apply them. Again, we adapt the proof for $\Xsb$ spaces of propagation results of microlocal regularity coming from \cite{control-nl}.

\bigskip

In this article, $b'$ will be a constant such that estimates of Lemma \ref{lmtrilinXsb} holds. Actually, each of the trilinear estimates (with different $s$) that will be done will yield one $b'<1/2$ but remains true if we choose a greater one. So we take $b'<1/2$ as the largest of these constants. This allows to choose one $b>1/2$ with  $1>b+b'$.
%We suppose that our manifold $M$ satisfies some bilinear properties
%$$\left\|vw\right\|_{L^2} \leq \min(N,L)^s \left\|v_0\right\|_{L^2}\left\|w_0\right\|_{L^2}$$
%which actually implies the existence of $b>1/2$ and $b'<1/2$, $b+b'<1$ such that
%\bna 
%\forall u_1, u_2, u_3 \in X^{s,b'}(\R \times M)\\
%u_1u_2u_3\in X^{s,-b'}(\R \times M)
%\ena
%and
%$$\left\|u_1u_2u_3\right\|_{X^{s,-b'}} \leq C \left\|u_1\right\|_{X^{s,b'}}\left\|u_2\right\|_{X^{s,b'}}\left\|u_3\right\|_{X^{s,b'}}$$\\

In all the rest of the paper, $C$ will denote any constant whose value could change along the article.

%-------------------------------------------------------------------------
\section{Some properties of $\Xsb$ spaces}
%-------------------------------------------------------------------------

Since $M$ is compact, $\Delta$ has a compact resolvent and thus, the spectrum of $\Delta$ is discrete. We choose $e_k\in L^2(M)$, $k\in M$ an orthonormal basis of eigenfunctions of $-\Delta$, associated to eigenvalues $\lambda_k$. Denote $P_k$ the orthogonal projector on $e_k$. We equip the Sobolev space $H^s(M)$ with the norm (with $\left\langle x\right\rangle=\sqrt{1+|x|^2}$),
$$\left\|u\right\|^2_{H^s(M)}=\sum_{k}\left\langle \lambda_k\right\rangle^s \left\|P_k u\right\|^2_{L^2(M)}.$$
The Bourgain space $\Xsb$ is equipped with the norm
$$\left\|u\right\|^2_{X^{s,b}}= \sum_{k}\left\langle \lambda_k\right\rangle^s \left\|\left\langle \tau+ \lambda_k \right\rangle^b \widehat{P_k}(\tau) u\right\|^2_{L^2(\R_{\tau}\times M)}=\left\|u^{\#}\right\|^2_{H^{b}(\R,H^s(M))}$$
 where $u=u(t,x)$, $t\in\R$, $x\in M$, $u^{\#}(t)=e^{-it\Delta}u(t)$ and $ \widehat{P_k u}(\tau)$ denotes the Fourier transform of $P_k u$ with respect to the time variable.
 
\bigskip 

$\Xsbt$ is the associated restriction space with the norm
\bna 
\left\|u\right\|_{\Xsbt}=\inf \left\{ \left\| \tilde{u}\right\|_{\Xsb} \left| \tilde{u}=u \textnormal{   on   } ]0,T[\times M  \right. \right\}
\ena
We also write $\left\|u\right\|_{X^{s,b}_I}$ if the infinimum is taken on functions $\tilde{u}$ equalling $u$ on an interval $I$. The following properties  of $X^{s,b}_T$ spaces are easily verified.
\begin{enumerate}
 \item $X^{s,b}$ and $X^{s,b}_T$ are Hilbert spaces.
	\item If $s_1\leq s_2$, $b_1\leq b_2$ we have $X^{s_2,b_2} \subset  X^{s_1,b_1}$ with continuous embedding.
	\item For every $s_1<s_2$, $b_1<b_2$ and $T>0$, we have $X^{s_2,b_2}_T \subset  X^{s_1,b_1}_T$
with compact imbedding.
\item For $0<\theta <1$, the complex interpolation space $\left(X^{s_1,b_1},X^{s_2,b_2}\right)_{[\theta]}$ is $X^{(1-\theta)s_1+\theta s_2,(1-\theta)b_1+\theta b_2}$. \label{enuminterp}
\end{enumerate}
\ref{enuminterp}. can be proved with the interpolation theorem of Stein-Weiss for weighted $L^p$ spaces (see \cite{Bergh} p 114).

Then, we list some additional trilinear estimates that will be used all along the paper.
\begin{lemme}
\label{lmtrilinXsb}
If Assumption \ref{hypinegXsb} is fulfilled, for every $r\geq s>s_0$, there exist $0<b'<1/2$ and $C>0$ such that for any $u$ and $\tilde{u} \in X^{r,b'}$
\bnan
\label{multilinH1Hs}
 \left\| |u|^2 u \right\|_{X^{r,-b'}} &\leq& C \left\|u\right\|^2_{X^{s,b'}} \left\|u\right\|_{X^{r,b'}}\\
 \label{multilinH12Hs}
 \left\| |u|^2 \widetilde{u} \right\|_{X^{r,-b'}} &\leq& C \left\|u\right\|_{X^{s,b'}}\left\|u\right\|_{X^{r,b'}} \left\|\widetilde{u}\right\|_{X^{r,b'}}\\
\label{multilinH1}
 \left\||u|^2u-|\widetilde{u}|^2\widetilde{u}\right\|_{X^{s,-b'}} &\leq &C \left(\left\|u\right\|^2_{X^{s,b'}}+ \left\|\tilde{u}\right\|^2_{X^{s,b'}} \right) \left\|u-\tilde{u}\right\|_{X^{s,b'}}.
\enan
Moreover, the same estimates hold with $z_1\overline{z_2}z_3$ replaced by any $\R$-trilinear form on $\C$.
\end{lemme}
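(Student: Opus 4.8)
The plan is to deduce all three inequalities from a single space-time quadrilinear estimate, into which the bilinear bound of Assumption \ref{hypinegXsb} is fed through the transfer principle. First I would reduce the number of statements. The difference estimate (\ref{multilinH1}) follows from the multilinear version of (\ref{multilinH1Hs}) at $r=s$: writing $|u|^2u-|\widetilde u|^2\widetilde u$ as a telescoping sum of $\R$-trilinear monomials, each trilinear in $u$, $\widetilde u$ and $u-\widetilde u$ with one factor always equal to $u-\widetilde u$, and bounding every monomial by the trilinear estimate. Since the bilinear input of Assumption \ref{hypinegXsb} is stated for $u_j=e^{it\Delta}f_j$ and is insensitive to complex conjugation (conjugating $e^{it\Delta}f$ only reflects the spectral and time-frequency supports while preserving every $L^2$ norm), the reduction to an arbitrary $\R$-trilinear form is purely notational. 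Thus it suffices to control, by duality, the quadrilinear form $\left|\int_{\R}\int_M u_1\overline{u_2}u_3\overline{u_4}\,dx\,dt\right|$, where $u_4$ is the dual tester of the output norm and the Sobolev orders are distributed so the top frequency carries $r$ and the others $s$.

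Then I would establish the transfer principle. Writing $u_j(t)=\int_{\R}e^{it\sigma}\,e^{it\Delta}\widehat{u_j^{\#}}(\sigma)\,d\sigma$, expanding the product and applying Minkowski's inequality moves the $L^2([0,T]\times M)$ norm inside the $\sigma$-integrals; Assumption \ref{hypinegXsb} then applies to each pair of free evolutions $e^{it\Delta}\widehat{u_1^{\#}}(\sigma_1)$, $e^{it\Delta}\widehat{u_2^{\#}}(\sigma_2)$, and a Cauchy--Schwarz in $\sigma$ against the weight $\langle\sigma\rangle^{b}$ yields $\|u_1u_2\|_{L^2([0,T]\times M)}\le C\min(N_1,N_2)^{s_0}\|u_1\|_{X^{0,b}}\|u_2\|_{X^{0,b}}$. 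The $\sigma$-integrability is exactly where $b>1/2$ enters, through $\int\langle\sigma\rangle^{-2b}\,d\sigma<\infty$.

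Next comes the core frequency analysis. After a Littlewood--Paley decomposition of each factor into dyadic blocks $f_j=\mathbf{1}_{\sqrt{1-\Delta}\in[N_j,2N_j[}f_j$, relabel so that $N_1\ge N_2\ge N_3\ge N_4$; on a product the two largest frequencies are comparable, $N_1\sim N_2$. I would pair the largest with the smallest by Cauchy--Schwarz, $\left|\int u_1\overline{u_2}u_3\overline{u_4}\right|\le\|u_1u_4\|_{L^2}\|u_2u_3\|_{L^2}$, and apply the transferred bilinear estimate to each pair, which extracts the two smallest frequencies, $(N_3N_4)^{s_0}$. It then remains to distribute the Sobolev weights and sum the dyadic series: the two comparable high frequencies absorb the $r$- and $s$-regularities (interchangeable up to constants since $N_1\sim N_2$), while on the two low frequencies each factor $N_j^{s_0}$ is written as $N_j^{s}\,N_j^{-(s-s_0)}$, the surplus $N_j^{-(s-s_0)}$ with $s-s_0>0$ making the geometric sums converge. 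This is precisely why the hypotheses require $r\ge s>s_0$.

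The main obstacle is reconciling the exponent $b'<1/2$ demanded in the statement (so that, together with $b>1/2$ and $b+b'<1$, the later Duhamel and fixed-point arguments close) with the exponent $b>1/2$ that the transfer principle produces. A naive transfer at $b'<1/2$ forces, on each modulation block at level $M$, a loss $M^{1/2-b'}$ that renders the modulation sums divergent. I would resolve this by refining the decomposition in the modulation variables $\langle\sigma+\lambda_k\rangle$: in the region where all four modulations are $O(1)$ the factors are genuine free evolutions and Assumption \ref{hypinegXsb} applies with no loss, while in the regions where some modulation is large one trades the excess weight $\langle\sigma+\lambda_k\rangle^{\,b-b'}$, supplied either by the negative output weight or by the algebraic relation coupling output and input modulations, to recover the integrability of the transfer step. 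A Schur-type summation over the modulation blocks, convergent precisely because $b+b'<1$, then combines with the frequency sum above. Carrying out this modulation bookkeeping uniformly over the four possible positions of the largest modulation is the delicate heart of the argument.
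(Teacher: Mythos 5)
Your overall architecture --- duality to a quadrilinear form, transfer of the bilinear estimate from free solutions to $X^{0,b}$ with $b>1/2$, dyadic frequency decomposition with the two highest frequencies comparable, Cauchy--Schwarz pairing the largest frequency with the smallest so that the bilinear estimate extracts the product of the two smallest frequencies to the power $s_0$, and summation of the dyadic series using the slack $s>s_0$ --- is exactly the standard route. Note that the paper does not reproduce this proof (it refers to \cite{InventionesBGT}, \cite{gerardcourspise}, \cite{PGPierfelice}), but it runs the same scheme in the Appendix for the commutator variant (Lemma \ref{quadrdoneXsb}), so your skeleton is the right one to compare against. Your preliminary reductions (telescoping for the difference estimate, handling of conjugates, distribution of the $r$ and $s$ weights) are all fine.

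The gap is in your final paragraph, which you correctly identify as the heart of the matter: passing from the exponent $b>1/2$ forced by the transfer principle to the exponent $b'<1/2$ required by the statement. The mechanism you propose --- trading the excess modulation weight against ``the algebraic relation coupling output and input modulations'' and closing by a Schur-type summation ``convergent because $b+b'<1$'' --- does not work. The convolution constraint in the time frequency only bounds the \emph{largest} modulation from below by the resonance function; it gives no upper bound on any modulation, and two modulations can be simultaneously arbitrarily large while the other two and the resonance stay $O(1)$ (say $K_1=K_2=K\to\infty$, $K_3=K_4=1$). In that regime your block-by-block transfer loses a factor $K^{2(1/2-b')}\to\infty$ and there is no frequency gain available to absorb it; the condition $b+b'<1$ plays no role at this point (it is only needed later, in the Duhamel estimate of Lemma \ref{gainint}). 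The correct and standard resolution, which is the one carried out in the paper's Appendix, is to \emph{interpolate} the transferred estimate (modulation cost $(K_1K_2K_3K_4)^{b}$ with $b>1/2$, frequency cost $m(N_1,\troisp,N_4)^{s_0}$, cf.\ (\ref{estimatemultinopt})) with a crude estimate obtained from the Sobolev embedding $H^{1/4}(\R)\subset L^4(\R)$ in the time variable (modulation cost $(K_1K_2K_3K_4)^{1/4}$, frequency cost $m(N_1,\troisp,N_4)^{3/2}$, cf.\ (\ref{estimatemultilinsobo})). Taking $b$ close to $1/2$ and then $\theta>0$ small, the interpolated modulation exponent $b(1-\theta)+\theta/4$ drops below some $b'<1/2$, making the modulation sums geometric, while the interpolated frequency exponent $(1-\theta)s_0+3\theta/2$ stays below $s$, so the frequency sums still converge; it is precisely the hypothesis $s>s_0$ that pays for this interpolation. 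Without this step your argument does not close.
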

The proof of the previous lemma can be found in \cite{InventionesBGT}, \cite{gerardcourspise} or \cite{PGPierfelice}. Yet, in the Appendix, we prove some slightly different estimates, but the proof gives an idea of how Lemma \ref{lmtrilinXsb} is established. We also give some variants that will be used in the linearized version of our equations.
\begin{lemme}
\label{lmtrilinXsbbis}
If Assumption \ref{hypinegXsb} is fulfilled, for every $-1\leq s\leq 1$ and any $s_0<r\leq 1$, there exist $0<b'<1/2$ and $C>0$ such that for any $u\in X^{s,b'}$ and $a_1,a_2 \in X^{1,b'}$
\bnan
\label{inegtrilinmult}
\left\|a_1\overline{a_2}u\right\|_{X^{s,-b'}}\leq C\left\|a_1\right\|_{X^{1,b'}} \left\|a_2\right\|_{X^{1,b'}}\left\|u\right\|_{X^{s,b'}}\\
\label{inegtrilinmultcompact}
\left\||a_1|^2 u\right\|_{X^{s,-b'}}\leq C\left\|a_1\right\|_{X^{1,b'}} \left\|a_1\right\|_{X^{r,b'}}\left\|u\right\|_{X^{s,b'}}
\enan
Moreover, the same estimates hold with $z_1\overline{z_2}z_3$ replaced by any $\R$-trilinear form on $\C$.
\end{lemme}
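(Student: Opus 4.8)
The plan is to reduce Lemma \ref{lmtrilinXsbbis} to the bilinear estimate (\ref{inegbilin}) of Assumption \ref{hypinegXsb} exactly as the trilinear estimates of Lemma \ref{lmtrilinXsb} are obtained, the only novelty being that the three factors now carry different regularities $s$, $1$, $1$ instead of sharing the same exponent. First I would decompose each function into Littlewood--Paley pieces, writing $u=\sum_{N}u_N$ and $a_i=\sum_{M_i}(a_i)_{M_i}$ with $u_N=\mathbf{1}_{\sqrt{1-\Delta}\in[N,2N[}(u)$ and similarly for the $a_i$, the dyadic indices ranging over powers of $2$. By duality, estimating $\left\|a_1\overline{a_2}u\right\|_{X^{s,-b'}}$ amounts to bounding
\bna
\left|\int_{\R}\int_M a_1\overline{a_2}u\,\overline{v}\,dx\,dt\right|
\ena
for $v\in X^{-s,b'}$ of unit norm, so the whole problem becomes a quadrilinear space-time integral over the four dyadic pieces of $a_1$, $a_2$, $u$, $v$.

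Next I would run the standard Bourgain-space reduction that turns such a quadrilinear estimate into a product of two bilinear estimates. The idea is that one pairs the two lowest-frequency factors together and the two highest together, applies Cauchy--Schwarz in space-time, and on each pair invokes (\ref{inegbilin}) after using the transfer principle to pass from the free evolutions $e^{it\Delta}f_j$ in the hypothesis to general $X^{0,b'}$ functions (this is where the condition $b'<1/2$, together with the choice $b>1/2$ with $1>b+b'$ made in the introduction, is used to sum the time-frequency weights). Each application of (\ref{inegbilin}) produces a factor $\min(\,\cdot\,,\cdot)^{s_0}$ in the smaller of the two frequencies. The regularity weights $\left\langle\lambda\right\rangle^{1/2}$ on the $a_i$ and $\left\langle\lambda\right\rangle^{s/2}$ on $u$ and $v$ must then absorb these losses so that the resulting geometric series over the dyadic scales converges; this forces the constraints $r>s_0$ and $-1\le s\le1$. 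The second estimate (\ref{inegtrilinmultcompact}) is handled identically, except that since both $a$-factors come from the same function $a_1$ one is free to put the full $H^1$ weight on one copy and only the weaker $X^{r,b'}$ weight on the other, which is exactly what makes the right-hand side a product $\left\|a_1\right\|_{X^{1,b'}}\left\|a_1\right\|_{X^{r,b'}}\left\|u\right\|_{X^{s,b'}}$; the extra room in the $r$-copy is precisely what yields compactness later in the paper.

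The main obstacle I expect is the bookkeeping of the frequency interactions that guarantees convergence of the dyadic sum for the \emph{full} range $-1\le s\le1$, in particular the negative-$s$ regime. When $s<0$ the weight on $u$ helps rather than hurts, but the weight on $v\in X^{-s,b'}$ now carries a positive power $-s$, and one must check that in every possible ordering of the four frequencies $(M_1,M_2,N,N_v)$ the net power of the largest frequency is nonpositive after the two $\min$-losses of size $s_0<r\le1$ are distributed. The worst cases are the high-high-to-low interactions, where frequency cancellation in the product must be exploited; this is the point at which the hypothesis that (\ref{inegbilin}) holds with the loss measured in $\min(N,L)$ rather than $\max(N,L)$ is essential, and it is the only genuinely delicate step. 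Once this case analysis is organized, each branch gives a convergent geometric series in the ratios of the dyadic scales, summing to the claimed constant $C$, and the $\R$-trilinearity remark follows since the argument never used the specific conjugation pattern $z_1\overline{z_2}z_3$, only the trilinear and the Littlewood--Paley structure.
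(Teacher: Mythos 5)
Your route is viable in principle, but it is genuinely different from --- and much heavier than --- the one the paper takes, and as written it stops short of a proof. The paper never returns to the dyadic decomposition: it observes that the endpoint case $s=1$ of (\ref{inegtrilinmult}) and (\ref{inegtrilinmultcompact}) is already contained in estimate (\ref{multilinH12Hs}) of Lemma \ref{lmtrilinXsb} (which ``holds whatever the position of the conjugate operator''), that the transpose of the multiplication operator $u\mapsto a_1\overline{a_2}u$ for the $L^2$ duality is an operator of the same form, so the case $s=-1$ follows by duality, and that the whole range $-1\leq s\leq 1$ then follows from the complex interpolation identity $\left(X^{s_1,b_1},X^{s_2,b_2}\right)_{[\theta]}=X^{(1-\theta)s_1+\theta s_2,(1-\theta)b_1+\theta b_2}$ listed among the properties of $X^{s,b}$ spaces. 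This costs two lines, explains why the statement is restricted exactly to the interval $[-1,1]$, and entirely avoids four-frequency bookkeeping. Your direct quadrilinear argument would have to redo that bookkeeping with four distinct regularities, and the step you yourself single out as ``the only genuinely delicate step'' --- verifying that in every ordering of the dyadic scales the two $\min^{s_0}$ losses are absorbed, in particular when $s<0$ and the dual factor carries a positive weight --- is precisely the part you do not carry out; the case analysis does close (the two largest frequencies must be comparable and $s_0<1$ gives the needed decay), but it has to be written. Two further cautions: with $b'<1/2$ you cannot invoke the transfer principle directly to pass from the free evolutions of Assumption \ref{hypinegXsb} to general $X^{0,b'}$ functions --- the sharp bilinear bound requires an exponent $b>1/2$, and one must interpolate it against a cruder estimate valid for small $b$, exactly as the paper does in its Appendix for Lemma \ref{quadrdoneXsb}; and the claim that the argument is insensitive to the conjugation pattern needs the (standard) observation that conjugation preserves the spatial spectral localization and reflects the time frequency, so the localized bilinear estimate still applies.
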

\begin{proof}
We first prove (\ref{inegtrilinmultcompact}). Estimate (\ref{multilinH12Hs}) of Lemma \ref{lmtrilinXsb} implies that the operator of multiplication by $|a_1|^2$ maps $X^{1,b'}$ into $X^{1,-b'}$ with norm $\left\|a_1\right\|_{X^{1,b'}} \left\|a_1\right\|_{X^{r,b'}}$. IBy duality, it maps $X^{-1,b'}$ into $X^{-1,-b'}$ with the same norm. We get the same result for $-1\leq s\leq 1$ by interpolation, which yields (\ref{inegtrilinmultcompact}). For (\ref{inegtrilinmult}), we observe that estimate 
$$\left\|a_1\overline{a_2}u\right\|_{X^{1,-b'}}\leq C\left\|a_1\right\|_{X^{1,b'}} \left\|a_2\right\|_{X^{1,b'}}\left\|u\right\|_{X^{1,b'}}$$
holds whatever the position of the conjugate operator and we conclude similarly.
\end{proof}
Let us study the stability of the $\Xsb$ spaces with respect to some particular operations.
\begin{lemme}
\label{lemmetps}
Let $\varphi \in C^{\infty}_0(\R)$ and $u\in \Xsb$ then $\varphi(t) u \in \Xsb$.\\
If $u\in \Xsbt$ then we have $\varphi(t) u \in \Xsbt$.
\end{lemme}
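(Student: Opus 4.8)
```latex
The plan is to prove the two statements of Lemma \ref{lemmetps} in order, beginning with
the claim on the full space $\Xsb$ and then deducing the statement on the restriction
space $\Xsbt$ by elementary manipulation of the defining infimum. The guiding principle
is to transport everything to the "companion" variable $u^\#(t)=e^{-it\Delta}u(t)$, since
by the very definition of the norm we have
$\left\|u\right\|_{X^{s,b}}=\left\|u^\#\right\|_{H^b(\R,H^s(M))}$, and the operator
$u\mapsto \varphi(t)u$ commutes with the conjugation by $e^{-it\Delta}$: indeed
$(\varphi u)^\#(t)=e^{-it\Delta}\varphi(t)u(t)=\varphi(t)\,u^\#(t)$, because $\varphi$ is a
scalar function of $t$ alone. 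Thus the whole question reduces to showing that
multiplication by $\varphi\in C^\infty_0(\R)$ is a bounded operator on
$H^b(\R,H^s(M))$.

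For the first statement I would argue as follows. The space $H^b(\R,H^s(M))$ is, by the
very definition used in the excerpt, the space of $H^s(M)$-valued tempered distributions
$v$ with $\left\langle\tau\right\rangle^b\widehat{v}(\tau)\in L^2(\R,H^s(M))$. Boundedness
of multiplication by a Schwartz (or merely $C^\infty_0$) function on the scalar space
$H^b(\R)$ is classical: for integer $b$ it follows from the Leibniz rule together with the
boundedness of the derivatives of $\varphi$, and for general $b\in\R$ one obtains it by
interpolation between integer orders and duality (or directly from the fact that
$\widehat{\varphi u}=\widehat\varphi * \widehat u$ and the elementary Peetre-type
inequality $\left\langle\tau\right\rangle^{|b|}\leq C\left\langle\tau-\sigma\right\rangle^{|b|}\left\langle\sigma\right\rangle^{|b|}$ to control the convolution). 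Since $\varphi$
acts only in $t$ and the target space $H^s(M)$ is just a fixed Hilbert space carried along
as a coefficient, exactly the same convolution estimate works verbatim for
$H^s(M)$-valued functions. Concretely, writing
$\widehat{\varphi u^\#}(\tau)=\int_\R \widehat\varphi(\tau-\sigma)\,\widehat{u^\#}(\sigma)\,d\sigma$,
one multiplies by $\left\langle\tau\right\rangle^b$, bounds it using the Peetre inequality,
and applies Young's (or Minkowski's integral) inequality in the $\tau$ variable while
keeping the $H^s(M)$ norm inside, using that $\widehat\varphi$ decays faster than any power
so that $\left\langle\cdot\right\rangle^{|b|}\widehat\varphi\in L^1(\R)$. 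This yields
$\left\|\varphi u\right\|_{X^{s,b}}=\left\|\varphi u^\#\right\|_{H^b(\R,H^s)}
\leq C\left\|u^\#\right\|_{H^b(\R,H^s)}=C\left\|u\right\|_{X^{s,b}}$, which is the first
assertion.

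The second statement follows formally from the first. Suppose $u\in\Xsbt$; by definition
there are extensions $\tilde u$ with $\tilde u=u$ on $]0,T[\times M$ and
$\left\|\tilde u\right\|_{X^{s,b}}$ arbitrarily close to $\left\|u\right\|_{\Xsbt}$. For any
such $\tilde u$, the function $\varphi\tilde u$ lies in $\Xsb$ by the first part, and it
agrees with $\varphi u$ on $]0,T[\times M$ since $\varphi$ depends only on $t$; hence
$\varphi\tilde u$ is an admissible competitor in the infimum defining
$\left\|\varphi u\right\|_{\Xsbt}$, and
$\left\|\varphi u\right\|_{\Xsbt}\leq\left\|\varphi\tilde u\right\|_{X^{s,b}}
\leq C\left\|\tilde u\right\|_{X^{s,b}}$. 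Taking the infimum over all admissible $\tilde u$
gives $\left\|\varphi u\right\|_{\Xsbt}\leq C\left\|u\right\|_{\Xsbt}<\infty$, so
$\varphi u\in\Xsbt$.

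I expect no genuine obstacle here; this is a soft stability lemma. The only point requiring
a little care is the reduction via $u^\#$ together with the verification that multiplication
by $\varphi$ is bounded on $H^b(\R)$ for \emph{non-integer} $b$, where one cannot simply
invoke the Leibniz rule and must instead use the convolution/Peetre argument (or
interpolation). Everything else—the commutation $(\varphi u)^\#=\varphi u^\#$ and the
infimum manipulation for the restriction space—is routine.
```
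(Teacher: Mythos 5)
Your proof is correct and follows essentially the same route as the paper's: both reduce to $u^\#$ via the commutation $(\varphi u)^\#=\varphi u^\#$, invoke boundedness of multiplication by $\varphi$ on $H^b(\R,H^s)$, and handle the restriction space by applying the result to an arbitrary extension and taking the infimum. The paper states the middle inequality without justification, whereas you supply the standard Peetre/Young convolution argument, which is a welcome but inessential addition.
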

\begin{proof}
We write
$$\left\|\varphi u\right\|_{X^{s,b}}= \left\|e^{-it\Delta}\varphi(t)u(t)\right\|_{H^{b}(\R,H^s)}= \left\|\varphi u^{\#}\right\|_{H^{b}(\R,H^s)}\leq C  \left\|u^{\#}\right\|_{H^{b}(\R,H^s)}\leq C\left\|u\right\|_{X^{s,b}}$$
We get the second result by applying the first one on any extension of $u$ and taking the infinimum.
\end{proof}
In the case of pseudodifferential operators in the space variable, we have to deal with a loss in $\Xsb$ regularity compared to what we could expect. Some regularity in the index $b$ is lost, due to the fact that a pseudodifferential operator does not keep the structure in time of the harmonics.\\
This loss is unavoidable as we can see, for simplicity on the torus $\mathbb{T}^1$ : we take  $u_n=\psi(t)e^{inx}e^{i|n^2|t}$ (where $\psi \in C^{\infty}_0$ equal to $1$ on $[-1,1]$) which is uniformly bounded in $X^{0,b}$ for every $b\geq 0$. Yet, if we consider the operator $B$ of order $0$ of multiplication by $e^{ix}$, we get $\left\|e^{ix}u_n\right\|_{X^{0,b}} \approx n^b$. Yet, we do not have such loss for operator of the form $(-\Delta)^r$ which acts from any $X^{s,b}$ to $X^{s-2r,b}$. But if we do not make any further assumption on the pseudodifferential operator, we can show that our example is the worst one : 
\begin{lemme}
\label{lemmepseudoxsb}
Let $ -1 \leq b \leq 1$ and $B$ be a pseudodifferential operator in the space variable of order $\rho$. For any $u\in \Xsb$ we have $B u \in X^{s-\rho-|b|,b}$.\\
Similarly, $B$ maps $\Xsbt$ into $X^{s-\rho-|b|,b}_T$.
\end{lemme}
\begin{proof}
We first deal with the two cases $b=0$ and $b=1$ and we will conclude by interpolation and duality.\\
For $b=0$, $X^{s,0}=L^2(\R,H^s)$ and the result is obvious.\\
For $b=1$, we have $u\in X^{s,1}$ if and only if
 $$u \in L^2(\R,H^s) \textnormal{ and } i\partial_t u+\Delta u \in L^2(\R,H^s)$$
 with the norm
 $$ \left\| u\right\|^2_{X^{s,1}}= \left\|u\right\|^2_{L^2(\R,H^s)}+\left\|i\partial_t u+\Delta u\right\|^2_{L^2(\R,H^s)}$$
Then, we have 
\bna
\left\|B u\right\|^2_{X^{s-\rho-1,1}}&=&\left\|Bu\right\|^2_{L^2(\R,H^{s-\rho-1})}+ \left\|i\partial_t Bu+\Delta Bu\right\|^2_{L^2(\R,H^{s-\rho-1})}\\
&\leq& C\left(\left\|u\right\|^2_{L^2(\R,H^{s-1})}+ \left\|B\left(i\partial_t u+\Delta u\right)\right\|^2_{L^2(\R,H^{s-\rho-1})}+ \left\|\left[B,\Delta \right]u\right\|^2_{L^2(\R,H^{s-\rho-1})}\right)\\
&\leq & C\left(\left\|u\right\|^2_{L^2(\R,H^{s-1})}+ \left\|i\partial_t u+\Delta u\right\|^2_{L^2(\R,H^{s-1})}+ \left\|u\right\|^2_{L^2(\R,H^{s})}\right)\\
&\leq & C\left\|u\right\|^2_{X^{s,1}}
\ena
Hence, $B$ maps $X^{s,0}$ into $X^{s-\rho,0}$ and $X^{s,1}$ into $X^{s-\rho-1,1}$. Then, we conclude by interpolation that $B$ maps $\Xsb=\left(X^{s,0},X^{s,1}\right)_{[b]}$ into {$\left(X^{s-\rho,0},X^{s-\rho-1,1}\right)_{[b]} =X^{s-\rho-b,b}$ } which yields the $b$ loss of regularity as announced.

By duality, this also implies that for $0\leq b \leq 1$, $B^*$ maps $X^{-s+\rho+b,-b}$ into $X^{-s,-b}$. As there is no assumption on $s\in \R$, we also have the result for $-1\leq b \leq 0$ with a loss $-b=|b|$.\\
To get the same result for the restriction spaces $\Xsbt$, we write the inequality for an extension $\tilde{u}$ of $u$, which yields
\bna
\left\|Bu\right\|_{X^{s-\rho-|b|,b}_T} \leq \left\|B\tilde{u}\right\|_{X^{s-\rho-|b|,b}} \leq C  \left\|\tilde{u}\right\|_{X^{s,b}} 
\ena
Taking the infinimum on all the $\tilde{u}$, we get the claimed result.
\end{proof}
We will also use the following elementary estimate (see e.g. \cite{ginibre} or \cite{Bourgain}).
\begin{lemme}
\label{gainint}
Let $(b,b')$ satisfying
\begin{eqnarray}
0<b'<\frac{1}{2}<b,~~~~b+b'\leq 1. 
\end{eqnarray}
If we note $F(t)=\Psi\left(\frac{t}{T}\right)\intt f(t')dt'$, we have for $T\leq 1$
\bna
\left\|F\right\|_{H^b} \leq CT^{1-b-b'}\left\|f\right\|_{H^{-b'}}.
\ena
\end{lemme}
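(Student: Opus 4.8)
The plan is to work entirely on the Fourier side in the time variable and to split $f$ according to whether its time-frequency $\tau$ is below or above the threshold $1/T$. Writing $\widehat{f}$ for the Fourier transform in $t$, one has the exact identity
\[ \intt f(t')\,dt' = \frac{1}{2\pi}\int_{\R}\widehat{f}(\tau)\,\frac{e^{it\tau}-1}{i\tau}\,d\tau =: G_{\mathrm{low}}(t)+G_{\mathrm{high}}(t), \]
where $G_{\mathrm{low}}$ (resp. $G_{\mathrm{high}}$) is the contribution of $\{|\tau|\le 1/T\}$ (resp. $\{|\tau|>1/T\}$). Throughout I will use the scaling bound $\nor{\Psi(\cdot/T)\phi(\cdot/T)}{H^b}\leq CT^{1/2-b}\nor{\phi}{H^b}$ for $T\leq 1$ and $0\le b$, which follows from $\langle\sigma/T\rangle\le T^{-1}\langle\sigma\rangle$ on the Fourier side, together with the elementary interpolation inequality $\nor{u}{H^b}\le C\nor{u}{L^2}^{1-b}\nor{u}{H^1}^{b}$, valid for $0\le b\le 1$ (here $b=1-b'\in(1/2,1)$ by hypothesis).

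For the low-frequency part I would exploit that $G_{\mathrm{low}}$ is band-limited and that $\Psi(\cdot/T)$ localizes in $\{|t|\lesssim T\}$. Using $|e^{it\tau}-1|\le |t\tau|$ and Cauchy--Schwarz one gets $|G_{\mathrm{low}}(t)|\le C T^{1/2-b'}\nor{f}{H^{-b'}}$ on the support of $\Psi(\cdot/T)$, whence $\nor{\Psi(\cdot/T)G_{\mathrm{low}}}{L^2}\le CT^{1-b'}\nor{f}{H^{-b'}}$. Since $G_{\mathrm{low}}' = f_{\mathrm{low}}$ and $\nor{f_{\mathrm{low}}}{L^2}\le CT^{-b'}\nor{f}{H^{-b'}}$ (because $\langle\tau\rangle^{b'}\le CT^{-b'}$ on $\{|\tau|\le 1/T\}$), the product rule gives $\nor{\Psi(\cdot/T)G_{\mathrm{low}}}{H^1}\le CT^{-b'}\nor{f}{H^{-b'}}$. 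Interpolating these two bounds with exponents $1-b$ and $b$ produces exactly $T^{(1-b')(1-b)-b'b}=T^{1-b-b'}$, as required.

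For the high-frequency part I would split $e^{it\tau}-1$ into its two summands. The constant piece yields $\Psi(t/T)\cdot c$ with $c=-\frac{1}{2\pi}\int_{|\tau|>1/T}\widehat{f}(\tau)/(i\tau)\,d\tau$; here $b'<1/2$ guarantees $\int_{|\tau|>1/T}\langle\tau\rangle^{2b'}\tau^{-2}\,d\tau\le CT^{1-2b'}$, so $|c|\le CT^{1/2-b'}\nor{f}{H^{-b'}}$, and the scaling bound $\nor{\Psi(\cdot/T)}{H^b}\le CT^{1/2-b}$ closes this term. The remaining piece is $\Psi(t/T)g$ with $\widehat{g}(\tau)=\widehat{f}(\tau)\mathbf 1_{|\tau|>1/T}/(i\tau)$. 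A direct computation gives $\nor{g}{H^b}^2=\int_{|\tau|>1/T}\langle\tau\rangle^{2b}\tau^{-2}|\widehat f|^2\,d\tau\le CT^{2(1-b-b')}\nor{f}{H^{-b'}}^2$, where the condition $b+b'\le 1$ is used precisely to bound $(\langle\tau\rangle T)^{2(b+b'-1)}\le 1$ on the support.

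The main obstacle is that one cannot simply drop the cutoff $\Psi(t/T)$ from $\Psi(t/T)g$: since $b>1/2$ the naive algebra estimate costs a factor $\nor{\Psi(\cdot/T)}{H^b}\sim T^{1/2-b}$ that blows up, and interpolation through $H^1$ fails because $\nor{g'}{L^2}=\nor{f_{\mathrm{high}}}{L^2}$ is not controlled by any positive power of $T$ times $\nor{f}{H^{-b'}}$. I would therefore prove separately the key multiplier fact that, for any $h$ whose time-Fourier transform is supported in $\{|\tau|\ge 1/T\}$, one has $\nor{\Psi(\cdot/T)h}{H^b}\le C\nor{h}{H^b}$ uniformly in $T\le 1$ (for $0\le b\le 1$). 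This is a Schur test on the kernel $K(\tau,\sigma)=\langle\tau\rangle^b\langle\sigma\rangle^{-b}\,T\,\widehat\Psi(T(\tau-\sigma))$ restricted to $|\sigma|\ge 1/T$: splitting according to $|\tau-\sigma|\lessgtr\langle\sigma\rangle$ and using the rapid decay of $\widehat\Psi$, both $\sup_\sigma\int K\,d\tau$ and $\sup_\tau\int K\,d\sigma$ are bounded, the powers of $T$ cancelling thanks to $|\sigma|\ge 1/T$. Applying this to $g$ and combining with the previous bound finishes the proof.
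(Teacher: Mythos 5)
Your argument is correct, and there is nothing in the paper to compare it against line by line: the lemma is quoted as an ``elementary estimate'' with a pointer to Ginibre and Bourgain, and no proof is given. The standard argument in those references has the same skeleton as yours (split $f$ at the time-frequency threshold $1/T$; for the high frequencies split $e^{it\tau}-1$ into its two summands), so your route differs only in the details. For the low-frequency piece the classical proof expands $e^{it\tau}-1=\sum_{n\geq 1}(it\tau)^n/n!$ and sums the bounds $\|t^n\Psi(t/T)\|_{H^b}\leq C^n T^{n+1/2-b}$, whereas you reach the same power $T^{1-b-b'}$ by interpolating a pointwise bound on $G_{\mathrm{low}}$ against the $H^1$ bound coming from $G_{\mathrm{low}}'=f_{\mathrm{low}}$; both work, and yours trades the series for the (standard) interpolation inequality. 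For the oscillating high-frequency piece you correctly isolate the genuine issue --- multiplication by $\Psi(\cdot/T)$ is \emph{not} uniformly bounded on $H^b$ for $b>1/2$, precisely because of low frequencies --- and your Schur test on the kernel restricted to $|\sigma|\geq 1/T$ does resolve it; the references obtain the same multiplier bound slightly more directly by writing $\langle\tau\rangle^b\leq C\left(\langle\tau-\sigma\rangle^b+\langle\sigma\rangle^b\right)$ in the convolution $T\widehat{\Psi}(T\cdot)\ast\widehat{g}$ and using $\|g\|_{L^2}\leq CT^{b}\|g\|_{H^b}$ on the support $\{|\sigma|\geq 1/T\}$. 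Two cosmetic remarks: the parenthetical ``$b=1-b'$'' is not implied by the hypothesis $b+b'\leq 1$ and is never actually used (your interpolation exponent is $b$ itself, and the arithmetic $(1-b')(1-b)-b'b=1-b-b'$ holds for all admissible $b,b'$); and you should state explicitly that $\Psi\in C^\infty_0(\R)$, since the bound $|t|\leq CT$ on $\mathrm{supp}\,\Psi(\cdot/T)$ and the rapid decay of $\widehat{\Psi}$ are both used --- this is the intended setting in the paper, where $\Psi$ is a compactly supported cutoff equal to $1$ near the origin.
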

In the futur aim of using a boot-strap argument, we will need some continuity in $T$ of the $X^{s,b}_T$ norm of a fixed function : 
\begin{lemme}
\label{continuiteXsbt}
Let $0<b<1$ and $u$ in $X^{s,b}$ then the function
\begin{eqnarray*}
\left\lbrace
\begin{array}{rcrcl}%argument r=alignement à droite puis center et left
f&:&]0,T]& \longrightarrow &  \R \\
 & &t    & \longmapsto     & \left\|u\right\|_{X^{s,b}_{t}}
\end{array}
\right.
\end{eqnarray*} 
 is continuous.
Moreover, if $b>1/2$, there exists $C_b$ such that 
$$\lim_{t\rightarrow 0} f(t) \leq C_b \nor{u(0)}{H^s}.$$
\end{lemme}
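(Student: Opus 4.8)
The plan is to transfer everything to the Hilbert-space-valued fractional Sobolev space via the isometry $u\mapsto u^{\#}$, $u^{\#}(t)=e^{-it\Delta}u(t)$, which by definition identifies $\Xsb$ with $H^{b}(\R,H^s(M))$. Since $\tilde u=u$ on $]0,t[\times M$ if and only if $\tilde u^{\#}=u^{\#}$ on $]0,t[$ (the map $\#$ acts pointwise in time), the infimum defining the restriction norm is unchanged, so $f(t)=\left\|u^{\#}\right\|_{H^{b}(]0,t[,H^s)}$ is the quotient norm of $v:=u^{\#}$ on $]0,t[$. In particular $f$ is non-decreasing, because shrinking the interval relaxes the constraint in the infimum; hence the one-sided limits $f(t^-)\leq f(t)\leq f(t^+)$ exist everywhere and it only remains to rule out jumps.

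First I would prove left-continuity by weak compactness and lower semicontinuity. Fix $t$, take $t_n\uparrow t$ and near-optimal extensions $V_n$ of $v$ on $]0,t_n[$ with $\left\|V_n\right\|_{H^{b}(\R,H^s)}\leq f(t_n)+1/n$. These are bounded, so up to a subsequence $V_n\rightharpoonup V$ weakly in the Hilbert space $H^{b}(\R,H^s)$. For any $t'<t$ we have $t_n>t'$ for $n$ large, hence $V_n=v$ on $]0,t'[$; as the restriction map to $]0,t'[$ is bounded linear, hence weakly continuous, we get $V=v$ on $]0,t'[$, and letting $t'\uparrow t$ yields $V=v$ on $]0,t[$. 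Thus $V$ is admissible and, by weak lower semicontinuity of the norm, $f(t)\leq\left\|V\right\|\leq\liminf_n\left\|V_n\right\|=f(t^-)$; with $f(t^-)\leq f(t)$ this gives equality.

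The harder direction is right-continuity. Given $\varepsilon>0$ pick an extension $V$ of $v$ on $]0,t[$ with $\left\|V\right\|\leq f(t)+\varepsilon$ and set $g:=v-V$, which vanishes on $]0,t[$. For $t'=t+h$ I would use the admissible extension $W:=V+\phi_{h}\,g$, where $\phi_{h}$ is smooth, equal to $1$ on $[t,t+h]$, supported in $]t-h,t+2h[$, with $\left\|\phi_h^{(k)}\right\|_{\infty}\leq Ch^{-k}$: indeed $W=V=v$ where $g=0$ and $W=v$ where $\phi_h=1$, so $W=v$ on $]0,t'[$. Then $f(t+h)\leq f(t)+\varepsilon+\left\|\phi_h g\right\|_{H^{b}(\R,H^s)}$, and the whole matter reduces to the fractional cut-off estimate $\left\|\phi_h g\right\|_{H^{b}(\R,H^s)}\tend{h}{0}0$ for $g\in H^{b}(\R,H^s)$ vanishing on $]0,t[$. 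This is the main obstacle. I expect to prove it first for $g\in H^{1}(\R,H^s)$ vanishing on $]0,t[$, where $\left\|\phi_h g\right\|_{L^2}\to0$ and $\left\|\phi_h g\right\|_{H^{1}}\to0$ follow by a direct computation (the only non-trivial term $\phi_h'g$ is supported either where $g\equiv0$ or on an interval of length $h$ on which $|g|$ is small, by $g(t)=0$ and Cauchy--Schwarz), so that $\left\|\phi_h g\right\|_{H^b}\leq\left\|\phi_h g\right\|_{L^2}^{1-b}\left\|\phi_h g\right\|_{H^1}^{b}\to0$ by the complex interpolation of item \ref{enuminterp}; and then to pass to general $g$ by density, using a bound for the multiplier norm of $\phi_h$ on $H^b$ that is uniform in $h$. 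The delicate point is exactly this uniform multiplier bound, and in particular the endpoint $b=1/2$, where neither a sharp characteristic cut-off nor a trace is available; I would keep the transition width of $\phi_h$ comparable to $h$ and rely on the interpolation description of the $\Xsb$ spaces rather than on pointwise values.

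Finally, for the ``moreover'' statement with $b>1/2$ one has $\Xsb\hookrightarrow C(\R,H^s)$, so $v(0)=u(0)$ is well defined. I would write $v=\eta\,v(0)+w$ with $\eta\in C_0^{\infty}(\R)$, $\eta(0)=1$, so that $w(0)=0$, and estimate $f(t)\leq\left\|\eta v(0)\right\|_{H^b(\R,H^s)}+\left\|w\right\|_{H^{b}(]0,t[,H^s)}=\left\|\eta\right\|_{H^b}\left\|u(0)\right\|_{H^s}+\left\|w\right\|_{H^{b}(]0,t[,H^s)}$. Keeping $\eta$ fixed produces the constant $C_b=\left\|\eta\right\|_{H^b}$, while the second term tends to $0$ as $t\to0$ by the same vanishing-restriction principle as above (now $w(0)=0$ plays the role of $g|_{]0,t[}=0$, since $]0,t[$ shrinks to a point where $w$ vanishes). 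This yields $\lim_{t\to0}f(t)\leq C_b\left\|u(0)\right\|_{H^s}$.
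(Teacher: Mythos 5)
Your architecture is sound and genuinely more detailed than the paper's own proof, which after reducing to the scalar case $H^b(\R)$ only treats the limit at $t=0$: the monotonicity of $f$, the left-continuity via weak compactness and lower semicontinuity of the norm, and the reduction of right-continuity to the vanishing of $\left\|\phi_h g\right\|_{H^b(\R,H^s)}$ for $g$ vanishing on $]0,t[$ are all correct and none of them is spelled out in the paper. Your treatment of the ``moreover'' part (splitting $v=\eta\, v(0)+w$ with $w(0)=0$) is exactly the paper's reduction. The one genuine gap is the ingredient you flag yourself: the uniform-in-$h$ multiplier bound needed to pass from $g\in H^1$ to general $g\in H^b$ by density. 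Interpolating \emph{operator} norms between $L^2$ and $H^1$ only gives $\left\|\phi_h\right\|_{H^b\to H^b}\leq C h^{-b}$ (the $H^1$ multiplier norm of a bump at scale $h$ is of order $h^{-1}$), and on all of $H^b$ with $b>1/2$ no uniform bound can hold, since $\left\|\phi_h\cdot 1\right\|_{H^b}\approx h^{1/2-b}\to\infty$; so your fallback ``rely on the interpolation description'' does not close this. The paper supplies precisely the missing estimate, on the relevant subspace: for $b>1/2$ a function $g\in H^b$ vanishing on $]0,t[$ is continuous with $g(t)=0$, hence $g(\tau)=\int_t^{\tau}g'$ with $g'\in H^{b-1}=H^{-b'}$ for $b'=1-b<1/2$, and Lemma \ref{gainint} gives $\left\|\phi_h g\right\|_{H^b}\leq Ch^{1-b-b'}\left\|g'\right\|_{H^{-b'}}=C\left\|g'\right\|_{H^{b-1}}$ uniformly in $h$; the same lemma with $b'=1-b-\varepsilon$ yields the decay rate $h^{\varepsilon}$ for $g\in H^{b+\varepsilon}$, which replaces your $L^2$--$H^1$ interpolation step. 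For $b<1/2$ the uniform bound holds on all of $H^b$ by the standard sharp-cutoff multiplier estimate, so your density argument goes through there. Neither your plan nor the paper's sketch covers $b=1/2$ (Lemma \ref{gainint} requires $b'<1/2<b$, and the multiplier norm degenerates logarithmically at the endpoint); the paper glosses over this, and the lemma is only ever invoked with $b>1/2$.
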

\begin{proof}
By reasoning on each component on the basis, we are led to prove the result in $H^b(\R)$. The most difficult case is the limit near $0$. It suffices to prove that if $u\in H^b(\R)$, with $b>1/2$, satisfies $u(0)=0$, and $\Psi \in C^{\infty}_0(\R)$ with $\Psi(0)=1$, then
$$\Psi\left(\frac{t}{T}\right)u \tend{T}{0} 0 \quad \textnormal{in} \quad H^b.$$
Such a function $u$ can be written $\intt f$ with $f\in H^{b-1}$. Then, Lemma \ref{gainint} gives the result we want if $u\in H^{b+\varepsilon}$. Nevertheless, if we only have $u\in H^b$, $\Psi(\frac{t}{T})u$ is uniformly bounded. We conclude by a density argument.\end{proof}
The following lemma will be useful to control solutions on large intervals that will be obtained by piecing together solutions on smaller ones. We state it without proof.
\begin{lemme}
\label{lemmerecouvrement}
Let $0<b<1$. If $\bigcup ]a_k,b_{k}[$ is a finite covering of $[0,1]$, then there exists a constant $C$ depending only of the covering such that for every $u\in X^{s,b}$
\begin{eqnarray*}
\left\|u\right\|_{X^{s,b}_{[0,1]}}\leq C\sum_k \left\|u\right\|_{X^{s,b}_{[a_k,b_{k}]}}.
\end{eqnarray*}
\end{lemme}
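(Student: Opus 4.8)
The plan is to glue together near-optimal extensions on the subintervals by means of a smooth partition of unity in the time variable, and then to invoke the stability of $X^{s,b}$ under multiplication by a fixed time cut-off, which is exactly Lemma \ref{lemmetps}. Since $\{]a_k,b_k[\}$ is an open cover of the compact set $[0,1]$, I would first fix, once and for all, a family $\chi_k \in C^{\infty}_0(]a_k,b_k[)$ with $\sum_k \chi_k \equiv 1$ on a neighborhood of $[0,1]$. Such a partition of unity exists and depends only on the given covering; this is what will make the final constant depend only on the covering, as asserted.

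Next, given $u\in X^{s,b}$ (we may assume every term on the right-hand side is finite, otherwise there is nothing to prove) and $\varepsilon>0$, I would pick for each $k$ an extension $\tilde u_k \in X^{s,b}$ with $\tilde u_k = u$ on $]a_k,b_k[$ and $\nor{\tilde u_k}{X^{s,b}} \leq \nor{u}{X^{s,b}_{[a_k,b_k]}} + \varepsilon$, which is possible by the very definition of the restriction norm. I would then set $v = \sum_k \chi_k \tilde u_k$. The key elementary observation is that, since $\textnormal{supp}\,\chi_k \subset\, ]a_k,b_k[$, where $\tilde u_k$ coincides with $u$, one has $\chi_k \tilde u_k = \chi_k u$ identically for each $k$; hence $v = \left(\sum_k \chi_k\right) u = u$ on $]0,1[$, so that $v$ is an admissible competitor in the infimum defining $\nor{u}{X^{s,b}_{[0,1]}}$.

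It then remains only to estimate $\nor{v}{X^{s,b}}$. By the triangle inequality and Lemma \ref{lemmetps} applied to each fixed $\chi_k \in C^{\infty}_0(\R)$, one gets
\bna
\nor{u}{X^{s,b}_{[0,1]}} &\leq& \nor{v}{X^{s,b}} \leq \sum_k \nor{\chi_k \tilde u_k}{X^{s,b}} \\
&\leq& \sum_k C_k \nor{\tilde u_k}{X^{s,b}} \leq C \sum_k \left( \nor{u}{X^{s,b}_{[a_k,b_k]}} + \varepsilon \right),
\ena
with $C = \max_k C_k$ depending only on the cut-offs $\chi_k$, i.e. only on the covering. Letting $\varepsilon \to 0$ yields the claim.

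There is no serious obstacle here; the only point deserving care is that the bound $\nor{\chi_k w}{X^{s,b}} \leq C_k \nor{w}{X^{s,b}}$ holds with a constant $C_k$ independent of $w$. This is precisely what the proof of Lemma \ref{lemmetps} gives: passing to $w^{\#}=e^{-it\Delta}w$ turns multiplication by $\chi_k$ into multiplication on $H^{b}(\R,H^s)$, and for $0<b<1$ multiplication by a fixed element of $C^{\infty}_0(\R)$ is bounded on $H^{b}(\R)$, the operator norm being controlled by the cut-off. One should simply make sure that the partition of unity is chosen before $u$ and $\varepsilon$, so that the resulting constant genuinely depends only on the covering and not on $u$.
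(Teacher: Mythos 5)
Your proof is correct. The paper explicitly states this lemma without proof, so there is nothing to compare against; your partition-of-unity argument --- gluing near-optimal extensions $\tilde u_k$ via cut-offs $\chi_k$ subordinate to the cover, using that $\chi_k\tilde u_k=\chi_k u$ to see the glued function is an admissible competitor, and invoking Lemma \ref{lemmetps} for the boundedness of multiplication by a fixed $C^{\infty}_0$ time cut-off --- is exactly the standard argument the author presumably had in mind, and every step checks out.
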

%-------------------------------------------------------------------------
\section{Existence of solution to NLS with source and damping term}
%-------------------------------------------------------------------------
\subsection{Nonlinear equation}
\label{sectexistence}
Let $a \in C^{\infty}(M)$ taking real values fixed.\\
We will prove the existence for defocusing non linearity of degree $3$ : they will have the form $\alpha u+ \beta |u|^2u$, with $\alpha, \beta \geq 0$. 
\begin{prop}
\label{thmexistenceNl}
Let $T>0$ and $s\geq 1$.
Assume that $M$ satisfies Hypothesis \ref{hypinegXsb}.
Then, for every $g\in L^2([0,T],H^s)$ and $u_0 \in H^s$, there exists a unique solution $u$ on $[0,T]$ in $X^{s,b}_T$ to the Cauchy problem 
\begin{eqnarray}
\label{dampedeqn}
\left\lbrace
\begin{array}{rcl}%argument r=alignement à droite puis center et left
i\partial_t u + \Delta u -\alpha u-\beta|u|^2u&=&a(x)(1-\Delta)^{-1}a(x)\partial_t u +g \textnormal{ on } [0,T]\times M\\
u(0)&=&u_{0} \in H^s
\end{array}
\right.
\end{eqnarray}
Moreover the flow map $$
\begin{array}{rcrcl}
F &:& H^s(M) \times L^2([0,T],H^s(M))&\rightarrow & X^{s,b}_{T}\\
           & & (u_0,g) &\mapsto   &  u
\end{array}
$$ is Lipschitz on every bounded subset.
\end{prop}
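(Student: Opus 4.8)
The plan is to reduce (\ref{dampedeqn}) to a standard semilinear Schr\"odinger equation of the form $i\partial_t u+\Delta u=F(u)$ and then run a contraction argument in $X^{s,b}_T$ with $b>1/2$ and $b+b'<1$, in the trilinear framework of Lemma \ref{lmtrilinXsb}. The only non-routine feature is the damping term, which carries $\partial_t u$ on the right. Write $B=a(1-\Delta)^{-1}a$; on the compact manifold $M$ this is a self-adjoint, nonnegative classical pseudodifferential operator of order $-2$. The operator $iI-B$ is boundedly invertible: from $|\langle(iI-B)v,v\rangle|\geq|\mathrm{Im}\langle(iI-B)v,v\rangle|=\norL{v}^2$ one gets injectivity with closed range, and the same for the adjoint gives surjectivity, so $\|(iI-B)^{-1}\|_{L^2\to L^2}\leq 1$; moreover $iI-B$ is elliptic of order $0$ with principal symbol $i$, hence $(iI-B)^{-1}$ is again of order $0$ and bounded on every $H^s$.

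First I would invert the time-derivative term. Solving (\ref{dampedeqn}) for $\partial_t u$ gives $\partial_t u=(iI-B)^{-1}\left(-\Delta u+\alpha u+\beta|u|^2u+g\right)$, so that, setting $S=B(iI-B)^{-1}$ (a pseudodifferential operator of order $-2$), the equation becomes
\begin{equation*}
i\partial_t u+\Delta u=F(u),\qquad F(u)=\alpha u+\beta|u|^2u+g+S\left(-\Delta u+\alpha u+\beta|u|^2u+g\right).
\end{equation*}
I would then solve the Duhamel equation $u=\psi(t)e^{it\Delta}u_0-i\psi(t/T)\int_0^t e^{i(t-t')\Delta}F(u)(t')\,dt'$ with cut-offs $\psi\in C^\infty_0$, by contraction in a ball of $X^{s,b}_T$. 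The homogeneous term is controlled by $C\nor{u_0}{H^s}$ and the Duhamel term by $CT^{1-b-b'}\nor{F(u)}{X^{s,-b'}_T}$ via Lemma \ref{gainint}. For $F(u)$ I would use: the trilinear estimate (\ref{multilinH1Hs}) (with exponent $\sigma\in(s_0,1]\subset(s_0,s]$, legitimate since $s\geq 1>s_0$) for $\beta|u|^2u$; the embedding $X^{s,0}_T\hookrightarrow X^{s,-b'}_T$ for the zeroth-order and source terms, noting $g\in L^2([0,T],H^s)=X^{s,0}_T$; and the extra smoothing of $S$ for the remaining terms.

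The step to watch is the term $S(-\Delta u)$. Although $\Delta$ is of order $2$, the gain of two derivatives in $S$ makes $S\Delta$ of order $0$, hence bounded on $H^s$; crucially, I would estimate it in $X^{s,0}_T=L^2([0,T],H^s)$, where a zeroth-order operator acts with no loss, and then use $X^{s,0}_T\hookrightarrow X^{s,-b'}_T$. This sidesteps the $|b|$-loss of Lemma \ref{lemmepseudoxsb}, which would otherwise cost $b'$ derivatives; the factor $T^{1-b-b'}$ from the Duhamel estimate then supplies the smallness needed to close the contraction on a short interval whose length depends only on $\nor{u_0}{H^s}$ and $\nor{g}{L^2([0,T],H^s)}$. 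Uniqueness on all of $X^{s,b}_T$ and the Lipschitz dependence of $(u_0,g)\mapsto u$ follow from the difference estimate (\ref{multilinH1}) together with the continuity of $t\mapsto\nor{u}{X^{s,b}_t}$ (Lemma \ref{continuiteXsbt}).

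Finally, to reach arbitrary $T$, I would establish a global-in-time a priori bound and iterate, patching with Lemma \ref{lemmerecouvrement}. For $s=1$ the $L^2$ norm and the energy $E$ obey a dissipation inequality up to the contribution of the source $g$, so a Gronwall argument bounds $\nor{u(t)}{H^1}$ on $[0,T]$; for $s>1$, once the $H^1$ norm is under control, the trilinear estimate (\ref{multilinH1Hs}) with $\sigma=1$ makes the top norm $\nor{u}{X^{s,b'}}$ appear only linearly, so persistence of regularity and a second Gronwall argument bound $\nor{u}{X^{s,b}_T}$. Since each local step has length bounded below in terms of the running norm, finitely many steps cover $[0,T]$. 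The main obstacle is thus the damping term, resolved by the algebraic inversion of $iI-B$ and by exploiting that $S\Delta$ is of order $0$.
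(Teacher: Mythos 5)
Your proof is correct and takes essentially the same approach as the paper: the paper sets $J=1+ia(1-\Delta)^{-1}a$ and changes unknown to $v=Ju$, which is algebraically the same inversion of the damping as your $(iI-B)^{-1}$, and it likewise exploits that the resulting operator composed with $\Delta$ has order $0$, estimating it in $X^{s,0}_T=L^2([0,T],H^s)$ and embedding into $X^{s,-b'}_T$ to avoid the $b$-loss of Lemma \ref{lemmepseudoxsb}. The contraction via Lemma \ref{gainint} and the trilinear estimates of Lemma \ref{lmtrilinXsb}, the energy argument for global existence, and the persistence-of-regularity step all match the paper's proof.
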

\begin{proof}
It is strongly inspired by the one of Bourgain \cite{Bourgain} and Dehman, Gérard, Lebeau \cite{control-nl} for the stabilization term. The proof is mainly based on estimates of Lemma \ref{lmtrilinXsb}.

First, we establish that the operator $J$ defined by $Jv=(1+ia(x)(1-\Delta)^{-1}a(x))v$ is an isomorphism of $H^{s}$ and $\Xsb$ ($s \in \R$ and $-1\leq b \leq 1$ ). 

$J$ is an isomorphism of $L^2$ because of its decomposition in identity plus an antiselfadjoint part $J=1+A$. It is then an isomorphism of $H^s$ with $s\geq 0$ by ellipticity and for every $s\in \R$ by duality. Using Lemma \ref{lemmepseudoxsb}, we infer that if $-1\leq b \leq 1$, $A$ maps $\Xsb$ into itself. Moreover, $J^{-1}$ (considered for example acting on $L^2([0,T]\times M)$) is a pseudodifferential operator of order $0$ and satisfies $J^{-1}=1-AJ^{-1}$. Then, using again Lemma \ref{lemmepseudoxsb}, we get that $AJ^{-1}$ maps $\Xsb$ into $X^{s-|b|+2}$ and $J$ is an isomorphism of $\Xsb$.

In the sequel of the proof, $v$ will denote $Ju$. Hence, we can write system (\ref{dampedeqn}) as 
\begin{eqnarray}
\label{eqnR0damped}
\left\lbrace
\begin{array}{rcl}%argument r=alignement à droite puis center et left
\partial_t v -i \Delta v -R_0 v+i\beta|u|^2u&=&-ig\textnormal{ on } [0,T]\times M\\
v&=&Ju\\
v(0)&=&v_{0}=Ju_0 \in H^s
\end{array}
\right.
\end{eqnarray}
where $R_0=-i\Delta AJ^{-1}+i\alpha J^{-1}$ is a pseudo-differential operator of order 0.

First, we notice that if $g\in L^2([0,T],H^s)$, it also belongs to $X^{s,-b'}_T$ as $b'\geq 0$.\\
We consider the functional
 $$\Phi(v)(t)=e^{it\Delta}v_0+\int_0^t e^{i(t-\tau)\Delta}\left[R_0 v-i\beta \left|u\right|^2u-i g\right](\tau) d\tau $$
We will apply a fixed point argument on the Banach space $\Xsbt$.
Let $\psi \in C^{\infty}_0(\R)$ be equal to $1$ on $[-1,1]$. Then by construction, (see \cite{ginibre}) :
$$\left\|\psi(t)e^{it\Delta}v_0\right\|_{\Xsb}=\left\|\psi\right\|_{H^{b}(\R)} \left\|v_0\right\|_{H^s}$$
Thus, for $T\leq 1$ we have
$$\left\|e^{it\Delta}v_0\right\|_{\Xsbt}\leq C \left\|v_0\right\|_{H^s}\leq C \left\|u_0\right\|_{H^s}$$
For $T\leq 1$, the one dimensional estimate of Lemma \ref{gainint} implies
$$ \left\|\psi(t/T)\intt e^{i(t-\tau)\Delta}F(\tau)\right\|_{\Xsb}\leq CT^{1-b-b'}\left\|F\right\|_{X^{s-b'}}$$
and then
\bnan
\label{inegprincip}
\left\|\int_0^t e^{i(t-\tau)\Delta}\left[R_0 v-i\beta \left|u\right|^2u-i g\right](\tau) d\tau\right\|_{\Xsbt}& \leq & CT^{1-b-b'}\left\|R_0 v-\beta i\left|u\right|^2u-i g\right\|_{X^{s,-b'}_T}\nonumber\\
&\leq & CT^{1-b-b'}\left\|R_0 v\right\|_{X^{s,0}_T}+\left\|\left|u\right|^2u\right\|_{X^{s,-b'}_T}+\left\|g\right\|_{X^{s,-b'}_T}\nonumber\\
& \leq & CT^{1-b-b'}\left\|v\right\|_{X^{s,b}_T}\left(1+\left\|v\right\|_{X^{1,b}_T}^2\right)+\left\|g\right\|_{X^{s,-b'}_T}
\enan
Thus
\bnan
\label{tameestimateDuhamel}
\left\|\Phi(v)\right\|_{\Xsbt}\leq C \left\|u_0\right\|_{H^s}+\left\|g\right\|_{X^{s,-b'}_T}+CT^{1-b-b'}\left\|v\right\|_{X^{s,b}_T}\left(1+\left\|v\right\|_{X^{1,b}_T}^2\right)
\enan
and similarly,
\bnan
\label{majdiff}
\left\|\Phi(v)-\Phi(\tilde{v})\right\|_{\Xsbt}\leq CT^{1-b-b'}\left\|v-\tilde{v}\right\|_{\Xsbt}\left(1+\left\|v\right\|_{\Xsbt}^2+\left\|\tilde{v}\right\|_{\Xsbt}^2\right)
\enan
These estimates imply that if $T$ is chosen small enough $\Phi$ is a contraction on a suitable ball of $\Xsbt$. Moreover, we have uniqueness in the class $\Xsbt$ for the Duhamel equation and therefore for the Schrödinger equation. 

We also prove propagation of regularity.\\
If $u_0\in H^s$, with $s>1$, we have an existence time $T$ for the solution in $\Xubt$ and another time $\tilde{T}$ for the existence in $X^{s,b}_{\tilde{T}}$. By uniqueness in $\Xubt$, the two solutions are the same on $[0,\tilde{T}]$. Assume $\tilde{T}<T$. Then, $\left\|u(t,.)\right\|_{H^s}$ explodes as $t$ tends to $\tilde{T}$ whereas $\left\|u(t,.)\right\|_{H^1}$ remains bounded. Using local existence in $H^1$ and Lemma \ref{lemmerecouvrement}, we get that $\left\|u\right\|_{X^{1,b}_{\tilde{T}}}$ is finite. Applying tame estimate (\ref{tameestimateDuhamel}) on a subinterval $[T-\varepsilon,T]$, with $\varepsilon$ small enough such that $C\varepsilon^{1-b-b'}\left(1+\left\|v\right\|_{X^{1,b}_T}^2\right)<1/2$, we obtain 
\bna
\left\|v\right\|_{\Xsbt}\leq C \left\|u(T-\varepsilon)\right\|_{H^s}+\left\|g\right\|_{X^{s,-b'}_T}.
\ena
Therefore, $u\in X^{s,b}_{\tilde{T}}$, and this contradicts the explosion of $\left\|u(t,.)\right\|_{H^s}$ near $\tilde{T}$.

Next, we use energy estimates to get global existence.\\
First, we will consider the energy :
\bna
E(t)=\frac{1}{2}\int_M \left|\nabla u \right|^2 +\frac{1}{2}\alpha \int_M \left|u\right|^2+\beta	\frac{1}{4}\int_M \left|u\right|^4
\ena
The energy is conserved if $g=0$ and $a=0$. It is nonincreasing if $g=0$. In general, multiplying our equation by $\partial_t \bar{u}$, we have the relation : 
\begin{eqnarray*}
E(t)-E(0)&=&-\int_{0}^{t}\left\|(1-\Delta)^{-1/2}a(x)\partial_t u\right\|^2_{L^2}-\Re \int_{0}^{t}\int_M  g \overline{\partial_t u} \\
%+\beta \Re \int_{0}^        jbhkl{t}\int_M  g \left|u\right|^2 \bar{u}+\alpha\Re \int_{0}^{t}\int_M  g \bar{u} 
&=&-\int_{0}^{t}\left\|(1-\Delta)^{-1/2}a(x)\partial_t u\right\|^2_{L^2}-\Re \int_{0}^{t}\int_M  (J^{-1*}g) \overline{\partial_t v}\\
&=&-\int_{0}^{t}\left\|(1-\Delta)^{-1/2}a(x)\partial_t u\right\|^2_{L^2}-\Re \int_{0}^{t}\int_M  (J^{-1*}g) \overline{i \Delta v +R_0 v-i\beta|u|^2u-ig}\\
\end{eqnarray*}

If $0\leq t \leq T$ (for this equation, there is not global existence in negative time) and $\beta > 0$, we get 
\begin{eqnarray*}
E(t) & \leq &E(0))+ C \int_{0}^{t} \left\|\nabla (J^{-1*}g) \right\|_{L^2} \left\| \nabla u\right\|_{L^2}+ \int_{0}^{t} \left\|g\right\|_{L^2}\left\|u\right\|_{L^2}\\
& & +\int_{0}^{t} \left\|g\right\|_{L^4}\left\|u\right\|^3_{L^4}+\nor{g}{L^2([0,T]\times M)}^2\\
&\leq & E(0)+ C\int_{0}^{t} \left\|g(\tau) \right\|_{H^1}  \sqrt{E(\tau)} +C\int_{0}^{t} \left\|g(\tau)\right\|_{L^2} \left(E(\tau)\right)^{1/4}\\
& &+C\int_{0}^{t} \left\|g(\tau)\right\|_{H^1} \left(E(\tau)\right)^{3/4}+\nor{g}{L^2([0,T]\times M)}^2\\
&\leq & E(0)+ C \int_{0}^{t} \left\|g(\tau) \right\|_{H^1} \left[1+ \left(E(\tau)\right)^{3/4}\right]+\nor{g}{L^2([0,T]\times M)}^2
\end{eqnarray*}
Therefore
\begin{eqnarray*}
\underset{0\leq \tau \leq t }{\max} E(\tau) \leq E(0)+ C (\left[1+ \underset{0\leq \tau \leq t }{\max}E(\tau) ^{3/4}\right]  \left\|g\right\|_{L^1([0,T],H^1)}+\nor{g}{L^2([0,T]\times M)}^2
\end{eqnarray*}
So we have finally 
\begin{eqnarray}
\label{majorationenergie}
E(t) \leq C\left( 1+E(0)^4 +\nor{g}{L^2([0,T]\times M)}^8+ \left\|g\right\|^4_{L^1([0,T],H^1)}\right)
\end{eqnarray}
This implies that the energy is bounded if $g\in L^2([0,T],H^1)$ and yields global existence in $\Xubt$ for every $T>0$. The fact that the flow is locally Lipschitz follows from estimate (\ref{majdiff}).
\end{proof}
\begin{remarque}
If $g=0$, the solution of (\ref{dampedeqn}) satisfies the energy decay 
\begin{eqnarray*}
E(t)-E(0)=-\int_{0}^{t}\left\|(1-\Delta)^{-1/2}a(x)\partial_t u\right\|^2_{L^2}
\end{eqnarray*}
This is obtained for initial data in $H^2$ by multiplying the equation by $\partial_t \bar{u}$ and can be extended to initial data in $H^1$ by approximation.
\end{remarque}

\begin{remarque}
\label{rmkbound}
We have also proved that for any $u_0$, $g$ with $\nor{u_0}{H^1}+\nor{g}{L^2([0,T],H^1)}\leq A $, the solution $u$ of (\ref{dampedeqn}) satisfies
$$\nor{u}{\Xubt}\leq C(T,A).$$
\end{remarque}

\begin{remarque}
\label{gainb}
If we look carefully at inequality (\ref{inegprincip}), we see that we have for $0<\varepsilon< 1-b-b'$
 \bnan
\left\|\int_0^t e^{(t-\tau)\Delta}\left[R_0 v-i\left|u\right|^2u-i Jg\right](\tau) d\tau\right\|_{X^{1,b+\varepsilon}}& \leq & CT^{1-b-b'-\varepsilon}\left\|R_0 v-i\left|u\right|^2u-i Jg\right\|_{X^{1,-b'}_T}\nonumber\\
& \leq & CT^{1-b-b'-\varepsilon}\left\|v\right\|_{X^{1,b}_T}\left(1+\left\|v\right\|_{X^{1,b}_T}^2\right)+\left\|g\right\|_{L^2([0,T],H^1)}
\enan
And we can then conclude that $u$ is bounded in $X^{1,b+\varepsilon}_T$.
\end{remarque}
\begin{remarque}
We notice that for a solution of the equation, the term of stabilization $a(x)(1-\Delta)^{-1}a(x)\partial_t u$ belongs to $L^{\infty}([0,T],H^1(M))$ as expected. Actually, for a solution, this term acts as an operator of order $0$. This is more visible using the equation fulfilled by $v=Ju$.
\end{remarque}
Then, in the aim of obtaining controllability near trajectories, we prove an appropriate existence theorem.
\begin{prop}
\label{thmexistproche}
Suppose that Assumption \ref{hypinegXsb} is fulfilled. Let $T>0$ and $w$ solution in $\Xubt$ of 
\begin{eqnarray}
\label{eqnproche}
\left\lbrace
\begin{array}{rcl}%argument r=alignement à droite puis center et left
i\partial_t w + \Delta w &=&\pm|w|^2w+g_1 \textnormal{ on } [0,T]\times M\\
w(0)&=&w_{0} \in H^1
\end{array}
\right.
\end{eqnarray}
with $g_1\in L^{2}([0,T],H^1)$. Then, for any $s\in ]s_0,1]$, there exists $\varepsilon>0$ such that for any $u_0\in H^s$ and $g\in L^2([0,T],H^s)$ with $\left\|u_0-w_0\right\|_{H^s}+\left\|g_1-g\right\|_{L^2([0,T],H^s)}\leq  \varepsilon $ there exists a unique solution $u$ in $\Xsbt$ of equation (\ref{eqnproche}).\\
Moreover for any $1\geq r\geq s$ there exists $C=C(r,\left\|w\right\|_{\Xubt},T)>0$ such that, if $u_0\in H^{r}$ and $g\in L^2([0,T],H^{r})$, we have $u\in X^{r,b}_T$ and
\bnan
\label{ineglinearbehaviorproche}
\left\|u-w\right\|_{X^{r,b}_T} \leq  C\left( \left\|u_0-w_0\right\|_{H^{r}}+\left\|g_1-g\right\|_{L^2([0,T],H^{r})}\right).
\enan 
\end{prop}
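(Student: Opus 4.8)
The plan is to set $z = u - w$ and to solve the equation satisfied by the perturbation $z$ by a fixed point argument performed on short subintervals, then to bootstrap the regularity. Subtracting the two copies of (\ref{eqnproche}) and expanding the cubic difference gives $i\partial_t z + \Delta z = N_{\pm}(w,z) + (g-g_1)$ with $z(0) = u_0 - w_0$, where
\begin{eqnarray*}
N_{\pm}(w,z) = \pm\left(2|w|^2 z + w^2\overline{z} + 2w|z|^2 + \overline{w}z^2 + |z|^2 z\right).
\end{eqnarray*}
Writing $z_0 = u_0 - w_0$ and $h = g - g_1$, I would study the Duhamel map
\begin{eqnarray*}
\Phi(z)(t) = e^{it\Delta}z_0 - i\int_0^t e^{i(t-\tau)\Delta}\left[N_{\pm}(w,z) + h\right](\tau)\,d\tau
\end{eqnarray*}
by a contraction argument in $X^{s,b}$, exactly in the spirit of Proposition \ref{thmexistenceNl}.

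The essential feature is that the part of $N_{\pm}(w,z)$ that is linear in $z$, namely $L_w(z) = \pm(2|w|^2 z + w^2\overline{z})$, carries the coefficient $\|w\|_{X^{1,b}_T}^2$, which is of size $O(1)$ and therefore provides no smallness. On an interval $I$ of length $\delta\le 1$, Lemma \ref{gainint} together with the trilinear estimate (\ref{inegtrilinmult}) of Lemma \ref{lmtrilinXsbbis} (valid for all $-1\le s\le 1$, and for any $\R$-trilinear form, so that it applies to both $|w|^2 z$ and $w^2\overline{z}$) yields
\begin{eqnarray*}
\left\|\int_0^t e^{i(t-\tau)\Delta}L_w(z)\,d\tau\right\|_{X^{s,b}_I} \le C\delta^{1-b-b'}\|w\|_{X^{1,b}_T}^2\,\|z\|_{X^{s,b}_I},
\end{eqnarray*}
while the quadratic and cubic contributions are bounded, through Lemma \ref{lmtrilinXsb}, by $C\delta^{1-b-b'}(\|w\|_{X^{1,b}_T}\|z\|_{X^{s,b}_I}^2 + \|z\|_{X^{s,b}_I}^3)$ and the source by $C\|h\|_{L^2([0,T],H^s)}$. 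I would first fix $\delta$ small, depending only on $\|w\|_{X^{1,b}_T}$, so that $C\delta^{1-b-b'}\|w\|_{X^{1,b}_T}^2 < 1/2$; then $\Phi$ is a contraction on a small ball of $X^{s,b}_{\delta}$ as soon as $z_0$ and $h$ are small. Since $b>1/2$ gives the embedding $X^{s,b}_I \hookrightarrow C(\overline{I},H^s)$, the endpoint value $z(t_{k+1})$ is controlled by $\|z\|_{X^{s,b}_{I_k}}$, so I can iterate over the $N\simeq T/\delta$ subintervals covering $[0,T]$. Each step multiplies the bound by a fixed constant and $N$ depends only on $T$ and $\|w\|_{X^{1,b}_T}$; choosing $\varepsilon$ small enough keeps $z$ in the small ball at every step, and Lemma \ref{lemmerecouvrement} then produces a unique $z\in X^{s,b}_T$ with $\|z\|_{X^{s,b}_T}\le C(T,\|w\|_{X^{1,b}_T})\,\varepsilon$.

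For the regularity statement (\ref{ineglinearbehaviorproche}) I would run the same subinterval analysis at the level $r$, treating the equation for $z$ as essentially linear. The linear term $L_w(z)$ is again absorbed by the choice of $\delta$ via (\ref{inegtrilinmult}) at regularity $r$ (which holds since $r\in\,]s_0,1]\subset[-1,1]$), while each quadratic or cubic term contains at least one factor of $z$ that can be measured in $X^{s,b}_T$ and hence is $\le C\varepsilon$ by the first part (using $H^r\hookrightarrow H^s$ for the data, since $s\le r$). This gives $\|z\|_{X^{r,b}_{I_k}}\le 2C\|z(t_k)\|_{H^r} + 2C\|h\|_{L^2([0,T],H^r)} + C\varepsilon\|z\|_{X^{r,b}_{I_k}}$, hence, after absorbing the last term, a linear recursion whose summation over the fixed number of subintervals yields (\ref{ineglinearbehaviorproche}) with a constant $C(r,\|w\|_{X^{1,b}_T},T)$.

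The step I expect to be the main obstacle is precisely the absence of any smallness in the term linear in $z$: a single fixed point on all of $[0,T]$ is impossible because the contraction factor $T^{1-b-b'}\|w\|_{X^{1,b}_T}^2$ is not small for arbitrary $T$. Everything rests on trading this for short subintervals and then propagating, across the fixed number of them, both the smallness of $z$ (so that the quadratic and cubic terms stay subordinate) and the $H^r$ bounds, without losing control of the accumulated constants. The careful bookkeeping that guarantees $z$ remains in the small ball at each step, and the correct choice of $\varepsilon$ in terms of $T$ and $\|w\|_{X^{1,b}_T}$, is the delicate point.
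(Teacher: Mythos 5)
Your proposal is correct and follows essentially the same route as the paper: the same decomposition $u=w+r$, the same trilinear estimates to absorb the term linear in the perturbation by taking the subinterval length small relative to $\|w\|_{X^{1,b}_T}$, and the same iteration over finitely many subintervals glued by Lemma \ref{lemmerecouvrement}, first at level $s$ and then at level $r$ treating the remainder as effectively linear. The only cosmetic difference is that the paper closes the nonlinear estimate at level $s$ by a bootstrap using the continuity in $T$ of the $X^{s,b}_T$ norm (Lemma \ref{continuiteXsbt}) rather than a contraction on a small ball, which is an equivalent device.
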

\begin{remarque}
In the focusing case, the existence of $w$ is not guaranteed for any $w_0$ $g_1$ and $T$, and the result we prove assumes this existence.   
\end{remarque}
\begin{remarque}
Here, we emphasize the fact that the asumption of smallness only concerns the $H^s$ norm and not $H^r$. This is a consequence of the subcritical behavior.
\end{remarque}
\begin{proof}
We want to linearize the equation. If $u=w+r$ and $g=g_1+g_r$, then
\bna
\left|w+r\right|^2(w+r)&=&\left|w\right|^2w+2\left|w\right|^2r+w^2\bar{r}+2\left|r\right|^2w+r^2\bar{w} +\left|r\right|^2r\\
&=&\left|w\right|^2w+2\left|w\right|^2r+w^2\bar{r}+F(w,r).
\ena
We are looking for $r$ solution of 
\begin{eqnarray}
\left\lbrace
\begin{array}{rcl}%argument r=alignement à droite puis center et left
i\partial_t r + \Delta r &=&2\left|w\right|^2r+w^2\bar{r}+F(w,r)+g_r\\
r(x,0)&=&r_{0}(x)
\end{array}
\right.
\end{eqnarray}
We make a proof similar to Proposition \ref{thmexistenceNl}. We only write the necessary estimates. (\ref{multilinH1Hs}) and (\ref{multilinH12Hs}) yield
\bna
\left\|r\right\|_{X^{r,b}_T}&\leq& C\left(\left\|r_0\right\|_{H^{r}}+\left\|g_r\right\|_{L^2([0,T],H^{r})}\right)+CT^{1-b-b'}\left\|w\right\|_{\Xubt}^2\left\|r\right\|_{X^{r,b}_T}\\
&&+CT^{1-b-b'}\left(\left\|w\right\|_{\Xubt}\left\|r\right\|_{X^{r,b}_T}\left\|r\right\|_{X^{s,b}_T}+\left\|r\right\|_{X^{r,b}_T}\left\|r\right\|_{X^{s,b}_T}^2\right).
\ena
With $T$ such that $CT^{1-b-b'}\left\|w\right\|_{\Xubt}^2 <1/2$, it yields
\bnan
\left\|r\right\|_{X^{r,b}_T}&\leq& C\left(\left\|r_0\right\|_{H^{r}}+\left\|g_r\right\|_{L^2([0,T],H^{r})}\right)\nonumber \\
\label{estimlinproche}
&&+CT^{1-b-b'}\left(\left\|w\right\|_{\Xubt}\left\|r\right\|_{X^{r,b}_T}\left\|r\right\|_{X^{s,b}_T}+\left\|r\right\|_{X^{r,b}_T}\left\|r\right\|_{X^{s,b}_T}^2\right).
\enan
First, we apply this with $r=s$. As we have proved in Lemma \ref{continuiteXsbt} the continuity with respect to $T$ of $\left\|r\right\|_{X^{s,b}_T}$ we are in position to apply a boot-strap argument : for $\left\|r_0\right\|_{H^s}+\left\|g_r\right\|_{L^2([0,T],H^s)}$ small enough (depending only on $\left\|w\right\|_{\Xubt}$), we obtain  : 
\bna
\left\|r\right\|_{X^{s,b}_T}\leq C \left\|r_0\right\|_{H^s}+\left\|g_r\right\|_{L^2([0,T],H^s)}.
\ena
Repeating the argument on every small interval, using that $\left\|r\right\|_{X^{s,b}_T}$ controls $L^{\infty}(H^s)$ and matching solutions with Lemma \ref{lemmerecouvrement}, we get the same result for every large interval, with a smaller constant $\varepsilon$, depending only on $s$, $T$ and $\left\|w\right\|_{\Xubt} $.\\
Then, we return to the general case $r\geq s$ and $CT^{1-b-b'}\left\|w\right\|_{\Xubt}^2 <1/2$. For $T$ small enough (depending only on $r$, $\varepsilon$ and $\left\|w\right\|_{\Xubt} $), estimate (\ref{estimlinproche}) becomes 
\bna
\left\|r\right\|_{X^{r,b}_T}\leq C\left( \left\|r_0\right\|_{H^{r}}+\left\|g_r\right\|_{L^2([0,T],H^{r})}\right)
\ena
Again, we obtain the desired result by piecing solutions together.
\end{proof}
\subsection{Linear equation with rough potential}
The control near trajectories will be obtained by a perturbation of control of linear Schrödinger equation with rough potential. The equation considered are the linearization of nonlinear equations and its dual version. We establish here the necessary estimates.
\begin{prop}
\label{thmlineareqn}
Suppose Assumption \ref{hypinegXsb}. Let $T>0$, $s\in [-1,1]$, $A>0$ and $w\in \Xubt$ with $\nor{w}{\Xubt}\leq A$.\\
For every $u_0\in H^{s}$ and $g\in X^{s,-b'}_T$ there exists a unique solution $u$ in $X^{s,b}_T$ of equation 
\begin{eqnarray}
\left\lbrace
\begin{array}{rcl}%argument r=alignement à droite puis center et left
i\partial_t u + \Delta u &=&\pm2|w|^2u\pm w^2\overline{u}+g \textnormal{ on } [0,T]\times M\\
u(0)&=&u_{0} \in H^{s}
\end{array}
\right.
\end{eqnarray}
Moreover there exists $C=C(s,A,T)>0$ such that
\bnan
\label{inegeqnlin}
\left\|u\right\|_{X^{s,b}_T} \leq  C\left( \left\|u_0\right\|_{H^{s}}+\left\|g\right\|_{X^{s,-b'}}\right).
\enan 
\end{prop}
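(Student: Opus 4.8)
The plan is to run a fixed-point argument on $\Xsbt$ exactly as in the proofs of Propositions \ref{thmexistenceNl} and \ref{thmexistproche}, the decisive simplification being that the right-hand side is now \emph{linear} in $u$. First I would rewrite the equation in Duhamel form, looking for a fixed point of
\begin{eqnarray*}
\Phi(u)(t)=e^{it\Delta}u_0-i\int_0^t e^{i(t-\tau)\Delta}\left[\pm2|w|^2u\pm w^2\overline{u}+g\right](\tau)\,d\tau
\end{eqnarray*}
on a ball of $\Xsbt$. The two potential terms $|w|^2u=w\overline{w}u$ and $w^2\overline{u}$ are $\R$-trilinear forms in $(w,w,u)$, so they are precisely of the type controlled by estimate (\ref{inegtrilinmult}) of Lemma \ref{lmtrilinXsbbis}, which is valid over the full range $s\in[-1,1]$ required here (the position of the conjugate being immaterial by the last sentence of that lemma).

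For the main estimate I would combine the free-evolution bound $\left\|\psi(t)e^{it\Delta}u_0\right\|_{\Xsb}\leq C\left\|u_0\right\|_{H^s}$ with the time-gain of Lemma \ref{gainint} applied to the Duhamel integral, and then bound the trilinear terms by (\ref{inegtrilinmult}) together with the embeddings $\Xubt\hookrightarrow X^{1,b'}_T$ and $\Xsbt\hookrightarrow X^{s,b'}_T$ (valid since $b>b'$):
\begin{eqnarray*}
\left\||w|^2u\right\|_{X^{s,-b'}_T}+\left\|w^2\overline{u}\right\|_{X^{s,-b'}_T}\leq C\left\|w\right\|_{\Xubt}^2\left\|u\right\|_{\Xsbt}\leq CA^2\left\|u\right\|_{\Xsbt}.
\end{eqnarray*}
This yields, for $T\leq1$,
\begin{eqnarray*}
\left\|\Phi(u)\right\|_{\Xsbt}\leq C\left\|u_0\right\|_{H^s}+CT^{1-b-b'}\left(A^2\left\|u\right\|_{\Xsbt}+\left\|g\right\|_{X^{s,-b'}_T}\right),
\end{eqnarray*}
and, by linearity, the contraction estimate $\left\|\Phi(u)-\Phi(\tilde u)\right\|_{\Xsbt}\leq CT^{1-b-b'}A^2\left\|u-\tilde u\right\|_{\Xsbt}$. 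The crucial point, and the reason no boot-strap is needed here (unlike in Proposition \ref{thmexistproche}), is that the contraction constant $CT^{1-b-b'}A^2$ depends only on $A$ and not on the size of the data. Choosing $T^*=T^*(A)$ so that $C(T^*)^{1-b-b'}A^2\leq1/2$ makes $\Phi$ a contraction and delivers a unique solution on $[0,T^*]$ with $\left\|u\right\|_{X^{s,b}_{T^*}}\leq C(\left\|u_0\right\|_{H^s}+\left\|g\right\|_{X^{s,-b'}})$.

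Finally I would pass from the short time $T^*$ to an arbitrary $T$ by iteration. Since $T^*$ depends only on $A$ and, by monotonicity of the restriction norm, $\left\|w\right\|_{X^{1,b}_I}\leq\left\|w\right\|_{\Xubt}\leq A$ on every subinterval $I\subset[0,T]$, one can cover $[0,T]$ by finitely many intervals of length $\sim T^*$, solve on each taking as new initial datum the value $u(kT^*)$ (controlled in $H^s$ by $\left\|u\right\|_{X^{s,b}}$ on the previous interval), and glue the pieces with Lemma \ref{lemmerecouvrement}. Each step multiplies the estimate by a fixed factor, so the global bound (\ref{inegeqnlin}) holds with a constant $C(s,A,T)$ growing (at worst exponentially) in $T$, and uniqueness propagates interval by interval. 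I expect the only genuinely delicate point to be the uniform-in-data book-keeping of constants through this iteration --- in particular checking that the local existence time is dictated solely by $A$ --- while the analytical heart of the matter is entirely contained in the trilinear estimate (\ref{inegtrilinmult}), whose validity on the whole range $s\in[-1,1]$ is exactly tailored to the statement.
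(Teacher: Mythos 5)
Your proposal is correct and follows exactly the route the paper intends: the paper's own proof is the single line ``we make the same arguments as above using estimates of Lemma \ref{lmtrilinXsbbis}'', i.e.\ the Duhamel fixed-point scheme of Proposition \ref{thmexistenceNl} with the nonlinear trilinear bounds replaced by (\ref{inegtrilinmult}), a local time depending only on $A$, and gluing via Lemma \ref{lemmerecouvrement}. Your write-up simply makes explicit the details the paper leaves implicit.
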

\begin{proof}
We make the same arguments as above using estimates of Lemma \ref{lmtrilinXsbbis}.
\end{proof}
%-----------------------------------------------------------------------------
\section{Linearisation in $H^1$}
%-----------------------------------------------------------------------------
The following result show that any sequence of solutions with Cauchy data weakly convergent to $0$ asymptotically behave as solutions of the linear equation. These types of results were first introduced by P. Gérard in  \cite{linearisationondePG} for the wave equation and are typical of subcritical situations.
\begin{prop}
\label{proplinearisation}
Suppose Assumption \ref{hypinegXsb} is fulfilled. Let $(u_n)\in \Xubt$ be a sequence of solutions of
\begin{eqnarray}
\label{eqncontrolenl}
\left\lbrace
\begin{array}{rcl}%argument r=alignement à droite puis center et left
i\partial_t u_n + \Delta u_n -u_n-|u_n|^2u_n&=&a(x)(1-\Delta)^{-1}a(x)\partial_t u_n \textnormal{ on } [0,T]\times M\\
u_n(0)&=&u_{n,0} \in \Hu
\end{array}
\right.
\end{eqnarray}
such that
$$u_{n,0} \underset{\Hu}{\rightharpoonup} 0.$$ 
Then 
$$|u_n|^2u_n \underset{X^{1,-b'}_T}{\longrightarrow} 0.$$
\end{prop}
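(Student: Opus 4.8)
The plan is to reduce the statement, via the trilinear estimate of Lemma \ref{lmtrilinXsb}, to the strong convergence $u_n\to 0$ in $X^{s,b'}_T$ for some intermediate exponent $s\in (s_0,1)$, and then to obtain this strong convergence from the compactness of the embedding $X^{1,b}_T\hookrightarrow X^{s,b'}_T$ together with the identification of the weak limit as the trivial solution of the damped equation.

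First I fix $s\in (s_0,1)$ and note that, since $u_{n,0}\rightharpoonup 0$ in $\Hu$ the sequence $(u_{n,0})$ is in particular bounded in $H^1$; Remark \ref{rmkbound} (applied with $g=0$ and $\alpha=\beta=1$) then gives a uniform bound $\left\|u_n\right\|_{X^{1,b}_T}\leq C$. Estimate (\ref{multilinH1Hs}) with $r=1$ yields
$$\left\| |u_n|^2 u_n \right\|_{X^{1,-b'}_T} \leq C\left\|u_n\right\|_{X^{s,b'}_T}^2 \left\|u_n\right\|_{X^{1,b'}_T} \leq C'\left\|u_n\right\|_{X^{s,b'}_T}^2,$$
so it suffices to prove that $\left\|u_n\right\|_{X^{s,b'}_T}\to 0$.

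To prove this I use the subsequence principle. Given any subsequence, the uniform bound and the reflexivity of the Hilbert space $X^{1,b}_T$ let me extract a further subsequence (still denoted $u_n$) with $u_n\rightharpoonup u$ weakly in $X^{1,b}_T$; since $s<1$ and $b'<b$, the embedding $X^{1,b}_T\hookrightarrow X^{s,b'}_T$ is compact (property 3 of these spaces), hence along this subsequence $u_n\to u$ strongly in $X^{s,b'}_T$. I then identify $u$. The strong convergence in $X^{s,b'}_T$ together with the Lipschitz estimate (\ref{multilinH1}) gives $|u_n|^2u_n\to |u|^2u$ in $X^{s,-b'}_T$, while the linear terms pass to the weak limit; therefore $u\in X^{1,b}_T$ solves the damped equation (\ref{eqncontrolenl}) in the sense of distributions. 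Moreover, because $b>1/2$ the evaluation $w\mapsto w(0)$ is bounded from $X^{1,b}_T$ to $H^1$, hence weakly continuous, so $u(0)$ is the weak limit of $u_{n,0}$, namely $0$. By the uniqueness part of Proposition \ref{thmexistenceNl}, the only solution of (\ref{eqncontrolenl}) with vanishing data is $u\equiv 0$; thus $u_n\to 0$ strongly in $X^{s,b'}_T$ along the subsequence. Since any subsequence admits a further subsequence with this property, the full sequence satisfies $u_n\to 0$ in $X^{s,b'}_T$, which combined with the first step finishes the proof.

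The main obstacle is the identification of the weak limit $u$ as the zero solution. One must pass to the limit in the nonlinear term --- this is precisely where the subcritical gain of the trilinear estimates is used, since strong $X^{s,b'}_T$ convergence with $b'<1/2$ is enough to control $|u_n|^2u_n$ in $X^{s,-b'}_T$ --- while simultaneously recovering the initial condition $u(0)=0$ despite having only weak control in $X^{1,b}_T$; this is why the continuity of the trace map for $b>1/2$ is essential. The remaining ingredients (the a priori bound, the compact embedding, and the reduction via (\ref{multilinH1Hs})) are routine consequences of the results already established.
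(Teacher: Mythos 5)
Your proposal is correct and follows the same overall strategy as the paper: reduce to strong convergence $u_n\to 0$ in an intermediate space via the tame trilinear estimate, then obtain that convergence by the subsequence principle and compact embedding. The one place where you genuinely diverge is the identification of the limit. The paper does not pass to the limit in the equation at all: it observes that $u_{n,0}\rightharpoonup 0$ in $H^1$ forces $u_{n,0}\to 0$ strongly in $H^s$ by Rellich, and then invokes continuity of the nonlinear flow in $H^s$ to conclude $u_n\to 0$ in $X^{s,b}_T$ directly (which is why it needs the extra gain $X^{1,b+\varepsilon}_T$ from Remark \ref{gainb} to compactly embed into $X^{s,b}_T$ with the same index $b$). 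You instead target $X^{s,b'}_T$ with $b'<b$, so the plain bound of Remark \ref{rmkbound} suffices for compactness, and you identify the limit by passing to the limit in the damped equation (using (\ref{multilinH1}) for the cubic term and weak continuity of the trace at $t=0$ for $b>1/2$) and then appealing to uniqueness in $X^{1,b}_T$ from Proposition \ref{thmexistenceNl}. Your route has the advantage of only using well-posedness results the paper actually states (Proposition \ref{thmexistenceNl} is formulated for $s\geq 1$, whereas the paper's appeal to "continuity of the flow in $H^s$" for $s_0<s<1$ is left implicit); the small price is that you must justify that a distributional solution in $X^{1,b}_T$ with zero data falls under the Duhamel-based uniqueness statement, which is routine but worth a sentence.
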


\begin{proof}
We prove that any subsequence (still denoted $u_n$) admits another subsequence converging to $0$. The main point is the tame $\Xsbt$ estimate of Lemma \ref{lmtrilinXsb}. For one $s_0<s<1$ we have
\bnan
\label{ineglinea}
\left\| |u_n|^2u_n \right\|_{X^{1,-b'}_T}\leq \left\|u_n\right\|^2_{X^{s,b}_T}\left\|u_n\right\|_{\Xubt}
\enan
First, using Remark \ref{rmkbound}, we conclude that $u_n$ is bounded in $X^{1,b}_T$, and actually by Remark \ref{gainb}, $u_n$ is bounded in $X^{1,b+\varepsilon}_T$ for some $\varepsilon>0$. By compact embedding of $X^{1,b+\varepsilon}_T$ into $X^{s,b}_T$ we obtain that $u_n$ admits a subsequence converging weakly in $X^{1,b}_T$ and strongly in $X^{s,b}_T$ to a function $u\in X^{s,b}_T$ with $u(0)=0$. $u_n(0)$ strongly converges to $0$ in $H^s$ and by continuity of the nonlinear flow in $H^s$, $u_n$ strongly converges to $0$ in $X^{s,b}_T$.
This yields the desired result thanks to (\ref{ineglinea}).\end{proof}
%--------------------------------------------------------------------------
\section{Propagation of compactness}
%--------------------------------------------------------------------------
In this section, we adapt some theorems of Dehman-Gerard-Lebeau \cite{control-nl} in the case of $\Xsb$ spaces. We recall that $S^*M$ denotes the cosphere bundle of the Riemannian manifold $M$,
$$S^*M =\left\{(x,\xi) \in T^*M\quad :\quad \left|\xi \right|_x=1\right\}$$
\begin{prop}
\label{thmpropagation2}Let $r\in \R$.
Let $u_n$ be a sequence of solutions to 
$$i\partial_t u_n + \Delta u_n=f_n$$  
such that for one $0\leq b\leq 1$, we have
$$ \left\|u_n\right\|_{X^{r,b}_T} \leq C,~~\left\|u_n\right\|_{X^{r-1+b,-b}_T} \rightarrow 0~~ and~~\left\|f_n\right\|_{X^{r-1+b,-b}_T}\rightarrow  0 $$ 
Then, there exists a subsequence $(u_{n'})$ of $(u_n)$ and a positive measure $\mu$ on $]0,T[\times S^*M$ such that for every tangential pseudodifferential operator $A=A(t,x,D_x)$ of order $2r$ and of principal symbol $\sigma(A)=a_{2r}(t,x,\xi)$,
$$  (A(t,x,D_x)u_{n'},u_{n'})_{L^2(]0,T[\times M)} \rightarrow \int_{]0,T[\times S^*M} a_{2r}(t,x,\xi)~d\mu(t,x,\xi)$$
Moreover, if $G_s$ denotes the geodesic flow on $S^*M$, one has for every $s\in \R$,
$$G_s(\mu)=\mu$$
\end{prop}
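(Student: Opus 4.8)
The plan is to follow the strategy of \cite{control-nl}, realizing it inside the Bourgain scale, and to produce $\mu$ as a microlocal defect measure in the sense of P.\ G\'erard \cite{defectmeasure} whose geodesic invariance is read off the equation through a commutator computation.

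\emph{Reduction to order $0$.} I would first normalize the regularity. Set $\Lambda=(1-\Delta)^{1/2}$ and $v_n=\Lambda^r u_n$, so that $v_n$ is bounded in $X^{0,b}_T$, hence in $L^2(]0,T[\times M)$, and solves $i\partial_t v_n+\Delta v_n=\Lambda^r f_n=:\tilde f_n$. For a tangential operator $A$ of order $2r$ with principal symbol $a_{2r}$, write $(Au_n,u_n)=(Bv_n,v_n)$ with $B=\Lambda^{-r}A\Lambda^{-r}$ tangential of order $0$ and principal symbol $b_0=a_{2r}|\xi|_x^{-2r}$, which coincides with $a_{2r}$ on $S^*M$. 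The hypotheses become $\nor{v_n}{X^{-1+b,-b}_T}\to0$ and $\nor{\tilde f_n}{X^{-1+b,-b}_T}\to0$; in particular $v_n\rightharpoonup 0$ in $L^2(]0,T[\times M)$. Thus it suffices to treat $r=0$ and operators of order $0$.

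\emph{Existence and positivity of $\mu$.} Two order-$0$ operators with the same principal symbol differ by an operator of order $-1$; by Lemma~\ref{lemmepseudoxsb} such an operator maps $X^{0,b}_T$ into $X^{1-|b|,b}_T$, which embeds compactly in $L^2(]0,T[\times M)$, so its bilinear form tends to $0$ along $v_n\rightharpoonup0$. Hence $\lim (Bv_n,v_n)$ depends only on $b_0$, defining a linear functional $\ell$ on principal symbols. Since $L^2(M)$ embeds compactly in $H^{-1/2}(M)$, one has $v_n\to0$ in $L^2(]0,T[,H^{-1/2})$, so the sharp G\r{a}rding inequality gives $\ell(b_0)\ge0$ whenever $b_0\ge0$. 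By the Riesz representation theorem $\ell$ is represented by a positive measure $\mu$ on $]0,T[\times S^*M$, after extracting a subsequence.

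\emph{Propagation.} To get invariance I would test against the commutator with $\Delta$. For a tangential operator $B$ of order $-1$ with principal symbol $b_{-1}$ and $\varphi\in C_0^\infty(]0,T[)$, the relation $\partial_t v_n=i\Delta v_n-i\tilde f_n$ yields, after integration by parts in $t$,
\begin{eqnarray*}
-\int_0^T\varphi'(t)(Bv_n,v_n)\,dt=\int_0^T\varphi(t)\Big[-i([\Delta,B]v_n,v_n)+i(Bv_n,\tilde f_n)-i(B\tilde f_n,v_n)\Big]dt.
\end{eqnarray*}
On the left $B$ has negative order, so by Lemma~\ref{lemmepseudoxsb} and the compact embedding $Bv_n\to0$ strongly in $L^2$, whence the left-hand side tends to $0$. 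The two source terms vanish as well: $Bv_n$ is bounded in $X^{1-b,b}_T$ while $B\tilde f_n\to0$ in $X^{0,-b}_T$, so the duality pairings $X^{1-b,b}\times X^{-1+b,-b}$ and $X^{0,-b}\times X^{0,b}$ kill them using $\nor{\tilde f_n}{X^{-1+b,-b}_T}\to0$. Finally $[\Delta,B]$ is tangential of order $0$ with principal symbol $\frac1i\{-|\xi|_x^2,b_{-1}\}=i\,H_p b_{-1}$, where $p(x,\xi)=|\xi|_x^2$ and $H_p=\{p,\cdot\}$, so by the previous part the commutator term converges to $\int\varphi\,\langle\mu,H_pb_{-1}\rangle$. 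Combining, $\langle\mu,H_p b_{-1}\rangle=0$ for every $\varphi$ and every symbol $b_{-1}$ of order $-1$, which is exactly the invariance of $\mu$ under the Hamiltonian flow of $p=|\xi|_x^2$; restricted to the energy surface $S^*M$ this is the geodesic flow, giving $G_s(\mu)=\mu$.

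\emph{Main difficulty.} The conceptual content is classical; the real work is to carry out every step in the $X^{s,b}$ scale instead of $L^2([0,T],H^s)$. The delicate points are that a pseudodifferential operator of negative order gains genuine compactness here only modulo the $|b|$ loss of Lemma~\ref{lemmepseudoxsb}, and that the source and lower-order remainders are absorbed precisely by the hypotheses $\nor{u_n}{X^{r-1+b,-b}_T}\to0$ and $\nor{f_n}{X^{r-1+b,-b}_T}\to0$ through the $X^{s,b}/X^{-s,-b}$ duality and the compact embeddings of Bourgain spaces. Checking that these losses remain compatible with the order counting, so that no order-$0$ term survives on the left and exactly the geodesic vector field appears on the right, is where the argument must be executed with care.
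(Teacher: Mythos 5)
Your proof is correct and follows essentially the same route as the paper: a microlocal defect measure obtained via G\r{a}rding, with geodesic invariance extracted from the commutator of $\Delta$ with a lower-order tangential operator, and all error terms killed through the $X^{s,b}/X^{-s,-b}$ duality together with the hypotheses $\nor{u_n}{X^{r-1+b,-b}_T}\to0$ and $\nor{f_n}{X^{r-1+b,-b}_T}\to0$; the reduction to $r=0$ and the integration by parts in $t$ against $\varphi$ are only cosmetic variants of the paper's pairing of $Lu_n$ with $A_\varepsilon^*u_n$. The one slight imprecision is that the compact embedding $X^{1-|b|,b}_T\subset L^2$ fails at the endpoints $b\in\{0,1\}$, but the duality estimate against $\nor{v_n}{X^{-1+b,-b}_T}\to0$, which you also invoke, covers those cases.
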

\begin{proof}
Existence of the measure : it is based on Gärding inequality, see \cite{defectmeasure} for an introduction.\\
Propagation : 
Denote $L$ the operator $L  = i\partial_t  + \Delta$.  
Let $\varphi=\varphi(t)\in C_0^\infty(]0,T[)$, $B(x,D_x)$ be a pseudodifferential operator of order $1$, with principal symbol  $b_{2r-1}$, $A(t,x,D_x)=\varphi(t)B(x,D_x)$. For $\varepsilon >0$, we denote $A_{\varepsilon}=\varphi B_{\varepsilon}=Ae^{\varepsilon\Delta}$ for the regularization.

As $A_{\varepsilon}u_n$ and $A^*_{\varepsilon}u_n$ are $C^{\infty}$, we can write $(L u_n,A^*_{\varepsilon}u_n)_{L^2(]0,T[\times M)}=(f_n,A^*_{\varepsilon}u_n)_{L^2(]0,T[\times M)}$ and\\ $(A_{\varepsilon}u_n,Lu_n)_{L^2(]0,T[\times M)}=(A_{\varepsilon}u_n,f_n)_{L^2(]0,T[\times M)}$. We write by a classical way
\begin{eqnarray*}
\alpha_{n,\varepsilon}&=& (L u_n,A^*_{\varepsilon}u_n)_{L^2(]0,T[\times M)}-(A_{\varepsilon}u_n,Lu_n,)_{L^2(]0,T[\times M)}\\
&=& ([A_{\varepsilon},\Delta]u_n,u_n)-i(\partial_t(A_{\varepsilon}) u_n,u_n)
\end{eqnarray*}
We will strongly use Lemma \ref{lemmetps} and \ref{lemmepseudoxsb} without citing them.

$\partial_t(A_{\varepsilon})$ is of order $2r-1$ uniformly in $\varepsilon$, then, 
\bna
\sup_{\varepsilon}(\partial_t(A_{\varepsilon}) u_n,u_n)_{L^2(]0,T[\times M)} \leq C  \|\partial_t(A_{\varepsilon}) u_n\|_{X^{-r+1-b,b}_T}\|u_n\|_{X^{r-1+b,-b}_T} \leq C \|u_n\|_{X^{r,b}_T}\|u_n\|_{X^{r-1+b,-b}_T} 
\ena
which tends to $0$ if $n\rightarrow \infty$.\\
But we have also
\begin{eqnarray*}
\alpha_{n,\varepsilon}&=& (f_n,A^*_{\varepsilon}u_n)_{L^2(]0,T[\times M)}-(A_{\varepsilon}u_n,f_n)_{L^2(]0,T[\times M)}
\end{eqnarray*}

\begin{eqnarray*}
\left|(f_n,A^*_{\varepsilon}u_n)_{L^2(]0,T[\times M)}\right| &\leq& \|f_n\|_{X^{r-1+b,-b}_T} \|A^*_{\varepsilon}u_n\|_{X^{-r+1-b,b}_T}\\
&\leq& \|f_n\|_{X^{r-1+b,-b'}_T} \|u_n\|_{X^{r,b}_T}
\end{eqnarray*}

Then, $\sup_{\varepsilon}\left|(f_n,A^*_{\varepsilon}u_n)_{L^2(]0,T[\times M)}\right|\rightarrow 0$ when $n \rightarrow \infty$. The same estimate for the other terms gives $\sup_{\varepsilon} \alpha_{n,\varepsilon} \rightarrow 0$.

Finally, taking the supremum on $\varepsilon$ tending to $0$, we get
  $$(\varphi[B,\Delta]u_n,u_n)_{L^2(]0,T[\times M)} \rightarrow 0 \textnormal{ when } n \rightarrow \infty $$
 which means, in terms of measure
  $$\int_{]0,T[\times S^*M} \varphi(t) \left\{\sigma_{2}(\Delta),b_{2r-1}  \right\} ~d\mu(t,x,\xi)  =0. $$
 This is precisely the propagation along the geodesic flow.
\end{proof}
  
\begin{corollaire}
\label{corrpropag2}
Let $r\in\R$. Assume that $\omega \subset M$ satisfies Assumption $\ref{geometriccontrol}$ and $a \in C^{\infty}(M)$, as in  (\ref{lienoma}). Let $u_n$ be a sequence bounded in $X^{r,b'}_T$ with $0< b'<1/2$, weakly convergent to $0$ and satisfying
\begin{eqnarray}
\left\lbrace
\begin{array}{c}%argument r=alignement à droite puis center et left
i\partial_t u_n + \Delta u_n \rightarrow 0 \textnormal{ in } X^{r,-b'}_T\\
a(x)u_n \rightarrow 0 \textnormal{ in } L^2([0,T],H^{r})
\end{array}
\right.
\end{eqnarray}
Then, we have $u_n \rightarrow 0 $ in $X^{r,1-b'}$.
\end{corollaire}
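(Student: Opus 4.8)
The plan is to deduce the statement from the propagation of the microlocal defect measure in Proposition~\ref{thmpropagation2}, combined with the geometric control Assumption~\ref{geometriccontrol}, and then to upgrade the resulting $L^2_{loc}$ convergence to the full $X^{r,1-b'}$ regularity by a direct use of the equation. I argue by successive extractions: it suffices to show that every subsequence of $(u_n)$ admits a further subsequence converging to $0$ in $X^{r,1-b'}_{loc}$, and since the limit is always $0$ the whole sequence will converge.

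First I would apply Proposition~\ref{thmpropagation2} with $b=b'$ and $f_n=i\partial_t u_n+\Delta u_n$. The three hypotheses hold: $(u_n)$ is bounded in $X^{r,b'}_T$ by assumption; since $0<b'<1$ the embedding $X^{r,b'}_T\hookrightarrow X^{r-1+b',-b'}_T$ is compact (the compact-embedding property of the $\Xsb$ spaces), so the weakly null bounded sequence $(u_n)$ tends strongly to $0$ in $X^{r-1+b',-b'}_T$; and $\left\|f_n\right\|_{X^{r-1+b',-b'}_T}\leq C\left\|f_n\right\|_{X^{r,-b'}_T}\to 0$ by continuous embedding. Hence, along a subsequence $(u_{n'})$, there is a nonnegative measure $\mu$ on $]0,T[\,\times S^*M$, invariant under the geodesic flow $G_s$, such that $(Au_{n'},u_{n'})_{L^2}\to\int a_{2r}\,d\mu$ for every tangential operator $A$ of order $2r$ with principal symbol $a_{2r}$.

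The second step turns the damping hypothesis into the vanishing of $\mu$ over $\omega$. Writing $\Lambda^{2r}$ for the self-adjoint operator of order $2r$ defining the $H^r$ norm, and using that $a$ is real, I have $\left\|au_n\right\|_{L^2([0,T],H^r)}^2=(a\Lambda^{2r}a\,u_n,u_n)_{L^2(]0,T[\times M)}$. The operator $a\Lambda^{2r}a$ is of order $2r$ with principal symbol $a(x)^2|\xi|_x^{2r}$, equal to $a(x)^2$ on $S^*M$; so the left-hand side converges to $\int a(x)^2\,d\mu$. As $\left\|au_n\right\|_{L^2([0,T],H^r)}\to 0$ and $\mu\geq 0$, I conclude $\mu(\,]0,T[\,\times S^*\omega)=0$, where $S^*\omega=\{(x,\xi)\in S^*M:\ x\in\omega\}$. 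Now the geometric control Assumption~\ref{geometriccontrol} gives $S^*M=\bigcup_{s\in[0,T_0)}G_{-s}(S^*\omega)$, an open cover; by compactness of $S^*M$ a finite subcover exists, and the $G_s$-invariance of $\mu$ yields $\mu(\,]0,T[\,\times G_{-s}(S^*\omega))=\mu(\,]0,T[\,\times S^*\omega)=0$ for each $s$, whence $\mu\equiv 0$. Testing against $A=\varphi(t)\Lambda^{2r}$ with $\varphi\in C_0^\infty(]0,T[)$, $\varphi\geq0$, gives $\int_0^T\varphi(t)\left\|u_{n'}(t)\right\|_{H^r}^2\,dt\to 0$, i.e. $u_{n'}\to 0$ in $L^2_{loc}(]0,T[,H^r)$; since this limit is independent of the chosen subsequence, the full sequence converges there.

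Finally I would upgrade this to $X^{r,1-b'}$ by localizing in time. Fix $\varphi\in C_0^\infty(]0,T[)$ and set $w_n=\varphi u_n$, which solves $i\partial_t w_n+\Delta w_n=g_n$ on $\R$ with $g_n=\varphi f_n+i\varphi' u_n$. Here $\varphi f_n\to 0$ in $X^{r,-b'}$ by Lemma~\ref{lemmetps}, while $\varphi' u_n\to 0$ in $X^{r,0}\hookrightarrow X^{r,-b'}$ because $\varphi'$ is supported in $]0,T[$ and $u_n\to 0$ in $L^2_{loc}H^r$; thus $g_n\to 0$ in $X^{r,-b'}$. Since $w_n$ has compact support in time, the relation $\partial_t(w_n^{\#})=-ig_n^{\#}$ (with $v^{\#}=e^{-it\Delta}v$) together with the identity $\left\|\cdot\right\|_{H^{1-b'}}^2=\left\|\cdot\right\|_{H^{-b'}}^2+\left\|\partial_t\cdot\right\|_{H^{-b'}}^2$ in the time variable gives
\bna
\left\|w_n\right\|_{X^{r,1-b'}}^2=\left\|w_n\right\|_{X^{r,-b'}}^2+\left\|g_n\right\|_{X^{r,-b'}}^2.
\ena
The first term is bounded by $\left\|w_n\right\|_{X^{r,0}}^2=\left\|\varphi u_n\right\|_{L^2([0,T],H^r)}^2\to 0$ and the second tends to $0$ as well, so $\varphi u_n\to 0$ in $X^{r,1-b'}$, which is the claim. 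The main obstacle is not any single estimate but the assembly: checking that the damping term forces $\mu$ to vanish on $S^*\omega$ and then (via $G_s$-invariance plus Assumption~\ref{geometriccontrol}) everywhere, and verifying that the localized remainder $\varphi' u_n$ is controlled \emph{precisely} by the $L^2_{loc}H^r$ convergence already obtained, so that the bootstrap from $L^2_{loc}H^r$ up to $X^{r,1-b'}$ closes.
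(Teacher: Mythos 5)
Your first two steps — extracting a microlocal defect measure via Proposition \ref{thmpropagation2} (with $b=b'$, using compact embedding to get $\left\|u_n\right\|_{X^{r-1+b',-b'}_T}\to 0$ from weak convergence), showing $a^2\mu=0$ from the damping hypothesis, and propagating $\mu=0$ over all of $S^*M$ by geodesic invariance and Assumption \ref{geometriccontrol} — are exactly the paper's argument, and your identity $\left\|w\right\|^2_{X^{r,1-b'}}=\left\|w\right\|^2_{X^{r,-b'}}+\left\|i\partial_t w+\Delta w\right\|^2_{X^{r,-b'}}$ for compactly supported $w$ is correct. The problem is the last step: by cutting off with $\varphi\in C^\infty_0(]0,T[)$ you only prove $\varphi u_n\to 0$ in $X^{r,1-b'}$, i.e. convergence on compact subsets of the \emph{open} interval $]0,T[$. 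This is strictly weaker than the stated conclusion $u_n\to 0$ in $X^{r,1-b'}_T$, and the difference is not cosmetic: the corollary is invoked in the proof of Proposition \ref{propstabilisation} precisely to conclude $u_n(0)\to 0$ in $H^1$ via the embedding $X^{1,1-b'}_T\subset C([0,T],H^1)$ (using $1-b'>1/2$), so control up to the endpoint $t=0$ is the whole point. Your cutoff kills all information there, and it cannot be repaired by taking $\varphi\equiv 1$ near $0$, since then $\varphi' u_n$ lives outside $[0,T]$ where nothing is known about the extension.

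The paper closes this gap differently: from $u_n\to 0$ in $L^2([0,T],H^r)$ one picks a time $t_0$ with $u_n(t_0)\to 0$ in $H^r$, writes the Duhamel formula based at $t_0$, and uses Lemma \ref{gainint} to bound $\left\|\int_0^t e^{i(t-\tau)\Delta}f_n(\tau)\,d\tau\right\|_{X^{r,1-b'}_T}\leq C\left\|f_n\right\|_{X^{r,-b'}_T}$ (for $T\leq 1$, then piecing intervals together via Lemma \ref{lemmerecouvrement}). The free-evolution term is controlled by $\left\|u_n(t_0)\right\|_{H^r}\to 0$ and the inhomogeneous term by $\left\|f_n\right\|_{X^{r,-b'}_T}\to 0$, which yields convergence on the closed interval, endpoints included. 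You should replace your final localization step by this Duhamel argument (or otherwise supply the convergence at $t=0$ and $t=T$).
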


\begin{proof} Let $(u_{n_k})$ be a quelconque subsequence of $(u_n)$. The asumption on $b'$ and compact embedding  allow us to apply Proposition \ref{thmpropagation2}. We can attach to $(u_{n_k})$ a microlocal defect measure in $L^2([0,T],H^r)$ that propagates along the geodesics with infinite speed. The second assumption gives $a(x)\mu=0$.
  By Assumption \ref{geometriccontrol}, and the fact that $a$ is elliptic on $\omega$, we have $\mu=0$ on $]0,T[\times S^*M$, ie $(u_{n'}) \rightarrow 0 $ in $L^{2}([0,T],H^r)$, and  $u_n \rightarrow u $ in $L^{2}([0,T],H^r)$.\\
Then, we can pick $t_0$ such that $u_n(t_0)\rightarrow 0$ in $H^r$.\\
Using Lemma \ref{gainint} and asumptions on $b'$, we get for $T\leq 1$
\bna
\left\| \int_0^t e^{i(t-\tau)\Delta}f_n(\tau) d\tau \right\|_{X^{r,1-b'}_T}\leq C \left\|f_n\right\|_{X^{r,-b'}_T}
\ena
Using Duhamel formula, we conclude $u_n\rightarrow 0$ in $X^{r,1-b'}_T$.\\
Then, the hypothesis $T\leq 1$ is easily removed by piecing solutions together as in Lemma \ref{lemmerecouvrement}.
\end{proof}
%------------------------------------------------------------------------------
\section{Propagation of regularity}
%------------------------------------------------------------------------------

We write Proposition 13 of \cite{control-nl} with some $\Xsb$ asumptions on the second term of the equation.
\begin{prop}
\label{propreg}
Let $T>0$, $0\leq b<1$ and $u\in X^{r,b}_T $, $r \in \R$ solution of
$$i \partial_t u +\Delta u=f \in X^{r,-b}_T $$
Given $\omega_0=(x_0,\xi_0) \in T^*M \setminus 0$, we assume that there exists a $0-order$ pseudo-differential operator $\chi(x,D_x)$, elliptic in $\omega_0$ such that
$$\chi(x,D_x)u \in L^2_{loc}(]0,T[,H^{r+\rho}) $$
for some $\rho \leq   \frac{1-b}{2}$. Then, for every $\omega_1 \in \Gamma_{\omega_0}$, the geodesic ray starting at $\omega_0$, there exists a pseudodifferential operator $\Psi(x,D_x)$, elliptic in $\omega_1$ such that 
$$ \Psi(x,D_x)u \in L^2_{loc}(]0,T[,H^{r+\rho}) $$
\end{prop}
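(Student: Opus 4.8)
Proposition \ref{propreg} is a microlocal propagation-of-regularity statement for the Schrödinger operator, and the natural strategy is the standard positive-commutator (propagation) argument adapted to the $\Xsb$ framework, closely following Proposition 13 of \cite{control-nl}. The plan is to build a first-order tangential pseudodifferential operator whose symbol is carried along the bicharacteristic (geodesic) flow and whose commutator with $L = i\partial_t + \Delta$ has a sign that forces regularity to propagate from $\omega_0$ to any $\omega_1$ on the same geodesic ray. Since the geodesic flow on $S^*M$ is the bicharacteristic flow of $\Delta$, and the Schrödinger flow propagates with infinite speed along these rays, the regularity $L^2_{loc}(]0,T[,H^{r+\rho})$ assumed microlocally near $\omega_0$ must spread to all of $\Gamma_{\omega_0}$.

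Concretely, first I would reduce to proving that the set of points where $u$ is microlocally $H^{r+\rho}$ is both open and invariant under the geodesic flow; openness is immediate from the definition of ellipticity, so the content is the flow-invariance. To get this, I would fix a symbol $a = a(t,x,\xi)$, homogeneous of the appropriate degree and supported in a small conic neighborhood of the ray, chosen so that its Poisson bracket $\{\sigma_2(\Delta), a\}$ with the principal symbol of $\Delta$ has a definite sign (a square plus a term supported where ellipticity is already known). Forming $A_\varepsilon = A e^{\varepsilon \Delta}$ as a regularization exactly as in Proposition \ref{thmpropagation2}, I would compute the commutator identity
\begin{eqnarray*}
(Lu, A_\varepsilon^* u) - (A_\varepsilon u, Lu) = ([A_\varepsilon,\Delta]u,u) - i(\partial_t(A_\varepsilon)u,u),
\end{eqnarray*}
and estimate the right-hand side using the $\Xsb$ pseudodifferential and multiplication lemmas (Lemmas \ref{lemmepseudoxsb}, \ref{lemmetps}). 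The terms involving $f \in X^{r,-b}_T$ are paired against $A_\varepsilon^* u$, which the mapping properties of Lemma \ref{lemmepseudoxsb} keep in the dual space; the weight budget is exactly what forces the restriction $\rho \leq \frac{1-b}{2}$, since one gains $1-b$ derivatives in the $b$-index through the gain estimate of Lemma \ref{gainint} but must split this symmetrically between the two factors of $u$ in the quadratic form.

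The main obstacle, and the step I expect to require the most care, is the loss of $b$-regularity when conjugating by a genuine (non-constant-coefficient) pseudodifferential operator, exactly the phenomenon quantified by Lemma \ref{lemmepseudoxsb}: the operator $A$ does not preserve the temporal resonance structure of the harmonics, so $Au$ lives only in a space with a degraded second index. I would handle this by keeping the operators tangential (acting only in $x$), by exploiting that $[B,\Delta]$ drops the spatial order by one so that the commutator symbol $\{\sigma_2(\Delta),b_{2r-1}\}$ lands at the correct homogeneity $2r$, and by carefully tracking that the bilinear pairings balance as $\|u\|_{X^{r,b}_T}\|u\|_{X^{r-1+b,-b}_T}$-type products where the $b$-loss is absorbed against the available regularity. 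Passing to the limit $\varepsilon \to 0$ then yields the transport equation for the microlocal defect measure along the geodesic flow, and a connectedness argument on the ray $\Gamma_{\omega_0}$ upgrades microlocal $H^{r+\rho}$ regularity from $\omega_0$ to $\omega_1$, giving the elliptic operator $\Psi(x,D_x)$ as claimed.
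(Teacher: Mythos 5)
Your proposal follows essentially the same route as the paper: a positive-commutator argument with a tangential operator $A=\varphi(t)B(x,D_x)$ whose symbol solves $\frac{1}{i}\{\sigma_2(\Delta),b(x,\xi)\}=|c(x,\xi)|^2+r(x,\xi)$ with $c$ elliptic at $\omega_1$ and $r$ supported near $\omega_0$, combined with a regularization and the $\Xsb$ mapping properties of pseudodifferential operators. Two small corrections: the constraint $\rho\le\frac{1-b}{2}$ comes from Lemma \ref{lemmepseudoxsb} rather than Lemma \ref{gainint} (the operator $B$ has order $2(r+\rho)-1$ and loses $b$ in the spatial index, so $\|Au_n\|_{X^{-r,b}_T}\le C\|u_n\|_{X^{r+2\rho-1+b,b}_T}$ forces $r+2\rho-1+b\le r$); and the conclusion is read off directly from the uniform bound $\int\varphi(t)\left\|c(x,D_x)u_n\right\|_{L^2}^2\,dt\le C$ rather than via a microlocal defect measure, which is the mechanism of Proposition \ref{thmpropagation2} (propagation of compactness for a weakly null sequence) and is not what is used for a single fixed $u$ here.
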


\begin{corollaire}
\label{corollairepropagreg}
With the notations of the Proposition, if an open set $\omega$ satisfies Assumption \ref{geometriccontrol} and $a(x) u\in L^2_{loc}(]0,T[,H^{r+\rho})$, with $a \in C^{\infty}(M)$, as in  (\ref{lienoma}), then $u\in L^2_{loc}(]0,T[,H^{r+\rho}(M)$.
\end{corollaire}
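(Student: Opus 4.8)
The plan is to derive the corollary as a direct geometric consequence of Proposition~\ref{propreg}, using the geometric control condition of Assumption~\ref{geometriccontrol} to cover the whole cosphere bundle. First I would observe that the hypothesis $a(x)u\in L^2_{loc}(]0,T[,H^{r+\rho})$, combined with the fact that $a$ is elliptic on $\omega$ (recall $\omega=\{a\neq 0\}$ from (\ref{lienoma})), provides the required local smoothing for \emph{every} initial point $\omega_0=(x_0,\xi_0)\in T^*M\setminus 0$ whose base point $x_0$ lies in $\omega$: indeed for such $\omega_0$ one can take $\chi(x,D_x)$ to be (a microlocalization of) multiplication by $1/a$ near $x_0$, which is elliptic at $\omega_0$ and satisfies $\chi(x,D_x)u\in L^2_{loc}(]0,T[,H^{r+\rho})$ since $au$ already has this regularity.

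Next I would invoke the geometric control condition: by Assumption~\ref{geometriccontrol}, every geodesic of $M$ travelling at unit speed enters $\omega$ in time less than $T_0$. Rephrased in the cosphere bundle, this means that for \emph{any} point $\omega_1\in S^*M$ the backward geodesic ray $\Gamma_{\omega_1}$ passes through a point $\omega_0$ whose base point lies in $\omega$. By the smoothing just established at $\omega_0$ and the propagation statement of Proposition~\ref{propreg}, we obtain a pseudodifferential operator $\Psi(x,D_x)$ elliptic at $\omega_1$ with $\Psi(x,D_x)u\in L^2_{loc}(]0,T[,H^{r+\rho})$. Since $\omega_1$ was arbitrary, $u$ is microlocally in $H^{r+\rho}$ at every point of $S^*M$.

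Finally I would pass from microlocal regularity to global regularity by a compactness argument. The collection of elliptic symbols $\{\sigma(\Psi_{\omega_1})\}_{\omega_1\in S^*M}$ covers the compact set $S^*M$; extracting a finite subcover and using a partition of unity subordinate to it, one builds an elliptic operator of order $0$ (a finite sum $\sum_j \Psi_j^*\Psi_j$ plus a smoothing remainder) that is globally elliptic on $S^*M$ and under which $u$ gains the regularity $H^{r+\rho}$ locally in time. Elliptic regularity then upgrades this to $u\in L^2_{loc}(]0,T[,H^{r+\rho}(M))$, which is the claim.

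The only genuinely delicate point is the passage from the pointwise microlocal conclusion of Proposition~\ref{propreg} to a global covering statement, and in particular checking that the constants and the regularization parameter $\rho\leq \frac{1-b}{2}$ can be chosen uniformly along all geodesic rays. This is handled by the compactness of $S^*M$ and the uniform entry time $T_0$ supplied by Assumption~\ref{geometriccontrol}; once finiteness of the cover is secured, the rest is the standard elliptic paste-together argument and presents no real obstacle.
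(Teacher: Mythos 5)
Your argument is correct and is exactly the deduction the paper intends: the corollary is stated without a separate proof, being treated as an immediate consequence of Proposition \ref{propreg} via the ellipticity of $a$ on $\omega$ (one may simply write $u=(1/a)\,(au)$ near any $x_0\in\omega$ to get the microlocal hypothesis there), geometric control to flow the regularity out to all of $S^*M$, and compactness of $S^*M$ to patch the finitely many elliptic microlocalizations together. Your closing worry about uniformity of $\rho$ is moot since $\rho$ is fixed in the statement and the conclusion is purely qualitative.
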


\Dem We first regularize : $u_n=e^{\frac{1}{n}\Delta }u$ with $\left\|u_n\right\|_{X^{r,b}_T}\leq C $. Set $s=r+\rho$\\
Let $B(x,D_x)$ be a pseudodifferential operator of order $2s-1=2r+2\rho-1$, that will be chosen later and $A=A(t,x,D_x)=\varphi(t)B(x,D_x)$ where $\varphi \in C^{\infty}_0(]0,T[)$.\\ 
If $L=i\partial_t +\Delta$, we write 
\begin{eqnarray*}
& &(L u_n,A^*u_n)_{L^2(]0,T[\times M)}-(Au_n,Lu_n,)_{L^2(]0,T[\times M)}\\
&=& ([A,\Delta]u_n,u_n)_{L^2(]0,T[\times M)}-(i\varphi ' B u_n,u_n)_{L^2(]0,T[\times M)}
\end{eqnarray*}
\begin{eqnarray*}
|( Au_n,f_n)_{L^2(]0,T[\times M)}| &\leq & \| Au_n\|_{X^{-r,b}_T}\|f_n\|_{X^{r,-b}_T}\\
&\leq & \| u_n\|_{X^{r+2\rho-1+b,b}_T}\|f_n\|_{X^{r,-b}_T}
\end{eqnarray*}
As we have chosen $ \rho\leq \frac{1-b}{2}$, we have $r+2\rho-1+b \leq r$ and so 
\begin{eqnarray*}
|(Au_n,f_n)_{L^2(]0,T[\times M)}| & \leq & C \| u_n\|_{X^{r,b}_T}\|f_n\|_{X^{r,-b}_T}\leq C.
\end{eqnarray*}
Similarly 
\begin{eqnarray*}
|( \varphi' B u_n,u_n)_{L^2(]0,T[\times M)}| &\leq & C \| u_n\|_{X^{r,b}_T}\|u_n\|_{X^{r,-b}_T} \leq C
\end{eqnarray*}
Then, 
$$([A,\Delta]u_n,u_n)_{L^2(]0,T[\times M)}=\int_0^T \varphi(t) ([B,\Delta]u_n(t),u_n(t))_{L^2(M)} dt $$
is uniformly bounded. Then, we select $B$ by means of symplectic geometry. Take $\omega_1 \in \Gamma_{\omega_0}$, $U$ and $V$ two small conical neighborhoods, respectively of $\omega_1$ and $\omega_0$. For every symbol $c(x,\xi)$, of order $s$, supported in $U$, one can find a symbol $b(x,\xi)$ of order $2s-1$ such that
$$\frac{1}{i}\left\{\sigma_2(\Delta),b(x,\xi)\right\} = \left|c(x,\xi)\right|^2+r(x,\xi)$$
with $r(x,\xi)$ of order $2s$ and supported in $V$. We take $B$ a pseudodifferential operator with principal symbol $b$ so that $[B,\Delta]$ is a pseudodifferential operator of principal symbol $\left|c(x,\xi)\right|^2+r(x,\xi)$. Then, if we choose $c(x,\xi)$ elliptic at $\omega_1$, we conclude
$$\intt \varphi(t) \left\|c(x,D_x)u_n(t,x)\right\|^2_{L^2} dt \leq C.$$ 
This ends the proof of Proposition \ref{propreg}.
\begin{corollaire}
\label{corpropagreg}
Here dim $M \leq 3$ and $b>1/2$.
Let $u\in X^{1,b}_T$ solution of 
\begin{eqnarray}
\label{eqnpropagreg}
\left\lbrace
\begin{array}{rcl}%argument r=alignement à droite puis center et left
i\partial_t u + \Delta u &=& |u|^2u+u\textnormal{ on } [0,T]\times M\\
\partial_t u&=&0 \textnormal{ on } ]0,T[\times \omega \\
\end{array}
\right.
\end{eqnarray}
where $\omega$ satisfies Assumption \ref{geometriccontrol}.\\
Then $u\in C^{\infty}(]0,T[\times M)$.
\end{corollaire}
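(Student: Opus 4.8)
The plan is to bootstrap regularity in the index $r$ using the propagation-of-regularity machinery of Proposition \ref{propreg} and its Corollary \ref{corollairepropagreg}, combined with the fact that the time-derivative (hence, via the equation, $\Delta u$) vanishes on $\omega$. First I would observe that since $u\in X^{1,b}_T$ solves the nonlinear equation, the source term $f=|u|^2u+u$ satisfies $f\in X^{1,-b'}_T\subset X^{1,-b}_T$ by the trilinear estimate (\ref{multilinH1Hs}) of Lemma \ref{lmtrilinXsb} (using $b'<b$), so $u$ is an admissible input for Corollary \ref{corollairepropagreg} with $r=1$. The key geometric input is the hypothesis $\partial_t u=0$ on $]0,T[\times\omega$: differentiating the equation in $t$ shows formally that extra regularity is available microlocally over $\omega$. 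More precisely, on $\omega$ the equation reads $\Delta u = |u|^2u+u$ (since $\partial_t u=0$ there), which already gains derivatives on $u$ localized to $\omega$.

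The heart of the argument is an iteration. Suppose inductively that $u\in X^{r,b}_T$ for some $r\geq 1$. I would like to show $a(x)u\in L^2_{loc}(]0,T[,H^{r+\rho})$ for $\rho=\tfrac{1-b}{2}>0$, and then invoke Corollary \ref{corollairepropagreg} to conclude $u\in L^2_{loc}(]0,T[,H^{r+\rho})$ globally in $x$. The local gain near $\omega$ comes from $\partial_t u=0$ there: applying $\partial_t$ to the equation, or using the relation $\Delta u=|u|^2u+u$ valid on $\omega$, promotes the regularity of $u$ restricted to $\omega$ by the elliptic regularity of $\Delta$, provided the nonlinearity $|u|^2u$ is controlled by the current regularity. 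Here the subcriticality in dimension $\leq 3$ ensures that $|u|^2u+u$ maps the current Sobolev class into one that, after the elliptic gain, is strictly better on $\omega$. Once the local $H^{r+\rho}$ regularity over $\omega$ is established, Assumption \ref{geometriccontrol} (geometric control) propagates it along every geodesic to all of $S^*M$, yielding $u\in L^2_{loc}(]0,T[,H^{r+\rho})$.

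To close the loop back into the Bourgain scale, I would upgrade the $L^2_{loc}(]0,T[,H^{r+\rho})$ conclusion to $u\in X^{r+\rho',b}_{T'}$ on slightly shrunk time intervals, using the Duhamel formula together with the smoothing estimate of Lemma \ref{gainint} and the trilinear estimates of Lemma \ref{lmtrilinXsb} (now at the higher index $r+\rho'$), exactly as in the local existence and propagation-of-regularity arguments already developed. Iterating, $r$ increases by a fixed increment at each step, so after finitely many steps $u\in X^{r,b}_{T'}$ for arbitrarily large $r$ on any interior interval, hence $u\in C^{\infty}(]0,T[\times M)$ by Sobolev embedding in both $t$ and $x$.

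The main obstacle I anticipate is the bookkeeping that transfers the purely spatial, $x$-local gain coming from $\partial_t u=0$ on $\omega$ into a genuine gain in the Bourgain index $b$ while simultaneously raising $r$: Proposition \ref{propreg} only permits a gain $\rho\leq\tfrac{1-b}{2}$ per application, and one must verify at each stage that the nonlinear term $|u|^2u$ lies in the space $X^{r,-b}_T$ required to apply the propagation result at the new regularity. Controlling this nonlinear term at every level of the induction, and checking that the geometric-control propagation genuinely delivers the same $\rho$-gain uniformly in $r$, is the delicate part; the rest is a routine bootstrap.
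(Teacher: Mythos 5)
Your plan is correct and follows essentially the same route as the paper's proof: elliptic regularity on $\omega$ (where $\partial_t u=0$ turns the equation into $\Delta u=|u|^2u+u$), propagation of the gain $\rho\leq\frac{1-b}{2}$ via Corollary \ref{corollairepropagreg} under geometric control, return to the Bourgain scale by re-solving the Cauchy problem at a well-chosen time $t_0$ and invoking uniqueness, and iteration. The only cosmetic difference is that the paper first bootstraps all the way to $C^{\infty}(]0,T[\times\omega)$ by the local elliptic argument before propagating, rather than re-deriving the local gain at each stage of the induction.
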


\Dem We have $u\in L^{\infty}([0,T],H^1)$, and so in $L^{\infty}([0,T],L^6)$ by Sobolev embedding. Then, we infer that $|u|^2 u \in L^{\infty}([0,T],L^2(M))$.\\ 
On $]0,T[\times \omega $, we have
$$\Delta u = \left|u\right|^2 u +u.$$ 
Therefore, $\Delta u \in L^2([0,T],L^2(\omega))$ and $u\in L^2(]0,T[\times H^2(\omega))$. Since $H^2(\omega)$ is an algebra, we can go on the same reasonning to conclude that $u\in C^{\infty}(]0,T[\times \omega)$.\\
By applying once Corollary \ref{corollairepropagreg}, we get $u\in L^2_{loc}([0,T],H^{1+\frac{1-b}{2}})$. Then we can pick $t_0$ such that $u(t_0) \in H^{1+\frac{1-b}{2}}$. We can then solve in $X^{1+\frac{1-b}{2},b}$ our nonlinear Schrödinger equation with initial data $u(t_0)$. By uniqueness in $X^{1,b}_T$, we can conclude that $u\in X^{1+\frac{1-b}{2},b}_T$.\\
By iteration, we get that $u\in L^2(]0,T[,H^{r})$ for every $r\in \R$ and $u\in C^{\infty}([0,T],M)$.
\begin{corollaire}
\label{corprolunique}
If, in addition to Corollary \ref{corpropagreg}, $\omega$ satisfies Assumption \ref{uniquecontinuation}, then $u=0$.
\end{corollaire}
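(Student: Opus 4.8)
The plan is to combine the regularity result just established in Corollary \ref{corpropagreg} with the unique continuation Assumption \ref{uniquecontinuation}. The previous corollary guarantees that the solution $u$ of (\ref{eqnpropagreg}) is $C^{\infty}(]0,T[\times M)$, which is exactly the regularity required to invoke Assumption \ref{uniquecontinuation}. The only gap to fill is that Assumption \ref{uniquecontinuation} is stated for a \emph{linear} Schr\"odinger equation with smooth potentials $b_1,b_2$ and vanishing on $[0,T]\times\omega$, whereas here we have a nonlinear equation with the condition $\partial_t u=0$ on $]0,T[\times\omega$. So the work is to rephrase the situation into the linear framework of the hypothesis.

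First I would observe that on $]0,T[\times\omega$ the equation (\ref{eqnpropagreg}) together with $\partial_t u=0$ forces $\Delta u=|u|^2u+u$ to be independent of $t$ there, and more usefully that $u$ itself is $t$-independent on $\omega$; differentiating the equation in time and using $\partial_t u=0$ on $\omega$ should let me deduce that $v:=\partial_t u$ solves a linearized equation. Concretely, set $v=\partial_t u$; differentiating $i\partial_t u+\Delta u=|u|^2u+u$ with respect to $t$ gives
\begin{eqnarray*}
i\partial_t v+\Delta v = 2|u|^2 v + u^2\overline{v} + v,
\end{eqnarray*}
which is precisely a linear equation of the form $i\partial_t v+\Delta v+b_1 v+b_2\overline{v}=0$ with $b_1=-(2|u|^2+1)$ and $b_2=-u^2$. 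Since $u\in C^{\infty}(]0,T[\times M)$, these coefficients are smooth, and by hypothesis $v=\partial_t u=0$ on $]0,T[\times\omega$. Assumption \ref{uniquecontinuation} then yields $v\equiv 0$, i.e. $u$ is independent of $t$ on all of $]0,T[\times M$.

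Once $u$ is known to be time-independent, the equation reduces to the stationary relation $\Delta u=|u|^2u+u$ on $M$. The final step is to show this forces $u=0$: I would multiply by $\overline{u}$ and integrate over the compact manifold $M$, obtaining $-\int_M|\nabla u|^2=\int_M|u|^4+\int_M|u|^2$, where the left side is nonpositive and the right side is nonnegative, so both must vanish and hence $u\equiv 0$. The main obstacle I anticipate is the bookkeeping around regularity and the time-differentiation argument near the endpoints $t=0,T$: one must ensure $v=\partial_t u$ is genuinely a smooth solution on the open interval (which Corollary \ref{corpropagreg} provides via $u\in C^\infty(]0,T[\times M)$) and that the coefficients $b_1,b_2$ lie in the admissible class, but this is routine given the smoothness already in hand. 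The conceptual heart is simply recognizing that differentiating in time converts the nonlinear equation plus the constraint $\partial_t u=0$ on $\omega$ into exactly the linear unique-continuation setting of Assumption \ref{uniquecontinuation}.
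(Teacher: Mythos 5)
Your proposal is correct and follows essentially the same route as the paper: differentiate the equation in time so that $v=\partial_t u$ solves a linear Schr\"odinger equation with smooth potentials vanishing on $]0,T[\times\omega$, apply Assumption \ref{uniquecontinuation} to get $v\equiv 0$, and then multiply the resulting stationary equation by $\overline{u}$ and integrate over $M$ to conclude $u=0$. The paper's proof is word-for-word this argument, including the same sign bookkeeping in the final energy identity.
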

\begin{proof}
Using Corollary \ref{corpropagreg}, we infer that $u\in C^{\infty}(]0,T[\times M)$.\\
Taking time derivative of equation (\ref{eqnpropagreg}), $v=\partial_t u$ satisfies
\begin{eqnarray}
\left\lbrace
\begin{array}{rcl}%argument r=alignement à droite puis center et left
i\partial_t v + \Delta v +f_1~ v+ f_2~ \bar{v}&=&0\\
v&=&0 \textnormal{     on   } ]0,T[\times \omega
\end{array}
\right.
\end{eqnarray} 
 for some $f_1$, $f_2 \in C^{\infty}(]0,T[\times M)$. Assumption \ref{uniquecontinuation} gives $v=\partial_t u=0$. 
Multiplying (\ref{eqnpropagreg}) by $\bar{u}$ and integrating , we get
$$\int_M \left|\nabla u \right|^2 +\int_M \left|u\right|^4+\int_M \left|u \right|^2=0$$
and so $u=0$.
\end{proof}
\begin{remarque}
\label{corproluniquelin}
We have the same conclusion for $u\in X^{1,b}_T$ solution of 
\begin{eqnarray}
\left\lbrace
\begin{array}{rcl}%argument r=alignement à droite puis center et left
i\partial_t u + \Delta u &=& u\textnormal{ on } [0,T]\times M\\
\partial_t u&=&0 \textnormal{ on } ]0,T[\times \omega \\
\end{array}
\right.
\end{eqnarray}
\end{remarque}
%-------------------------------------------------------------------------
\section{Stabilization}
%-------------------------------------------------------------------------
Theorem \ref{thmstab} is a consequence of the following Proposition
\begin{prop}
\label{propstabilisation}Let $a \in C^{\infty}(M)$, as in  (\ref{lienoma}).
Under Hypothesis  \ref{geometriccontrol}, \ref{uniquecontinuation} and $\ref{hypinegXsb}$, for every $T>0$ and every $R_0>0$, there exists a constant $C>0$ such that inequality
$$E(0)\leq C \int_{0}^{T}\left\|(1-\Delta)^{-1/2}a(x)\partial_t u\right\|^2_{L^2} dt$$
holds for every solution $u$ of the damped equation 
\begin{eqnarray}
\label{eqndamped}
\left\lbrace
\begin{array}{rcl}%argument r=alignement à droite puis center et left
i\partial_t u + \Delta u -(1+|u|^2)u&=&a(x)(1-\Delta)^{-1}a(x)\partial_t u \textnormal{ on } [0,T]\times M\\
u(0)&=&u_{0} \in H^1
\end{array}
\right.
\end{eqnarray}
 and $\left\|u_0\right\|_{H^1}\leq R_0$.
\end{prop}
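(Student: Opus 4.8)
The plan is to argue by contradiction, following the scheme of \cite{control-nl}: combine the linearisation Proposition \ref{proplinearisation}, the propagation of compactness Corollary \ref{corrpropag2}, and the unique continuation Corollary \ref{corprolunique}. Suppose the estimate fails for the given $T$ and $R_0$. Then there is a sequence $(u_n)$ of solutions of (\ref{eqndamped}) with $\nor{u_{n,0}}{H^1}\le R_0$ and
$$E(u_n(0)) \geq n \int_0^T \left\|(1-\Delta)^{-1/2}a(x)\partial_t u_n\right\|_{L^2}^2\,dt.$$
Since $\nor{u_{n,0}}{H^1}\le R_0$, the numbers $\alpha_n^2 := E(u_n(0))$ are bounded, and (discarding trivial solutions) positive; after extraction assume $\alpha_n \to \alpha \geq 0$. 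The contradiction hypothesis gives $\int_0^T\|(1-\Delta)^{-1/2}a\partial_t u_n\|_{L^2}^2 \le \alpha_n^2/n \to 0$, hence $a\partial_t u_n \to 0$ in $L^2([0,T],H^{-1})$. Remarks \ref{rmkbound} and \ref{gainb} bound $(u_n)$ in $X^{1,b+\varepsilon}_T$, so by compact embedding a subsequence converges strongly in $X^{s,b}_T$ for every $s<1$. I would split according to whether $\alpha>0$ or $\alpha=0$.

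\textbf{Large data regime ($\alpha>0$).} Let $u$ be the (weak in $\Xubt$, strong in $X^{s,b}_T$) limit. Because $(1-\Delta)^{-1}a\partial_t u_n\to 0$ in $L^2([0,T],H^1)$, the damping disappears in the limit and $u$ solves $i\partial_t u+\Delta u=|u|^2u+u$; moreover $a\partial_t u_n\to 0$ forces $\partial_t u=0$ on $]0,T[\times\omega$. Corollary \ref{corprolunique} (whose regularisation step Corollary \ref{corpropagreg} uses Assumption \ref{geometriccontrol} and whose unique continuation uses Assumption \ref{uniquecontinuation}) then gives $u\equiv 0$. Hence $u_{n,0}\rightharpoonup 0$ in $\Hu$, and Proposition \ref{proplinearisation} yields $|u_n|^2u_n\to 0$ in $X^{1,-b'}_T$, so that $i\partial_t u_n+\Delta u_n-u_n\to 0$ in $X^{1,-b'}_T$. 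I would then apply Corollary \ref{corrpropag2} at level $r=1$: its input $a u_n\to 0$ in $L^2([0,T],H^1)$ follows by combining the free lower-order convergence $a u_n\to 0$ in $L^2H^s$ ($s<1$) with the elliptic regularity near $\omega$ provided by the equation, the right-hand side $-i a\partial_t u_n + a(1+|u_n|^2)u_n + a(1-\Delta)^{-1}a\partial_t u_n$ being bounded in $L^2L^2$ (its first and last pieces being moreover small), followed by interpolation. Corollary \ref{corrpropag2} then gives $u_n\to 0$ in $X^{1,1-b'}_T$; as $1-b'>1/2$, this space embeds in $C([0,T],\Hu)$, so $u_{n,0}\to 0$ in $\Hu$ and $\alpha_n^2=E(u_n(0))\to 0$, contradicting $\alpha>0$.

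\textbf{Small data regime ($\alpha=0$).} Here $u_{n,0}\rightharpoonup 0$ carries no information, so I renormalise $v_n=u_n/\alpha_n$, which solves $i\partial_t v_n+\Delta v_n-v_n-\alpha_n^2|v_n|^2v_n=a(1-\Delta)^{-1}a\partial_t v_n$ with $E(u_n(0))/\alpha_n^2=1$, whence $\nor{v_{n,0}}{H^1}^2\to 1$. The observation rescales to $\int_0^T\|(1-\Delta)^{-1/2}a\partial_t v_n\|_{L^2}^2\le 1/n\to 0$, and the cubic term, carrying the factor $\alpha_n^2\to 0$, is negligible. The same compactness argument produces a limit $v$ solving the \emph{linear} equation $i\partial_t v+\Delta v=v$ with $\partial_t v=0$ on $]0,T[\times\omega$; by Remark \ref{corproluniquelin} (linear unique continuation) $v\equiv 0$, and propagation of compactness gives $v_n\to 0$ in $X^{1,1-b'}_T$, so $v_{n,0}\to 0$ in $\Hu$, contradicting $\nor{v_{n,0}}{H^1}\to 1$.

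The technical heart, and the step I expect to be hardest, is the passage to \emph{top-order} strong convergence required to feed Corollary \ref{corrpropag2} at $r=1$. The observation controls only $a\partial_t u_n$ in $H^{-1}$, not $a u_n$ in $\Hu$, so the propagation of compactness cannot be invoked naively; the point is to exploit the equation near $\omega$ to gain one derivative on $a u_n$ (equivalently, to propagate compactness for $\partial_t u_n$ and recover $u_n$ elliptically), thereby bridging the gap between the automatic convergence in $H^s$, $s<1$, and the $\Hu$ convergence that the energy $E$ detects. Everything else (boundedness in Bourgain spaces, vanishing of the damping and of the rescaled nonlinearity, and the two unique continuation inputs) is either routine or quoted from the results above.
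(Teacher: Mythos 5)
Your proposal follows essentially the same route as the paper: contradiction, the dichotomy $\alpha>0$ / $\alpha=0$ with the renormalisation $v_n=u_n/\alpha_n$ in the second case, passage to the limit plus Corollary \ref{corprolunique} (resp.\ Remark \ref{corproluniquelin}) to kill the weak limit, linearisation via Proposition \ref{proplinearisation}, and Corollary \ref{corrpropag2} at level $r=1$ to contradict the normalisation. The only loose point is your treatment of the key step $au_n\to 0$ in $L^2([0,T],H^1)$: $a\partial_t u_n$ is \emph{not} bounded in $L^2L^2$ (the observation only makes it small in $L^2([0,T],H^{-1})$), and the paper's cleaner version works entirely at the level $L^2H^{-1}$ --- all three pieces of $a(\Delta-1)u_n$ are small there (the cubic term because $u_n\to 0$ in $L^{\infty}L^p$ for $p<6$), whence $(1-\Delta)^{1/2}(au_n)\to 0$ in $L^2L^2$ via a commutator identity, with no interpolation needed.
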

\begin{proof}[Proof of Proposition \ref{propstabilisation} $\Rightarrow$ Theorem \ref{thmstab}] For any $f \in H^1(M)$, Sobolev embeddings yield
\bna
 E(f)\leq C \left(\left\|f\right\|^2_{H^1} +\left\|f\right\|^4_{H^1}\right) \\
\left\|f\right\|_{H^1} \leq C\left(E(f)\right)^{1/2} .
\ena
As the energy is decreasing, if $\left\|u_0\right\|_{H^1}\leq R_0$, we can find another $\widetilde{R}_0$ such that $\left\|u(t)\right\|_{H^1}\leq \widetilde{R}_0$ for any $t>0$. For this range of values, we have 
\bnan
\label{equivenerg}
C^{-1}\left(E(f)\right)^{1/2}\leq \left\|f\right\|_{H^1} \leq C\left(E(f)\right)^{1/2}
\enan
 for one $C>0$ depending on $R_0$.

 We apply Proposition \ref{propstabilisation} with this bound and obtain $E(t)\leq Ce^{-\gamma(R_0) t} E(0)$. Then, for $t>t(R_0)$, we have $\left\|u(t)\right\|_{H^1}\leq 1$. 

We take $\gamma(1)$ the decay rate corresponding to the bound $1$. Therefore, for $t>t(R_0)$, we get $\left\|u(t)\right\|_{H^1}\leq Ce^{-\gamma(1) (t-t(R_0))}\left\|u(t(R_0))\right\|_{H^1}$. This yields a decay rate independant of $R_0$ as announced, while the coefficient $C$ may strongly depend on $R_0$.
\end{proof}
\begin{remarque}
If we make the change of unknown $w=e^{-it}u$, $w$ is solution of the new damped equation
\begin{eqnarray*}
\left\lbrace
\begin{array}{rcl}%argument r=alignement à droite puis center et left
i\partial_t w + \Delta w -|w|^2w&=&a(x)(1-\Delta)^{-1}a(x)(\partial_tw-iw) \textnormal{ on } [0,T]\times M\\
w(0)&=&u_{0} \in H^1
\end{array}
\right.
\end{eqnarray*}
This modification is necessary because there is not exponential decay for the damped equation (\ref{eqndamped}) with $|u|^2u$ instead of $(1+|u|^2)u$. We check for example that for $a=1$, the solution with constant Cauchy data $u_0$ satisfies
$$|u(t)|^2=\frac{|u_0|}{1+|u_0|t}.$$
Moreover, it also proves that the solution is global in time only on $\R^{+}$ (this restriction remains with the non linearity $(1+|u|^2)u$).
\end{remarque}

\begin{proof}[Proof of Proposition \ref{propstabilisation}]
We argue by contradiction, we suppose the existence of a sequence $(u_n)$ of solutions of (\ref{eqndamped}) such that
$$\left\|u_n(0)\right\|_{H^1}\leq R_0$$
and
\begin{eqnarray}
\label{majorationdamped}
 \int_{0}^{T}\left\|(1-\Delta)^{-1/2}a(x)\partial_t u_n\right\|^2_{L^2} dt \leq \frac{1}{n} 
 E(u_n(0))
 \end{eqnarray}
We note $\alpha_n=E(u_n(0))^{1/2}$.
By the Sobolev embedding for the $L^4$ norm, we have $\alpha_n \leq C (R_0)$. So, up to extraction, we can suppose that $\alpha_n \longrightarrow \alpha$.\\
We will distinguich two cases : $\alpha>0$ and $\alpha=0$.

\bigskip

First case : $\alpha_n \longrightarrow \alpha>0$\\
By decreasing of the energy, $(u_n)$ is bounded in $L^{\infty}([0,T],H^1)$ and so in $\Xubt$. Then, as $\Xubt$ is a separable Hilbert we can extract a subsequence such that $ u_n\rightharpoonup u$ weakly in $\Xubt$ ans strongly in $X^{s,b'}_T$ for one $u \in \Xubt$ and $s>s_0$. Therefore, $|u_n|^2u_n$ converges to $|u|^2u$ in $X^{s,-b'}_T$.

Using (\ref{majorationdamped}) and passing to the limit in the equation verified by $u_n$, we get
 \begin{eqnarray*}
\left\lbrace
\begin{array}{rcl}%argument r=alignement à droite puis center et left
i\partial_t u + \Delta u &=&|u|^2u+u \textnormal{ on } [0,T]\times M\\
\partial_t u&=&0 \textnormal{ on } ]0,T[\times \omega \\
\end{array}
\right.
\end{eqnarray*}
Using Corollary \ref{corprolunique}, we infer $u=0$.\\
Therefore, we have, up to new extraction, $u_n(0)\rightharpoonup 0$ in $H^1$. Using Proposition \ref{proplinearisation} of linearisation, we infer that $|u_n|^2u_n \rightarrow 0$ in $X^{1,-b'}_T$.\\
Moreover, because of (\ref{majorationdamped}) we have
$$a(x)(1-\Delta)^{-1}a(x)\partial_t u_n \underset{L^2([0,T],H^1)}{\longrightarrow}0$$
and the convergence is also in $X^{1,-b'}_T$.\\
Then,  estimate (\ref{majorationdamped}) also implies
$a(x)\partial_t u_n \underset{L^2([0,T],H^{-1})}{\longrightarrow}0$.\\
Using equation (\ref{eqndamped}), we obtain
$$a(x)\left[\Delta u_n -u_n-|u_n|^2u_n-a(x)(1-\Delta)^{-1}a(x)\partial_t u_n\right] \underset{L^2([0,T],H^{-1})}{\longrightarrow}0$$
By Sobolev embedding, $u_n$ tends to $0$ in $L^{\infty}([0,T],L^p)$ for any $p<6$. Therefore, $|u_n|^2u_n$ converges to $0$ in $L^{\infty}([0,T],L^q)$ for $q<2$ and so in $L^2([0,T],H^{-1})$. Thus, we get
$$a(x)(\Delta -1)u_n \underset{L^2([0,T],H^{-1})}{\longrightarrow}0.$$
Therefore, $(1-\Delta)^{1/2} a(x) u_n = (1-\Delta)^{-1/2}a(x)(1-\Delta)u_n+(1-\Delta)^{-1/2}[(1-\Delta),a(x)]u_n$ converges to $0$ in $L^2([0,T],L^2)$.

In conclusion, we have
\bna
\left\lbrace
\begin{array}{l}%argument r=alignement à droite puis center et left
u_n \rightharpoonup 0 \quad \textnormal {in} \quad X^{1,b'}_T\\
a(x)u_n \rightarrow 0 \quad \textnormal {in} \quad L^2([0,T],H^{1})\\
i\partial_t u_n +\Delta u_n -u_n \longrightarrow 0 \quad \textnormal {in} \quad X^{1,-b'}_T
\end{array}
\right.
\ena
Thus, changing $u_n$ into $e^{it}u_n$ and using that the multiplication by $e^{it}$ is continuous on any $\Xsbt$ (see Lemma \ref{lemmetps}), we are in position to apply Corollary \ref{corrpropag2}. Hence, as we have $1-b'>1/2$, it yields 
$$u_n(0) \underset{H^1}{\longrightarrow}0. $$
In particular, $E(u_n(0))\rightarrow 0$ which is a contradiction to our hypothesis $\alpha >0$.

\bigskip

Second case : $\alpha_n \longrightarrow 0$\\
Let us make the change of unknown $v_n=u_n/\alpha_n$. $v_n$ is solution of the system
$$i\partial_t v_n +\Delta v_n -a(x)(1-\Delta)^{-1}a(x)\partial_t v_n= v_n+\alpha_n^2|v_n|^2v_n$$
and
\begin{eqnarray}
\label{majorenergystab}
\int_{0}^{T}\left\|(1-\Delta)^{-1/2}a(x)\partial_t v_n\right\|^2_{L^2} dt \leq \frac{1}{n} 
\end{eqnarray}
For a constant depending on $R_0$, we still have (\ref{equivenerg}). Therefore, we write
$$ \left\|v_n(t)\right\|_{H^1}=\frac{ \left\|u_n(t)\right\|_{H^1} }{ E(u_n(0))^{1/2} }\leq C\frac{E(u_n(t))^{1/2}  }{E(u_n(0))^{1/2}}\leq C$$
\begin{eqnarray}
\label{H1tend1}
\left\|v_n(0)\right\|_{H^1}=\frac{ \left\|u_n(0)\right\|_{H^1} }{ E(u_n(0))^{1/2} }\geq C>0
\end{eqnarray}
Thus, we have $\left\|v_n(0)\right\|_{H^1}\approx 1$ and $v_n$ is bounded in $L^{\infty}([0,T],H^1)$.\\
By the same estimates we made in the proof of Proposition \ref{thmexistenceNl}, we obtain
$$\left\|v_n\right\|_{X^{1,b}_T} \leq C \left\|v_n(0)\right\|_{H^1}+CT^{1-b-b'}\left(\left\|v_n\right\|_{X^{1,b}_T}+\alpha_n^2 \left\|v_n\right\|_{X^{1,b}_T}^3\right) $$
Then, if we take $CT^{1-b-b'}<1/2$, independant of $v_n$, we have
$$\left\|v_n\right\|_{X^{1,b}_T} \leq C(1+\alpha_n^2 \left\|v_n\right\|_{X^{1,b}_T}^3).$$
By a boot strap argument, we conclude that, $\left\|v_n\right\|_{X^{1,b}_{T}}$ is uniformly bounded. Using Lemma \ref{lemmerecouvrement}, we conclude that it is bounded on $\Xubt$ for some large $T$ and then, $\alpha_n^2|v_n|^2v_n $ tends to $0$ in $X^{1,-b'}_T$.\\
Then, we can extract a subsequence such that $v_n \rightharpoonup v$ in $\Xubt$ where $v$ is solution of 
\begin{eqnarray*}
\left\lbrace
\begin{array}{rcl}%argument r=alignement à droite puis center et left
i\partial_t v + \Delta v &=&v \textnormal{ on } [0,T]\times M\\
\partial_t v&=&0 \textnormal{ on } ]0,T[\times \omega \\
\end{array}
\right.
\end{eqnarray*}
It implies $v=0$ by Remark \ref{corproluniquelin}.\\
Estimate (\ref{majorenergystab}) yields that $a(x)(1-\Delta)^{-1}a(x)\partial_t v_n $ converges to $0$ in $L^2([0,T],H^1)$ and so in $X^{1,-b'}_T$.

We finish the proof as in the first case to conclude the convergence of $v_n$ to $0$ in $\Xubt$. This contradicts (\ref{H1tend1}).
\end{proof}
%----------------------------------------------------------------------
\section{Controllability of the linear equation}
%----------------------------------------------------------------------
\label{sectioncontrollin}
%----------------------------------------------------------------------
\subsection{Observability estimate}
%----------------------------------------------------------------------
\begin{prop}
\label{propobserv}
Assume that $(M,\omega)$ satisfies Hypothesis \ref{geometriccontrol}, \ref{hypinegXsb} and \ref{uniquecontinuationH1}. Let $a \in C^{\infty}(M)$, as in  (\ref{lienoma}), taking real values. Then, for every $-1\leq s\leq 1$, $T>0$ and $A>0$, there exists $C$ such that estimate 
$$\left\|u_0\right\|^2_{H^{s}}\leq C \intT \left\|a u(t)\right\|^2_{H^{s}}dt $$
holds for every solution $u(t,x)\in X^{s,b}_T$ of the system
\begin{eqnarray}
\label{eqlin1}
\left\lbrace
\begin{array}{rcl}%argument r=alignement à droite puis center et left
i\partial_t u + \Delta u &=&\pm2|w|^2 u \pm w^2 \bar{u}\quad\textnormal{on}\quad[0,T]\times M\\
u(0)&=&u_{0} ~\in H^{s}
\end{array}
\right.
\end{eqnarray}
with one $w$ satisfying $\left\|w\right\|_{\Xubt}\leq A$.
\end{prop}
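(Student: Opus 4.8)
The plan is to argue by contradiction, exactly in the spirit of the compactness--uniqueness scheme used for Proposition \ref{propstabilisation}, but now for the linear equation \eqref{eqlin1}. Suppose the estimate fails. Then for each $n$ there is a potential $w_n$ with $\nor{w_n}{\Xubt}\leq A$ and a solution $u_n\in X^{s,b}_T$ of \eqref{eqlin1} (with $w$ replaced by $w_n$) such that, after normalizing $\nor{u_{n}(0)}{H^s}=1$, one has
\begin{eqnarray*}
\intT \nor{a u_n(t)}{H^s}^2\,dt \longrightarrow 0.
\end{eqnarray*}
By Proposition \ref{thmlineareqn} applied with $g=0$, the normalization $\nor{u_n(0)}{H^s}=1$ gives a uniform bound $\nor{u_n}{X^{s,b}_T}\leq C$. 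Since $\nor{w_n}{\Xubt}\leq A$, I may extract subsequences so that $w_n\rightharpoonup w$ in $\Xubt$ (and strongly in $X^{r,b'}_T$ for some $s_0<r<1$ by compact embedding, property 3 of the $\Xsb$ spaces) and $u_n\rightharpoonup u$ in $X^{s,b}_T$, with $u_n(0)\rightharpoonup u(0)$ in $H^s$.

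The first key step is to pass to the limit in the potential terms $|w_n|^2 u_n$ and $w_n^2\overline{u_n}$ using the trilinear estimates of Lemma \ref{lmtrilinXsbbis}: the strong convergence of $w_n$ in $X^{r,b'}_T$ combined with the weak convergence of $u_n$ should identify the limit $u$ as a solution of \eqref{eqlin1} with the limiting potential $w$, while $au(t)=0$ on $]0,T[\times\omega$ by the vanishing of the observation term. This is precisely the hypothesis of the unique continuation Assumption \ref{uniquecontinuationH1}, since $2|w|^2$ and $w^2$ lie in $L^\infty([0,T],L^3)$ (as $w\in L^\infty([0,T],H^1)\hookrightarrow L^\infty([0,T],L^6)$ in dimension $3$, so $|w|^2\in L^\infty([0,T],L^3)$). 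After applying the propagation-of-regularity machinery (Corollary \ref{corollairepropagreg}) to upgrade the regularity of $u$ enough to invoke unique continuation, I conclude $u\equiv 0$, hence $u_n(0)\rightharpoonup 0$ in $H^s$.

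The second key step is to promote this weak convergence to strong convergence of $u_n(0)$ in $H^s$, which will contradict the normalization $\nor{u_n(0)}{H^s}=1$. Here I would invoke the propagation of compactness, Corollary \ref{corrpropag2}: I need to check its three hypotheses for $u_n$ (after the harmless substitution absorbing the zeroth-order part, as in the proof of Proposition \ref{propstabilisation}), namely that $u_n\rightharpoonup 0$ in $X^{s,b'}_T$, that $au_n\to 0$ in $L^2([0,T],H^s)$, and that $i\partial_t u_n+\Delta u_n\to 0$ in $X^{s,-b'}_T$. The observation hypothesis gives the second; the first follows from the established weak convergence; and the third requires showing that the potential terms $|w_n|^2 u_n$ and $w_n^2\overline{u_n}$ tend to $0$ in $X^{s,-b'}_T$, which again rests on Lemma \ref{lmtrilinXsbbis} together with the linearisation/compactness argument (the strong convergence $u_n\to 0$ in $X^{s,b'}_T$ that Corollary \ref{corrpropag2} itself will feed back, together with the bound on $w_n$). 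Corollary \ref{corrpropag2} then yields $u_n\to 0$ in $X^{s,1-b'}_T$, hence $u_n(0)\to 0$ in $H^s$, the desired contradiction.

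The main obstacle I anticipate is the regularity bookkeeping in the first step: unique continuation (Assumption \ref{uniquecontinuationH1}) is stated for $C([0,T],H^1)$ solutions with $L^\infty L^3$ potentials, whereas the limit $u$ is produced only in $X^{s,b}_T$ with $s$ possibly below $1$. Reconciling this --- propagating enough microlocal regularity along the geodesics via Corollary \ref{corollairepropagreg} to legitimately apply the unique continuation statement, while keeping the rough potential $w\in\Xubt$ under control in the trilinear estimates --- is the delicate part, and it is exactly the role played by the combination of Assumption \ref{hypinegXsb} (ensuring the bilinear/trilinear smoothing) and Assumption \ref{uniquecontinuationH1} (the low-regularity unique continuation) in the statement.
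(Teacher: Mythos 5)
Your overall strategy (contradiction, normalization, extraction of weak limits, unique continuation to kill the limit, propagation of compactness to upgrade to strong convergence) is the same compactness--uniqueness scheme as the paper's, and several ingredients are correctly identified (boundedness via Proposition \ref{thmlineareqn}, the $L^\infty L^3$ integrability of $|w|^2$, the regularity upgrade through Corollary \ref{corollairepropagreg} before invoking Assumption \ref{uniquecontinuationH1}, and the final contradiction). However, the \emph{order} in which you run the two key steps creates a genuine gap that you yourself flag without resolving: to apply Corollary \ref{corrpropag2} you must verify that $i\partial_t u_n+\Delta u_n=\pm2|w_n|^2u_n\pm w_n^2\overline{u_n}$ tends to $0$ in $X^{s,-b'}_T$, and your proposal derives this from ``the strong convergence $u_n\to 0$ in $X^{s,b'}_T$ that Corollary \ref{corrpropag2} itself will feed back.'' That is circular. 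The paper breaks the circle by applying Proposition \ref{thmpropagation2} (not the corollary) to the differences $r_n=u_n-u$, whose source terms $f_n$ are only required to converge in the much weaker space $X^{s-1+b,-b}_T$; this convergence is free, since the potential terms are bounded in $X^{s,-b'}_T$ by Lemma \ref{lmtrilinXsbbis} and $X^{s,-b'}_T$ embeds compactly into $X^{s-1+b,-b}_T$, so a subsequence converges there without any need to identify or annihilate the limit. This yields $r_n\to 0$ in $L^2_{loc}H^s$, hence in $X^{s,b'}_I$ by interpolation, \emph{before} any unique continuation argument.

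The second, related weak point is your first step: you claim that strong convergence of $w_n$ in $X^{r,b'}_T$ together with mere weak convergence of $u_n$ identifies the weak limit of $|w_n|^2u_n$ as $|w|^2u$. With the estimates as stated this does not follow: writing $|w_n|^2u_n-|w|^2u=(u_n-u)|w_n|^2+u\bigl[|w_n-w|^2-w(\overline{w-w_n})-\overline{w}(w-w_n)\bigr]$ as the paper does, the term $(u_n-u)|w_n|^2$ is only controlled by $\|u_n-u\|_{X^{s,b'}_T}\|w_n\|^2_{X^{1,b'}_T}$, so you need the strong convergence of $u_n-u$ delivered by the propagation-of-compactness step; the ``compact'' estimate (\ref{inegtrilinmultcompact}) gains a low norm only when the two potential factors are the same function (as in $u|w_n-w|^2$), and the remaining terms are merely linear in $w-w_n$ and converge only weakly. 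So the propagation of compactness must come first; once it does, your remaining outline (identification of the limit equation, bootstrap of regularity, Assumption \ref{uniquecontinuationH1} to get $u=0$, and then strong convergence of $u_n(0)$ to $0$ contradicting the normalization) coincides with the paper's proof.
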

\begin{proof}
We only treat the case with $2|w|^2 u + w^2 \bar{u}$. The others are similar. We argue by contradiction. Let $u_n \in X^{s,b}_T$ be a sequence of solution to (\ref{eqlin1}) with some associated $w_n$ such that
\bnan
\label{hypocontradict}
\left\| u_n(0) \right\|_{H^{s}}=1,\quad \intT \left\|a u_n \right\|^2_{H^{s}} \rightarrow 0
\enan
and
$$ \left\|w_n\right\|_{\Xubt}\leq A.$$
Proposition \ref{thmlineareqn} of existence yields that $u_n$ is bounded in $\Xsbt$ and we can extract a subsequence such that $u_n$ converges strongly in $X^{s-1+b,-b}_T $ to some $u\in \Xsbt$ ($b<1-b'<1$).\\
Then, using Lemma \ref{lmtrilinXsbbis}, we infer that $2|w_n|^2 u_n + w_n^2 \bar{u_n}$ is bounded in $X^{s,-b'}_T$. We can extract another subsequence such that it converges strongly in $X^{s-1+b,-b}_T $ (here we use $-b<-1/2<-b'$) to some $ \Psi \in X^{s,-b'}_T$. \\
Denoting $r_n=u_n-u$ and $f_n=2|w_n|^2 u_n +w_n^2 \bar{u_n}-\Psi$, we can apply Proposition \ref{thmpropagation2} of propagation of compactness. As $\omega$ satisfies geometric control and $au_n \rightarrow 0$ in $L^2([0,T],H^{s})$, we obtain that $r_n \rightarrow 0$ in $L^2_{loc}([0,T],H^{s})$.\\
$r_n$ is also bounded in $\Xsbt$ and we deduce, by interpolation, that $r_n$ tends to $0$ in $X^{s,b'}_I$ for every $I\subset\subset ]0,T[$.
  
Now, we want to prove that $u\equiv 0$ using unique continuation. As $w_n$ is bounded in $\Xubt$, we can extract a subsequence such that it converges weakly to some $w\in \Xubt$. We have to prove that $u$ is solution of a linear Schrödinger equation with potential. But the fact that $|w_n|^2u_n$ converges weakly to $|w|^2u$ is not guaranteed and actually uses the fact that the regularity $H^1$ is subcritical (see the article of L. Molinet \cite{Molinet} where the limit of the product is not the expected one).\\
We decompose
\bna
u_n|w_n|^2-u|w|^2&=&(u_n-u)|w_n|^2+u\left[|w_n-w|^2-w(\overline{w-w_n})-\overline{w}(w-w_n)\right]\\
&=&1+2+3+4
\ena
Term 1 converges strongly to $0$ in $X^{s,-b'}_T$ because $u_n-u$ tends to $0$ in $X^{s,b'}_T$ and $w_n$ is bounded in $\Xubt$. For term 2, we use tame estimate for $\varepsilon$ such that $1-\varepsilon>s_0$
 $$\left\|u|w_n-w|^2\right\|_{X^{s,-b'}_T}\leq \left\|u\right\|_{\Xsbt}\left\|w_n-w\right\|_{X^{1-\varepsilon,b'}_T}\left\|w_n-w\right\|_{X^{1,b'}_T}.$$
By compact embedding, $w_n-w$ converges, up to extraction, strongly to $0$ in $X^{1-\varepsilon,b'}_T$ and Term 2 converges strongly in $ X^{s,-b'}_T$. Terms 3 and 4 converge weakly to $0$ in $X^{-1,-b}_T$ and so in the distributional sense.

Finally, we conclude that the limit of $u_n|w_n|^2$ is $u|w|^2$. We obtain similarly that $w_n^2 \overline{u_n}$ converges in the distributional sense to $w^2 \bar{u}$. Therefore, $u$ is solution of
\begin{eqnarray*}
\left\lbrace
\begin{array}{c}%argument r=alignement à droite puis center et left
i\partial_t u + \Delta u =2|w|^2 u +w^2 \bar{u}\\
u=0 \textnormal{ on } [0,T]\times \omega
\end{array}
\right.
\end{eqnarray*}
Using Corollary \ref{corollairepropagreg}, we infer that $u\in L^2_{loc}([0,T],H^{s+\frac{1-b}{2}})$ and existence Proposition \ref{thmlineareqn} yields that it actually belongs to $X^{s+\frac{1-b}{2},b}_T$. By iteration, we obtain that $u\in X^{1,b}_T$. Then, we can apply Assumption \ref{uniquecontinuationH1} and we have in fact $u=0$.

We pick $t_0 \in [0,T]$ such that $u_n(t_0)$ converges strongly to $0$ in $H^s$. Estimate (\ref{inegeqnlin}) of existence Proposition \ref{thmlineareqn} yields strong convergence to $0$ of $u_n$ in $\Xsbt$. Therefore, $\left\|u_n(0)\right\|_{H^s}$ tends to $0$, which contradicts (\ref{hypocontradict}).   
\end{proof}
%----------------------------------------------------------------------------------
\subsection{Linear control}
%---------------------------------------------------------------------------------
\begin{prop}
\label{proplincontrol}
Assume that $(M,\omega)$ satisfies Hypothesis \ref{geometriccontrol}, \ref{hypinegXsb} and \ref{uniquecontinuationH1}. Let $-1\leq s\leq 1$, $T>0$ and $w\in X^{1,b}_T$. For every $u_0\in H^s(M)$ 
there exists a control $g\in C([0,T],H^s)$ supported in $[0,T]\times \overline{\omega}$, such that the unique solution $u$ in $X^{s,b}_T$ of the Cauchy problem
\begin{eqnarray}
\left\lbrace
\begin{array}{rcl}%argument r=alignement à droite puis center et left
i\partial_t u + \Delta u &=&\pm2|w|^2 u  \pm w^2 \overline{u} +g\quad \textnormal{on}\quad[0,T]\times M\\
u(0)&=&u_{0} \in H^s(M)
\end{array}
\right.
\end{eqnarray}
satisfies $u(T)=0$.
\end{prop}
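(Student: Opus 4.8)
The plan is to invoke the HUM (Hilbert Uniqueness Method) duality, converting the observability estimate of Proposition \ref{propobserv} into a control. Since observability is already granted, the proof is essentially standard; the only two points requiring care are that the equation is merely $\mathbb{R}$-linear (because of the $\pm w^2\overline u$ term) and that the control must land in $H^s$, not in a negative-index space.

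First I would record the bilinear identity behind the duality. The operator $B:u\mapsto \pm 2|w|^2u\pm w^2\overline u$ is self-adjoint for the real inner product $\mathrm{Re}\int_M u\,\overline v$, because multiplication by $|w|^2$ is real and $\mathrm{Re}\int_M w^2\overline u\,\overline v=\mathrm{Re}\int_M \overline{w}^2 uv$. Hence, if $u$ solves the inhomogeneous equation with forcing $g$ and $v$ solves the homogeneous equation, i.e. (\ref{eqlin1}), which is exactly the equation of Proposition \ref{propobserv}, an integration by parts gives
\bna
\frac{d}{dt}\,\mathrm{Im}\int_M u\,\overline v\,dx=-\,\mathrm{Re}\int_M g\,\overline v\,dx,
\ena
all the potential contributions cancelling in the imaginary part thanks to the real self-adjointness of $B$. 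Integrating in time with $u(0)=u_0$ produces the fundamental relation $\mathrm{Im}\langle u(T),v(T)\rangle-\mathrm{Im}\langle u_0,v(0)\rangle=-\int_0^T\mathrm{Re}\langle g,v\rangle\,dt$, which characterises the controls steering $u_0$ to $0$.

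Next comes the minimisation. Because I want $g\in C([0,T],H^s)$, I would take the adjoint data in $H^{-s}$ and use that Proposition \ref{propobserv}, valid for every index in $[-1,1]$, applies with $-s$ in place of $s$. On the solutions $v$ of (\ref{eqlin1}) parametrised by $v(0)\in H^{-s}$, set
\bna
J(v)=\tfrac12\int_0^T\|a\,v(t)\|_{H^{-s}}^2\,dt-\mathrm{Im}\langle u_0,v(0)\rangle .
\ena
The quadratic part is coercive precisely by the observability estimate in $H^{-s}$, and the linear part is bounded by $\|u_0\|_{H^s}\|v(0)\|_{H^{-s}}$ via the $H^s$--$H^{-s}$ duality; thus $J$ is continuous, strictly convex and coercive on $H^{-s}$ and has a unique minimiser $v^\ast$. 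Writing the Euler--Lagrange equation and using $\|a v\|_{H^{-s}}^2=\langle a(1-\Delta)^{-s}a\,v,v\rangle_{L^2}$, I read off the control $g=a(1-\Delta)^{-s}a\,v^\ast$ (up to an inessential sign). It is supported in $\overline\omega$ because of the outer factor $a$; and since $v^\ast\in X^{-s,b}_T\subset C([0,T],H^{-s})$ by the existence Proposition \ref{thmlineareqn} while $a(1-\Delta)^{-s}a$ gains $2s$ derivatives, one gets $g\in C([0,T],H^s)$. Comparing the Euler--Lagrange identity with the fundamental relation forces $\mathrm{Im}\langle u(T),v(T)\rangle=0$ for all admissible final data, hence $u(T)=0$; that the associated $u$ lies in $X^{s,b}_T$ is then Proposition \ref{thmlineareqn}, since $C([0,T],H^s)\subset X^{s,-b'}_T$.

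The hard part here is bookkeeping rather than analysis: one must ensure the control comes out in $H^s$ and not in $H^{-s}$. This is exactly why the adjoint data is placed in $H^{-s}$ and why it is essential that Proposition \ref{propobserv} holds across the whole range $-1\le s\le 1$; the smoothing operator $(1-\Delta)^{-s}$ then turns the $H^{-s}$ adjoint solution into an $H^s$ control. The $\mathbb{R}$-linearity is by contrast harmless, once one works with the real pairing $\mathrm{Im}\langle\cdot,\cdot\rangle$ in which the $w$-dependent terms disappear.
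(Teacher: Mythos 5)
Your proof is correct and is essentially the paper's argument: HUM with the control operator $g=a(x)(1-\Delta)^{-s}a(x)\,v$ acting on an adjoint state with datum in $H^{-s}$, the coercivity being supplied by the observability estimate of Proposition \ref{propobserv} applied at index $-s$, exactly as in the paper. The only (equivalent) variations are that you phrase HUM as minimisation of the quadratic functional $J$ rather than as invertibility of the operator $S$, and that you keep the same sign on $w^2\overline{v}$ in the adjoint equation and pair with $\mathrm{Im}\langle\cdot,\cdot\rangle$, whereas the paper flips that sign and pairs with $\mathrm{Re}\langle\cdot,\cdot\rangle$ --- both bookkeepings of the $\mathbb{R}$-linearity are consistent.
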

\begin{proof}
We only treat the case with $2|w|^2 u + w^2 \bar{u}$. Let $a(x)\in C^{\infty}(M)$ real valued, as in (\ref{lienoma}). We apply the HUM method of J.L. Lions. We consider the system
\begin{eqnarray*}
\left\lbrace
\begin{array}{rcll}%argument r=alignement à droite puis center et left
i\partial_t u + \Delta u &=& 2|w|^2 u + w^2 \overline{u}+g&g\in L^2([0,T],H^{s}) \quad u(T)=0\\
i\partial_t v + \Delta v &=& 2|w|^2 v - w^2 \overline{v}&v(0)=v_0\in H^{-s}
\end{array}
\right.
\end{eqnarray*}
These equations are well posed in $X^{s,b}_T$ and $X^{-s,b}_T$ thanks to Proposition \ref{thmlineareqn}.
The equation verified by $v$ is the dual as the one of $u$ for the real duality (the equation is not $\C$ linear). Then, multiplying the first system by $i\overline{v}$, integrating and taking real part, we get (the computation is true for $w$, $g$ and $v_0$ smooth, we extend it by approximation)
\bna
\Re (u_0,v_0)_{L^2}=\Re \int_0^T (ig,v)_{L^2} dt 
\ena
where $(\cdot,\cdot)_{L^2}$ is the complex duality on $L^2(M)$. We define the continuous map $S : H^{-s} \rightarrow H^s$ by $Sv_0=u_0$ with the choice
$$g=Av=-ia(x)(1-\Delta)^{-s}a(x).$$
This yields
$$\Re (Sv_0,v_0)_{L^2}= \Re \int_{0}^T (a(x)(1-\Delta)^{-s}a(x)v,v)=\int_0^T \norL{1-\Delta)^{-s/2}a(x)v}^2= \int_0^T \nor{a(x)v}{H^{-s}}^2$$
Thus, $S$ is self-adjoint and positive-definite thanks to observability estimate of Proposition \ref{propobserv}. It therefore defines an isomorphism from $H^{-s}$ into $H^s$. Moreover, we notice that the norms of $S$ and $S^{-1}$ are uniformly bounded as $w$ is bounded in $\Xubt$.  
\end{proof}
\begin{prop}
Assume $0\leq s\leq 1$, $w=0$ and $(M,\omega)$ is either :\\
-($\Tot$,any open set)\\
-($S^2\times S^1$,(a neighborhood of the equator)$\times S^1$)\\
-($S^2\times S^1$,$S^2\times $ (any open set of $S^1$))\\
Then, the same conclusion as Proposition \ref{proplincontrol} holds.
\end{prop}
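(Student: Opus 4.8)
The plan is to observe that the hypothesis $w=0$ collapses the equation of Proposition \ref{proplincontrol} to the free Schr\"odinger equation $i\partial_t u + \Delta u = g$, and that the HUM construction used to prove Proposition \ref{proplincontrol} is completely insensitive to \emph{how} the observability estimate is obtained: the only place where Assumption \ref{geometriccontrol} entered was through Proposition \ref{propobserv}, which served to make the operator $S$ positive-definite. Since the three configurations listed here do \emph{not} satisfy the geometric control condition, I would keep the HUM machinery verbatim and feed it a different source of observability, namely the known unconditional observability of the free group $e^{it\Delta}$ for these geometries. Concretely, for each $-1\leq \sigma\leq 1$ and $T>0$ it suffices to establish
\begin{eqnarray*}
\nor{v_0}{H^\sigma}^2 \leq C\intT \nor{a v}{H^\sigma}^2\, dt, \qquad v(t)=e^{it\Delta}v_0,
\end{eqnarray*}
and then to run the argument of Proposition \ref{proplincontrol} with dual index $\sigma=-s$.

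First I would settle the $L^2$ case $\sigma=0$. For $\Tot$ with an arbitrary open set this is exactly the theorem of Jaffard \cite{Jaffard} (see also Komornik \cite{Komornik}); since $\{a\neq 0\}=\omega$ is open and $a$ can be taken elliptic on any open $\omega'\subset\subset\omega$, the quantity $\norL{av}$ bounds from below a constant times the $L^2$ mass on $\omega'$, and Jaffard's estimate applied to $\omega'$ closes the loop. For $S^2\times S^1$ I would use the product principle recalled in the introduction: if $\omega_1\subset M_1$ is observable for the Schr\"odinger group on $M_1$, then $\omega_1\times M_2$ is observable on $M_1\times M_2$, which one proves by expanding $v$ on the eigenbasis of $M_2$ and applying the $M_1$-estimate fibrewise. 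Taking $M_1=S^2$ with $\omega_1$ a neighborhood of the equator (every great circle meets it, so it satisfies geometric control on $S^2$, hence observability) gives $\omega=\omega_1\times S^1$, and taking $M_1=S^1$ with $\omega_1=]0,\varepsilon[$ (one-dimensional observability, valid for every open subarc and every $T>0$) gives $\omega=S^2\times]0,\varepsilon[$.

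Next I would propagate the estimate along the Sobolev scale. Conjugating by $(1-\Delta)^{\sigma/2}$, the function $w=(1-\Delta)^{\sigma/2}v$ is again a free solution with $\norL{w(0)}=\nor{v_0}{H^\sigma}$, so the $L^2$ estimate yields $\nor{v_0}{H^\sigma}^2\leq C\intT\norL{a(1-\Delta)^{\sigma/2}v}^2$; writing $a(1-\Delta)^{\sigma/2}v=(1-\Delta)^{\sigma/2}(av)-[(1-\Delta)^{\sigma/2},a]v$, using that the commutator has order $\sigma-1$, and using that every $H^{\sigma-1}$ norm is conserved by the free flow, I obtain the weak observability
\begin{eqnarray*}
\nor{v_0}{H^\sigma}^2\leq C\intT \nor{av}{H^\sigma}^2\, dt + CT\,\nor{v_0}{H^{\sigma-1}}^2.
\end{eqnarray*}
The lower-order term is absorbed by a compactness--uniqueness argument: a contradicting sequence $v_{0,n}$, normalized in $H^\sigma$ with vanishing observation, converges strongly in $H^{\sigma-1}$ by compact embedding to a nonzero limit $v_0$ whose free evolution vanishes on $[0,T]\times\omega$. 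The required unique continuation for this limit I would obtain directly from the $L^2$ observability, \emph{at any regularity}, by time-averaging: mollifying in $t$ by $\rho_\varepsilon$ replaces $v_0$ by $m_\varepsilon(\Delta)v_0$ with a multiplier $m_\varepsilon$ concentrated near the origin and rapidly decaying, hence smoothing, while keeping the solution free and still vanishing on $[\varepsilon,T-\varepsilon]\times\omega$; the $L^2$ estimate forces $m_\varepsilon(\Delta)v_0=0$, and letting $\varepsilon\to 0$ gives $v_0=0$, the contradiction.

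The main obstacle is precisely the manufacture of observability in the absence of geometric control: the $L^2$ input for the torus rests on the nontrivial number-theoretic content of Jaffard's theorem, and the reduction for $S^2\times S^1$ relies on the fibrewise product argument being uniform enough to survive the spatial cutoff and to transfer to the Sobolev scale. Once the $H^\sigma$ observability is secured for $\sigma\in[-1,1]$, the self-adjoint positive operator $S$ of Proposition \ref{proplincontrol} is an isomorphism and the control $g$ is produced exactly as there, with no further work.
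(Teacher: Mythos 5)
Your proposal is correct and follows essentially the same route as the paper: reduce to an observability estimate for the free group, feed in Jaffard--Komornik for $\Tot$ and the fibrewise product argument for $S^2\times S^1$ at the $L^2$ level, and pass to the Sobolev scale by conjugating with $(1-\Delta)^{\sigma/2}$ and using commutator estimates. The only divergence is in how the lower-order commutator term is handled: the paper ultimately delegates this step to Proposition \ref{thmpropagrecontrol} (controllability in $L^2$ implies controllability in $H^s$ via the $L^2$ HUM operator, which relies on Assumption \ref{hypcommut}), whereas you give a self-contained compactness--uniqueness argument whose unique-continuation input is obtained by time-mollifying the free solution --- a clean and correct alternative that avoids the quadrilinear commutator machinery.
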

\begin{proof}
By following the proof of Proposition \ref{proplincontrol}, we are reduced to proving an observability estimate
$$\left\|u_0\right\|^2_{H^{-s}}\leq C \intT \left\|a(x) e^{it\Delta}u_0\right\|^2_{H^{-s}}dt$$
These results are already known for $s=0$ : \\
-for $\Tot$, this was first proved by S. Jaffard \cite{Jaffard} in dimension $2$ and generalized to any dimension by V. Komornik \cite{Komornik}.\\
-the others example are of the form $(M_1\times M_2,\omega_1\times M_2)$ were $\omega_1$ satisfies observability estimate.

We can extend them to any $s$, with $0\leq s\leq 1$ by writing $\nor{u_0}{H^{-s}}=\nor{(1-\Delta)^{-s/2}u_0}{L^2}$. We conclude using observablility estimate in $L^2$ and commutator estimates.\\ 
Actually, Proposition \ref{thmpropagrecontrol} of the next section proves that controllability in $L^2$ implies controllability in $H^s$, $0\leq s\leq 1$, with the HUM operator constructed on $L^2$. This yields the observability estimate in $H^{-s}$ and for that reason, we do not detail the previous argument. 
\end{proof}
%------------------------------------------------------------------------------------
\subsection{Regularity of the control}
%-----------------------------------------------------------------------------------
\label{subsectprop}
This section is strongly inspired by the work of B. Dehman and G. Lebeau \cite{HumDehLeb}. It express the fact that the HUM operator constructed on a space $H^s$ propagates some better regularity. We extend this result to the Schrödinger equation with some rough potentials.
  
Let $T>0$, $s\in [-1,1]$ and $w\in X^{1,b}_T$. As in the the proof of Proposition \ref{proplincontrol}, we denote $S= S_{s,T,w,a} : H^{-s}\rightarrow H^s$ the HUM operator of control associated to the trajectory $w$ by $S\Phi_0=u_0$ where
\begin{eqnarray*}
\left\lbrace
\begin{array}{rcl}%argument r=alignement à droite puis center et left
i\partial_t \Phi + \Delta \Phi &=& 2|w|^2 \Phi - w^2 \overline{\Phi}\\
\Phi(x,0)&=&\Phi_{0}(x)\in H^{-s}
\end{array}
\right.
\end{eqnarray*}
 and $u$ solution of 
\begin{eqnarray*}
\left\lbrace
\begin{array}{rcl}%argument r=alignement à droite puis center et left
i\partial_t u + \Delta u &=&2|w|^2 u + w^2 \overline{u}+A \Phi\\
u(T)&=&0
\end{array}
\right.
\end{eqnarray*}
where $A=-ia(x)(1-\Delta)^{-s}a(x)$.
\begin{prop}
\label{thmpropagrecontrol}
Suppose Assumptions \ref{hypinegXsb} and \ref{hypcommut} are fulfilled. Let $0\leq s_0<s \leq 1$, $\varepsilon=1-s$ and $w\in \Xubt$. Denote $S=S_{s,T,w,a}$ the operator defined above. We assume that $S$ is an isomorphism from $H^{-s}$ into $H^s$. Then, $S$ is also an isomorphism from $H^{-s+\varepsilon}$ into $H^{s+\varepsilon}=H^1$.
\end{prop}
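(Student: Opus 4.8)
The plan is to turn the statement into a compact perturbation of the invertibility hypothesis by conjugating with powers of $\Lambda:=(1-\Delta)^{1/2}$. Writing $\varepsilon=1-s$ (so $-s+\varepsilon=1-2s$ and $s+\varepsilon=1$; the case $s=1$, $\varepsilon=0$ is trivial, so assume $0<\varepsilon$), the map $\Phi_0\mapsto\Lambda^{s-\varepsilon}\Phi_0$ is an isometry of $L^2(M)$ onto $H^{-s+\varepsilon}$ and $\Lambda^{s+\varepsilon}$ is an isometry of $H^{s+\varepsilon}$ onto $L^2(M)$. Hence $S$ is an isomorphism from $H^{-s+\varepsilon}$ onto $H^{s+\varepsilon}=H^1$ \emph{if and only if} the operator $T_\varepsilon:=\Lambda^{s+\varepsilon}S\Lambda^{s-\varepsilon}$ is an isomorphism of $L^2(M)$, while the hypothesis says precisely that $T_0:=\Lambda^{s}S\Lambda^{s}$ is an isomorphism of $L^2(M)$. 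The elementary identity $T_\varepsilon=T_0+R$, where $R:=\Lambda^{s}[\Lambda^{\varepsilon},S]\Lambda^{s-\varepsilon}$, then reduces everything to showing that $R$ is a \emph{compact} operator on $L^2(M)$, after which a Fredholm argument closes the proof. Throughout one works with the real Hilbert space structure on $L^2(M;\C)$, since the equations defining $S$ are only $\R$-linear.

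The commutator $[\Lambda^{\varepsilon},S]$ is computed by commuting $\Lambda^{\varepsilon}$ through the two equations defining $S$. Since $\Lambda^{\varepsilon}=(1-\Delta)^{\varepsilon/2}$ has a real symbol it commutes with complex conjugation, so applying it to the dual equation for $\Phi$, to the control $A=-ia(x)(1-\Delta)^{-s}a(x)$, and to the backward equation for $u$ produces, via the Duhamel formula for the well-posed linear flow of Proposition \ref{thmlineareqn}, the representation $[\Lambda^{\varepsilon},S]\Phi_0=\tilde u(0)$, where $\tilde u$ solves the primal linear equation with potential $2|w|^2\tilde u+w^2\overline{\tilde u}$, satisfies $\tilde u(T)=0$, and is driven by a finite sum of commutator sources of the three types $[\Lambda^{\varepsilon},|w|^2]$, $[\Lambda^{\varepsilon},w^2\overline{(\cdot)}]$ (acting on $\Phi$ and on $u$) and $[\Lambda^{\varepsilon},A]$.

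The crux, and the hardest step, is to estimate these sources with a genuine gain. The term $[\Lambda^{\varepsilon},A]$ is harmless: $A$ is a pseudodifferential operator of order $-2s$, its commutator with $\Lambda^{\varepsilon}$ has order $\varepsilon-2s-1$, and it is controlled by the pseudodifferential bounds of Lemma \ref{lemmepseudoxsb}. The difficulty is concentrated in $[\Lambda^{\varepsilon},|w|^2]$ and $[\Lambda^{\varepsilon},w^2\overline{(\cdot)}]$, because $w$ only lies in $\Xubt$: a naive commutation would cost $\varepsilon$ derivatives on $w$, which $w$ does not possess. This is exactly what Assumption \ref{hypcommut} is designed to bypass. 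The quadrilinear estimate (\ref{quadrinlinschrod}) shows that these commutators map $X^{\sigma,b'}_T$ into $X^{\sigma,-b'}_T$ with the \emph{same} exponent $s_0$ as the trilinear estimates of Lemma \ref{lmtrilinXsbbis}, the loss $N^{\varepsilon}$ being compensated, and with constant controlled by $\nor{w}{\Xubt}^2$. Consequently $[\Lambda^{\varepsilon},S]$ gains $\varepsilon$ over $\Lambda^{\varepsilon}S$, so that $R=\Lambda^{s}[\Lambda^{\varepsilon},S]\Lambda^{s-\varepsilon}$ maps $L^2(M)$ boundedly into $H^{\delta}(M)$ for some $\delta>0$ (one may take $\delta=\varepsilon$). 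Since $M$ is compact, $H^{\delta}(M)\hookrightarrow L^2(M)$ is compact, whence $R$ is compact on $L^2(M)$.

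Finally one concludes by Fredholm theory. As $T_0$ is an isomorphism it is Fredholm of index $0$, and adding the compact operator $R$ keeps $T_\varepsilon=T_0+R$ Fredholm of index $0$. For injectivity, suppose $T_\varepsilon\psi=0$ for some $\psi\in L^2(M)$; then $\Phi_0:=\Lambda^{s-\varepsilon}\psi\in H^{-s+\varepsilon}\subset H^{-s}$ (using $\varepsilon>0$) satisfies $S\Phi_0=0$, so the injectivity of $S$ on $H^{-s}$ forces $\Phi_0=0$, hence $\psi=0$. An injective Fredholm operator of index $0$ is surjective, so $T_\varepsilon$ is an isomorphism of $L^2(M)$, which by the equivalence of the first paragraph means that $S$ is an isomorphism from $H^{-s+\varepsilon}$ onto $H^{s+\varepsilon}=H^1$. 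The main obstacle is the commutator estimate of the third paragraph, i.e.\ the use of Assumption \ref{hypcommut} to commute $\Lambda^{\varepsilon}$ past $|w|^2$ at the borderline regularity of $w$.
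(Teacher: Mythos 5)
Your proof is correct, and its analytic core is exactly the paper's: everything hinges on showing that the commutator $[D^{\varepsilon},S]$ (with $D^{\varepsilon}=(-\Delta)^{\varepsilon/2}$ or your $\Lambda^{\varepsilon}$, which differ by a smoothing operator) gains $\varepsilon$ derivatives, i.e.\ maps $H^{-s}$ boundedly into $H^{s}$; both you and the paper obtain this by writing $v-D^{\varepsilon}u$ and $\varphi-D^{\varepsilon}\Phi$ as solutions of the linear equations of Proposition \ref{thmlineareqn} driven by the sources $[D^{\varepsilon},|w|^2]$, $[D^{\varepsilon},w^2\overline{(\cdot)}]$ and $[D^{\varepsilon},A]$, and by estimating the first two via Lemma \ref{quadrdoneXsb}, i.e.\ Assumption \ref{hypcommut}. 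Where you genuinely diverge is in how the isomorphism is extracted from this estimate. The paper argues directly: it first checks that $S$ maps $H^{-s+\varepsilon}$ into $H^{s+\varepsilon}$, then writes $\left\|D^{\varepsilon}\Phi_0\right\|_{H^{-s}}\leq C\left\|SD^{\varepsilon}\Phi_0\right\|_{H^{s}}\leq C\left\|[S,D^{\varepsilon}]\Phi_0\right\|_{H^{s}}+C\left\|u_0\right\|_{H^{s+\varepsilon}}$, which yields the quantitative a priori bound $\nor{\Phi_0}{H^{-s+\varepsilon}}\leq C\nor{S\Phi_0}{H^{s+\varepsilon}}$ with a constant controlled by $\nor{w}{\Xubt}$ (this explicit control is convenient for the uniformity statements used later, e.g.\ in Proposition \ref{propcontrotraj}); the price is that one must first make sense of $SD^{\varepsilon}\Phi_0$ for $\Phi_0\in H^{-s}$ only, which strictly speaking requires a small regularization step. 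Your route conjugates to $L^2$, writes $T_{\varepsilon}=T_0+R$ with $R=\Lambda^{s}[\Lambda^{\varepsilon},S]\Lambda^{s-\varepsilon}$, and invokes Fredholm theory plus the injectivity inherited from $H^{-s}$; this cleanly sidesteps the regularization issue and the two-step (boundedness, then a priori estimate) structure, at the cost of producing the inverse only through the open mapping theorem, so the norm of $S^{-1}$ on $H^{s+\varepsilon}$ is not as transparently controlled. One small imprecision: $R$ does not map $L^2$ into $H^{\varepsilon}$ as you assert --- the commutator bound $\left\|[\Lambda^{\varepsilon},S]\Phi_0\right\|_{H^{s}}\leq C\left\|\Phi_0\right\|_{H^{-s}}$ gives the $\varepsilon$ gain on the \emph{source} side, namely $\left\|R\psi\right\|_{L^2}\leq C\left\|\psi\right\|_{H^{-\varepsilon}}$, so $R$ factors through the compact embedding $L^2\hookrightarrow H^{-\varepsilon}$; this still yields the compactness of $R$ on $L^2$ that your Fredholm argument needs, so the proof goes through.
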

\begin{proof}
First, we show that $S$ maps $H^{-s+\varepsilon}$ into $H^{s+\varepsilon}$.

Let $\Phi_0\in H^{-s+\varepsilon}$. By existence Proposition \ref{thmlineareqn}, we have $\Phi \in X^{-s+\varepsilon,b}_T$, then $A\Phi \in L^2([0,T],H^{s+\varepsilon})$ and existence Proposition \ref{thmlineareqn} gives again $u\in X^{s+\varepsilon,b}_T$ and $u(0)=S\Phi_0\in H^{s+\varepsilon}$.

To finish, we only have to prove that $S\Phi_0=u_0\in H^{s+\varepsilon}$ implies $\Phi_0\in H^{-s+\varepsilon}$. As we already know that $\Phi_0\in H^{-s}$, we need to prove that $(-\Delta)^{\varepsilon/2}\Phi_0\in H^{-s}$. We use the fact that $S$ is an isomorphism from $H^{-s}$ into $H^{s}$. Denote $D^{\varepsilon}=(-\Delta)^{\varepsilon/2}$.
\bna
\left\|D^{\varepsilon}\Phi_0\right\|_{H^{-s}}\leq C\left\|S D^{\varepsilon}\Phi_0\right\|_{H^{s}}& \leq &C\left\|SD^{\varepsilon}\Phi_0-D^{\varepsilon}S\Phi_0\right\|_{H^{s}}+C\left\|D^{\varepsilon}S\Phi_0\right\|_{H^{s}}\\
&\leq &C\left\|SD^{\varepsilon}\Phi_0-D^{\varepsilon}u_0\right\|_{H^{s}}+C\left\|u_0\right\|_{H^{s+\varepsilon}}
\ena
Let $\varphi$ solution of
 \begin{eqnarray*}
\left\lbrace
\begin{array}{rcl}%argument r=alignement à droite puis center et left
i\partial_t \varphi + \Delta \varphi &=& 2|w|^2 \varphi - w^2 \overline{\varphi}\\
\varphi(x,0)&=&D^{\varepsilon}\Phi_{0}(x)
\end{array}
\right.
\end{eqnarray*} 
 and $v$ solution of 
\begin{eqnarray*}
\left\lbrace
\begin{array}{rcl}%argument r=alignement à droite puis center et left
i\partial_t v + \Delta v &=&2|w|^2 v + w^2 \overline{v}+A \varphi\\
v(T)&=&0
\end{array}
\right.
\end{eqnarray*}
So that $v(0)=SD^{\varepsilon}\Phi_0$. We need to estimate $\nor{v(0)-D^{\varepsilon}u_0}{H^s}$. But $r=v-D^{\varepsilon} u$  is solution of
 \begin{eqnarray*}
\left\lbrace
\begin{array}{rcl}%argument r=alignement à droite puis center et left
i\partial_t r + \Delta r &=&2|w|^2 r + w^2 \overline{r}- 2 [D^{\varepsilon},|w|^2]u-[D^{\varepsilon},w^2]\overline{u}+A (\varphi-D^{\varepsilon}\Phi)-[D^{\varepsilon},A]\Phi\\
r(T)&=&0
\end{array}
\right.
\end{eqnarray*}
Then, using Proposition \ref{thmlineareqn} we obtain
\bna
\left\|r_0\right\|_{H^{s}}\leq C \left\|r\right\|_{X^{s,b}_T}&\leq &C\left(\left\| [D^{\varepsilon},|w|^2]u\right\|_{X^{s,-b'}_T}+\nor{[D^{\varepsilon},w^2]}{X^{s,-b'}_T}\right.\\
& &\left.+ \left\|A (\varphi-D^{\varepsilon}\Phi)\right\|_{X^{s,-b'}_T}+\left\|[D^{\varepsilon},A]\Phi\right\|_{X^{s,-b'}_T}\right)
\ena
Lemma \ref{quadrdoneXsb} of the Appendix, Section \ref{sectcommut} gives us some estimates about the commutators. For the last term, we notice that $[D^{\varepsilon},A]$ is a pseudodifferential operator of order $\varepsilon-2s-1\leq -2s$.
\bna
\left\|r_0\right\|_{H^{s}}\leq C\left(\left\|w\right\|^2_{X^{s+\varepsilon,b'}_T}\left\|u\right\|_{X^{s,b'}_T}+\left\|A (\varphi-D^{\varepsilon}\Phi)\right\|_{X^{s,-b'}_T}+\left\|\Phi\right\|_{L^2([0,T],H^{-s})}\right)
\ena
We already know that $u\in X^{s,b'}_T$, $w \in X^{s+\varepsilon,b'}_T$ and $\Phi \in X^{-s,b}_T$. We only have to estimate \\
$\left\|A (\varphi-D^{\varepsilon}\Phi)\right\|_{X^{s,-b'}_T}\leq C \left\|\varphi-D^{\varepsilon}\Phi\right\|_{L^2([0,T],H^{-s})}$. But $d=\varphi-D^{\varepsilon}\Phi$ is solution of
\begin{eqnarray*}
\left\lbrace
\begin{array}{rcl}%argument r=alignement à droite puis center et left
i\partial_t d + \Delta d &=& 2|w|^2 d - w^2 \overline{d}-2[D^{\varepsilon},|w|^2]\Phi+[D^{\varepsilon},w^2]\overline{\Phi}\\
d(x,0)&=&0
\end{array}
\right.
\end{eqnarray*}
Thus, using Proposition \ref{thmlineareqn}, we get 
 \bna
 \left\|\varphi-D^{\varepsilon}\Phi\right\|_{L^2([0,T],H^{-s})}\leq C \left\|d\right\|_{X^{-s,b}_T}\leq C\left( \left\|[D^{\varepsilon},|w|^2]\Phi\right\|_{X^{-s,-b'}_T}+\left\|[D^{\varepsilon},w^2]\overline{\Phi}\right\|_{X^{-s,-b'}_T}\right)
 \ena
The second part of Lemma \ref{quadrdoneXsb} of the Appendix allows us to conclude.
\end{proof}
%----------------------------------------------------------------------
\section{Control near a trajectory}
%----------------------------------------------------------------------
Theorem \ref{thmcontrolenonlin} and \ref{thmcontrolenonlinw0} are consequences of the following Proposition
\begin{prop}
\label{propcontrotraj}
Suppose Assumptions \ref{hypinegXsb} and \ref{hypcommut} are fulfilled. Let $T>0$ and $w\in \Xubt$ a controlled trajectory, i.e. solution  of 
\begin{eqnarray*}
i\partial_t w + \Delta w &=&\pm|w|^2w+g_1 \textnormal{ on } [0,T]\times M
\end{eqnarray*}
with $g_1\in L^2([0,T],H^1(M))$, supported in $\overline{\omega}$.
Let $1\geq s>s_0\geq 0$ .
Assume that the HUM operator $S=S_{s,T,w,a}$, defined in Subsection \ref{subsectprop}, is an isomorphism from $H^{-s}$ into $H^s$.\\
There exists $\varepsilon>0$ such that for every $u_0\in H^{s}$ with $\left\|u_0-w(0)\right\|_{H^s}<\varepsilon$, there exists $g\in C([0,T],H^s)$ supported in $[0,T]\times \overline{\omega} $ such that the unique solution $u$ in $\Xsbt$ of
\begin{eqnarray}
\label{eqnonlinsource}
\left\lbrace
\begin{array}{rcl}%argument r=alignement à droite puis center et left
i\partial_t u + \Delta u &=&\pm|u|^2u+g\\
u(x,0)&=&u_{0}(x)
\end{array}
\right.
\end{eqnarray}
fulfills $u(T)=w(T)$.\\
Moreover, we can find another $\varepsilon>0$ depending only on $T$,$s$, $\omega$ and $\left\|w\right\|_{\Xubt}$ such that for any $u_0\in H^1$ with $\left\|u_0-w(0)\right\|_{H^s}<\varepsilon$, the same conclusion holds with $g\in C([0,T],H^1)$.
\end{prop}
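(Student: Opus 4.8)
The plan is to reduce the nonlinear control problem to a fixed point built on the linear control of Proposition \ref{proplincontrol}. Writing $u=w+r$ and $g=g_1+g_r$ and expanding
\bna
\pm|w+r|^2(w+r)\mp|w|^2w=\pm\left(2|w|^2r+w^2\overline{r}\right)\pm F(w,r),
\ena
with $F(w,r)=2|r|^2w+r^2\overline{w}+|r|^2r$, the task becomes: find $g_r$ supported in $[0,T]\times\overline{\omega}$ such that the solution $r$ of
\bna
i\partial_t r+\Delta r=\pm\left(2|w|^2r+w^2\overline{r}\right)\pm F(w,r)+g_r
\ena
with $r(0)=u_0-w(0)=:r_0$ (small in $H^s$) satisfies $r(T)=0$; then $u=w+r$ answers the problem since $u(0)=u_0$ and $u(T)=w(T)$.

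First I would promote the linear control operator to a control operator with a source. Given $h\in X^{s,-b'}_T$, I solve backward in time the equation $i\partial_t z+\Delta z=\pm(2|w|^2z+w^2\overline{z})+h$ with $z(T)=0$ (the equation is reversible, so Proposition \ref{thmlineareqn} applies after reversing time), and then apply the HUM control of Proposition \ref{proplincontrol} to steer the datum $r_0-z(0)$ to $0$ at time $T$. The sum $r=z+y$ solves the linear equation with source $h$, satisfies $r(0)=r_0$, $r(T)=0$, and obeys $\|r\|_{\Xsbt}\le C(\|r_0\|_{H^s}+\|h\|_{X^{s,-b'}_T})$, the constant being controlled because $\|S^{-1}\|$ and $\|w\|_{\Xubt}$ are bounded. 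The associated control has the form $g_r=Av$ with $A=-ia(x)(1-\Delta)^{-s}a(x)$, hence is supported in $\overline{\omega}$ and, since $b>1/2$ gives $v\in C([0,T],H^{-s})$ and $A$ has order $-2s$, lies in $C([0,T],H^s)$.

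Next I set up the map $\mathcal{F}:r\mapsto\widetilde{r}$ sending $r$ to the controlled solution produced above with source $h=\pm F(w,r)$; a fixed point of $\mathcal{F}$ solves the nonlinear problem. The trilinear estimates of Lemma \ref{lmtrilinXsb} give $\|F(w,r)\|_{X^{s,-b'}_T}\le C(\|w\|_{\Xubt}\|r\|_{\Xsbt}^2+\|r\|_{\Xsbt}^3)$, together with the matching Lipschitz bound for $F(w,r)-F(w,r')$. Hence on a ball $\{\|r\|_{\Xsbt}\le R\}$ one has $\|\mathcal{F}(r)\|_{\Xsbt}\le C\varepsilon+C(\|w\|_{\Xubt}R^2+R^3)$; choosing first $R$ small so that $C(\|w\|_{\Xubt}R+R^2)\le 1/2$ (which also makes $\mathcal{F}$ a contraction) and then $\varepsilon$ small so that $C\varepsilon\le R/2$, the Banach fixed point theorem produces the required $r\in\Xsbt$, giving the first conclusion with $g=g_1+g_r\in C([0,T],H^s)$.

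For the $H^1$ statement the smallness is still imposed only in $H^s$, and the point is to show that this same fixed point lies in $\Xubt$ with an $H^1$ control. Here I would invoke Proposition \ref{thmpropagrecontrol}: the HUM operator is also an isomorphism $H^{-s+\varepsilon}\to H^1$ with $\varepsilon=1-s$, so the control-with-source scheme maps $H^1$ data and $X^{1,-b'}_T$ sources to $\Xubt$ solutions and $C([0,T],H^1)$ controls. The decisive subcritical feature is that, by estimate (\ref{multilinH12Hs}), every term in the $X^{1,-b'}_T$ bound of $F(w,r)$ carries at least one factor $\|r\|_{\Xsbt}$, which is small; a bootstrap then closes an estimate of the form $\|r\|_{\Xubt}\le C\|r_0\|_{H^1}+(\text{small})\,\|r\|_{\Xubt}$, exactly as in Proposition \ref{thmexistproche}. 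The main obstacle is precisely this decoupling, namely propagating $H^1$ regularity of both the solution and the control while the data are small only in $H^s$; it rests on two nontrivial inputs, the regularity propagation of the HUM operator (Proposition \ref{thmpropagrecontrol}, which itself relies on the quadrilinear commutator Assumption \ref{hypcommut}) and the subcritical tame estimates, whose interplay is the heart of the argument.
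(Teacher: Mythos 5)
Your proposal is correct and follows essentially the same route as the paper: linearize around $w$, decompose the perturbation into a HUM-controlled part plus a part driven by the cubic remainder $F(w,r)$, close with a Banach fixed point using the tame trilinear estimates, and invoke Proposition \ref{thmpropagrecontrol} for the $H^1$ regularity of the control with smallness only in $H^s$. The only cosmetic difference is that you iterate on the state $r\in X^{s,b}_T$ with the nonlinearity frozen, whereas the paper iterates on the HUM datum $\Phi_0\in H^{-s}$ via $B\Phi_0=-S^{-1}K\Phi_0+S^{-1}r_0$; the estimates involved are identical.
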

\begin{proof}
In the demonstration, we denote $C$ some constants that could actually depend on $T$, $\left\|w\right\|_{\Xubt}$ and $s$. The final $\varepsilon$ will have the same dependence. We make the proof for the defocusing case, but since there is no energy estimate, it is the same in the other situation.

We linearize the equation as in Proposition \ref{thmexistproche}. If $u=w+r$, then $r$ is solution of 
\begin{eqnarray*}
\left\lbrace
\begin{array}{rcl}%argument r=alignement à droite puis center et left
i\partial_t r + \Delta r &=&2\left|w\right|^2r+w^2\bar{r}+F(w,r)+g-g_1\\
r(x,0)&=&r_{0}(x)
\end{array}
\right.
\end{eqnarray*}
with $F(w,r)=2\left|r\right|^2w+r^2\bar{w} +\left|r\right|^2r$. We seek $g$ under the form $g_1+A\Phi$ where $\Phi$ is solution of the dual linear equation and $A=-ia(x)(1-\Delta)^{-s}a(x)$, as in the linear control. The purpose is then to choose the adequat $\Phi_0$  and the system is completely determined.\\
With $\|r_0\|_{H^s}$ small enough, we are looking for a control such that $r(T)=0$. \\
More precisely, we consider the two systems
 \begin{eqnarray*}
\left\lbrace
\begin{array}{rcl}%argument r=alignement à droite puis center et left
i\partial_t \Phi + \Delta \Phi &=& 2|w|^2 \Phi - w^2 \overline{\Phi}\\
\Phi(x,0)&=&\Phi_{0}(x) \in H^{-s}
\end{array}
\right.
\end{eqnarray*}
and
\begin{eqnarray*}
\left\lbrace
\begin{array}{rcl}%argument r=alignement à droite puis center et left
i\partial_t r + \Delta r &=&2\left|w\right|^2r+w^2\bar{r}+F(w,r)+A\Phi\\
r(x,T)&=&0
\end{array}
\right.
\end{eqnarray*}

Let us define the operator
\begin{eqnarray*}
\begin{array}{rrcl}%argument r=alignement à droite puis center et left
L:&H^{-s}(M)&\rightarrow &H^s(M)\\
& \Phi_0&\mapsto &L \Phi_{0}=r(0).
\end{array}
\end{eqnarray*}

We split $r=v+\Psi$ with $\Psi$ solution of 
\begin{eqnarray*}
\left\lbrace
\begin{array}{rcl}%argument r=alignement à droite puis center et left
i\partial_t \Psi + \Delta \Psi &=&2|w|^2 \Psi + w^2 \overline{\Psi}+A\Phi\\
\Psi(T)&=&0
\end{array}
\right.
\end{eqnarray*}

This corresponds to the linear control, and so $\Psi(0)=S \Phi_0$. $v$ is solution of 
\begin{eqnarray}
\label{eqnv}
\left\lbrace
\begin{array}{rcl}%argument r=alignement à droite puis center et left
i\partial_t v + \Delta v &=&2|w|^2 v + w^2 \overline{v}+F(w,r)\\
v(T)&=&0
\end{array}
\right.
\end{eqnarray}
Then, $r$, $v$, $\Psi$ belong to $\Xsbt$ and $r(0)=v(0)+\Psi(0)$, which we can write
$$L\Phi_0= K\Phi_0+S\Phi_0$$
where $K\Phi_0=v(0)$. \\
$L\Phi_0 =r_0$ is equivalent to $\Phi_0=-S^{-1}K\Phi_0+S^{-1}r_0$. Defining the operator $B:H^{-s} \rightarrow H^{-s}$ by 
$$B\Phi_0=-S^{-1}K\Phi_0+S^{-1}r_0, $$
the problem $L\Phi_0 =r_0$ is now to find a fixed point of $B$ near the origin of $H^{-s}$. We will prove that $B$ is contracting on a small ball $B_{H^{-s}}(0,\eta)$ provided that $\nor{r_0}{H^s}$ is small enough.\\
We may assume $T<1$, and fix it for the rest of the proof (actually the norm of $S^{-1}$ depends on $T$ and even explode when $T$ tends to $0$, see \cite{Miller} and \cite{Tenenbaum}).\\
We have
$$\left\|B\Phi_0 \right\|_{H^{-s}}\leq C\left(\left\|K\Phi_0 \right\|_{H^{s}}+\left\|r_0 \right\|_{H^{s}}\right)$$
So, we are led to estimate $\left\|K\Phi_0 \right\|_{H^{s}}=\left\|v(0) \right\|_{H^{s}}$.\\
If we apply to equation (\ref{eqnv}) the estimate of Proposition \ref{thmlineareqn} we get
\bna
\left\|v(0) \right\|_{H^{s}}&\leq &\left\|v\right\|_{\Xsbt}\\
&\leq &C \left\|F(w,r)\right\|_{X^{s,-b'}_T}\\
&\leq & C \left\|w\right\|_{X^{1,b}_T}\left\|r\right\|^2_{X^{s,b}_T}+\left\|r\right\|^3_{X^{s,b}_T}
\ena
Then, we use the linear behavior near a trajectory of Proposition \ref{thmexistproche}. We conclude that for\\ $\left\|A\Phi\right\|_{L^2([0,T],H^s)}\leq \left\|\Phi\right\|_{X^{-s,b}_T} \leq C\left\|\Phi_0\right\|_{H^{-s}}<C\eta$ (see Proposition \ref{thmlineareqn}) small enough, we have 
\bna
\left\|r\right\|_{X^{s,b}_T}\leq C \left\|\Phi_0\right\|_{H^{-s}}.
\ena
This yields
\bna
\left\|B\Phi_0 \right\|_{H^{-s}}\leq C\left(\left\|\Phi_0\right\|^2_{H^{-s}}+\left\|\Phi_0\right\|^3_{H^{-s}}+\left\|r_0 \right\|_{H^{s}}\right)
\ena
Choosing $\eta$ small enough and $\left\|r_0 \right\|_{H^{s}}\leq \eta/2C$, we obtain $ \left\|B\Phi_0 \right\|_{H^{-s}}\leq \eta$ and $B$ reproduces the ball $B_{H^{-s}}(0,\eta)$.

If $u_0\in H^1$, we want one $g$ in $C([0,T],H^1)$, that is $\Phi_0\in H^{1-2s}$. We prove that $B$ reproduces $B_{H^{-s}}(0,\eta)\cap B_{H^{1-2s}}(0,R)$ for $R$ large enough.

Proposition \ref{thmpropagrecontrol} yields that $S$ is an isomorphism from $H^{1-2s}$ into $H^1$. Then, we have by the same arguments as above
$$\left\|B\Phi_0 \right\|_{H^{1-2s}}\leq C\left(\left\|K\Phi_0 \right\|_{H^{1}}+\left\|r_0 \right\|_{H^{1}}\right)$$
\bna
\left\|v(0) \right\|_{H^{1}}&\leq &C \left\|v\right\|_{\Xubt}\\
&\leq &C \left\|F(w,r)\right\|_{X^{1,-b'}_T}\\
&\leq & C \left\|w\right\|_{X^{1,b}_T}\left\|r\right\|_{X^{s,b}_T}\left\|r\right\|_{X^{1,b}_T}+\left\|r\right\|^2_{X^{s,b}_T}\left\|r\right\|_{X^{1,b}_T}
\ena
and 
$$\left\|r\right\|_{X^{1,b}_T}\leq C \left\|\Phi_0\right\|_{H^{1-2s}}.$$
Then,
$$\left\|B\Phi_0 \right\|_{H^{1-2s}}\leq C\left(R\eta+R\eta^2+ \left\|r_0 \right\|_{H^{1}}\right)$$
Choosing $\eta$ such that $C(\eta+\eta^2)<1/2$ (it is important to notice here that this bound does not depend on the size of $r_0$ in $H^1$) and $R$ large enough, we obtain that $B$ reproduces $B_{H^{-s}}(0,\eta)\cap B_{H^{1-2s}}(0,R)$.

Let us prove that it is contracting for the $H^{-s}$ norm. For that, we examine the systems
\begin{eqnarray}
\label{eqnNldiff}
\left\lbrace
\begin{array}{rcl}%argument r=alignement à droite puis center et left
i\partial_t (r-\tilde{r}) + \Delta (r-\tilde{r})&=& 2|w|^2(r-\tilde{r})+w^2\overline{(r-\tilde{r})}+F(w,r)-F(w,\tilde{r})+A(\Phi-\widetilde{\Phi})\\
(r-\tilde{r})(T)&=&0
\end{array}
\right.
\end{eqnarray}
\begin{eqnarray*}
\left\lbrace
\begin{array}{rcl}%argument r=alignement à droite puis center et left
i\partial_t (v-\tilde{v}) + \Delta (v-\tilde{v})  &=&2|w|^2(v-\tilde{v})+w^2\overline{(v-\tilde{v})}+F(w,r)-F(w,\tilde{r})\\
(v-\tilde{v})(T)&=&0
\end{array}
\right.
\end{eqnarray*}
We obtain
\bnan
\label{diffB}
\left\|B\Phi_0 -B\widetilde{\Phi}_0\right\|_{H^{-s}} & \leq & C \left\|(v-\tilde{v})(0)\right\|_{H^s} \leq  C\left\|F(w,r)-F(w,\tilde{r})\right\|_{X^{s,-b'}_T} \nonumber\\
&\leq & C \left(\left\|r\right\|_{\Xsbt}+\left\|\tilde{r}\right\|_{\Xsbt}+\left\|r\right\|^2_{\Xsbt}+\left\|\tilde{r}\right\|^2_{\Xsbt}\right)\left\|r-\tilde{r}\right\|_{\Xsbt} \nonumber \\
&\leq& C(\eta+\eta^2)\left\|r-\tilde{r}\right\|_{\Xsbt}\leq C\eta\left\|r-\tilde{r}\right\|_{\Xsbt}
\enan
Considering equation (\ref{eqnNldiff}), we deduce
\bna 
\left\|r-\tilde{r}\right\|_{\Xsbt} & \leq & C \left\|F(w,r)-F(w,\tilde{r})\right\|_{X^{s,-b'}_T} + C\left\|A(\Phi-\widetilde{\Phi})\right\|_{L^2([0,T],H^s)}\\
&\leq & C \eta \left\|r-\tilde{r}\right\|_{\Xsbt}+ C\left\| \Phi_0-\widetilde{\Phi}_0\right\|_{H^{-s}}
\ena
If $\eta$ is taken small enough it yields
\bnan
\label{estimdiff}
\left\|r-\tilde{r}\right\|_{\Xsbt}\leq C \left\| \Phi_0-\widetilde{\Phi}_0\right\|_{H^{-s}}.
\enan
Combining (\ref{estimdiff}) with (\ref{diffB}) we finally get
\bna
\left\|B\Phi_0 -B\widetilde{\Phi}_0\right\|_{H^{-s}} 
&\leq& C\eta \left\| \Phi_0-\widetilde{\Phi}_0\right\|_{H^{-s}}
\ena
This yields that $B$ is a contraction on a small ball $B_{H^{-s}}(0,\eta)$, which completes the proof of Proposition \ref{propcontrotraj}.
\end{proof}
\begin{corollaire}
Let $T>0$ and ($M,\omega)$ such that Assumptions \ref{geometriccontrol}, \ref{hypinegXsb}, \ref{uniquecontinuationH1} and \ref{hypcommut} are fulfilled.\\
Then, the set of reachable states is open in $H^s$ for $s_0< s \leq 1$.
\end{corollaire}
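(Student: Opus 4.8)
The plan is to deduce openness of the reachable set directly from Proposition \ref{propcontrotraj}, after exploiting the time-reversal symmetry of the equation. Fix an initial datum $u_0\in H^s$ (the argument is insensitive to this choice, e.g.\ $u_0=0$) and let $\mathcal{R}$ denote the set of states reachable from $u_0$ at time $T$ by an admissible control supported in $[0,T]\times\overline{\omega}$. Let $u_1\in\mathcal{R}$, realized by a controlled trajectory $w\in\Xubt$ solving $i\partial_t w+\Delta w=\pm|w|^2w+g_1$ with $g_1$ supported in $\overline{\omega}$, $w(0)=u_0$ and $w(T)=u_1$. I want to show that an entire $H^s$-ball around $u_1$ is contained in $\mathcal{R}$.

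First I would record the symmetry. If $u(t)$ solves $i\partial_t u+\Delta u=\pm|u|^2u+g$, then $\check{u}(t):=\overline{u(T-t)}$ solves the same equation with control $\check{g}(t)=\overline{g(T-t)}$, which is again supported in $[0,T]\times\overline{\omega}$. Choosing a real orthonormal eigenbasis $e_k$, the change of variable $s=T-t$ gives $\widehat{P_k\check{u}}(\tau)=e^{-iT\tau}\,\overline{\widehat{P_k u}(\tau)}$, whence $\|\check{u}\|_{\Xsb}=\|u\|_{\Xsb}$ and $\|\check{u}(t)\|_{H^s}=\|u(T-t)\|_{H^s}$. Thus this involution preserves all the relevant norms, the class $\Xubt$, the equation, and the support of the control; in particular $\check{w}$ is a controlled trajectory in $\Xubt$ from $\overline{u_1}$ to $\overline{u_0}$, with $\|\check{w}\|_{\Xubt}=\|w\|_{\Xubt}$.

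Next I would apply the control-near-a-trajectory result to $\check{w}$. Under Assumptions \ref{geometriccontrol}, \ref{hypinegXsb}, \ref{uniquecontinuationH1}, Proposition \ref{propobserv} furnishes the observability estimate for the linearization around $\check{w}$, and Proposition \ref{proplincontrol} shows that the HUM operator $S_{s,T,\check{w},a}$ is an isomorphism from $H^{-s}$ onto $H^s$, so the hypotheses of Proposition \ref{propcontrotraj} are met. That proposition provides $\varepsilon>0$ such that every $\check{u}_0\in H^s$ with $\|\check{u}_0-\check{w}(0)\|_{H^s}<\varepsilon$ can be steered to $\check{w}(T)=\overline{u_0}$ by an admissible control. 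Now take $\tilde{u}_1\in H^s$ with $\|\tilde{u}_1-u_1\|_{H^s}<\varepsilon$ and set $\check{u}_0=\overline{\tilde{u}_1}$; since conjugation is an $H^s$-isometry, $\|\check{u}_0-\check{w}(0)\|_{H^s}=\|\tilde{u}_1-u_1\|_{H^s}<\varepsilon$, so there is a controlled solution $\check{v}\in\Xsbt$ from $\overline{\tilde{u}_1}$ to $\overline{u_0}$. Applying the involution once more, $u(t):=\overline{\check{v}(T-t)}$ is an admissible controlled solution in $\Xsbt$ with $u(0)=u_0$ and $u(T)=\tilde{u}_1$, so $\tilde{u}_1\in\mathcal{R}$. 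This exhibits $B_{H^s}(u_1,\varepsilon)\subset\mathcal{R}$ and proves openness.

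The only genuine point to verify is the symmetry step: one must check that $u\mapsto\overline{u(T-\cdot)}$ genuinely preserves the $\Xsb$ regularity (the conjugation alone reverses the dispersion relation and would spoil the modulation weight $\langle\tau+\lambda_k\rangle$, but the accompanying time reversal exactly compensates it) as well as the support of the control, so that Proposition \ref{propcontrotraj} is legitimately applicable to $\check{w}$ and the reconstructed $u$ is a bona fide $\Xsbt$ solution. Everything else is a direct transcription of Proposition \ref{propcontrotraj}.
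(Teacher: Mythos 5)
The paper states this corollary without proof, and your argument --- time-reversing the controlled trajectory via the involution $u\mapsto\overline{u(T-\cdot)}$, verifying that it preserves the equation, the support of the control and the $X^{s,b}$ norms (your Fourier computation $\widehat{P_k\check{u}}(\tau)=e^{-iT\tau}\overline{\widehat{P_k u}(\tau)}$ is correct and is the one genuinely non-trivial check), and then applying Proposition \ref{propcontrotraj} to the reversed trajectory, whose HUM hypothesis is supplied by Propositions \ref{propobserv} and \ref{proplincontrol} under the stated assumptions --- is exactly the intended one. The only caveat is definitional: since Proposition \ref{propcontrotraj} requires the reference trajectory to lie in $\Xubt$, what you actually prove is that every state reached by an $\Xubt$ controlled trajectory has an $H^s$-neighborhood of states reachable by $\Xsbt$ trajectories, which is the natural reading of the corollary and all the tools of the paper can deliver.
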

In the next corollary, $\widehat{f}(k)$ denotes the coordinates of a function $f$ in the basis of eigenfunction of $M$.
\begin{corollaire}
\label{corpetitharmon}
Suppose the same assumptions as Proposition \ref{propcontrotraj}.  Let $E_0>\nor{w_0}{H^1}$.\\
Then, there exist $N$ and $\varepsilon$ such that for every $u_0$ and $u_1\in H^1$ with
\bnan
\label{asumpetitharmon}
\left\|u_0\right\|_{H^1}\leq E_0 &\quad &\left\|u_1\right\|_{H^1} \leq E_0\\
\label{asumboundedH1}
\sum_{|k|\leq N} \left|\widehat{u_0}(k)-\widehat{w_0}(k)\right|^2 \leq \varepsilon &\quad&\sum_{|k|\leq N} \left|\widehat{u_1}(k)-\widehat{w_T}(k)\right|^2 \leq \varepsilon
\enan
we can find a control $g\in L^{\infty}([0,T],H^1)$ supported in $[0,T]\times \omega$ such that the unique solution of (\ref{eqnonlinsource}) with control $g$ and $u(0)=u_0$ satisfies $u(T)=u_1$.  
\end{corollaire}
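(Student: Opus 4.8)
The plan is to deduce the statement from Proposition \ref{propcontrotraj}, applied once forward in time and once in reverse time, after converting the low-frequency hypothesis (\ref{asumboundedH1}) together with the uniform $H^1$ bound (\ref{asumpetitharmon}) into genuine $H^s$-smallness for a conveniently chosen auxiliary exponent $s$. Since the data lie in $H^1$, I am free to fix any $s$ with $s_0 < s < 1$ and run Proposition \ref{propcontrotraj} at that regularity; let $\varepsilon_0 = \varepsilon_0(T,s,\omega,\nor{w}{\Xubt})$ denote the threshold it provides.

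First I would carry out the frequency splitting, which is the real content of the corollary. For $u_0 \in H^1$ one writes
\[
\nor{u_0 - w_0}{H^s}^2 = \sum_{|k|\le N}\langle\lambda_k\rangle^s\left|\widehat{u_0}(k)-\widehat{w_0}(k)\right|^2 + \sum_{|k|>N}\langle\lambda_k\rangle^s\left|\widehat{u_0}(k)-\widehat{w_0}(k)\right|^2.
\]
The high-frequency tail is controlled by writing $\langle\lambda_k\rangle^s = \langle\lambda_k\rangle^{s-1}\langle\lambda_k\rangle$ and using $s-1<0$: it is at most $\big(\sup_{|k|>N}\langle\lambda_k\rangle^{s-1}\big)\nor{u_0-w_0}{H^1}^2 \le \big(\sup_{|k|>N}\langle\lambda_k\rangle^{s-1}\big)(E_0+\nor{w_0}{H^1})^2$. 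Because the eigenvalues tend to infinity, the supremum tends to $0$, so choosing $N$ large forces this term below $\varepsilon_0^2/2$. The low-frequency part is bounded by $\langle\lambda_N\rangle^s\,\varepsilon$ by (\ref{asumboundedH1}); with $N$ now fixed, choosing $\varepsilon$ small forces it below $\varepsilon_0^2/2$. Hence $\nor{u_0-w_0}{H^s}<\varepsilon_0$, and the identical argument applied to $u_1$ and $w_T$ gives $\nor{u_1-w_T}{H^s}<\varepsilon_0$.

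Then I would realize the control by splitting $[0,T]$ at $T/2$ and joining the two endpoints at $w(T/2)$. On $[0,T/2]$, the $H^1$ part of Proposition \ref{propcontrotraj}, applied to the trajectory $w$ restricted to $[0,T/2]$, produces $g^{(1)}\in C([0,T/2],H^1)$ steering $u_0$ to $w(T/2)$, which is legitimate since $\nor{u_0-w(0)}{H^s}<\varepsilon_0$. For the second half I use the time-reversal symmetry of the defocusing equation: if $u$ solves (\ref{eqnonlinsource}) then $\overline{u(T-\cdot)}$ solves the same equation with control $\overline{g(T-\cdot)}$, again supported in $\overline{\omega}$, so $\tilde w(t)=\overline{w(T-t)}$ is a controlled trajectory of the same $\Xubt$ norm. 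Reaching $u_1$ from $w(T/2)$ on $[T/2,T]$ is thereby equivalent to steering $\overline{u_1}$, which satisfies $\nor{\overline{u_1}-\overline{w_T}}{H^s}=\nor{u_1-w_T}{H^s}<\varepsilon_0$, to $\overline{w(T/2)}$ along $\tilde w$; Proposition \ref{propcontrotraj} again furnishes this, and undoing the reversal yields $g^{(2)}\in C([T/2,T],H^1)$. The two pieces match at $t=T/2$ (both equal $w(T/2)$), so by uniqueness in $\Xsbt$ the concatenated control $g$, which lies in $L^\infty([0,T],H^1)$ and is supported in $[0,T]\times\overline{\omega}$, drives $u_0$ to $u_1$.

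The controllability itself is entirely contained in Proposition \ref{propcontrotraj}, so the one genuinely new ingredient is the elementary frequency-splitting estimate above. The point I expect to require the most care is the bookkeeping of the time reversal in the second half and the fact that Proposition \ref{propcontrotraj} is being invoked for the half-time trajectory $w|_{[0,T/2]}$ and for the reversed trajectory $\tilde w$: for these one needs the associated HUM operators $S_{s,T/2,w|_{[0,T/2]},a}$ and $S_{s,T/2,\tilde w,a}$ to be isomorphisms, which under the standing hypotheses of geometric control (Assumption \ref{geometriccontrol}) and unique continuation (Assumption \ref{uniquecontinuationH1}) follows from the observability estimate of Proposition \ref{propobserv} and the construction of Proposition \ref{proplincontrol}, exactly as for the full-time operator.
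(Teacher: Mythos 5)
Your proof is correct and follows essentially the same route as the paper: a two-step control through $w(T/2)$, with the second step obtained by time reversal, combined with the frequency-splitting estimate that converts the uniform $H^1$ bound plus low-frequency closeness into $H^s$-smallness for some $s_0<s<1$. You actually spell out the two points the paper leaves implicit (the tail estimate via $\sup_{|k|>N}\langle\lambda_k\rangle^{s-1}\to 0$, and the need for the half-time and reversed HUM operators to be isomorphisms, which follows from Propositions \ref{propobserv} and \ref{proplincontrol} under the standing geometric assumptions), so nothing further is needed.
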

\begin{proof}
We build the control in two steps : the first brings the system from $u_0$ to $w(T/2)$ and the second from $w(T/2)$ to $u_1$. Actually, the second step is the same by reversing time and we only describe the first one.

Let $s_0<s<1$. We first check that the first part of the conclusion of Proposition \ref{propcontrotraj} is true without Assumption \ref{hypcommut}. It gives one $\widetilde{\varepsilon}>0$ such that if $\left\|u_0-w_0\right\|_{H^s}\leq \widetilde{\varepsilon} $ we have a control to $w(T/2)$ in time $T/2$ with $g\in C([0,T/2],H^1)$. We only check that once $E_0$ is chosen, we can find $N$ and $\varepsilon$ such that assumptions (\ref{asumpetitharmon}) and (\ref{asumboundedH1}) imply $\left\|u_0-w_0\right\|_{H^s}\leq \widetilde{\varepsilon}$.
\end{proof}
We also obtain a first proof of global controllability. The Assumptions we make are stronger than Theorem \ref{thmcontrol} that will be proved using stabilization. Yet, in the examples we treat, the Assumptions are fulfilled.
\begin{corollaire}
\label{corglobtraj}
Theorem \ref{thmcontrol} is true under the stronger Assumptions \ref{geometriccontrol}, \ref{hypinegXsb} and \ref{uniquecontinuationH1}.
\end{corollaire}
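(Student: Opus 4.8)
The plan is to deduce the conclusion of Theorem \ref{thmcontrol} from the local control near trajectories already at our disposal. First note that under Assumptions \ref{geometriccontrol}, \ref{hypinegXsb} and \ref{uniquecontinuationH1} the observability estimate of Proposition \ref{propobserv} holds for every trajectory $w\in\Xubt$, so by Proposition \ref{proplincontrol} the HUM operator $S=S_{s,T,w,a}$ is an isomorphism from $H^{-s}$ onto $H^{s}$; hence Proposition \ref{propcontrotraj} and its Corollary \ref{corpetitharmon} are available near \emph{any} controlled trajectory. The first reduction is to controllability to $0$. The equation is reversible: if $u$ solves $i\partial_tu+\Delta u=|u|^2u+g$ on $[0,T_0]$, then $v(t)=\overline{u(T_0-t)}$ solves the same equation with control $\overline{g(T_0-\cdot)}$, still supported in $[0,T_0]\times\overline\omega$ and of the same norm. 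Thus, once any $u_0$ with $\|u_0\|_{H^1}\le R_0$ is driven to $0$ in a uniform time with a uniformly bounded control, we drive $u_0\to0$ on $[0,T_0]$ and, reversing a control sending $\overline{u_1}\to0$, we glue a control sending $0\to u_1$ on $[T_0,2T_0]$, reaching $u(2T_0)=u_1$.

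Second, fix $s\in(s_0,1)$ and establish the qualitative fact that every $v\in H^1$ is controllable to $0$. Say $v_0\approx v_1$ if some control supported in $\overline\omega$ steers $v_0$ to $v_1$ in finite time; by reversibility and concatenation this is an equivalence relation on $H^1$. Each class is open: given $v_0$, let $w$ be the free solution ($g_1=0$) issued from $v_0$ on a fixed interval $[0,T]$, which exists globally in $\Xubt$ by Proposition \ref{thmexistenceNl}; Proposition \ref{propcontrotraj} furnishes $\varepsilon>0$ such that every $v_0'$ with $\|v_0'-v_0\|_{H^s}<\varepsilon$ can be steered to $w(T)$, so $v_0'\approx w(T)\approx v_0$. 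Since $\|\cdot\|_{H^s}\le\|\cdot\|_{H^1}$ for $s\le1$, this neighborhood contains an $H^1$-ball, so the class is $H^1$-open. As $H^1$ is connected and the classes partition it, there is a single class; in particular $v\approx0$ for every $v\in H^1$.

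Third, I would upgrade this to the uniform statement of Theorem \ref{thmcontrol}. The set $\{P_N u_0:\|u_0\|_{H^1}\le R_0\}$ of low-frequency profiles lies in the finite-dimensional span of the $e_k$, $|k|\le N$, and is compact. Fixing $E_0=2R_0$ and using the previous step to produce, for finitely many profiles, controlled trajectories steering a function carrying that profile to $0$ in bounded time and with bounded $\Xubt$ norm, Corollary \ref{corpetitharmon} provides $N$ and $\varepsilon$ so that any $u_0$ in the $R_0$-ball is, in its first $N$ modes, within $\varepsilon$ of the initial profile of one such trajectory $w^{(i)}$ (whose terminal profile is $0$); hence $u_0$ is driven to $0$. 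The finitely many trajectory times are equalized to a common $T_0$ by resting at $0$ with $g=0$ after arrival, and the control bound is the maximum over the finite family. Combined with the reversal gluing of the first paragraph (inserting, if needed, a short rest at $0$ so the controls match continuously at the junctions), this yields the uniform $T$ and $C$.

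The hard part is precisely this last uniformization. The obstruction is that the $R_0$-ball of $H^1$ is not compact in $H^1$ and the local radius $\varepsilon$ of Proposition \ref{propcontrotraj} depends on $\|w\|_{\Xubt}$; one must therefore exploit that the ball \emph{is} compact in $H^s$ for $s<1$ (equivalently, that only finitely many low-frequency modes must be matched, as quantified by Corollary \ref{corpetitharmon}) and that the steering trajectories can be chosen with uniformly bounded $\Xubt$ norm, so that a uniform lower bound on the local radii is available and a finite subfamily covers the ball. The remaining points—global existence of the free reference trajectory and the continuity of the glued control—are routine, the former from Proposition \ref{thmexistenceNl} and the latter from inserting rest intervals at $0$.
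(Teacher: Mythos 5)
Your reduction to controllability to $0$ by time reversal is correct and matches the paper, but the core of your argument --- the connectedness step --- has a genuine gap. You declare that ``$v_0\approx v_1$ if some control steers $v_0$ to $v_1$ in finite time'' is an equivalence relation ``by reversibility and concatenation''. It is not: as your own first paragraph records, reversing a controlled trajectory from $v_0$ to $v_1$ produces a controlled trajectory from $\overline{v_1}$ to $\overline{v_0}$, not from $v_1$ to $v_0$. The relation is therefore not symmetric (the time-reversal symmetry of NLS involves complex conjugation, which permutes the accessibility sets rather than fixing them), the ``classes'' do not partition $H^1$, and the open-classes-plus-connectedness argument collapses. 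One cannot repair it softly: the set $\mathcal{R}_0$ of data steerable to $0$ is indeed open (by Proposition \ref{propcontrotraj} applied to a steering trajectory), but openness of its complement would require a uniform lower bound on the local radius $\varepsilon$ over nearby steering trajectories, and $\varepsilon$ depends on $\left\|w\right\|_{\Xubt}$, which is uncontrolled for the trajectories your qualitative step produces. Replacing $\approx$ by the equivalence relation it generates (chains of arrows in both directions) restores the partition but no longer yields an actual control from $v$ to $0$.

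The paper closes exactly this gap by being quantitative from the start: starting from $u_0$, it follows the \emph{free} nonlinear trajectory $w$ (so that $\left\|w\right\|_{\Xubt}$, hence the admissible $\varepsilon$ of Theorem \ref{thmcontrolenonlin}, is controlled by the conserved energy $E(w_0)$ alone), and steers the state not to an arbitrary nearby target but to the contracted endpoint $u_T=(1-\varepsilon/A)w_T$, which satisfies $\left\|u_T-w_T\right\|_{H^1}\leq\varepsilon$ and $E(u_T)\leq(1-\varepsilon/A)^2E(w_T)$. Since the $H^1$ bound $A$ is preserved, the same $\varepsilon$ works at every step, the energy decays geometrically, and after a number of steps depending only on $R_0$ one lands in the ball where local control near $0$ applies; this yields the uniform $T$ and $C$ directly, making your third paragraph unnecessary. (Your compactness-based uniformization via Corollary \ref{corpetitharmon} and the $H^s$-compactness of the $H^1$-ball could in principle be made to work, but only downstream of a qualitative steering-to-zero statement that your step 2 does not actually establish.) If you want to keep a two-stage structure, you must replace the connectedness argument by some mechanism that makes monotone quantitative progress toward $0$ --- which is precisely the paper's energy-decay iteration.
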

\begin{proof}
We will make successives controls near some free nonlinear trajectory so that the energy decrease. The main argument is that the $\varepsilon$ of Theorem \ref{thmcontrolenonlin} only depends on $\left\|w\right\|_{\Xubt}$ and if the trajectory is a free nonlinear trajectory, then the $\varepsilon$ only depends on $\left\|w_0\right\|_{H^1}$.  We just have to be careful that each new free trajectory fulfills $\nor{w}{\Xubt}\leq A$ for one fixed constant $A$.
   
Fix $T>0$. There exist $C_1$ such that  
\bna
\left\|f\right\|_{H^1} \leq C_1\left(E(f) + \sqrt{E(f)}\right)^{1/2} \quad \forall f \in H^1(M).
\ena
Denote $A=C_1\left(E(w_0) + \sqrt{E(w_0)}\right)$. There exists a constant such that $\left\|w_0\right\|_{H^1}\leq A$ implies $\left\|w\right\|_{\Xubt}\leq B$ for $w$ solution of
\bna
\left\lbrace
\begin{array}{rcl}%argument r=alignement à droite puis center et left
i\partial_t w + \Delta w &=&|w|^2w \textnormal{ on } [0,T]\times M\\
w(0)&=&w_{0} 
\end{array}
\right.
\ena
Let $\varepsilon$ the constant so that Theorem \ref{thmcontrolenonlin} si true for any $w$ with $\left\|w\right\|_{\Xubt}\leq B$. We choose the arrival point $u_T=(1-\varepsilon/A)w_T$ such that  
$$\left\|u_T-w_T\right\|_{H^1}= \varepsilon/A \left\|w_T\right\|_{H^1} \leq C_1\left(E(w_T) + \sqrt{E(w_T)}\right) \varepsilon/A =  \varepsilon.$$ 
We have a control $g$ supported in $[0,T]\times \overline{\omega}$ such that the solution $u$ of
 \bna
\left\lbrace
\begin{array}{rcl}%argument r=alignement à droite puis center et left
i\partial_t u + \Delta u &=&|u|^2u +g\textnormal{ on } [0,T]\times M\\
u(0)&=&w_{0} 
\end{array}
\right.
\ena
satisfies $u(T)=u_T$.
If $ 1-\varepsilon/A\in [0,1]$, we have
\bna
E(u_T)=\frac{1}{2}\int_M |(1-\varepsilon/A) \nabla w_T|^2 +\frac{1}{4}\int_M |(1-\varepsilon/A) w_T|^4 \leq (1-\varepsilon/A)^2 E(w_T)
\ena
Moreover, we still have
\bna
\left\|u_T\right\|_{H^1} \leq C_1\left(E(u_T) + \sqrt{E(u_T)}\right)^{1/2}\leq A.
\ena
Then, we can reiterate this processus with the same $\varepsilon$. We construct a sequence of solutions $u_n\in X^{1,b}_{[nT,(n+1)T]}$ and of controls $g_n\in C([nT,(n+1)T],H^1)$ such that
 \bna
\left\lbrace
\begin{array}{rcl}%argument r=alignement à droite puis center et left
i\partial_t u_n + \Delta u_n &=&|u_n|^2u_n +g_n\textnormal{ on } [nT,(n+1)T]\times M\\
u_n(nT)&=&u_{n-1}(nT) 
\end{array}
\right.
\ena
and 
$$ E(u_n(nT))\leq (1-\varepsilon/A)^{2n}E(w_0)\leq C (1-\varepsilon/A)^{2n}\left(\left\|w_0\right\|^2_{H^1} +\left\|w_0\right\|^4_{H^1}\right)$$
But, we have
$$\left\|u_n(nT)\right\|^2_{H^1} \leq C_1\left(E(u_n(nT)) + \sqrt{E(u_n(nT))}\right)^{1/2}.$$
Therefore, it can be made arbitrary small for large $n$. This allows to use local controllability near the trajectory $0$. We obtain global controllability making the same proof in negative time.
\end{proof}
%---------------------------------------------------------------------
\section{Necessity of geometric control assumption on $S^3$}
%---------------------------------------------------------------------
\label{sectGCCnec}
In this section, we prove that on $S^3$, the geometric control is necessary for stabilization to occur. The argument uses some concentration of eigenfunctions. This concentration was also used by N. Burq, P. G\'erard and N. Tzvetkov \cite{instabiliteBGT} to prove some ill-posedness results.
\begin{prop}
Let $\Gamma$ be a geodesic of $S^3$ and $a\in C^{\infty}(S^3)$ such that $Supp(a)\cap \Gamma=\emptyset$. Then, for every $R_0>0$, $C$ and $\gamma>0$ there exist $T>0$ and $u_0 \in H^1(S^3)$ with $\nor{u_0}{H^1}\leq R_0$ such that
$$\nor{u(T)}{H^1} > Ce^{-\gamma T} \nor{u}{H^1}$$
for $u$ solution of equation 
\begin{eqnarray}
\label{eqndamped2}
\left\lbrace
\begin{array}{rcl}%argument r=alignement à droite puis center et left
i\partial_t u + \Delta u -(1+|u|^2)u&=&a(x)(1-\Delta)^{-1}a(x)\partial_t u \textnormal{ on } [0,T]\times S^3\\
u(0)&=&u_{0} \in H^1
\end{array}
\right.
\end{eqnarray}
\end{prop}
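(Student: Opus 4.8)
Here is the strategy I would follow. The negation of exponential stabilization should come from data concentrated on the geodesic $\Gamma$, exactly where the damping coefficient $a$ is blind: there the dissipation is negligible, the $H^1$ norm is essentially frozen on any fixed time interval, and taking that interval long enough defeats any prescribed rate $Ce^{-\gamma t}$.

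First I would fix the geometry and import the concentration. Since $\mathrm{Supp}(a)$ and $\Gamma$ are disjoint compact sets, there is $\delta>0$ with $a\equiv 0$ on the tube $V_\delta=\{x:\,d(x,\Gamma)<\delta\}$. Then I would use the concentration of highest weight spherical harmonics from \cite{instabiliteBGT}: a sequence $\psi_k$ of eigenfunctions, $-\Delta\psi_k=\lambda_k\psi_k$ with $\lambda_k\to\infty$, normalized by $\nor{\psi_k}{H^1}=R_0$, which concentrates on $\Gamma$ in the quantitative form
\[
\nor{\psi_k}{L^2(M\setminus V_\delta)}\le C_\delta\,e^{-c_\delta\sqrt{\lambda_k}}\nor{\psi_k}{L^2(M)},\qquad
\nor{\psi_k}{L^\infty(M)}\le C\,\lambda_k^{-1/4}\nor{\psi_k}{H^1}.
\]
On $S^3\subset\R^4$ one takes $\psi_k$ proportional to the restriction of $(x_1+ix_2)^k$, concentrating on the circle $\{x_3=x_4=0\}$, and reduces a general geodesic to this one by the isometry group of $S^3$. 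The second, subcritical, bound is what will make the cubic term harmless.

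Next I would compare the true solution with the free phase rotation of the eigenfunction. Set $\phi_k(t)=e^{-i(1+\lambda_k)t}\psi_k$. Using $\Delta\psi_k=-\lambda_k\psi_k$ one finds $i\partial_t\phi_k+\Delta\phi_k=\phi_k$, so $\phi_k$ solves the damped equation (\ref{eqndamped2}) with a source on the right hand side,
\[
F_k=-|\phi_k|^2\phi_k+i(1+\lambda_k)\,a(1-\Delta)^{-1}a\,\phi_k .
\]
I claim $\nor{F_k}{L^2([0,T],H^1)}\to 0$ as $k\to\infty$ for each fixed $T$. For the cubic part, the two bounds above give $\||\phi_k|^2\phi_k\|_{H^1}\lesssim \nor{\psi_k}{L^\infty}^2\nor{\nabla\psi_k}{L^2}+\nor{\psi_k}{L^6}^3\lesssim R_0^3\lambda_k^{-1/2}\to0$. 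For the damping part, $a$ is supported in $M\setminus V_\delta$, so $\nor{a\psi_k}{L^2}\le\nor{a}{L^\infty}\nor{\psi_k}{L^2(M\setminus V_\delta)}$ is exponentially small in $\sqrt{\lambda_k}$; since $(1-\Delta)^{-1}$ maps $H^{-1}$ into $H^1$ and $a\in C^\infty$,
\[
(1+\lambda_k)\nor{a(1-\Delta)^{-1}a\psi_k}{H^1}\le C_a\,(1+\lambda_k)\nor{a\psi_k}{L^2}\longrightarrow 0,
\]
the polynomial prefactor being beaten by the exponential decay.

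Finally I would run the perturbation argument. Let $u_k$ solve (\ref{eqndamped2}) with $u_k(0)=\psi_k$; it exists in $\Xubt$ and is bounded there uniformly in $k$ by Remark \ref{rmkbound} (data bounded by $R_0$, no source). Both $u_k=F(\psi_k,0)$ and $\phi_k=F(\psi_k,F_k)$ are values of the flow of Proposition \ref{thmexistenceNl}, which is Lipschitz on bounded sets, with a constant uniform in $k$ because $\nor{\phi_k}{\Xubt}\le C_T R_0$ (here $\phi_k^{\#}(t)=e^{-it}\psi_k$). Hence $\nor{u_k-\phi_k}{C([0,T],H^1)}\le C\nor{F_k}{L^2([0,T],H^1)}\to0$, while $\nor{\phi_k(t)}{H^1}=\nor{\psi_k}{H^1}=R_0$ for all $t$. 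Therefore $\nor{u_k(T)}{H^1}\to R_0=\nor{u_k(0)}{H^1}$: no decay occurs. Given $C,\gamma>0$, I would then fix $T$ with $Ce^{-\gamma T}<1/2$ and choose $k$ large enough that $\nor{u_k(T)}{H^1}>R_0/2$; with $u_0=\psi_k$ this gives $\nor{u(T)}{H^1}>Ce^{-\gamma T}\nor{u(0)}{H^1}$, which is the desired conclusion. I expect the genuinely delicate point to be the concentration input itself, namely securing, uniformly in $k$, both the exponential smallness of $\psi_k$ off $V_\delta$ and the subcritical $L^\infty$ bound, and then checking that the $\lambda_k$-amplified damping residual still vanishes; granted these quantitative estimates, the comparison with $\phi_k$ is a routine use of the well-posedness theory of Section \ref{sectexistence}.
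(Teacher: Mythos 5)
Your proof is correct and follows essentially the same route as the paper: concentrate the highest-weight harmonics $c_k(x_1+ix_2)^k$ on the geodesic, compare the damped nonlinear flow with the exact linear solution $e^{-i(1+\lambda_k)t}\psi_k$, and check that both the cubic term and the $\lambda_k$-amplified damping residual vanish thanks to the concentration. The only differences are cosmetic --- you bound the cubic term directly via the $L^\infty$ concentration estimate and conclude with the Lipschitz flow map of Proposition \ref{thmexistenceNl}, whereas the paper invokes Proposition \ref{proplinearisation} and writes an equation for the difference $u_n-v_n$ with a source small in $X^{1,-b'}_T$ (and, incidentally, your phase $e^{-i(1+\lambda_k)t}$ is the correct one; the paper's $e^{i(\lambda_n-1)t}$ is a sign typo).
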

\begin{proof}Let $T$ such that $Ce^{-\gamma T}\leq 1/2$.

By changes of coordinates, we can assume that $\Gamma=\left\{x_3=x_4=0\right\}$. We will use the eigenfunctions $\Phi_n=c_n (x_1+ix_2)^n$ that concentrates on the subset $\left\{x_3=x_4=0\right\}$. $c_n$ is chosen such that $\nor{\Phi_n}{H^1}=R_0$ and so $c_n\approx n^{1/2-1}$. We have $-\Delta \Phi_n=\lambda_n \Phi_n$ with $\lambda_n= n(n+2)$. Let $u_n$ be the solution of (\ref{eqndamped2}) with $u_n(0)= \Phi_n$. Let $v_n=e^{i(\lambda_n-1)t}\Phi_n$ be the solution of the linear equation
\bna
\left\lbrace
\begin{array}{rcl}%argument r=alignement à droite puis center et left
i\partial_t v_n + \Delta v_n -v_n&=&0 \textnormal{ on } [0,T]\times S^3\\
v_n(0)&=& \Phi_n
\end{array}
\right.
\ena
Then, $r_n=u_n-v_n$ is solution of
\bna
\left\lbrace
\begin{array}{rcl}%argument r=alignement à droite puis center et left
i\partial_t r_n + \Delta r_n -r_n&=&a(x)(1-\Delta)^{-1}a(x)\partial_t r_n +R_n\textnormal{ on } [0,T]\times S^3\\
r_n(0)&=&0
\end{array}
\right.
\ena
with $R_n=|u_n|^2u_n+a(x)(1-\Delta)^{-1}a(x)\partial_t v_n$.

Proposition \ref{proplinearisation} of linearisation yields that $|u_n|^2u_n \longrightarrow 0$ in $X^{1,-b'}_T$.\\
For the other term in $R_n$, we use the concentration of the $\Phi_n$.
\bna
\nor{a(x)(1-\Delta)^{-1}a(x)\partial_t v_n}{X^{1,-b'}_T}&\leq &\nor{a(x)(1-\Delta)^{-1}a(x)\partial_t v_n}{L^2([0,T],H^1)}\\
&\leq &\nor{a(x)\partial_t v_n}{L^2([0,T],H^{-1})}\leq (\lambda_n+1)\nor{a(x)\Phi_n}{L^{\infty}(S^3)}
\ena
Let $\delta>0$, such that we have $x_3^2+x_4^2> \delta$ on Supp $a$. Hence, we have $\left|(x_1+ix_2)\right|^2=x_1^2+x_2^2=1-x_3^2-x_4^2<1-\delta$. 
\bna
(\lambda_n+1)\nor{a(x)\Phi_n}{L^{\infty}(S^3)}\leq C (\lambda_n+1) c_n  (1-\delta)^{n/2} 
\ena
Since $\lambda_n$ and $c_n$ are at most polynomial in $n$, we deduce that $R_n$ tends to $0$ in $X^{1,-b'}_T$. By some arguments similar to the proof of the continuity of the flow map of Proposition \ref{thmexistenceNl},, we infer that $r_n$ tends to $0$ in $X^{1,b}_T$. Then, $\nor{u_n(T)}{H^1}$ tends to $R_0$ and for $n$ large enough, we have $\nor{u_n(T)}{H^1}>R_0/2$. 
\end{proof}
With a similar proof, we could show the same result on $S^2\times S^1$ if $Supp(a)\cap (\Gamma \times S^1) =\emptyset$  for some geodesic $\Gamma$ of $S^2$. Yet, it does not imply geometric control.
 
The construction of J. V. Ralston \cite{Ralston} proves that actually, a necessary condition for stabilization is that the support of $a(x)$ intersects any stable closed geodesic (see also the work of L. Thomann \cite{Thomann} where this concentration is used to prove ill-posedness). In the case of $S^3$, we use the geometric fact that every closed geodesic is stable.      
\appendix
%-------------------------------------------------------------------------------------
\section{Some commutator estimates}
%-------------------------------------------------------------------------------------
\label{sectcommut}
This section is devoted to the proof of some commutator estimates used in Proposition \ref{thmpropagrecontrol}. More precisely, we study the action of $[(-\Delta)^{\varepsilon/2},a_1a_2]$ on $\Xsb$ where $a_i$ are rough. We first give a simple proof for $\Tot$ (rational or not) and then a general one under Assumption \ref{hypcommut}. Then, we show that this assumption is fulfilled for $S^3$ and $S^2\times S^1$. We will need an elementary lemma.

\begin{lemme}\label{lmepsilon} If $0\leq \varepsilon \leq 1$, we have for any norm $\left||k|^{\varepsilon}-|k_3|^{\varepsilon}\right| \leq \left|k-k_3\right|^{\varepsilon}$.
\begin{proof}
Using triangular inequality, we get $\left||k|-|k_3|\right|^{\varepsilon} \leq  \left|k-k_3\right|^{\varepsilon}.$ Then, we are reduced to the case of $\R^{+*}$ : we prove that for $z$, $t\in \R^{+*}$, we have $(z+t)^{\varepsilon}-z^{\varepsilon} \leq t^{\varepsilon}$, which is an easy consequence of Minkowsky inequality for $1\leq 1/\varepsilon \leq +\infty$. 
\end{proof}
\end{lemme}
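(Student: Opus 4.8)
The plan is to split the estimate into two independent steps: a reduction that removes the vector structure, and a one-dimensional concavity argument for the power function.

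First I would invoke the reverse triangle inequality for the norm, which gives $\bigl| |k| - |k_3| \bigr| \leq |k - k_3|$. Since $t \mapsto t^{\varepsilon}$ is nondecreasing on $[0,+\infty)$ for $\varepsilon \geq 0$, raising both sides to the power $\varepsilon$ yields $\bigl| |k| - |k_3| \bigr|^{\varepsilon} \leq |k - k_3|^{\varepsilon}$. Hence it is enough to prove the scalar inequality
$$\bigl| |k|^{\varepsilon} - |k_3|^{\varepsilon} \bigr| \leq \bigl| |k| - |k_3| \bigr|^{\varepsilon},$$
that is, for two nonnegative reals $a = |k|$ and $b = |k_3|$, that $|a^{\varepsilon} - b^{\varepsilon}| \leq |a-b|^{\varepsilon}$.

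By symmetry I may assume $a \geq b$, and setting $z = b \geq 0$, $t = a - b \geq 0$ this is exactly the subadditivity statement $(z+t)^{\varepsilon} \leq z^{\varepsilon} + t^{\varepsilon}$. To establish it I would use that $f(x) = x^{\varepsilon}$ is concave on $[0,+\infty)$ when $0 \leq \varepsilon \leq 1$ and satisfies $f(0)=0$: for $z,t>0$, applying concavity to the convex decompositions $z = \tfrac{z}{z+t}(z+t) + \tfrac{t}{z+t}\cdot 0$ and $t = \tfrac{t}{z+t}(z+t) + \tfrac{z}{z+t}\cdot 0$ gives $z^{\varepsilon} \geq \tfrac{z}{z+t}(z+t)^{\varepsilon}$ and $t^{\varepsilon} \geq \tfrac{t}{z+t}(z+t)^{\varepsilon}$; summing the two yields $z^{\varepsilon} + t^{\varepsilon} \geq (z+t)^{\varepsilon}$. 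Equivalently, this is the monotonicity of $\ell^p$ norms: with $p = 1/\varepsilon \geq 1$ one has $\bigl\| (z^{\varepsilon}, t^{\varepsilon}) \bigr\|_{\ell^p} \leq \bigl\| (z^{\varepsilon}, t^{\varepsilon}) \bigr\|_{\ell^1}$, which reads precisely $(z+t)^{\varepsilon} \leq z^{\varepsilon} + t^{\varepsilon}$; this is the route suggested by the Minkowski inequality for the exponent $1 \leq 1/\varepsilon \leq +\infty$.

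There is no genuine obstacle here: the entire content is the concavity of $t \mapsto t^{\varepsilon}$ together with $f(0)=0$, and the only point requiring a line of care is the reduction step, namely that the vector inequality follows from the scalar one through the reverse triangle inequality and the monotonicity of the power function. The endpoint $\varepsilon = 1$ is simply the reverse triangle inequality itself, so it can be dispatched separately if one prefers to treat only $0 \leq \varepsilon < 1$ in the concavity argument.
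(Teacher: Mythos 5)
Your proposal is correct and follows essentially the same route as the paper: reduce to the scalar case via the reverse triangle inequality and monotonicity of $t\mapsto t^{\varepsilon}$, then establish the subadditivity $(z+t)^{\varepsilon}\leq z^{\varepsilon}+t^{\varepsilon}$, which is exactly the paper's appeal to the Minkowski inequality for $1\leq 1/\varepsilon\leq+\infty$. Your concavity argument simply spells out the details that the paper leaves implicit.
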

\subsection{An easier proof for $\Tot$}
\begin{lemme}
Let $M=\R^3/(\theta_x \Z \times \theta_y \Z \times \theta_z\Z)$ with $(\theta_x ,\theta_y ,\theta_z) \in \R^3$. Denote $s_0$ the constant taken from Assumption \ref{hypinegXsb}. Let $s>s_0$ and $0\leq \varepsilon \leq 1$.\\
Then,  there exists $b'<1/2$ such that $u_3 \mapsto [\Delta^{\varepsilon/2},u_1u_2]u_3$ maps any $X^{s,b'}$ into $X^{s,-b'}$, where $u_1u_2$ denotes the operator of multiplication by $u_1u_2$ with $u_i\in X^{s+\varepsilon,b'}$ for $i\in \{1,2\}$.\\
This function $[\Delta^{\varepsilon/2},u_1u_2]$ also maps $X^{-s,b'}$ into $X^{-s,-b'}$.\\
Moreover, the same result holds with $u_i$ replaced by $\overline{u_i}$ for $i$ in a subset of $\{1,2,3\}$.
\end{lemme}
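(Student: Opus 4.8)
The plan is to diagonalize on the torus and reduce the statement to the trilinear estimate of Lemma \ref{lmtrilinXsb}. On $\Tot$ the operator $\Delta^{\varepsilon/2}$ is the Fourier multiplier $k\mapsto |k|^{\varepsilon}$ (the zero mode being annihilated), so for the trilinear object $[\Delta^{\varepsilon/2},u_1u_2]u_3$ I would first compute its symbol. Writing $u_1,u_2,u_3$ in Fourier series with spatial frequencies $k_1,k_2,k_3$, the product $u_1u_2u_3$ has output frequency $k=k_1+k_2+k_3$, so $\Delta^{\varepsilon/2}(u_1u_2u_3)$ multiplies the corresponding coefficient by $|k_1+k_2+k_3|^{\varepsilon}$, whereas $u_1u_2\,\Delta^{\varepsilon/2}u_3$ multiplies it by $|k_3|^{\varepsilon}$. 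Hence the commutator acts on the frequency triple $(k_1,k_2,k_3)$ through the multiplier $m(k_1,k_2,k_3)=|k_1+k_2+k_3|^{\varepsilon}-|k_3|^{\varepsilon}$, while leaving the time--frequency structure untouched, since $\Delta^{\varepsilon/2}$ is purely spatial.

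The heart of the argument is the pointwise bound on $m$. Using Lemma \ref{lmepsilon} with the pair $(k_1+k_2+k_3,\,k_3)$, together with the subadditivity of $t\mapsto t^{\varepsilon}$ for $0\le\varepsilon\le1$, I obtain
\[
\left|m(k_1,k_2,k_3)\right|\le |k_1+k_2|^{\varepsilon}\le (|k_1|+|k_2|)^{\varepsilon}\le |k_1|^{\varepsilon}+|k_2|^{\varepsilon}.
\]
This says exactly that the commutator multiplier is dominated by the two multipliers obtained by moving the derivative $\Delta^{\varepsilon/2}$ onto $u_1$ or onto $u_2$; morally $[\Delta^{\varepsilon/2},u_1u_2]u_3$ behaves like $(\Delta^{\varepsilon/2}u_1)u_2u_3+u_1(\Delta^{\varepsilon/2}u_2)u_3$.

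To turn this into a norm estimate I would argue by duality, using $X^{s,-b'}=(X^{-s,b'})^{*}$: the quantity $\nor{[\Delta^{\varepsilon/2},u_1u_2]u_3}{X^{s,-b'}}$ is the supremum over $\nor{u_4}{X^{-s,b'}}\le1$ of a quadrilinear form whose kernel carries the factor $m(k_1,k_2,k_3)$. Replacing each $u_i$ by the function $U_i$ defined by $\widehat{U_i}=|\widehat{u_i}|$ --- which leaves every Bourgain norm unchanged and removes all cancellation --- and inserting the bound above, I dominate this form by the two forms attached to $(\Delta^{\varepsilon/2}U_1)U_2U_3$ and $U_1(\Delta^{\varepsilon/2}U_2)U_3$ paired against $U_4$. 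Each of these is controlled by estimate (\ref{multilinH1Hs}) of Lemma \ref{lmtrilinXsb} taken with $r=s$ (after polarization), so that all three factors sit in $X^{s,b'}$ with $s>s_0$; using $\nor{\Delta^{\varepsilon/2}u_i}{X^{s,b'}}\le\nor{u_i}{X^{s+\varepsilon,b'}}$ and the embedding $X^{s+\varepsilon,b'}\hookrightarrow X^{s,b'}$, I conclude
\[
\nor{[\Delta^{\varepsilon/2},u_1u_2]u_3}{X^{s,-b'}}\le C\,\nor{u_1}{X^{s+\varepsilon,b'}}\nor{u_2}{X^{s+\varepsilon,b'}}\nor{u_3}{X^{s,b'}},
\]
which is the first assertion. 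The $X^{-s,b'}\to X^{-s,-b'}$ statement then follows by duality, since the $L^{2}(\R\times M)$--adjoint of $[\Delta^{\varepsilon/2},u_1u_2]$ is $-[\Delta^{\varepsilon/2},\overline{u_1}\,\overline{u_2}]$, a commutator of the very same form; and the conjugated variants are covered throughout because Lemma \ref{lmtrilinXsb} holds for any $\R$-trilinear form on $\C$.

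I expect the main obstacle to be precisely the passage from the pointwise, non-factorized and sign-indefinite bound on $m$ to a genuine multilinear norm estimate: $m(k_1,k_2,k_3)$ is neither a tensor product nor of constant sign, so one cannot simply write the commutator as a sum of two products. The device of passing to the functions $U_i$ with $\widehat{U_i}=|\widehat{u_i}|$ (legitimate because the $X^{s,b}$ norm sees only $|\widehat{u_i}|$) is what makes the domination by $|k_1|^{\varepsilon}+|k_2|^{\varepsilon}$ rigorous and lets the trilinear estimate be applied without loss.
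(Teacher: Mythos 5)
Your proposal is correct and follows essentially the same route as the paper: pass to the Fourier side by duality, bound the commutator multiplier $\bigl||k_1+k_2+k_3|^{\varepsilon}-|k_3|^{\varepsilon}\bigr|$ by $|k_1|^{\varepsilon}+|k_2|^{\varepsilon}$ via Lemma \ref{lmepsilon}, replace each $\widehat{u_i}$ by its modulus to remove cancellation, and conclude with the trilinear $X^{s,b}$ estimate. The only (immaterial) variation is that for the $X^{-s,b'}\to X^{-s,-b'}$ claim you pass to the adjoint commutator, whereas the paper simply re-runs the same quadrilinear estimate with the trilinear inequality applied at the level $H^{-s}$.
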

\begin{proof}We choose the norm $|k|=\sqrt{(\theta_xk_x)^2+(\theta_yk_y)^2+(\theta_zk_z)^2}$ so that 
$$\widehat{-\Delta u}(k)=|k|^2\widehat{u}(k) $$
By duality, it is equivalent to prove
$$\int_{\R\times M} [(-\Delta)^{\varepsilon/2},u_1u_2]u ~ \overline{v}\leq C \left\|u_1\right\|_{X^{s+\varepsilon,b'}} \left\|u_2\right\|_{X^{s+\varepsilon,b'}}\left\|u\right\|_{X^{s,b'}}\left\|v\right\|_{X^{-s,b'}}$$
Using Parseval theorem and denoting $k=k_1+k_2+k_3$, $\tau=\tau_1+\tau_2+\tau_3$
\bna
\int_{\R\times M} [(-\Delta)^{\varepsilon/2},u_1u_2]u ~ \overline{v} &=& \int_{\tau_1,\tau_2,\tau_3}\sum_{k_1,k_2,k_3} \widehat{u_1}(k_1,\tau_1)\widehat{u_2}(k_2,\tau_2)(|k|^{\varepsilon}-|k_3|^{\varepsilon})\widehat{u}(k_3,\tau_2) \overline{\widehat{v}}(k,\tau)\\
& \leq & \int_{\tau_1,\tau_2,\tau_3}\sum_{k_1,k_2,k_3} \left||k|^{\varepsilon}-|k_3|^{\varepsilon}\right| \left|\widehat{u_1}(k_1,\tau_1)\widehat{u_2}(k_2,\tau_2)\widehat{u}(k_3,\tau_3) \overline{\widehat{v}}(k,\tau)\right|
\ena
Lemma \ref{lmepsilon} and $k-k_3=k_1+k_2$ yields
\bna
\left|\int_{\R\times M} [(-\Delta)^{\varepsilon/2},u_1u_2]u ~ \overline{v} \right|
& \leq & C \int_{\tau_1,\tau_2,\tau_3}\sum_{k_1,k_2,k_3}\left(|k_1|^{\varepsilon}+|k_2|^{\varepsilon}\right) \left|\widehat{u_1}(k_1,\tau_1)\right|\left|\widehat{u_2}(k_2,\tau_2)\left|\right|\widehat{u}(k_3,\tau_3)\right| \left|\widehat{v}(k,\tau)\right|
\ena
Denoting $u_1^{§}$ the function with Fourier transform $  \left|\widehat{u_1}(k_1,\tau_1)\right|$ we obtain.
\bna
\left|\int_{\R\times M} [(-\Delta)^{\varepsilon/2},u_1u_2]u ~ \overline{v} \right|
& \leq & C \int_{\R\times M} \left(\Delta^{\varepsilon/2}u_1^{§}\right)u_2^{§} u^{§} ~ \overline{v^{§}} +\int_{\R\times M} u_1^{§}\left(\Delta^{\varepsilon/2}u_2^{§}\right) u^{§} ~ \overline{v^{§}} \\
&\leq & C \left\|u_1\right\|_{X^{s+\varepsilon,b'}}\left\|u_2\right\|_{X^{s+\varepsilon,b'}}\left\|u\right\|_{X^{s,b'}} \left\|v\right\|_{X^{-s,b'}}
\ena
Here, we have finished the proof using the trilinear Bourgain estimate because $s>s_0$.
If we estimate this integral using the trilinear estimate at the negative level $H^{-s}$, we obtain the second result we announced.
\end{proof}
\subsection{General proof under Assumption \ref{hypcommut}}
\begin{lemme}
%\label{lemmecommut}
\label{quadrdoneXsb}
Denote $s_0$ the constant taken from Assumption \ref{hypcommut}. Let $s>s_0$ and $0\leq \varepsilon \leq 1$.\\
Then,  there exists $b'<1/2$ such that $u_3 \mapsto [(-\Delta)^{\varepsilon/2},u_1u_2]u_3$ maps any $X^{s,b'}$ into $X^{s,-b'}$, where $u_1ua_2$ denotes the operator of multiplication by $u_1u_2$ with $u_i\in X^{s+\varepsilon,b'}$ for $i\in \{1,2\}$.\\
This function $[\Delta^{\varepsilon/2},u_1u_2]$ also maps $X^{-s,b'}$ into $X^{-s,-b'}$.\\
Moreover, the same result holds with $u_i$ replaced by $\overline{u_i}$ for $i$ in a subset of $\{1,2,3\}$.
\end{lemme}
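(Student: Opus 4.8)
The plan is to argue by duality and reduce everything to a single application of the quadrilinear estimate of Assumption \ref{hypcommut}, after a dyadic decomposition. By definition of the $X^{s,-b'}$ norm, showing that $[(-\Delta)^{\varepsilon/2},u_1u_2]$ maps $X^{s,b'}$ into $X^{s,-b'}$ amounts to controlling, for $u_3\in X^{s,b'}$ and $v\in X^{-s,b'}$, the pairing $\int_{\R\times M}[(-\Delta)^{\varepsilon/2},u_1u_2]u_3\,\overline{v}$. Since $(-\Delta)^{\varepsilon/2}$ is self-adjoint and commutes with conjugation on the spectral side, I would move it across the $L^2$ pairing and factor out the multiplier $u_1u_2$, rewriting
\bna
\int_{\R\times M}[(-\Delta)^{\varepsilon/2},u_1u_2]u_3\,\overline{v}=\int_{\R\times M}u_1u_2\left(u_3\,(-\Delta)^{\varepsilon/2}\overline{v}-(-\Delta)^{\varepsilon/2}u_3\,\overline{v}\right).
\ena
Setting $u_4=\overline{v}$ and using $\overline{(-\Delta)^{\varepsilon/2}v}=(-\Delta)^{\varepsilon/2}\overline{v}$, the right-hand side is, up to an overall sign, exactly the integrand of Assumption \ref{hypcommut}, with $u_4$ in the role of the conjugated fourth factor. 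The target is then the bound $\|u_1\|_{X^{s+\varepsilon,b'}}\|u_2\|_{X^{s+\varepsilon,b'}}\|u_3\|_{X^{s,b'}}\|v\|_{X^{-s,b'}}$.

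Next I would perform a Littlewood--Paley decomposition $u_i=\sum_{N_i}u_{i,N_i}$, with $u_{i,N_i}$ localized at $\sqrt{1-\Delta}\in[N_i,2N_i[$, and treat each block with Assumption \ref{hypcommut}. The uniform-in-$\tau$ shape of that assumption is precisely what permits the transfer from free evolutions $e^{it\Delta}f_j$ to general Bourgain functions: writing $u_j^{\#}=e^{-it\Delta}u_j$ and decomposing $u_j(t)=\int_\R e^{it\tau_j}e^{it\Delta}\widehat{u_j^{\#}}(\tau_j)\,d\tau_j$, the time-modulations combine into the single frequency $\tau=\sum_j\tau_j$ absorbed by the supremum, after which the passage to a finite exponent $b'<1/2$ is the standard one (Cauchy--Schwarz in the $\tau_j$ together with the gain carried by $m(N_1,\troisp,N_4)^{s_0}$), identical to the deduction of Lemma \ref{lmtrilinXsb} from Assumption \ref{hypinegXsb}. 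This produces, for some $b'<1/2$ and each tuple of blocks,
\bna
\left|\int u_{1,N_1}u_{2,N_2}\left((-\Delta)^{\varepsilon/2}u_{3,N_3}\,u_{4,N_4}-u_{3,N_3}(-\Delta)^{\varepsilon/2}u_{4,N_4}\right)\right|\leq C(N_1^{\varepsilon}+N_2^{\varepsilon})\,m(N_1,\troisp,N_4)^{s_0}\prod_{j}\|u_{j,N_j}\|_{X^{0,b'}},
\ena
so I would simply invoke this machinery (\cite{InventionesBGT}, \cite{gerardcourspise}) rather than redo it one multilinearity higher.

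Finally I would sum over the dyadic parameters. Writing $\|u_{1,N_1}\|_{X^{0,b'}}=N_1^{-(s+\varepsilon)}a_{N_1}$ and likewise $N_2^{-(s+\varepsilon)}b_{N_2}$, $N_3^{-s}c_{N_3}$, $N_4^{s}d_{N_4}$, so that $\sum a_{N_1}^2\approx\|u_1\|_{X^{s+\varepsilon,b'}}^2$ and similarly for the others, the factor $N_1^{\varepsilon}+N_2^{\varepsilon}$ is absorbed by the surplus $\varepsilon$ derivatives carried by $u_1,u_2$; the surviving kernel is then dominated by $m(N_1,\troisp,N_4)^{s_0}\,N_1^{-s}N_2^{-s}N_3^{-s}N_4^{s}$ (and its symmetric counterpart). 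The claim follows once this kernel passes a Schur/Cauchy--Schwarz test, which gives $C\|u_1\|_{X^{s+\varepsilon,b'}}\|u_2\|_{X^{s+\varepsilon,b'}}\|u_3\|_{X^{s,b'}}\|v\|_{X^{-s,b'}}$. The estimate at the negative level, $X^{-s,b'}\to X^{-s,-b'}$, is obtained verbatim after interchanging the $H^{s}$ and $H^{-s}$ weights, and the versions with $\overline{u_i}$ are covered by the last sentence of Assumption \ref{hypcommut}. I expect the summation to be the only delicate point: the dangerous positive power $N_4^{s}$ attached to the dual variable is controlled only because the integral is negligible unless the largest frequency is comparable to the sum of the other three, and it is the strict inequality $s>s_0$ that ultimately renders the resulting geometric-type series summable.
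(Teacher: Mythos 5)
Your overall architecture --- duality, dyadic decomposition in the space frequency, reduction of each block to the quadrilinear estimate of Assumption \ref{hypcommut}, then summation over the dyadic parameters --- is the same as the paper's. The genuine gap is at the step where you assert the block estimate with $\prod_j\|u_{j,N_j}\|_{X^{0,b'}}$ for some $b'<1/2$, claiming the transfer is ``identical to the deduction of Lemma \ref{lmtrilinXsb} from Assumption \ref{hypinegXsb}.'' The transfer from free solutions via the $\sup_\tau$ formulation plus Cauchy--Schwarz in the $\tau_j$ only yields the estimate with exponent $b>1/2$ (one needs $\langle\tau\rangle^{-b}\in L^2(\R)$), and the paper explicitly notes that this ``requires $b>1/2$ which is too much for our purpose.'' Lowering the exponent below $1/2$ requires a second decomposition in the modulation variable $\langle i\partial_t+\Delta\rangle\sim K_j$ and an interpolation with cruder estimates that are better in time; the point you miss is that in the commutator setting those cruder estimates do not preserve the gain $(N_1^{\varepsilon}+N_2^{\varepsilon})$. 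The Sobolev-type bound (\ref{estimatemultilinsobo}) carries a factor $(N_3^{\varepsilon}+N_4^{\varepsilon})$ instead, so after interpolation one picks up a loss $N_3^{\theta\varepsilon}$, which is fatal precisely when $N_3$ is the dominant frequency: $u_3$ has only $s$ derivatives to spend, $N_4$ can be comparable to $N_3$, and the factor $m(N_1,\dots,N_4)^{s_0}$ gives no help there since $N_3$ is then among the two largest frequencies. This is why the paper's proof splits the summation into three regimes and, in the regime $\max(N_1,N_2)\le N_3$, replaces the Sobolev bound by the estimate (\ref{estimcommutbis}), which exploits that $[u_1^{N_1}u_2^{N_2},\Delta^{\varepsilon/2}]$ is a pseudodifferential operator of order $\le 0$ and therefore loses only $\max(N_1,N_2)^{\mu}$, i.e.\ on the frequencies that do carry the surplus decay $N_j^{-s-\varepsilon}$. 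Your single Schur test with kernel $m^{s_0}N_1^{-s}N_2^{-s}N_3^{-s}N_4^{s}$ would not close in that regime.

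Your treatment of the remaining points is sound: the regime $N_4\gg N_1+N_2+N_3$ is indeed handled by near-orthogonality (the paper quotes a variant of Lemma 2.6 of \cite{InventionesBGT} giving a bound $O(\mu_{k_4}^{-p})$ for the elementary spectral blocks), the statement $X^{-s,b'}\to X^{-s,-b'}$ follows by duality exactly as in the paper, and the conjugated variants are covered by the last clause of Assumption \ref{hypcommut}. But as written, the proof is missing the interpolation machinery in the modulation variable together with the case analysis it forces, and in particular the order-zero commutator estimate needed when $N_3$ dominates.
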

\begin{proof}
The proof follows the techniques of J. Bourgain and N. Burq, P. Gérard, N. Tzvetkov. Here, we were inspired more precisely by \cite{PGPierfelice}. We recall the notations $u^{\#}=e^{-it\Delta}u(t)$, $u^{N}=\mathbf{1}_{\sqrt{1-\Delta}\in [N,2N[}u$ where $N$ is a dyadic number and $\widehat{u}(\tau)$ is the Fourier transform of $u$ with respect to the time variable. First, with some dyadic integers $N_i$ fixed, we estimate the integral 
\bna
&&I(N_1,..,N_4)=\int_{\R\times M}u_1^{N_1}u_2^{N_2}\left[((-\Delta)^{\varepsilon/2}u_3^{N_3}) ~ \overline{u_4}^{N}-u_3^{N_3} ~ (-\Delta)^{\varepsilon/2}\overline{u_4}^{N}\right]~dtdx\\
&=&\frac{1}{(2\pi)^4}\int_{\R\times M}\iiiint_{\R^4}e^{it(\tau_1+\tau_2+\tau_3-\tau)}e^{it\Delta}\widehat{u_1^{N_1\#}}(\tau_1)e^{it\Delta}\widehat{u_{2}^{N_2\#}}(\tau_2)\\
&&\times\left[((-\Delta)^{\varepsilon/2}e^{it\Delta}\widehat{u_3^{N_3\#}}(\tau_3))\overline{e^{it\Delta}\widehat{u_4^{N\#}}(\tau)}-e^{it\Delta}\widehat{u_3^{N_3\#}}(\tau_3)(-\Delta)^{\varepsilon/2}\overline{e^{it\Delta}\widehat{u_4^{N_4\#}}(\tau)}\right]d\tau_1~d\tau_2~d\tau_3d\tau_4~dtdx
\ena
By nearly orthogonality in $H^b$ and partition of unity, $u_j=\sum_{n\in \Z} \varphi(t-n/2)u_j(t)$, we are led to the special case where the $u_j$ are supported in time in the interval $]0,1[$. Select $\chi \in C^{\infty}_0(\R)$ such that $\chi=1$ on $[0,1]$. Thus, estimates (\ref{quadrinlinschrod}), applied with $\tau_j$ fixed, and Cauchy-Schwarz inequality in $(\tau_1,\tau_2,\tau_3,\tau_4)$ gives for any $b>1/2$
\bnan
\label{estimatemultinopt}
\left|I(N_1,..,N_4)\right|&\leq &C (N_1^{\varepsilon}+N_2^{\varepsilon})\left(m(N_1,\troisp,N_4)\right)^{s_0}\prod_{j=1}^4\int_{\tau_j}\nor{\widehat{u_j^{N_j\#}}(\tau_j)}{L^2(M)}\nonumber\\
&\leq& C (N_1^{\varepsilon}+N_2^{\varepsilon})\left(m(N_1,\troisp,N_4)\right)^{s_0}\prod_{j=1}^4\nor{u_j^{N_j}}{X^{0,b}(\R\times M)}
\enan
This estimate is very satisfactory for the space regularity. Yet, for the regularity in time, it requires $b>1/2$ which is too much for our purpose. We will interpolate with some crude estimates in space but better in time.\\
For the case where $N_1$ is large, we estimate $\left|I(N_1,\troisp,N)\right|$ using Sobolev embeddings $H^{1/4}(\R)\subset L^4(\R)$  :
\bnan
\label{estimatemultilinsobo}
\left|I(N_1,\troisp,N_4)\right|\leq C (N_3^{\varepsilon}+N_4^{\varepsilon}) \left(m(N_1,\troisp,N_4)\right)^{3/2} \prod_{j=1}^4\nor{u_j^{N_j}}{X^{0,1/4}(\R\times M)}
\enan
In another case where the frequency $N_3$ is large, we will use an argument near \cite{DelortSJ}. In that case, we can not afford a loss in the frequency $N_3$. We use the fact that $[u_1^{N_1}u_2^{N_2},\Delta^{\varepsilon/2}]$ is a pseudodifferential operator of order less than $0$ (if $\varepsilon \leq 1$). 
Then,
\bnan
\left|I(N_1,\troisp,N_4)\right|&=&\left|\int_{\R\times M}[u_1^{N_1}u_2^{N_2},\Delta^{\varepsilon/2}]u_3^{N_3}\overline{u_4^{N_4}}\right| \nonumber\\
&\leq & C\int_{\R} \left\|[u_1(t)^{N_1}u_2(t)^{N_2},\Delta^{\varepsilon/2}]\right\|_{L^2\rightarrow L^2 } \left\|u_3(t)\right\|_{L^2(M)}\left\|u_4(t)\right\|_{L^2(M)}dt\nonumber\\
&\leq& \int_{\R}\sum_{\alpha=0}^m\nor{\partial^{\alpha} u_1u_2(t)}{L^{\infty}(M)}\left\|u_3(t)\right\|_{L^2(M)}\left\|u_4(t)\right\|_{L^2(M)}dt\nonumber\\
\label{estimcommutbis} &\leq& C \max\left(N_1, N_2\right)^{\mu} \prod_{j=1}^4\nor{u_j^{N_j}}{X^{0,1/4}(\R\times M)} 
\enan
where $\mu$ depends on the dimension and on $\varepsilon$.

Let us now begin the summation of the harmonics. As in \cite{PGPierfelice}, we decompose each function
$$u=\sum_{K}u_{K}, \quad u_{K}=\mathbf{1}_{K\leq \left\langle i\partial_t+\Delta \right\rangle<2K}(u) $$
where $K$ denotes the sequence of dyadic integers. Notice that
$$\left\|u\right\|_{X^{0,b}}^2 \approx \sum_{K}K^{2b}\nor{u_K}{L^2(\R\times M)}^2\approx \sum_{K}\nor{u_K}{X^{0,b}}^2.$$
Then, we decompose the integral in sum of the following elementary integrals
\bna
I(N_1,\troisp,N_4,K_1,\troisp, K_4)=\int_{\R\times M}a_1^{N_1,K_1}a_2^{N_2,K_2}\left[((-\Delta^{\varepsilon/2})u^{N_3,K_3})\overline{v}^{N,K}-u^{N_3,K_3} (-\Delta^{\varepsilon/2})\overline{v}^{N,K}\right]dtdx
\ena
Estimate (\ref{estimatemultinopt}) leads to (for every $b>1/2$)
\bna
\left|I(N_1,\troisp,N_4,K_1,\troisp,K_4)\right| \leq  (N_1^{\varepsilon}+N_2^{\varepsilon})m(N_1,\troisp,N_4)^{s_0}(K_1K_2K_3K_4)^b\prod_{j=1}^4 \norL{u_j^{N_j,K_j}}.
\ena
We will interpolate this estimate with different inequalities. We distinguish three cases : $N_4\leq C(N_1+N_2+N_3)$ with $N_3<\max(N_1,N_2)$ or $\max(N_1,N_2)\leq N_3$, and the last case $N_4> C(N_1+N_2+N_3)$ with $C$ large enough. Without loss of generality, we can assume $N_1\geq N_2$.

First case : $N_3<\max(N_1,N_2)=N_1$ and $N_4\leq C(N_1+N_2+N_3)$\\
Estimate (\ref{estimatemultilinsobo}) gives 
\bna
\left|I(N_1,\troisp,N_4,K_1,\troisp,K_4)\right|& \leq & (N_3^{\varepsilon}+N_4^{\varepsilon})m(N_1,\troisp,N_4)^{3/2}\\
&&(K_1K_2K_3K_4)^{1/4} \prod_{j=1}^4 \norL{u_j^{N_j,K_j}}
\ena
Then, for every $\theta \in [0,1]$
\bna
\left|I(N_1,\troisp,N_4,K_1,\troisp,K_4)\right| & \leq & C (N_1^{\varepsilon}+N_2^{\varepsilon})^{1-\theta} (N_3^{\varepsilon}+N_4^{\varepsilon})^{\theta}m(N_1,\troisp,N_4)^{(1-\theta)s_0+3\theta/2}\\
&&(K_1K_2K_3K)^{b(1-\theta)+\theta/4} \prod_{j=1}^4 \norL{u_j^{N_j,K_j}}
\ena
We denote $s(\theta)=(1-\theta)s_0+3\theta/2$ and $b(\theta)=b(1-\theta)+\theta/4$. 
\bna
\left|I(N_1,\troisp,N,K_1,\troisp)\right|& \leq &  C(N_1^{\varepsilon}+N_2^{\varepsilon})^{1-\theta} (N_3^{\varepsilon}+N_4^{\varepsilon})^{\theta}m(N_1,\troisp,N_4)^{s(\theta)}\\
&&(K_1K_2K_3K_4)^{b(\theta)-b'} \prod_{j=1}^4 \nor{u_j^{N_j}}{X^{0,b'}}
\ena
By choosing some appropriate $\theta$ and $b'<1/2<b$, we can make the serie in $K$ convergent if $b(\theta)-b'<0$. This yields :
\bna
&&\left|I(N_1,\troisp,N_4)\right| \leq C(N_1^{\varepsilon}+N_2^{\varepsilon})^{1-\theta} (N_3^{\varepsilon}+N_4^{\varepsilon})^{\theta}m(N_1,\troisp,N_4)^{s(\theta)}\prod_{j=1}^4 \nor{u_j^{N_j}}{X^{0,b'}}\\
&&\leq C N_1^{(1-\theta)\varepsilon-s-\varepsilon}N_4^{s+\theta\varepsilon}N_2^{s(\theta)-s-\varepsilon}N_3^{s(\theta)+\theta\varepsilon-s} \prod_{j=1}^2\nor{u_j}{X^{s+\varepsilon,b'}}\nor{u_3}{X^{s,b'}}\nor{u_4}{X^{-s,b'}}\\
&&\leq \left(\frac{N_4}{N_1}\right)^{s+\theta\varepsilon}N_2^{s(\theta)-s-\varepsilon}N_3^{s(\theta)+\theta\varepsilon-s} \prod_{j=1}^2\nor{u_j}{X^{s+\varepsilon,b'}}\nor{u_3}{X^{s,b'}}\nor{u_4}{X^{-s,b'}}\\
\ena
The series is convergent thanks to $N_4\leq CN_1$ and after choosing $\theta$ small enough such that $s(\theta)+\theta\varepsilon-s<0$ with $b(\theta)-b'<0$.

Second case : $N_1=\max(N_1,N_2)\leq N_3$ and so $N_4\leq CN_3$.\\
This time, $N_3$ is a large frequency and we can not have any loss $N_3^{\theta\varepsilon}$ from the interpolation. We proceed with the same interpolation procedure but between (\ref{estimatemultinopt}) and (\ref{estimcommutbis}). After summation in $K$ and a good choice of $b'<1/2<b$ and
\bna
\left|I(N_1,\troisp,N_4)\right|&\leq &C N_1^{(1-\theta)(s_0+\varepsilon)+\theta\mu-s-\varepsilon}N_4^{s}N_2^{(1-\theta)s_0-s-\varepsilon}N_3^{-s} \prod_{j=1}^2\nor{u_j}{X^{s+\varepsilon,b'}}\nor{u_3}{X^{s,b'}}\nor{u_4}{X^{-s,b'}}\\
&\leq & \left(\frac{N_4}{N_3}\right)^{s}N_1^{(1-\theta)(s_0+\varepsilon)+\theta\mu-s-\varepsilon}N_2^{(1-\theta)s_0-s-\varepsilon} \prod_{j=1}^2\nor{u_j}{X^{s+\varepsilon,b'}}\nor{u_3}{X^{s,b'}}\nor{u_4}{X^{-s,b'}}\\
\ena
We choose $\theta$ small enough such that $(1-\theta)(s_0+\varepsilon)+\theta\mu-s-\varepsilon \leq s_0+\theta\mu-s<0$ and $b(\theta)-b'<0$. And we conclude by the same summation as in the first case.

Last case : $N_4 \geq C(N_1+N_2+N_3)$\\
This case is trivial in the particular case of $\Tot$, $S^3$ or $S^2\times S^1$ since this integral is zero for $C$ large enough. In the general case, we apply the following lemma  which is a variant of Lemma 2.6 in \cite{InventionesBGT}.
\begin{lemme}
There exists $C>0$ such that, if for any $j=1,2,3$, $C\mu_{k_j}\leq \mu_{k_4}$, then for every $p>0$, there exists $C_p>0$ such that for every $w_j\in L^2(M)$, $j=1,2,3,4$
\bna
\int_{ M}\Pi_{k_1}w_1\Pi_{k_2}w_2\left[(-\Delta)^{\varepsilon/2}\Pi_{k_3}w_3 ~ \overline{\Pi_{k_4}w_4}-\Pi_{k_3}w_3 ~ (-\Delta)^{\varepsilon/2}\overline{\Pi_{k_4}w_4}\right]dx \leq C_p \mu_{k_4}^{-p} \prod_{j=1}^4 \norL{w_j}
\ena
where $\Pi_{k}$ denotes the orthogonal projection on the eigenfunction $e_k$ associated to the eigenvalue $\mu_k$.
\end{lemme}
This ends the proof of the fist statement of Lemma \ref{quadrdoneXsb}. The second one is obtained by duality.
\end{proof}
\subsection{$S^3$ and $S^2\times S^1$ fulfill Assumption \ref{hypcommut}}
\begin{lemme}
Assumption \ref{hypcommut} holds true with any $s_0>1/2$ on $S^3$ and any $s_0>3/4$ on $S^2\times S^1$. 
\end{lemme}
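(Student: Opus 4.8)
The plan is to reduce the quadrilinear estimate (\ref{quadrinlinschrod}) to the bilinear estimate of Assumption \ref{hypinegXsb}, the only genuinely new ingredient being a spectral bound for the commutator multiplier. Expanding $u_j=e^{it\Delta}f_j=\sum_{k}\widehat{f_j}(k)\,e_k\,e^{-i\lambda_k t}$ in the eigenbasis, the antisymmetric factor becomes
\[
(-\Delta)^{\varepsilon/2}u_3\,u_4-u_3\,(-\Delta)^{\varepsilon/2}u_4=\sum_{k_3,k_4}\bigl(\lambda_{k_3}^{\varepsilon/2}-\lambda_{k_4}^{\varepsilon/2}\bigr)\,\widehat{f_3}(k_3)\widehat{f_4}(k_4)\,e_{k_3}e_{k_4}\,e^{-i(\lambda_{k_3}+\lambda_{k_4})t}.
\]
Carrying out $\int_\R\chi(t)e^{it\tau}(\cdots)\,dt$ then produces a factor $\widehat{\chi}(\tau+\Omega)$, with $\Omega$ the signed sum of the $\lambda_{k_j}$ dictated by the conjugation pattern, and the space integral produces the interaction coefficient $C(k_1,\dots,k_4)=\int_M e_{k_1}e_{k_2}e_{k_3}e_{k_4}$ (up to the conjugations present). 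Thus (\ref{quadrinlinschrod}) becomes a frequency-interaction sum weighted by $\lambda_{k_3}^{\varepsilon/2}-\lambda_{k_4}^{\varepsilon/2}$ and supported where $C(k_1,\dots,k_4)\neq0$.

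The heart of the matter — and the step I expect to be the main obstacle — is the pointwise estimate
\[
\bigl|\lambda_{k_3}^{\varepsilon/2}-\lambda_{k_4}^{\varepsilon/2}\bigr|\le C\,(N_1^{\varepsilon}+N_2^{\varepsilon})\qquad\text{whenever}\quad C(k_1,\dots,k_4)\neq0.
\]
On $S^3$ a degree-$n$ harmonic has $-\Delta$-eigenvalue $n(n+2)$, hence $N_j\simeq n_j$; the Clebsch--Gordan rule for products of spherical harmonics forces both $n_4\le n_1+n_2+n_3$ and $n_3\le n_1+n_2+n_4$, so $|n_3-n_4|\le n_1+n_2\lesssim N_1+N_2$. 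Take $N_1\ge N_2$ without loss of generality. If $\max(N_3,N_4)\le C_0N_1$ then $|\lambda_{k_3}^{\varepsilon/2}-\lambda_{k_4}^{\varepsilon/2}|\le\lambda_{k_3}^{\varepsilon/2}+\lambda_{k_4}^{\varepsilon/2}\lesssim N_1^{\varepsilon}$. Otherwise $|n_3-n_4|\lesssim N_1\ll\max(N_3,N_4)$ forces $N_3\simeq N_4\simeq N$, and $|\lambda_{k_3}-\lambda_{k_4}|=|n_3-n_4|(n_3+n_4+2)\lesssim N_1N$; applying the mean value theorem to $t\mapsto t^{\varepsilon/2}$ on the interval between $\lambda_{k_3}$ and $\lambda_{k_4}$, which lies in $[cN^{2},CN^{2}]$, gives $|\lambda_{k_3}^{\varepsilon/2}-\lambda_{k_4}^{\varepsilon/2}|\lesssim (N^{2})^{\varepsilon/2-1}N_1N=N_1N^{\varepsilon-1}\le N_1^{\varepsilon}$, using $N\ge N_1$ and $\varepsilon\le1$. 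It is precisely this use of the mean value theorem, rather than the H\"older-type bound of Lemma \ref{lmepsilon} (which would only give the useless $(N_1N_3)^{\varepsilon/2}$), that converts the commutator's eigenvalue difference into a gain carried by the low frequencies $N_1,N_2$.

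On $S^2\times S^1$ the same scheme works with eigenvalues $n(n+1)+p^2$: the $S^2$ Clebsch--Gordan rule again gives $|n_3-n_4|\le n_1+n_2$, so $|n_3(n_3+1)-n_4(n_4+1)|=|n_3-n_4|(n_3+n_4+1)\lesssim N_1\max(N_3,N_4)$, while integration over $S^1$ forces one of $p_3\pm p_4$ (according to the conjugation on $u_4$) to equal $\mp(p_1+p_2)$, whence $|p_3^2-p_4^2|=|p_3-p_4|\,|p_3+p_4|\lesssim N_1\max(N_3,N_4)$; adding, $|\lambda_{k_3}-\lambda_{k_4}|\lesssim N_1\max(N_3,N_4)$, the constraint also giving $|p_4|\simeq|p_3|$ and hence $N_3\simeq N_4$ in the large-frequency case, so the identical two-case analysis applies. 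Granting the multiplier bound, I would next factor out the constant $N_1^{\varepsilon}+N_2^{\varepsilon}$ and dominate the remainder by the frequency-interaction sum $\sum_{k_1,\dots,k_4}\prod_j|\widehat{f_j}(k_j)|\,|C(k_1,\dots,k_4)|\,|\widehat{\chi}(\tau+\Omega)|$ that already underlies the proof of (\ref{inegbilin}): there summability comes from the rapid decay of $\widehat{\chi}(\tau+\Omega)$, which enforces the near-resonance $\Omega\approx-\tau$, and not from phase cancellation, so the extra bounded factor $(\lambda_{k_3}^{\varepsilon/2}-\lambda_{k_4}^{\varepsilon/2})/(N_1^{\varepsilon}+N_2^{\varepsilon})$ inserts harmlessly. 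Estimating this sum by two applications of (\ref{inegbilin}) — grouping the four indices into two products so that each product contains one of the two largest frequencies, whence each bilinear factor is controlled by a $\min$ equal to one of the two smallest — reproduces $m(N_1,\dots,N_4)^{s_0}$.

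This yields (\ref{quadrinlinschrod}) with the same $s_0$ as in Assumption \ref{hypinegXsb}, namely $s_0=1/2+$ on $S^3$ and $s_0=3/4+$ on $S^2\times S^1$, both furnished by the bilinear estimates of \cite{Xsbsphere}. Finally, every conjugation pattern on the $u_i$ demanded in the last line of Assumption \ref{hypcommut} is handled verbatim: conjugation changes neither the $L^2$ norms of the products nor the modulus of the selection coefficient $C(k_1,\dots,k_4)$, and the two structural facts used above — the conjugation-independent degree selection $|n_3-n_4|\le n_1+n_2$ (self-duality of the $SO(3),SO(4)$ representations) and the factorization $p_3^2-p_4^2=(p_3-p_4)(p_3+p_4)$ — remain valid, the conjugation merely swapping which of $p_3\pm p_4$ is the constrained factor and flipping signs in $\Omega$, neither of which affects the estimates. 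Hence the same bounds hold and Assumption \ref{hypcommut} follows.
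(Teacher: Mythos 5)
Your treatment of the commutator multiplier is correct and complete, and it is the right thing to isolate: on the support of the interaction coefficient the selection rules force $|n_3-n_4|\le n_1+n_2$ (and $|p_3-p_4|\le |p_1|+|p_2|$ on $S^2\times S^1$), and your two-case argument (trivial bound when $\max(N_3,N_4)\lesssim N_1$, mean value theorem on $t\mapsto t^{\varepsilon/2}$ when $N_3\simeq N_4\gg N_1$) does give $|\lambda_{k_3}^{\varepsilon/2}-\lambda_{k_4}^{\varepsilon/2}|\le C(N_1^{\varepsilon}+N_2^{\varepsilon})$. The paper gets the same bound slightly differently, applying Lemma \ref{lmepsilon} not to $\lambda^{\varepsilon/2}$ but to $(\sqrt{\lambda})^{\varepsilon}$ with $\sqrt{n(n+2)}\approx n+1$, so that the difference inside the $\varepsilon$-th power is already $O(n_1+n_2)$; your claim that Lemma \ref{lmepsilon} "would only give $(N_1N_3)^{\varepsilon/2}$" is true only of its naive application, but your MVT route is an equally valid substitute.

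The gap is in the summation step. You propose to dominate the quadrilinear form by the modulus sum $\sum_{k_1,\troisp,k_4}\prod_j|\widehat{f_j}(k_j)|\,|C(k_1,\troisp,k_4)|\,|\widehat{\chi}(\tau+\Omega)|$ and then estimate \emph{that} by "two applications of (\ref{inegbilin})". This cannot work as stated: (\ref{inegbilin}) bounds the genuine space-time norm $\|u_1u_2\|_{L^2([0,T]\times M)}$, which retains cancellations in both the spatial integrals and the time oscillations, and it says nothing about the larger absolute-value sum. Conversely, if you try to use (\ref{inegbilin}) as a black box via Cauchy--Schwarz on the quadrilinear integral (pairing $u_1u_3$ against $u_2\overline{u_4}$), the commutator weight $\lambda_{k_3}^{\varepsilon/2}-\lambda_{k_4}^{\varepsilon/2}$ obstructs you, because it couples $k_3$ and $k_4$ and cannot be written as a product of separately bounded multipliers absorbed into $u_3$ and $u_4$; the pointwise bound by $N_1^{\varepsilon}+N_2^{\varepsilon}$ is only available after you have already passed to moduli. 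So one is forced to re-prove the summation at the quadrilinear level rather than cite the bilinear estimate, and this is exactly where the paper's work lies: (i) the bilinear \emph{eigenfunction} estimates of \cite{Xsbsphere} to bound $|\int H_{n_1}^{(1)}H_{n_2}^{(2)}H_{n_3}^{(3)}\overline{H}_{n_4}^{(4)}|$ by $m(N_1,\troisp,N_4)^{1/2+}$ (resp. $^{1/4}$ on $S^2$); (ii) the rapid decay of $\widehat{\chi}$ to reduce to a supremum over resonant sets $\Lambda(k)$; and (iii) the Gauss-integer divisor bound (Lemma 3.2 of \cite{InventionesBGT}) giving $\#\Gamma(a)\le C_{\eta}N_{\alpha}^{\eta}$ (resp. $C_\eta N_\alpha^{1+\eta}$ on $S^2\times S^1$), which after splitting the quadruple into two pairs, Cauchy--Schwarz and orthogonality yields the stated $s_0=1/2+$ and $s_0=3/4+$. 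Items (i) and (iii) are absent from your argument, and without (iii) in particular the sum over each resonant set loses a power of $N$; the exponents you quote are therefore asserted rather than derived.
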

\begin{proof}
We first treat the case of $S^3$ and follow the scheme of Proposition 3 of \cite{PGPierfelice}.
We write 
\bna
f_j=\sum_{n_j}H^{(j)}_{n_j},
\ena
where $H^{(j)}_{n_j}$ are spherical harmonics of degree $n_j$, and where the sum on $n_j$ bears on the domain
\bnan \label{spectredyadN} N_j\leq \sqrt{1+n_j(n_j+2)}< 2N_j.  \enan
Then, the solution $u_j$ are given by
$$u_j(t)=e^{it\Delta}f_j=\sum_{n_j}e^{-itn_j(n_j+2)}H^{(j)}_{n_j} $$
and we have to estimate
\bna
Q(f_1,\troisp,f_4,\tau)&=&\int_{\R}\int_{S^3} \chi(t)e^{it\tau}u_1u_2\left[(-\Delta)^{\varepsilon/2}u_3\overline{u_4}-u_3(-\Delta)^{\varepsilon/2}\overline{u_4}\right]dxdt\\
&= &\sum_{n_1,\troisp,n_4}\widehat{\chi}(\sum_{j=1}^4 \varepsilon_j n_j(n_j+2)-\tau)I(H_{n_1}^{(1)},\troisp,H_{n_4}^{(4)}),
\ena
with $\varepsilon_j=-1$ or $1$ depending on the position of conjugates and
\bna
I(H_{n_1}^{(1)},\troisp,H_{n_4}^{(4)})=(\sqrt{n_3(n_3+2)}^{\varepsilon}-\sqrt{n_4(n_4+2)}^{\varepsilon})\int_{S^3}H_{n_1}^{(1)}H_{n_2}^{(2)}H_{n_3}^{(3)}\overline{H}_{n_4}^{(4)}dx
\ena
We notice that $\int H_{n_1}H_{n_2}H_{n_3}\overline{H}_{n_4}\neq 0$ implies $n_4\leq n_1+n_2+n_3$ and $n_3\leq n_1+n_2+n_4$, that is $|n_4-n_3|\leq n_1+n_2$. Then, using Lemma \ref{lmepsilon} and fundamental theorem of calculus, we have
\bnan
\left|\sqrt{n_3(n_3+2)}^{\varepsilon}-\sqrt{n_4(n_4+2)}^{\varepsilon}\right|&\leq &\left|\sqrt{n_3(n_3+2)}- \sqrt{n_4(n_4+2)}\right|^{\varepsilon} \nonumber\\
\label{inegcommutn} &\leq &C \left| n_4-n_3\right|^{\varepsilon}\leq C (N_1^{\varepsilon}+N_2^{\varepsilon})
\enan
Moreover, bilinear eigenfunctions estimates (see Theorem 2 of \cite{Xsbsphere} or Theorem 2.5 of \cite{gerardcourspise}) yield 
\bna
\left|I(H_{n_1}^{(1)},\troisp,H_{n_4}^{(4)})\right|&\leq& C (N_1^{\varepsilon}+N_2^{\varepsilon}) \left|\int_{S^3}H_{n_1}^{(1)}H_{n_2}^{(2)}H_{n_3}^{(3)}\overline{H}_{n_4}^{(4)}dx\right|\\
&\leq &C (N_1^{\varepsilon}+N_2^{\varepsilon})m(N_1,\troisp,N_4)^{1/2+}\prod_{j=1}^4 \norL{H_{n_j}^{(j)}}
\ena
Using the fast decay of $\widehat{\chi}$ at infinity, we infer
\bna
\left|Q(f_1,\troisp,f_4,\tau)\right|&\leq & C (N_1^{\varepsilon}+N_2^{\varepsilon}) m(N_1,\troisp,N_4)^{1/2+}\sum_{l\in \Z} (1+\left|l\right|^2)^{-1}\sum_{\Lambda([\tau]+l)}\prod_{j=1}^4 \norL{H_{n_j}^{(j)}}\\
&\leq &C (N_1^{\varepsilon}+N_2^{\varepsilon}) m(N_1,\troisp,N_4)^{1/2+}\sup_{k\in \Z} \sum_{\Lambda(k)}\prod_{j=1}^4 \norL{H_{n_j}^{(j)}}
\ena
where $\Lambda(k)$ denotes the set of $(n_1,\troisp,n_4)$ satisfying (\ref{spectredyadN}) for $j=1,2,3,4$ and
$$\sum_{j=1}^4 \varepsilon_j n_j(n_j+2)=k.$$
Now, we write
$$\left\{1,2,3,4\right\}=\left\{\alpha,\beta,\gamma,\delta\right\} $$
with $m(N_1,\troisp,N_4)=N_{\alpha}N_{\beta}$ and we split the sum on $\Lambda(k)$ as
\bna
\left|Q(f_1,\troisp,f_4,\tau)\right|&\leq &C (N_1^{\varepsilon}+N_2^{\varepsilon}) m(N_1,\troisp,N_4)^{1/2+}\sup_{k\in \Z} \sum_{a\in \Z} S(a)S'(k-a)
\ena
where
\bna
S(a)=\sum_{\Gamma(a)}\norL{H_{n_{\alpha}}^{(\alpha)}}\norL{H_{n{\gamma}}^{(\gamma)}};\quad S'(a')=\sum_{\Gamma'(a')}\norL{H_{n_{\beta}}^{(\beta)}}\norL{H_{n_{\delta}}^{(\delta)}},\\
\Gamma(a)=\{(n_{\alpha},n_{\gamma}) : (\ref{spectredyadN}) \textnormal{ holds for } j=\alpha, \gamma, \sum_{j=\alpha,\gamma} \varepsilon_j n_j(n_j+2)=a\},\\
\Gamma'(a')=\{(n_{\beta},n_{\delta}) : (\ref{spectredyadN}) \textnormal{ holds for } j=\beta, \delta, \sum_{j=\beta,\delta} \varepsilon_j n_j(n_j+2)=a'\}.
\ena
Then, we use a number theoretic result involving the ring of Gauss integers (see Lemma 3.2 of \cite{InventionesBGT}). 
\begin{lemme}
Let $\sigma\in \{\pm 1 \}$. For every $\eta>0$, there exists $C_{\eta}$ such that, given $M\in \Z$ and a positive integer $N$,
$$\# \{(k_1,k_2) \in \N^2 : N\leq k_1\leq 2N, k_1^2+\sigma k_2^2=M\} \leq C_{\eta} N^{\eta}.$$
\end{lemme}
Noticing that $n_j(n_j+2)=(n_j+1)^2-1$, we get  
$$\sup_{a}\#\Gamma(a)\leq C_{\eta}N_{\alpha}^{\eta};\quad \sup_{a'}\# \Gamma'(a')\leq C_{\eta}N_{\beta}^{\eta},$$
and consequently, by the Cauchy-Schwarz inequality and the orthogonality of the $H_{n_{j}}^{(j)}$
\bna
&&\sum_{a\in \Z} S(a)S'(k-a) \leq C_{\eta}(N_{\alpha}N_{\beta})^{\eta/2}\times\\
&&\left(\sum_{a}\sum_{\Gamma(a)} \norL{H_{n_{\alpha}}^{(\alpha)}}^2\norL{H_{n_{\gamma}}^{(\gamma)}}^2 \right)^{1/2}\left(\sum_{a}\sum_{\Gamma'(k-a)} \norL{H_{n_{\beta}}^{(\beta)}}^2 \norL{H_{n_{\delta}}^{(\delta)}}^2\right)^{1/2}\\
&&\leq C_{\eta}(N_{\alpha}N_{\beta})^{\eta/2} \prod_{j=1}^4 \norL{f_j}.
\ena
This completes the proof for $S^3$.

 For $S^2\times S^1$, we adapt this argument with some slight modifications.\\
First, the formulae should be changed to 
$$u_j(t)(x,y)=e^{it\Delta}f_j=\sum_{n_j,p_j}e^{-itn_j(n_j+1)-ip_j^2t}H^{(j)}_{n_j,p_j}(x)e^{ip_jy} $$
where $H^{(j)}_{n_j,p_j}$ are spherical harmonics on $S^2$ of degree $n_j$. Estimate (\ref{inegcommutn}) becomes
\bna
\left|\sqrt{n_3(n_3+1)+p_3^2}^{\varepsilon}-\sqrt{n_4(n_4+2)+p_4^2}^{\varepsilon}\right|&\leq& \left|\sqrt{n_3(n_3+1)+p_3^2}-\sqrt{n_4(n_4+1)+p_4^2}\right|^{\varepsilon}\\
&\leq & \left|\left[\sqrt{n_3(n_3+1)}-\sqrt{n_4(n_4+1)}\right]^2+(p_3-p_4)^2\right|^{\varepsilon/2}\\
&\leq& \left|C(n_3-n_4)^2+(p_3-p_4)^2\right|^{\varepsilon/2}\\
&\leq & C \left|(n_1+n_2)^2+(p_1+p_2)^2\right|^{\varepsilon/2}\leq C (N_1^{\varepsilon}+N_2^{\varepsilon})
\ena
where we have used $|n_3-n_4|\leq |n_1+n_2|$ and $|p_3-p_4|\leq |p_1|+|p_2|$ for the integral to be non zero.
Bilinear eigenfunctions estimates for $S^2$ yield 
\bna
\left|I(H_{n_1,p_1}^{(1)},\troisp,H_{n_4,p_4}^{(4)})\right|&\leq &C (N_1^{\varepsilon}+N_2^{\varepsilon})m(N_1,\troisp,N_4)^{1/4}\prod_{j=1}^4 \norL{H_{n_j,p_j}^{(j)}}.
\ena
We finish the proof similarly, replacing the formula for $\Gamma(a)$ by
\bna
\Gamma(a)=&&\{(n_{\alpha},p_{\alpha},n_{\gamma},p_{\gamma}) : N_j\leq \sqrt{1+n_j(n_j+2)+p_j^2}\leq 2N_j,j=\alpha,\gamma \\ 
&&\textnormal{ and } \sum_{j=\alpha,\gamma} \varepsilon_j [n_j(n_j+2)+p_j^2]=a\}
\ena
In that case, the same number theoretic arguments yield $\sup_a \# \Gamma(a)\leq C_{\eta} N^{1+\eta}_{\alpha} $ and finally, after Cauchy-Schwarz inequality, we obtain
 $$\left|Q(f_1,\troisp,f_4,\tau)\right| \leq C (N_1^{\varepsilon}+N_2^{\varepsilon})m(N_1,\troisp,N_4)^{1/4+(1+\eta)/2} \prod_{j=1}^4 \norL{f_j}.$$
\end{proof}
\section{Unique continuation}
\label{sectuniqueness}
\subsection{Carleman estimates}
This section is only a variant in the Riemannian setting of some results of A. Mercado, A. Osses and L. Rosier \cite{CarlemanSchrodRosier}. We follow their proof very closely, sometimes line by line.\\
For sake of simplicity, we will assume that $u$ is supported in a fixed compact $K$ of a Riemannian manifold $\Omega$. Yet, the same reasonning as in \cite{CarlemanSchrodRosier} would allow to handle the case of Dirichlet boundary conditions for $u$. We have changed the notation of the manifold from $M$ to $\Omega$ because the Carleman estimates will not be used on the whole compact manifold $M$ but only on some open set $\Omega$.\\
$D$ denotes the Levi-Civita connection associated to the metric $g$. Then, it is torsion-free and the Hessian of the functions are symmetrics.\\
 $\cdot\quad$, $|\quad|$, $\nabla$ and $\Delta$ denote the scalar product, the norm, the gradient and the Laplacian with respect to the metric $g$. Moreover, the scalar product will be the real one : if $X=a+ib$ and $Y=c+id$, $X.Y=a\cdot c-b\cdot d +i(b\cdot c +a\cdot d)$ and $|X|^2=X\cdot \overline{X}$. $v_g$ denotes the Riemannian volume form and all the integrals are defined with this (even if it will be often omitted).\\
First, we list a few formulae that will be used along the proof. For any functions $f, h\in C^{\infty}(\Omega)$ with $h$ compactly supported and any vector fields $X$, $Y$ and $Z$, we have
\bna
D_{Z} (X\cdot Y)&=&  (D_{Z}X)\cdot Y +X\cdot (D_{Z}Y)\\
\nabla f \cdot Z &=& D_{Z} f \\
(D_{X}\nabla f )\cdot Y &=& Hess(f)(X,Y)\\
\int_{\Omega} (\Delta f) h ~dv_g&=& - \int_{\Omega} \nabla f \cdot \nabla h ~dv_g\\
\nabla (fh)&=& (\nabla f)h+f(\nabla h)\\
div(fX)&=&fdiv(X)+X\cdot \nabla f 
\ena

\bigskip
 
For brevity, $\iint$ will denote the integral over $]-T,T[\times \Omega$ and $\iint_{\omega}$ the integral over $]-T,T[\times \omega$ where $\omega$ is an open subset of $\Omega$. 

Let $\Psi\in C^4(\Omega)$ real valued . We assume that $\Psi$ satisfies the following properties
\bnan
\nabla\Psi\neq0 \textnormal{ in } \Omega  \backslash~ \omega \label{gradnonnul}\\
\Psi(x)\geq 2/3 \left\|\Psi\right\|_{L^{\infty}}. \label{assumCpsi}
\enan
(\ref{assumCpsi}) is technical and is easily fulfilled by replacing $\Psi$ by $\Psi+C$ with $C$ large enough.
We distinguish two cases : strong pseudoconvexity and weak pseudoconvexity.\\
The case of strong pseudoconvexity can be found in Isakov\cite{Isakov} but with local in time estimates, it reads
\bnan
Hess(\Psi(x))(\xi,\xi)+\left|\nabla \Psi(x) \cdot \xi\right|^2>0 \quad \forall (x,\xi) \in  T\Omega \backslash~ T\omega, 
\enan
which implies since the support is compact that 
\bnan
\label{strongconvex}
Hess(\Psi(x))(\xi,\xi)+\left|\nabla \Psi(x) \cdot \xi\right|^2>C \left|\xi \right|^2 \quad \forall (x,\xi) \in  T\Omega \backslash~ T\omega,~x\in K
\enan
Weak pseudoconvexity is defined by
\bnan
\label{weakconvex}
Hess(\Psi(x))(\xi,\xi)+\left|\nabla \Psi(x) \cdot \xi\right|^2\geq0 \quad \forall (x,\xi) \in  T\Omega \backslash~ T\omega.
\enan
Set $C_{\Psi}=2\left\|\Psi \right\|_{L^{\infty}(\Omega)}$ and
\bna
\theta(t,x):=\frac{e^{\lambda\Psi(x)}}{(T-t)(T+t)}, \quad \varphi(t,x):=\frac{e^{\lambda C_{\Psi}}-e^{\lambda \Psi(x)}}{(T-t)(T+t)},\quad \forall (t,x) \in ]-T,T[\times \Omega
\ena
Denote by $L(q)=i\partial_tq+\Delta q$ the linear Schrödinger operator.
\begin{prop}
\label{propCarlemanstrong}
Let $T>0$. Let $\Omega$ be a Riemannian manifold and $K$ a compact subset of $\Omega$.    
Assume that there exists a function $\Psi\in C^4(\Omega)$ such that (\ref{gradnonnul}), (\ref{assumCpsi}) and (\ref{strongconvex}) hold for some open set $\omega\subset \Omega$. Then, there exist constants $\lambda_0$, $s_0$ and $C$ such that for all $\lambda\geq \lambda_0$, all $s\geq s_0$ and $q\in L^2(]-T,T[,H^1(\Omega))$, supported in $K$, with $L(q)\in L^2(]-T,T[\times \Omega)$ we have
\bnan
\label{Carlemanstrong}
&&\iint \left[s^3\lambda^4 \theta^3 |q|^2+s\lambda  \theta \left|\nabla q\right|^2\right]e^{-2s\varphi}\\
&\leq & C\iint\left|L(q)\right|^2e^{-2s\varphi}+C\iint_{\omega} \left[s^3\lambda^4 \theta^3|q|^2+s\lambda\theta \left|\nabla q \right|^2\right]e^{-2s\varphi}\nonumber
\enan
\end{prop}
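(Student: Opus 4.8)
The plan is to use the classical conjugated-operator method of Mercado--Osses--Rosier \cite{CarlemanSchrodRosier}, carrying out every integration by parts with the Levi-Civita connection $D$ so that the flat Hessian is systematically replaced by $\mathrm{Hess}(\Psi)$ and the Riemannian formulae listed above are used throughout.

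\textbf{Conjugation.} First I would set $w=e^{-s\varphi}q$, so that $q=e^{s\varphi}w$; since $\varphi\to+\infty$ as $t\to\pm T$ and $q$ is supported in $K$, the function $w$ together with its derivatives vanishes at $t=\pm T$ and no spatial boundary term survives. Using $\nabla\varphi=-\lambda\theta\,\nabla\Psi$ and $\Delta\varphi=-\lambda^2\theta|\nabla\Psi|^2-\lambda\theta\,\Delta\Psi$, a direct expansion of $P_\varphi w:=e^{-s\varphi}L(e^{s\varphi}w)$ gives
$$P_\varphi w=iw_t+\Delta w+s^2\lambda^2\theta^2|\nabla\Psi|^2\,w+2s\nabla\varphi\cdot\nabla w+s(\Delta\varphi)\,w+is\varphi_t\,w.$$

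\textbf{Symmetric/skew splitting.} For the real inner product $\langle f,h\rangle=\Re\iint f\bar h$ on $]-T,T[\times\Omega$, the operators $\Delta$, $i\partial_t$ and multiplication by a real function are self-adjoint, multiplication by an imaginary function is skew-adjoint, and $X\cdot\nabla$ (with $X$ real) has skew part $X\cdot\nabla+\tfrac12\mathrm{div}X$. Grouping accordingly, I write $P_\varphi w=Aw+Bw$ with
$$Aw=iw_t+\Delta w+s^2\lambda^2\theta^2|\nabla\Psi|^2\,w,\qquad Bw=2s\nabla\varphi\cdot\nabla w+s(\Delta\varphi)\,w+is\varphi_t\,w,$$
where $A^*=A$ and $B^*=-B$. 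Then $\|P_\varphi w\|_{L^2}^2=\|Aw\|^2+\|Bw\|^2+2\langle Aw,Bw\rangle$, and the adjoint relations give $2\langle Aw,Bw\rangle=\langle[A,B]w,w\rangle$; this commutator is the term that must become the positive left-hand side.

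\textbf{The cross term and pseudoconvexity.} The main work is to compute $\langle[A,B]w,w\rangle$ by repeated integration by parts. The second-order-in-$w$ contributions come from the commutator of the elliptic part $\Delta+s^2\lambda^2\theta^2|\nabla\Psi|^2$ with the first-order part $2s\nabla\varphi\cdot\nabla+s(\Delta\varphi)$; after integration by parts they produce, as leading terms, $s\lambda\theta\,\mathrm{Hess}(\Psi)(\nabla w,\overline{\nabla w})$, a term $\simeq s\lambda^2\theta\,|\nabla\Psi\cdot\nabla w|^2$, and a zeroth-order term $\simeq s^3\lambda^4\theta^3|\nabla\Psi|^4|w|^2$, while the interactions involving $i\partial_t$ and $\varphi_t$ give only transport and $\varphi_{tt}$ remainders of strictly lower weight. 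Writing the first two as $s\lambda\theta\big[\mathrm{Hess}(\Psi)(\nabla w,\overline{\nabla w})+\lambda|\nabla\Psi\cdot\nabla w|^2\big]$ and using $\lambda\geq1$, the strong pseudoconvexity hypothesis (\ref{strongconvex}) bounds the bracket below by $C|\nabla w|^2$ on $\Omega\setminus\omega$, while (\ref{gradnonnul}) guarantees $|\nabla\Psi|\geq c>0$ there, so the zeroth-order term is bounded below by $Cs^3\lambda^4\theta^3|w|^2$. This yields the lower bound $C\iint(s^3\lambda^4\theta^3|w|^2+s\lambda\theta|\nabla w|^2)$ over $]-T,T[\times(\Omega\setminus\omega)$.

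\textbf{Remainders, localization, and return to $q$.} All remaining contributions carry strictly fewer powers of $s$ or $\lambda$, or place a derivative on the weight: in particular $\varphi_t$ and $\theta_t$ are $O(\theta^2)$ near $t=\pm T$, so the terms they generate are dominated by $s^3\lambda^4\theta^3|w|^2$ once $s\geq s_0$ and $\lambda\geq\lambda_0$ are large, which is exactly why the gain is stated with the weights $s^3\theta^3$ and $s\theta$. The contributions supported in $\omega$, where (\ref{strongconvex}) may fail, are moved to the right-hand side as the localized integral $\iint_\omega[\,s^3\lambda^4\theta^3|q|^2+s\lambda\theta|\nabla q|^2\,]e^{-2s\varphi}$. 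Finally I revert to $q$ via $q=e^{s\varphi}w$ and $\nabla q=e^{s\varphi}(\nabla w+s\nabla\varphi\,w)$, noting $|\nabla q|^2e^{-2s\varphi}\lesssim|\nabla w|^2+s^2\lambda^2\theta^2|\nabla\Psi|^2|w|^2$ and absorbing the last term into $s^3\lambda^4\theta^3|w|^2$, which turns the lower bound in $w$ into the stated estimate (\ref{Carlemanstrong}). \textbf{The main obstacle} is the bookkeeping of the cross term: identifying which integrations by parts produce the pseudoconvex quadratic form, keeping all signs correct, and verifying that every other term genuinely has lower weight so it can be absorbed for large $s,\lambda$ — together with the degeneracy of $\theta,\varphi$ at $t=\pm T$, which must be handled so that the time-boundary terms vanish and the $\theta_t,\varphi_t$ terms stay subordinate to $s^3\theta^3$.
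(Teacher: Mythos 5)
Your proposal follows essentially the same route as the paper's proof: conjugate by $e^{-s\varphi}$, split the conjugated operator into the self-adjoint part $iw_t+\Delta w+s^2|\nabla\varphi|^2w$ and the skew-adjoint part (the paper's $P_2$ and $P_1$), extract the cross term, identify the pseudoconvex quadratic form via $\mathrm{Hess}(\varphi)=-\lambda\theta\left[\mathrm{Hess}(\Psi)+\lambda(\nabla\Psi\cdot\,)(\nabla\Psi\cdot\,)\right]$, and absorb the remainders for large $s,\lambda$ before reverting to $q$. The only cosmetic differences are that you package the cross term as $\langle[A,B]w,w\rangle$ while the paper computes $2\Re(P_1u,P_2u)$ directly by integration by parts, and that your claim that the $\varphi_{tt}$ and $\nabla\varphi_t$ remainders are subordinate to $s^3\theta^3$ is precisely where assumption (\ref{assumCpsi}) enters, as in the paper.
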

\begin{prop}
\label{propCarlemanweak}
If in Proposition \ref{propCarlemanstrong}, we replace Assumption (\ref{strongconvex}) by (\ref{weakconvex}), we obtain the same result with
\bnan
\label{Carlemanweak}
&&\iint \left[s^3\lambda^4 \theta^3 |q|^2+s\lambda^2  \theta \left|\nabla \Psi\cdot\nabla q \right|^2\right]e^{-2s\varphi}\\
&\leq & C\iint\left|L(q)\right|^2e^{-2s\varphi}+C\iint_{\omega} \left[s^3\lambda^4 \theta^3|q|^2+s\lambda\theta \left|\nabla q \right|^2\right]e^{-2s\varphi}\nonumber
\enan
\end{prop}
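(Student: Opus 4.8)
The plan is to follow the proof of Proposition \ref{propCarlemanstrong} essentially verbatim, the only modification occurring at the single step where the pseudoconvexity hypothesis is used to control the spatial gradient. First I would conjugate the operator by the Carleman weight: setting $\psi = e^{-s\varphi}q$ (so that $q$ being compactly supported in $K$ forces $\psi$ and all the boundary-in-space contributions to vanish), I would study
\[
P\psi := e^{-s\varphi}L(e^{s\varphi}\psi) = i\partial_t\psi + \Delta\psi + 2s\,\nabla\varphi\cdot\nabla\psi + s(\Delta\varphi)\psi + \big(is\varphi_t + s^2|\nabla\varphi|^2\big)\psi .
\]
Splitting $P = P_1 + P_2$ into its formally self-adjoint part $P_1\psi = i\partial_t\psi + \Delta\psi + s^2|\nabla\varphi|^2\psi$ and its skew-adjoint part $P_2\psi = 2s\,\nabla\varphi\cdot\nabla\psi + s(\Delta\varphi)\psi + is\varphi_t\psi$, one expands $\nor{P\psi}{L^2}^2 = \nor{P_1\psi}{L^2}^2 + \nor{P_2\psi}{L^2}^2 + 2\Re(P_1\psi,P_2\psi)$ and identifies the cross term with $([P_1,P_2]\psi,\psi)$ up to time-boundary terms, which vanish because $\theta,\varphi\to+\infty$ as $t\to\pm T$.

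Second, I would carry out the integrations by parts in the commutator. Using $\nabla\varphi = -\lambda\theta\,\nabla\Psi$ together with the elementary identity $-Hess(\varphi)(\xi,\xi) = Hess(\theta)(\xi,\xi) = \lambda\theta\,Hess(\Psi)(\xi,\xi) + \lambda^2\theta\,(\nabla\Psi\cdot\xi)^2$, the commutator yields, after discarding lower-order terms in $s$ and $\lambda$ (to be absorbed at the end), the zeroth-order term $\sim s^3\lambda^4\theta^3|\psi|^2$ coming from $\nabla\varphi\cdot\nabla(|\nabla\varphi|^2)$, together with the gradient quadratic form
\[
s\lambda\theta\,Hess(\Psi)(\nabla\psi,\nabla\psi) + s\lambda^2\theta\,(\nabla\Psi\cdot\nabla\psi)^2 .
\]

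The one and only place where the argument diverges from the strong case is the treatment of this quadratic form. Under (\ref{strongconvex}) one bounds it below by $C s\lambda\theta|\nabla\psi|^2$, recovering the full gradient as in Proposition \ref{propCarlemanstrong}. Under the weaker hypothesis (\ref{weakconvex}) this is no longer available; instead I would write
\[
s\lambda\theta\,Hess(\Psi)(\xi,\xi) + s\lambda^2\theta\,(\nabla\Psi\cdot\xi)^2 = s\lambda\theta\big[Hess(\Psi)(\xi,\xi)+(\nabla\Psi\cdot\xi)^2\big] + s\lambda(\lambda-1)\theta\,(\nabla\Psi\cdot\xi)^2 ,
\]
where the first bracket is nonnegative by (\ref{weakconvex}) and the second term is $\geq \tfrac12 s\lambda^2\theta(\nabla\Psi\cdot\xi)^2$ once $\lambda\geq 2$. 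This retains exactly the degenerate term $s\lambda^2\theta|\nabla\Psi\cdot\nabla\psi|^2$ appearing on the left of (\ref{Carlemanweak}), at the price of the extra power of $\lambda$ and of losing control of the transverse derivatives. By (\ref{gradnonnul}) the factor $\nabla\Psi$ is nonvanishing on $\Omega\backslash\omega$, so the localization of the remaining terms to $\iint_{\omega}$ on the right-hand side is unaffected.

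Finally I would absorb the lower-order terms (of orders $s^2\lambda^3\theta^2|\psi|^2$ and the gradient pieces supported where $\nabla\Psi$ may degenerate) into the two leading terms by taking $\lambda\geq\lambda_0$ and $s\geq s_0$ large, move the contributions supported in $\omega$ to the right, and undo the conjugation via $\psi = e^{-s\varphi}q$, $\nabla\psi = e^{-s\varphi}(\nabla q - s\nabla\varphi\,q)$; the cross terms generated here are dominated by the $s^3\lambda^4\theta^3$ term precisely as in the strong case. I expect the main obstacle to be purely one of bookkeeping: tracking the exact powers of $s,\lambda,\theta$ in every term of the commutator expansion so that each spurious lower-order term is genuinely dominated, and verifying that the boundary terms in $t$ (and, via compact support in $K$, in $x$) vanish. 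The conceptual content, however, is entirely concentrated in the single algebraic splitting displayed above, which is the mechanism by which weak pseudoconvexity still yields a nondegenerate estimate in the $\nabla\Psi$ direction.
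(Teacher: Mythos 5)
Your proposal is correct and follows essentially the same route as the paper: the same conjugation $u=e^{-s\varphi}q$, the same splitting of the conjugated operator into a self-adjoint and a skew-adjoint part, the same cross-term computation, and the same key algebraic step $Hess(\Psi)(\xi,\xi)+\lambda(\nabla\Psi\cdot\xi)^2=\bigl[Hess(\Psi)(\xi,\xi)+(\nabla\Psi\cdot\xi)^2\bigr]+(\lambda-1)(\nabla\Psi\cdot\xi)^2\geq(\lambda-1)(\nabla\Psi\cdot\xi)^2$, which under weak pseudoconvexity yields exactly the degenerate term $s\lambda^2\theta\left|\nabla\Psi\cdot\nabla q\right|^2$ of (\ref{Carlemanweak}). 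The one bookkeeping point worth making explicit is that the remainder $-4s\Re\,i\iint u\,\nabla\varphi_t\cdot\nabla\bar u$ produces a gradient contribution $Cs\iint\theta\left|\nabla\Psi\cdot\nabla u\right|^2$ that is automatically aligned with $\nabla\Psi$ (since $\nabla\varphi_t\parallel\nabla\Psi$), so it is absorbed by the degenerate leading term for $\lambda$ large, exactly as in the paper.
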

\begin{proof}
Using regularisation in a standard way, we are reduced to consider $q\in C^{\infty}(]-T,T[\times \Omega)$. Denote $u=e^{-s\varphi}q$ and $w=e^{-s\varphi}L(q)=e^{-s\varphi}L(e^{s\varphi}u)$. We notice that $u$ and all its time derivatives vanish at $t=-T$ and $t=T$. Thus, all the integrations by part in time do not create any boundary term. We compute
\bna
w=Pu=iu_t+is\varphi_tu+\Delta u+2s\nabla \varphi \cdot \nabla u+s(\Delta \varphi) u+s^2|\nabla \varphi|^2u
\ena
We decompose $P=P_1+P_2$ with
\bna
&&P_1u:=is\varphi_tu+2s\nabla \varphi \cdot \nabla u+s(\Delta \varphi) u\\
&&P_2u:=iu_t+\Delta u+s^2|\nabla \varphi|^2u
\ena
\bna
\left\|w\right\|^2_{L^2(-T,T[\times \Omega)}=\left\|P_1u+P_2u\right\|^2=\left\|P_1u\right\|^2+\left\|P_2u\right\|^2+2\Re (P_1u,P_2u)
\ena
As usual in Carleman estimates, we only use $$2\Re (P_1u,P_2u)\leq \left\|w\right\|^2_{L^2(-T,T[\times \Omega)}.$$ We also decompose $2\Re (P_1u,P_2u)=I_1+I_2+I_3$ with
\bna
I_1&:=&2\Re \iint (2s\nabla \varphi\cdot\nabla u +s(\Delta \varphi)u)(-i\ubar_t+\Delta \ubar +s^2|\nabla \varphi|^2\ubar))\\
I_2&:=&2\Re \iint is \varphi_t u(-i\ubar_t+\Delta\ubar)\\
I_3&:=&2\Re \iint is \varphi_t u(s^2|\nabla \varphi|^2\ubar)=0
\ena
We first deal with $I_1$.
\bna
I_1&=&2\Re \iint (2s\nabla \varphi\cdot\nabla u +s(\Delta \varphi)u)((\Delta \ubar +s^2|\nabla \varphi|^2\ubar)-2\Re \iint i(2s\nabla \varphi\cdot\nabla u +s(\Delta \varphi)u)\ubar_t\\
&=&I_1^1+I_1^2.
\ena
Set $J=\iint (\nabla \varphi \cdot \nabla u)\Delta \overline{u}=-\iint \nabla \overline{u}\cdot \nabla (\nabla \varphi \cdot \nabla u))$. We have
\bna
\nabla \overline{u}\cdot \nabla (\nabla \varphi \cdot \nabla u))&=&D_{\nabla \overline{u}}(\nabla \varphi \cdot \nabla u)=(D_{\nabla \overline{u}}\nabla \varphi )\cdot \nabla u+\nabla \varphi \cdot (D_{\nabla \overline{u}}\nabla u)\\
&=& Hess(\varphi)(\nabla u,\nabla\overline{u})+ Hess(u)(\nabla \ubar,\nabla \varphi)
\ena
Actually
\bna
\nabla \varphi \cdot \nabla |\nabla u|^2&=&D_{\nabla \varphi}(\nabla u\cdot \nabla \overline{u})=(D_{\nabla \varphi}\nabla u)\cdot \nabla \overline{u}+\nabla u\cdot (D_{\nabla \varphi}\nabla \overline{u})=2 \Re (D_{\nabla \varphi}\nabla u)\cdot \nabla \overline{u}\\
&=&2\Re Hess(u)(\nabla \varphi, \nabla \overline{u})
\ena
Therefore,
\bna
2\Re J=-2 \iint Hess(\varphi)(\nabla u,\nabla\overline{u})+\iint \Delta \varphi \left|\nabla u\right|^2
\ena
Expanding $I_1^1$, we obtain
\bna
I_1^1&=&2\Re \left\{ 2sJ+\iint s(\Delta \varphi)u\Delta \ubar +\iint 2s^3(\nabla \varphi\cdot\nabla u) |\nabla \varphi|^2\ubar+\iint s^3(\Delta \varphi)|u|^2|\nabla \varphi|^2\right\}\\
&=&4s\Re J -2s \Re \iint \left((\nabla \Delta \varphi )u+ \Delta \varphi \nabla u\right)\cdot \nabla \ubar\\
& &+\iint 2s^3|\nabla \varphi|^2 \nabla \varphi\cdot \nabla |u|^2) +2\iint s^3(\Delta \varphi)|u|^2|\nabla \varphi|^2
\ena
where we have used $\nabla |u|^2=2\Re (\ubar \nabla u )$. Then, we remark that
\bna
-2s \Re \iint (\nabla \Delta \varphi )u\cdot \nabla \ubar &=&-s  \iint (\nabla \Delta \varphi )\cdot \nabla |u|^2\\
&=&s  \iint (\Delta^2 \varphi ) |u|^2, \\
\ena
\bna
2\iint s^3(\Delta \varphi)|u|^2|\nabla \varphi|^2= -2s^3 \iint \nabla \varphi \cdot ( |\nabla \varphi|^2\nabla |u|^2 + |u|^2\nabla |\nabla \varphi|^2).
\ena
We simplify
\bna
I_1^1&=&-4s\Re \iint Hess(\varphi)(\nabla u,\nabla\overline{u})+2s\iint \Delta \varphi \left|\nabla u\right|^2\\
&&+s \iint (\Delta^2 \varphi ) |u|^2-2s \iint \Delta \varphi |\nabla u|^2 -2s^3 \iint  |u|^2\nabla \varphi \cdot \nabla |\nabla \varphi|^2\\
&=&-4s\iint Hess(\varphi)(\nabla u,\nabla\overline{u})+s \iint (\Delta^2 \varphi ) |u|^2-2s^3 \iint (\nabla \varphi \cdot \nabla |\nabla \varphi|^2)|u|^2 \ena
Expanding $2\Re a = a+\overline{a}$ for $I_1^2$ and performing integration by part in $t$ for the first term, we get 
\bna
-I_1^2&=& \iint i(2s\nabla \varphi\cdot\nabla u +s(\Delta \varphi)u)\ubar_t -i \iint (2s\nabla \varphi\cdot\nabla \ubar +s(\Delta \varphi)\ubar )u_t\\
&=& \iint -i\left[2s\nabla \varphi_t\cdot\nabla u +2s\nabla \varphi\cdot\nabla u_t+s(\Delta \varphi_t)u+s(\Delta \varphi)u_t\right]\ubar \\
&&-i \iint 2s(\nabla \varphi\cdot\nabla \ubar) u_t -i\iint s(\Delta \varphi)\ubar u_t
\ena
Integration by part in $x$ yields 
\bna
-i \iint 2s(\nabla \varphi\cdot\nabla \ubar) u_t= 2is\iint (\Delta \varphi) \ubar u_t + 2is \iint (\nabla\varphi \cdot \nabla u_t)\ubar 
\ena
As a consequence
\bna
-I_1^2&=& \iint -i2s(\nabla \varphi_t\cdot\nabla u )\ubar -i s\iint(\Delta \varphi_t)|u|^2= \iint -i2s(\nabla \varphi_t\cdot\nabla u )\ubar +i s\iint \nabla \varphi_t\cdot\nabla |u|^2\\
&=&i \iint s\nabla \varphi_t \cdot (u\nabla\ubar-\ubar \nabla u))= 2s\Re i\iint \nabla \varphi_t \cdot (u\nabla \ubar)).
\ena
Finally,
\bna
I_1&=&-4s\Re \iint Hess(\varphi)(\nabla u,\nabla\overline{u})+s \iint (\Delta^2 \varphi ) |u|^2 \\
&&-2s^3 \iint \nabla \varphi \cdot \nabla |\nabla \varphi|^2 |u|^2-2s\Re i\iint \nabla \varphi_t \cdot (u\nabla \ubar))
\ena
On the other hand, we have
$$ \nabla \varphi \cdot \nabla |\nabla \varphi|^2=D_{\nabla \varphi}(\nabla \varphi\cdot \nabla \varphi )=2 D_{\nabla \varphi}\nabla \varphi\cdot \nabla \varphi =2 Hess(\varphi)(\nabla \varphi,\nabla \varphi)$$
We now turn to the other term $I_2$ :
\bna
I_2&=&2\Re \iint is \varphi_t u(-i\overline{u}_t+\Delta \ubar)= s\iint \varphi_t \partial_t|u|^2 +2s\Re i \iint  \varphi_t u\Delta \ubar\\
&=& - s\iint \varphi_{tt} |u|^2 - 2s\Re i \iint  (\nabla\varphi_t u + \varphi_t \nabla u)\cdot\nabla  \ubar \\
&=& - s\iint \varphi_{tt} |u|^2 - 2s\Re  \iint  i(\nabla\varphi_t \cdot \nabla  \ubar) u \\
\ena
Consequently, our final result is 
\bnan
2\Re (M_1u,M_2u)&=&\label{partprincu} \iint \left[-4s^3Hess(\varphi)(\nabla \varphi,\nabla \varphi) - s \varphi_{tt}+s(\Delta^2 \varphi )\right]|u|^2 \\
\label{partprincgradu}&&-4s\Re \iint Hess(\varphi)(\nabla u,\nabla\overline{u}) \\
\label{reste}& &-4s\Re  \iint  iu\nabla\varphi_t \cdot \nabla  \ubar 
\enan
(\ref{partprincu}) and (\ref{partprincgradu}) are the main parts in $|u|^2$ and $|\nabla u|^2$ respectively. (\ref{reste}) is a remainder term that will be estimated from above.

In what follows, $\varepsilon>0$ denote small constants (used in estimates from below) and $C$ large ones (used for estimates from above). 
We observe the following indentities, that will be used along the proof,
\bna
\nabla \varphi =- \lambda \theta \nabla\Psi, 
\ena
\bna
Hess(\varphi)(X,Y)&=&(D_X \nabla \varphi) \cdot Y=-\lambda D_X (\theta \nabla \Psi) \cdot Y=-\lambda \theta (D_X \nabla \Psi) \cdot Y -\lambda d\theta(X)\nabla \Psi \cdot Y\\
&=&-\lambda \theta Hess(\Psi)(X,Y)-\lambda^2\theta(\nabla \Psi \cdot X)(\nabla \Psi \cdot Y)\\
&=&-\theta \lambda \left[Hess(\Psi)(X,Y)+\lambda(\nabla \Psi \cdot X)(\nabla \Psi \cdot Y)\right].
\ena
Firstly, we estimate term (\ref{reste}), 
\bnan
\label{ineginterm66}
\left|(\ref{reste})\right|&\leq & Cs \iint |\nabla \varphi _t  \cdot \nabla u| |u|\leq C s \iint \frac{t\lambda e^{\lambda \Psi}}{(T^2-t^2)^2}|\nabla \Psi  \cdot \nabla u| |u| \nonumber\\
&\leq & C s \iint \frac{e^{\lambda \Psi}}{(T^2-t^2)}|\nabla \Psi  \cdot \nabla u|^2 +C s \iint \frac{(T\lambda)^2 e^{\lambda \Psi}}{(T^2-t^2)^3}|u|^2\nonumber\\
&\leq &Cs \iint \theta \left|\nabla \Psi \cdot \nabla u\right|^2+Cs\lambda^{-1}\iint \left|\nabla \varphi\right|^3\left|u\right|^2+C s \iint_{\omega} \lambda^2 \theta^3|u|^2 
\enan
Then, we estimate term (\ref{partprincu}) using  Assumptions (\ref{gradnonnul}) and (\ref{weakconvex}) (or (\ref{strongconvex})). On $(\Omega\backslash \omega) \cap K $, we have
\bna
-4s^3Hess(\varphi)(\nabla \varphi,\nabla \varphi)&=&4s^3\lambda \theta  \left[Hess(\Psi)(\nabla\varphi,\nabla\varphi)+\lambda\left|\nabla \Psi \cdot \nabla \varphi\right|^2\right]\\
&\geq &4s^3\lambda \theta  (\lambda-1) \left|\nabla \Psi \cdot \nabla \varphi \right|^2\geq s^3\lambda^4 \theta^3\left|\nabla \Psi\right|^4\geq \varepsilon s^3\lambda \left|\nabla \varphi\right|^3
\ena
Assumption (\ref{assumCpsi}) gives $\Psi(x)\leq C_{\Psi}\leq 3\Psi(x)$ and then, we have on $(\Omega\backslash \omega)\cap K$
\bna
\left|s\varphi_{tt}\right|\leq Cs\frac{e^{\lambda C_{\Psi}}}{((T^2-t^2))^3}\leq Cs\frac{e^{3\lambda \Psi(x)}}{((T^2-t^2))^3}\leq Cs\left|\nabla \varphi\right|^3 
\ena
Moreover, on $(\Omega\backslash \omega)\cap K $ we have 
\bna
\left|s\Delta^2\varphi\right|\leq Cs \theta \lambda^4 \leq Cs \lambda \left|\nabla \varphi \right|^3
\ena
Finally, for $\lambda$ and $s$ large enough
\bna
\iint_{\Omega\backslash \omega} \left[-4s^3Hess(\varphi)(\nabla \varphi,\nabla \varphi) - s \varphi_{tt}+s(\Delta^2 \varphi )\right]|u|^2\geq \iint_{\Omega\backslash \omega} \varepsilon s^3\lambda \left|\nabla \varphi\right|^3|u|^2
\ena
For the domain $\omega$, we have the estimate
\bna
\left|\iint_{\omega} \left[-4s^3Hess(\varphi)(\nabla \varphi,\nabla \varphi) - s \varphi_{tt}+s(\Delta^2 \varphi )\right]|u|^2\right|\leq C\iint_{\omega} s^3\lambda^4 \theta^3|u|^2
\ena
The final estimate for (\ref{partprincu}) is
\bnan
\label{ineginterm64}
(\ref{partprincu}) \geq \iint_{\Omega\backslash \omega} \varepsilon s^3\lambda \left|\nabla \varphi\right|^3|u|^2 -C \iint_{\omega} s^3\lambda^4 \theta^3|u|^2.
\enan
Now, let us estimate (\ref{partprincgradu}). We begin with the integral on $\omega$.
\bna
-4s\Re \iint_{\omega} Hess(\varphi)(\nabla u,\nabla\overline{u})&=&4s\Re \iint_{\omega} \theta \lambda \left[Hess(\Psi)(\nabla u,\nabla \overline{u})+\lambda\left|\nabla \Psi \cdot \nabla u \right|^2\right] \\
&\geq & - Cs\lambda\iint_{\omega}\theta \left|\nabla u \right|^2 +4 s \iint_{\omega} \theta \lambda ^2\left|\nabla \Psi \cdot \nabla u \right|^2\\
&\geq & - Cs\lambda\iint_{\omega}\theta \left|\nabla u \right|^2
\ena
Now, for the integral on $\Omega\backslash \omega$, we distinguish the two cases described above :

\bigskip

\textbf{Strong pseudoconvexity} : end of the proof of Proposition \ref{propCarlemanstrong}\\
Using assumption (\ref{strongconvex}), we can estimate the  part of (\ref{partprincgradu}) on $\Omega\backslash \omega$ by 
\bna
-4s\Re \iint_{\Omega\backslash \omega} Hess(\varphi)(\nabla u,\nabla\overline{u})&=&4s\Re \iint_{\Omega\backslash \omega}\theta \lambda \left[Hess(\Psi)(\nabla u,\nabla \overline{u})+\lambda\left|\nabla \Psi \cdot \nabla u \right|^2\right] \\
&\geq &\varepsilon s\lambda  \iint_{\Omega\backslash \omega} \theta \left|\nabla u\right|^2
\ena
The final estimate for (\ref{partprincgradu}) is 
\bnan
\label{ineginterm65}
(\ref{partprincgradu}) \geq \varepsilon s\lambda  \iint_{\Omega\backslash \omega} \theta \left|\nabla u\right|^2- Cs\lambda\iint_{\omega}\theta \left|\nabla u \right|^2
\enan
Putting together (\ref{ineginterm66}), (\ref{ineginterm64}) and (\ref{ineginterm65}), we get for $s$, $\lambda$ large enough
\bnan
(\ref{partprincu})+(\ref{partprincgradu})+(\ref{reste})&\geq &\iint_{\Omega\backslash \omega} \varepsilon s^3\lambda \left|\nabla \varphi\right|^3|u|^2 - C \iint_{\omega} s^3\lambda^4 \theta^3|u|^2- C s\lambda\iint_{\omega}\theta \left|\nabla u \right|^2\nonumber\\
&&+\varepsilon s\lambda  \iint_{\Omega\backslash \omega}\theta \left|\nabla u\right|^2-Cs\iint \theta \left|\nabla \Psi \cdot \nabla u\right|^2\nonumber\\
&&-C s\lambda^{-1}\iint \left|\nabla \varphi\right|^3\left|u\right|^2-C s \iint_{\omega} \lambda^2 \theta^3|u|^2\nonumber\\
&\geq & \varepsilon \iint s^3\lambda^4 \theta^3 |u|^2+\varepsilon s\lambda  \iint \theta \left|\nabla u\right|^2\nonumber\\
&&- C \iint_{\omega} s^3\lambda^4 \theta^3|u|^2- C s\lambda\iint_{\omega}\theta \left|\nabla u \label{presquefinal} \right|^2
\enan
where we have used the decomposition $\iint_{\Omega\backslash \omega}=\iint -\iint_{\omega}$ for the second inequality.\\
Replacing $u$ by $e^{-s\varphi}q$ and computing $\nabla q=e^{s\varphi}\left[\nabla u-s \lambda\theta  u \nabla \Psi\right]$ this yields after absorption 
\bnan
\iint \left[s^3\lambda^4 \theta^3 |q|^2+s\lambda  \theta \left|\nabla q\right|^2\right]e^{-2s\varphi} & \leq & C \iint \left[s^3\lambda^4 \theta^3 |u|^2 +s\lambda \theta |\nabla u|^2+s^3\lambda^3 \theta^3|\nabla \psi|^2|u|^2\right]\nonumber\\
&\leq &C \iint \left[s^3\lambda^4 \theta^3 |u|^2 +s\lambda \theta |\nabla u|^2\right]\label{lienuq}
\enan
\bnan
 \iint_{\omega} s^3\lambda^4 \theta^3|u|^2+ s\lambda\iint_{\omega}\theta \left|\nabla u \right|^2 & \leq &C\iint_{\omega} \left[s^3\lambda^4 \theta^3|q|^2+s\lambda\theta \left|\nabla q \right|^2+s^3\lambda^3 \theta^3|\nabla \psi|^2|q|^2\right]e^{-2s\varphi}\nonumber\\
& \leq &C\iint_{\omega} \left[s^3\lambda^4 \theta^3|q|^2+s\lambda\theta \left|\nabla q \right|^2\right]e^{-2s\varphi} \label{lienuq2}
\enan
Combining (\ref{presquefinal}), (\ref{lienuq}) and (\ref{lienuq2}), we get the expected result :
\bna
&&\iint \left[s^3\lambda^4 \theta^3 |q|^2+s\lambda  \theta \left|\nabla q\right|^2\right]e^{-2s\varphi}\\
&\leq & C\iint\left|i\partial_tq+\Delta q\right|^2e^{-2s\varphi}+C\iint_{\omega} \left[s^3\lambda^4 \theta^3|q|^2+s\lambda\theta \left|\nabla q \right|^2\right]e^{-2s\varphi}\nonumber
\ena

\bigskip

\textbf{Weak pseudoconvexity} : end of the proof of Proposition \ref{propCarlemanweak}\\
Assumption (\ref{weakconvex}) yields that for $\lambda$ large enough
 \bna
-4s\Re \iint_{\Omega\backslash \omega} Hess(\varphi)(\nabla u,\nabla\overline{u})\geq  \varepsilon s \iint_{\Omega\backslash \omega}\theta \lambda^2 \left|\nabla \Psi \cdot \nabla u \right|^2
\ena 
We finish the proof similarly to get 
\bna
(\ref{partprincu})+(\ref{partprincgradu})+(\ref{reste})
&\geq &\varepsilon\iint_{\Omega\backslash \omega}  s^3\lambda \left|\nabla \varphi\right|^3|u|^2 - C \iint_{\omega} s^3\lambda^4 \theta^3|u|^2- C s\lambda\iint_{\omega}\theta \left|\nabla u \right|^2\nonumber\\
&&+\varepsilon s \iint_{\Omega\backslash \omega}\theta \lambda^2 \left|\nabla \Psi \cdot \nabla u \right|^2-Cs\iint \theta \left|\nabla \Psi \cdot \nabla u\right|^2\nonumber\\
&&-C s\lambda^{-1}\iint \left|\nabla \varphi\right|^3\left|u\right|^2-C s \iint_{\omega} \lambda^2 \theta^3|u|^2\nonumber\\
&\geq & \varepsilon \iint s^3\lambda^4 \theta^3 |u|^2+\varepsilon s\lambda ^2 \iint \theta\left|\nabla\Psi\cdot\nabla u\right|^2\\
&&- C\iint_{\omega} s^3\lambda^4 \theta^3|u|^2- Cs\lambda\iint_{\omega}\theta \left|\nabla u \right|^2
\ena
and then
\bna
&&\iint \left[s^3\lambda^4 \theta^3 |q|^2+s\lambda^2  \theta \left|\nabla \Psi\cdot\nabla q \right|^2\right]e^{-2s\varphi}\\
&\leq & C\iint\left|i\partial_tq+\Delta q\right|^2e^{-2s\varphi}+C\iint_{\omega} \left[s^3\lambda^4 \theta^3|q|^2+s\lambda\theta \left|\nabla q \right|^2\right]e^{-2s\varphi}.\nonumber
\ena 
\end{proof}
\begin{comment}
\bna
I_1^2&=&2\Re \iint i(2s\nabla \varphi\cdot\nabla u +s(\Delta \varphi)u)\ubar_t\\
&=&-2 s\Re i\iint\left[ 2\nabla \varphi_t\cdot\nabla u \ubar+ 2\nabla \varphi\cdot\nabla u_t \ubar+(\Delta \varphi_t)u\ubar+(\Delta \varphi)u_t\ubar\right]\\
4s\Re \iint_{\Sigma}(\nabla \overline{u}\cdot \n)(\nabla \varphi \cdot \nabla u))-4s\Re \iint Hess(\varphi)(\nabla u,\nabla\overline{u})\\
&&+s \iint (\Delta^2 \varphi ) |u|^2-s \iint_{\Sigma} (\nabla \Delta \varphi \cdot \n )|u|^2 \\
&&+ 2s^3\iintS (\nabla \varphi\cdot \n)|u|^2|\nabla \varphi|^2 -2s^3 \iint \nabla \varphi \cdot \nabla |\nabla \varphi|^2 |q|^2
\ena
Then, IPP in $x$ in the last terms (the third is null)
\bna
I_1^2&=&-2 s\Re i\iint \left[ 2\nabla \varphi_t\cdot \nabla u \ubar+ 2\nabla \varphi \cdot \nabla u_t \ubar-\nabla \varphi \cdot ( \nabla u_t \ubar +u_t \nabla \ubar)+\iintS (\nabla \varphi \cdot \n) u_t \ubar
\right]
\ena
\end{comment}
%-----------------------------------------------------------------------
\subsection{Carleman estimates with potential $L^{\infty}([-T,T],L^3)$}
%-----------------------------------------------------------------------
\label{sectionuniquepot}
The following result proves that the strong pseudoconvexity allows to absorb some potential terms in $L^{\infty}([-T,T],L^3)$. This is in contrast with the weak pseudoconvexity which only absorbs terms in $L^{\infty}([-T,T]\times \Omega)$.
\begin{prop}
Assume $dim(\Omega)\leq 3$. Let $V_1$, $V_2\in L^{\infty}([-T,T],L^3)$. Then, Proposition \ref{propCarlemanstrong} holds with $L$ replaced by
$$L(q)=i\partial_t q +\Delta q +V_1q+V_2 \overline{q}. $$
\end{prop}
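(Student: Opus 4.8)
The plan is to reduce the estimate back to the potential-free operator and absorb the extra terms. Writing $L_0(q)=i\partial_t q+\Delta q$, we have $L_0(q)=L(q)-V_1q-V_2\overline{q}$, so pointwise
$$|L_0(q)|^2\leq 2|L(q)|^2+C\left(|V_1|^2+|V_2|^2\right)|q|^2.$$
Inserting this into the conclusion of Proposition~\ref{propCarlemanstrong} gives
$$\iint\left[s^3\lambda^4\theta^3|q|^2+s\lambda\theta|\nabla q|^2\right]e^{-2s\varphi}\leq C\iint|L(q)|^2e^{-2s\varphi}+C\iint\left(|V_1|^2+|V_2|^2\right)|q|^2e^{-2s\varphi}+C\iint_{\omega}\left[s^3\lambda^4\theta^3|q|^2+s\lambda\theta|\nabla q|^2\right]e^{-2s\varphi}.$$
Thus everything reduces to absorbing the potential integral into the left-hand side for $s$ large.

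For the absorption I would set $w=e^{-s\varphi}q$ and work at fixed $t$. Since $V_j\in L^3$, we have $|V_j|^2\in L^{3/2}$, and by H\"older together with the Sobolev embedding $H^1(\Omega)\hookrightarrow L^6(\Omega)$ (valid because $\dim(\Omega)\leq 3$ and $q$ is supported in the fixed compact $K$),
$$\int_{\Omega}|V_j|^2|w|^2\,dv_g\leq \nor{V_j}{L^3}^2\,\nor{w}{L^6}^2\leq C\nor{V_j}{L^3}^2\left(\nor{w}{L^2}^2+\nor{\nabla w}{L^2}^2\right).$$
Using $\nabla w=e^{-s\varphi}\left(\nabla q+s\lambda\theta\,q\,\nabla\Psi\right)$ (recall $\nabla\varphi=-\lambda\theta\nabla\Psi$), this yields, after integrating in $t$ and writing $M:=\max_j\nor{V_j}{L^{\infty}([-T,T],L^3)}$,
$$\iint\left(|V_1|^2+|V_2|^2\right)|q|^2e^{-2s\varphi}\leq CM^2\iint\left[|q|^2+|\nabla q|^2+s^2\lambda^2\theta^2|q|^2\right]e^{-2s\varphi}.$$

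Since $\theta$ is bounded below by a positive constant on $[-T,T]\times K$, I would then fix $\lambda$ as in Proposition~\ref{propCarlemanstrong} and take $s$ large enough (depending on $M$ and $\lambda$) so that $CM^2\leq \tfrac14 s^3\lambda^4\theta^3$, $CM^2\leq\tfrac14 s\lambda\theta$ and $CM^2 s^2\lambda^2\theta^2\leq\tfrac14 s^3\lambda^4\theta^3$ hold pointwise; the three contributions are then each dominated by half of the left-hand side and can be absorbed, giving the claimed inequality. The \emph{main point}, which is also the reason strong pseudoconvexity is required, is that the left-hand side here controls the full weighted gradient $s\lambda\theta|\nabla q|^2$, which is exactly what is needed to bound $\nor{\nabla w}{L^2}$ in the Sobolev step; in the weak case of Proposition~\ref{propCarlemanweak} only $|\nabla\Psi\cdot\nabla q|^2$ is available, which does not control $\nor{\nabla w}{L^2}$, so only $L^{\infty}([-T,T]\times\Omega)$ potentials can be absorbed there. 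I would finally note that the thresholds $\lambda_0,s_0$ and the constant $C$ now depend on $M$, which is harmless since the statement only asserts the existence of such constants.
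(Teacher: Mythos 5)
Your proposal is correct and follows essentially the same route as the paper: split off the potential terms from $L(q)$, bound $\left\|e^{-s\varphi}V_jq\right\|_{L^2}$ via H\"older ($L^3\times L^6\to L^2$) and the Sobolev embedding $H^1\hookrightarrow L^6$ applied to $w=e^{-s\varphi}q$, expand $\nabla w$ to pick up the $s^2\lambda^2\theta^2|q|^2$ term, and absorb everything into the left-hand side of the strong Carleman estimate for $s$ large using $\theta\geq c>0$. Your closing remark on why strong pseudoconvexity is needed (the full weighted gradient on the left-hand side) is exactly the point the paper makes when contrasting with the weak case.
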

\begin{proof}
We use the notation of Proposition \ref{propCarlemanstrong}. We write
\bna
\iint \left|i\partial_t q+\Delta q\right|^2e^{-2s\varphi} &\leq& 4\nor{e^{-s\varphi}L(q)}{L^2([0,T],L^2)}^2 + 4\nor{e^{-s\varphi}(V_1 q)}{L^2([0,T],L^2)}^2+4\nor{e^{-s\varphi}(V_2 \overline{q})}{L^2([0,T],L^2)}^2
\ena
But, by Hölder inequality and Sobolev embedding, we have for $s>1$
\bna
\nor{e^{-s\varphi}V_1 q}{L^2([0,T],L^2)}^2 &\leq &C\nor{V_1}{L^{\infty}(L^3)}^2\nor{e^{-s\varphi}q}{L^2(L^6)}^2\\
& \leq & C\left(\nor{e^{-s\varphi}q}{L^2(L^2)}^2+\nor{\nabla (e^{-s\varphi}q)}{L^2(L^2)}^2\right)\\
&\leq& C \left(\nor{e^{-s\varphi}q}{L^2(L^2)}^2+\nor{e^{-s\varphi}\nabla q}{L^2(L^2)}^2+s^2\lambda^2\nor{\theta(\nabla\Psi) e^{-s\varphi} q}{L^2(L^2)}^2\right)\\
&\leq& C \left(\iint \left[s^2\lambda^2 \theta^3 |q|^2+\theta \left|\nabla q\right|^2\right]e^{-2s\varphi}\right)
\ena
where we have used $\theta\geq C$. We get the desired result using estimate (\ref{Carlemanstrong}) of Proposition \ref{propCarlemanstrong} for $s$ large enough.
\end{proof}
\begin{remarque}
The uniqueness results we will obtain from the former Proposition are not optimal with respect to the regularity of the potential. Indeed, some recent papers (see the work of H. Koch and D. Tataru \cite{KochTataruCarlLp} or D. Dos Santos Ferreira \cite{DDSFLp}) establish Carleman type estimates in $L^p$ which are much better than what we get. They are more complicated and not required for our purpose. Yet, they would become necessary if we considered nonlinearities $|u|^{\alpha}u$ with $\alpha>2$.
\end{remarque}
\subsection{Application to uniqueness}
\begin{prop}
\label{thmprolongunique}
Let $\Omega, T, \omega, \Psi$ fulfilling the same assumptions as Proposition \ref{propCarlemanstrong}.\\
Let $q\in L^{\infty}([-T,T],H^1(\Omega))$ compactly supported, solution of $i\partial_t q +\Delta q +V_1q+V_2\overline{q}=0$ with $V_i\in L^{\infty}([-T,T],L^3)$ .\\
Let $D$ be an open subset of $\Omega$ such that $\widetilde{m}=\inf_{x\in D}\left\{\Psi(x)\right\}> \sup_{x\in \omega} \left\{\Psi(x)\right\}=m$.\\
Then, $q=0$ on $]-T,T[\times D$.
\end{prop}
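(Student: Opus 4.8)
The plan is to deduce the result from the Carleman estimate of the proposition with potential (the strong pseudoconvexity version). The standard strategy is a contradiction-free direct argument: I would apply the Carleman inequality to a suitable cutoff of $q$ and let the large parameter $s$ select the region $\{\Psi > m\}$, where the weight $e^{-2s\varphi}$ is exponentially larger than on $\omega$. First I would fix a small $\delta>0$ and a number $c$ with $m < c < \widetilde m$, and introduce a smooth cutoff $\eta = \eta(x)$ that equals $1$ on the region $\{\Psi \geq c\}$ and vanishes where $\Psi \leq m + \delta$ (here the hypothesis $\widetilde m > m$ is exactly what makes room for such a cutoff). I must also cut off in time near $t = \pm T$, but since the weight blows up there this is automatic; the function $q$ is already compactly supported in $x$ by assumption, so no spatial support issue arises beyond introducing $\eta$.

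Setting $p = \eta q$, one computes that $p \in L^2([-T,T],H^1)$ is supported in $K$ and satisfies
\[
i\partial_t p + \Delta p + V_1 p + V_2\overline p = [\Delta,\eta]q = 2\nabla\eta\cdot\nabla q + (\Delta\eta)q =: F,
\]
where $F$ is supported in the transition region $\Lambda = \{m+\delta \leq \Psi \leq c\}$, which is contained in $\{\Psi \leq c\}$ and disjoint from $D$. Applying the Carleman estimate (strong pseudoconvexity, with potential absorbed) to $p$, the left side controls $\iint s^3\lambda^4\theta^3 |p|^2 e^{-2s\varphi}$, while the right side is bounded by $C\iint |F|^2 e^{-2s\varphi}$ plus the localized $\omega$-term. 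The key point is the elementary ordering of the weight: since $\varphi$ is a decreasing function of $\Psi$, on $D$ one has $\varphi \leq (e^{\lambda C_\Psi}-e^{\lambda\widetilde m})/(T^2-t^2)$, whereas on the support of $F$ and on $\omega$ one has $\Psi \leq c$ and $\Psi\leq m$ respectively, so the exponential weight there is smaller by a definite factor $e^{2s\kappa\theta}$ with $\kappa>0$ depending on the gap $c - m$. Restricting the left-hand integral to $D$ and comparing exponentials yields, after dividing through, an inequality of the form
\[
\iint_{]-T,T[\times D} s^3\lambda^4 \theta^3 |q|^2 e^{-2s\varphi}
\;\leq\; C\,e^{-2s\kappa'}\bigl(\|q\|^2_{L^\infty(H^1)} + \text{($\omega$-term)}\bigr),
\]
and letting $s \to \infty$ forces $q \equiv 0$ on $]-T,T[\times D$.

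The routine calculations I would suppress are the precise comparison of the weights $\theta$, $\varphi$ across the three regions and the verification that the $\omega$-localized right-hand term carries the same favorable exponential decay relative to $D$; these follow from $\Psi|_\omega \leq m < c \leq \Psi|_D$ together with the monotonicity of $\varphi$ in $\Psi$. The main obstacle, and the only genuinely delicate point, is the handling of the commutator term $F = [\Delta,\eta]q$: it involves $\nabla q$, so one must ensure the Carleman estimate tolerates a first-order right-hand side supported in $\Lambda$. This is fine because $F$ is supported away from $D$ in the region $\{\Psi \leq c\}$, where the weight is exponentially dominated by its value on $D$, so even the $\|\nabla q\|_{L^2}$ factor (finite since $q \in L^\infty(H^1)$) is killed by the exponential gain; the $\lambda$ and $s$ powers on the two sides are harmless since we ultimately send $s\to\infty$ with $\lambda$ fixed at some admissible $\lambda_0$. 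A minor technical point is that the Carleman estimate is stated for $q$ with $L(q)\in L^2([-T,T]\times\Omega)$, so I would first note $p$ satisfies this (the potential and commutator terms being in $L^2$ by the $H^1$ regularity and $V_i \in L^\infty(L^3)$ via Hölder and Sobolev embedding in dimension $\leq 3$), and invoke the density/regularization reduction already used in the proof of Proposition~\ref{propCarlemanstrong}.
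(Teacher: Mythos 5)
Your overall strategy (Carleman estimate with the absorbed potential, then comparison of the exponential weights between $\omega$ and $D$ using the gap $\widetilde m>m$) is the right one and matches the paper's, but the final step contains a genuine gap. The inequality you arrive at,
\begin{equation*}
\iint_{]-T,T[\times D} s^3\lambda^4\theta^3|q|^2 e^{-2s\varphi}\ \leq\ C\,e^{-2s\kappa'}\bigl(\cdots\bigr),
\end{equation*}
does \emph{not} force $q\equiv 0$ on $]-T,T[\times D$ as $s\to\infty$. To conclude you must bound the left side from below by $\bigl(\inf e^{-2s\varphi}\bigr)\iint|q|^2$ over the region where you want vanishing, and then compare that infimum with $e^{-2s\kappa'}$. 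But $\varphi=\frac{e^{\lambda C_\Psi}-e^{\lambda\Psi}}{(T-t)(T+t)}$ blows up as $t\to\pm T$, so over the full time interval $\inf_{]-T,T[\times D}e^{-2s\varphi}=0$ and the comparison collapses: the left side tends to $0$ on its own and the inequality is consistent with $q\neq 0$. Your phrase ``smaller by a definite factor'' implicitly compares the weights at the same time $t$, whereas in the double integrals the relevant quantities are $\sup_{]-T,T[\times\omega}e^{-2s\varphi}=e^{-2s(\lambda_1+\varepsilon)/T^2}$ versus $\inf e^{-2s\varphi}$ over the target set, and these only order correctly if the target set is time-localized. This is exactly why the paper first reduces to showing $q=0$ on $]-\eta,\eta[\times D$ for some small $\eta$: there $\varphi\leq\lambda_1/(T^2-\eta^2)$ with $\lambda_1=e^{\lambda C_\Psi}-e^{\lambda\widetilde m}$, and the condition $\lambda_1/(T^2-\eta^2)<(\lambda_1+\varepsilon/2)/T^2$, i.e.\ $\eta^2<T^2(\varepsilon/2)/(\lambda_1+\varepsilon/2)$, makes the exponential comparison work; the full statement then follows because the Carleman estimate holds on every time interval, so one translates the center of the interval to cover $]-T,T[$. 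Both the choice of $\eta$ and this covering step are missing from your argument and cannot be suppressed as routine.

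A secondary remark: the spatial cutoff $p=\eta(x)q$ and the resulting commutator term $F=[\Delta,\eta]q$ are an unnecessary detour. Since $q$ is already compactly supported and satisfies $L(q)=0$ on all of $\Omega$, the paper applies the Carleman estimate directly to $q$, so the $\iint|L(q)|^2e^{-2s\varphi}$ term vanishes identically and only the $\omega$-localized term survives. Your cutoff would work too (the support of $F$ lies in $\{\Psi\leq c\}$ with $c<\widetilde m$, so its weight is controlled like the $\omega$-term), but it adds a first-order right-hand side you then have to argue away, for no gain.
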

\begin{remarque}
By considering the maximum of $\Psi$, we see that the assumptions of Proposition \ref{thmprolongunique} can not be fulfilled on a compact manifold. Therefore, we will only apply this result on an open set $\Omega$ of $M$, and the compact support of $u$ becomes important.
\end{remarque}
Since the previous Carleman estimates hold for every time interval (with constants depending on its length), we are reduced to the following lemma : 
\begin{lemme}
Under assumptions of Proposition \ref{thmprolongunique}, there exists one $\eta>0$ such that $q=0$ on $]-\eta,\eta[\times D$.
\end{lemme}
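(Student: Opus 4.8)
The plan is to localize the Carleman estimate of Proposition~\ref{propCarlemanstrong} (in its version absorbing a potential in $L^{\infty}([-T,T],L^3)$) to the region where $\Psi$ is large, and then let the large parameter $s$ tend to $+\infty$. First I would pick two intermediate levels $m<m_1<m_2<\widetilde{m}$ and, exploiting that $\Psi$ has no critical point in $\Omega\setminus\omega$ (so its level sets are smooth), build a cutoff $\chi=f(\Psi)\in C^{\infty}(\Omega)$ with $f\equiv 1$ on $[m_2,+\infty[$ and $f\equiv 0$ on $]-\infty,m_1]$. Then $\chi\equiv 1$ on a neighborhood of $D$ (since $\Psi\geq\widetilde{m}>m_2$ there), $\chi\equiv 0$ on a neighborhood of $\omega$ (since $\Psi\leq m<m_1$ there), and $\nabla\chi$ is supported in the band $\{m_1\leq\Psi\leq m_2\}$.

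Setting $p=\chi q$, which is still compactly supported in $K$ and lies in $L^{\infty}([-T,T],H^1)$, the equation $i\partial_t q+\Delta q+V_1q+V_2\overline{q}=0$ gives
\begin{equation*}
L(p):=i\partial_t p+\Delta p+V_1 p+V_2\overline{p}=[\Delta,\chi]q=(\Delta\chi)q+2\nabla\chi\cdot\nabla q,
\end{equation*}
which belongs to $L^2(]-T,T[\times\Omega)$ and is supported in $\{m_1\leq\Psi\leq m_2\}$. Applying the Carleman estimate to $p$, and observing that $p$ and $\nabla p$ vanish on $\omega$ (because $\chi$ is identically $0$ near $\omega$), the observation integral $\iint_{\omega}$ drops out entirely, leaving
\begin{equation*}
\iint\left[s^3\lambda^4\theta^3|p|^2+s\lambda\theta|\nabla p|^2\right]e^{-2s\varphi}\leq C\iint|L(p)|^2e^{-2s\varphi}.
\end{equation*}

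The conclusion comes from comparing $e^{-2s\varphi}$ on the two relevant regions. On $D\times]-\eta,\eta[$ one has $p=q$, $\theta\geq c>0$, and since $\Psi\geq\widetilde{m}$ and $T^2-t^2\geq T^2-\eta^2$, the bound $\varphi\leq A(\eta):=(e^{\lambda C_{\Psi}}-e^{\lambda\widetilde{m}})/(T^2-\eta^2)$; hence the left side is at least $c\,s^3\lambda^4 e^{-2sA(\eta)}\iint_{D\times]-\eta,\eta[}|q|^2$. On $\mathrm{supp}\,\nabla\chi$ one has $\Psi\leq m_2$, so $\varphi\geq B:=(e^{\lambda C_{\Psi}}-e^{\lambda m_2})/T^2$ for every $t$, bounding the right side by $Ce^{-2sB}\iint_{\{m_1\leq\Psi\leq m_2\}}|L(p)|^2$. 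Fixing first $\lambda\geq\lambda_0$ and then $\eta$ small, the strict inequality $\widetilde{m}>m_2$ forces $A(\eta)<B$ (indeed $A(\eta)\to(e^{\lambda C_{\Psi}}-e^{\lambda\widetilde{m}})/T^2<B$ as $\eta\to 0$), whence
\begin{equation*}
\iint_{D\times]-\eta,\eta[}|q|^2\leq\frac{C}{s^3\lambda^4}\,e^{2s(A(\eta)-B)}\iint_{\{m_1\leq\Psi\leq m_2\}}|L(p)|^2\ \tend{s}{+\infty}\ 0,
\end{equation*}
so $q\equiv 0$ on $]-\eta,\eta[\times D$. The main obstacle is exactly this weight separation: one must arrange the intermediate levels and the time window $\eta$ so that the infimum of $\varphi$ over the band where the error $L(p)$ is supported strictly dominates its supremum over $D\times]-\eta,\eta[$. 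The blow-up of $\varphi$ as $t\to\pm T$ only shrinks the weight and is harmless, but it is precisely what forces the restriction to a short interval $]-\eta,\eta[$ rather than all of $]-T,T[$.
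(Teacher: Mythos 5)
Your argument is correct, and it reaches the conclusion by a mechanism that is the same at its core but packaged differently from the paper's. The paper applies the Carleman inequality directly to $q$, so the right-hand side is the observation term $\iint_{\omega}\left[s^3\lambda^4\theta^3|q|^2+s\lambda\theta|\nabla q|^2\right]e^{-2s\varphi}$; using $\Psi\leq m$ on $\omega$ and the elementary absorption $y^3e^{-2(\lambda_1+\varepsilon)y}\leq Ce^{-2(\lambda_1+\varepsilon/2)y}$ (with $\lambda_1=e^{\lambda C_{\Psi}}-e^{\lambda\widetilde{m}}$, $\lambda_1+\varepsilon=e^{\lambda C_{\Psi}}-e^{\lambda m}$), this is bounded by $Ce^{-2s(\lambda_1+\varepsilon/2)/T^2}\nor{q}{L^2(H^1)}^2$, which is then beaten by the lower bound $e^{-2s\lambda_1/(T^2-\eta^2)}$ on $]-\eta,\eta[\times D$ as soon as $\eta^2<\frac{T^2\varepsilon/2}{\lambda_1+\varepsilon/2}$, and $s\to+\infty$ concludes. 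You instead insert the cutoff $\chi=f(\Psi)$, which makes the observation term vanish outright (since $p=\chi q\equiv 0$ near $\omega$) at the price of the commutator source $[\Delta,\chi]q$ supported in the band $\{m_1\leq\Psi\leq m_2\}$, and you run the identical weight separation between $D\times]-\eta,\eta[$ and that band. Both proofs hinge on the same fact — the strict gap in the values of $\Psi$ forces the exponent on the good region to dominate the one on the error region once $\eta$ is small — so they are close cousins; your version dispenses with the polynomial-in-$s$ absorption step (your error term carries no $s^3\theta^3$ weight) but requires checking $[\Delta,\chi]q\in L^2$, which is harmless given $q\in L^{\infty}([-T,T],H^1)$, whereas the paper's is shorter because the Carleman estimate already confines the right-hand side to $\omega$. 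One cosmetic point: the smoothness of $\chi=f(\Psi)$ needs only $\Psi\in C^4$, not the absence of critical points of $\Psi$ in $\Omega\setminus\omega$.
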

\begin{proof}
Fix $\lambda\geq \lambda_0>1$ (the next constants could depend on $\lambda$ but not on $s$). Let $T\geq \eta>0$ to be chosen later. Denote $\lambda_1=e^{\lambda C_{\psi}}-e^{\lambda \widetilde{m}}$ and $\lambda_1+\varepsilon=e^{\lambda C_{\psi}}-e^{\lambda m}$ with $\lambda_1>0$ and $\varepsilon>0$. By definition of $\widetilde{m}$ and $m$, we have for $s\geq 0$
\bna
e^{-2s\varphi} \leq e^{-2s\frac{\lambda_1+\varepsilon}{T^2-t^2}} \quad \forall (t,x)\in ]-T,T[\times \omega\\
e^{-2s\frac{\lambda_1}{T^2-\eta^2}} \leq e^{-2s\varphi} \quad \forall (t,x)\in ]-\eta,\eta[\times D
\ena
Moreover, once $\lambda_1$ and $\varepsilon$ are fixed, there exists some constant $C$ such that $y^3e^{-2(\lambda_1+\varepsilon)y}\leq Ce^{-2(\lambda_1+\varepsilon/2)y}$ for $y\geq 0$. Therefore, for every $(t,x)\in ]-T,T[\times \Omega$ with $x\in Supp ~u$, we have
$$(s\theta)^3 e^{-2s\frac{\lambda_1+\varepsilon}{T^2-t^2}}\leq C\left(\frac{s}{T^2-t^2}\right)^3 e^{-2s\frac{\lambda_1+\varepsilon}{T^2-t^2}}\leq C e^{-2s\frac{\lambda_1+\varepsilon/2}{T^2-t^2}}\leq C e^{-2s\frac{\lambda_1+\varepsilon/2}{T^2}}$$
Here, the constant $C$ does not depend on $s$.
Then, using Carleman estimate and $\theta \geq C>0$, we get
\bna
&&\iint_{]-\eta,\eta[\times D}s^3|q|^2e^{-2s\frac{\lambda_1}{T^2-\eta^2}}
\leq  C\iint_{]-T,T[\times \omega} \left[|u|^2+\left|\nabla q \right|^2\right]e^{-2s\frac{\lambda_1+\varepsilon/2}{T^2}}
\ena
Therefore,
\bna
s^3e^{-2s\frac{\lambda_1}{T^2-\eta^2}} \iint_{]-\eta,\eta[\times D}|q|^2
\leq  Ce^{-2s\frac{\lambda_1+\varepsilon/2}{T^2}} \nor{q}{L^{2}(H^1)}^2
\ena
Then, to finish the proof, we just have to choose $\eta$ such that $-2\frac{\lambda_1}{T^2-\eta^2} > -2\frac{\lambda_1+\varepsilon/2}{T^2}$, that is $\eta^2<\frac{T^2\varepsilon/2}{\lambda_1+\varepsilon/2}$ and let $s$ tend to $+\infty$.
\end{proof}
\subsection{Geometrical examples}
We give some geometrical examples where Proposition \ref{thmprolongunique} applies. Denote $q\in L^{\infty}([-T,T],H^1(\Omega))$ a solution of $i\partial_t q +\Delta q +V_1q+V_2\overline{q}=0$ with $V_i\in L^{\infty}([-T,T],L^3)$. In these following cases, Assumptions \ref{uniquecontinuation} and \ref{uniquecontinuationH1} are fulfilled. For the convenience of the reader, we recall this assumption : 
\begin{prop}
Let $(M,\widetilde{\omega})$ be either\\
- $(\Tot,\left\{x\in \R^3/(\theta_1 \Z\times \theta_2 \Z\times \theta_3\Z)  \left|\exists i\in \{1,2,3\}, x_i\in ]-\varepsilon,\varepsilon[+\theta_i\Z \right.\right\})$\\
- $(S^3,\widetilde{\omega})$ where $\widetilde{\omega}$ is a neighborhood of $S^3\cap\left\{x_4=0\right\}$ in $S^3\subset \R^4$.\\
-$(S^2\times S^1,(\omega_1 \times S^1 )\cup (S^2 \times ]0,\varepsilon[))$ where $\omega_1$ is a neighborhood of the equator of $S^2$.\\
For every $T>0$, the only solution in $C([0,T],H^1)$ to the system 
\begin{eqnarray}
\left\lbrace
\begin{array}{c}%argument r=alignement à droite puis center et left
i\partial_t q + \Delta q + b_1(t,x)q+b_2(t,x)\overline{q}=0  \textnormal{ on } [0,T]\times M\\
q=0 \textnormal{ on } [0,T]\times \widetilde{\omega}
\end{array}
\right.
\end{eqnarray}
where $b_1(t,x)$ and $b_2(t,x) \in L^{\infty}([0,T],L^{3})$ is the trivial one $q \equiv 0$.
\end{prop}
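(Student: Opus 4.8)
The plan is to deduce the global statement from the local propagation result of Proposition~\ref{thmprolongunique}, whose conclusion $q=0$ on $]-T,T[\times D$ already covers a full time interval. Given a solution $q\in C([0,T],H^1(M))$ of $i\partial_tq+\Delta q+b_1q+b_2\overline q=0$ vanishing on $[0,T]\times\widetilde\omega$, I would recenter the time variable on a symmetric subinterval, cover $M$ in space by finitely many applications of Proposition~\ref{thmprolongunique} to get $q\equiv0$ there, and finally use the forward-and-backward uniqueness of the linear Cauchy problem with $b_i\in L^\infty_tL^3_x$ to reach all of $[0,T]\times M$. Since $M$ is compact, no single $\Psi$ can satisfy (\ref{gradnonnul}) everywhere (it fails at a maximum of $\Psi$), so the spatial covering is a finite chain of applications: at each stage $\omega$ lies in $\widetilde\omega$ together with the part of $M$ where $q$ is already known to vanish, and the cutoff making $q$ compactly supported in the chart $\Omega$ is placed in the low-weight collar around $\omega$ or in a set where $q$ vanishes, so that the commutator $[\Delta,\chi]q$ is negligible in (\ref{Carlemanstrong}) as $s\to\infty$.

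It therefore suffices, for each geometry, to produce strongly pseudoconvex weights whose high-level sets sweep out $M$ starting from $\widetilde\omega$. The natural universal choice is a function of the geodesic distance, $\Psi=d(\cdot,x_0)^2$ (adjusted by an additive constant to meet (\ref{assumCpsi})): one has $\nabla\Psi=2d\,\nabla d\neq0$ away from $x_0$, giving (\ref{gradnonnul}), while by Hessian comparison $\mathrm{Hess}(\Psi)$ is positive definite, hence (\ref{strongconvex}) holds, throughout the geodesic ball on which $d\cot d>0$, i.e. for $d<\pi/2$ on the round factors and for all $d$ in the flat case. Placing the center $x_0$ in $\omega$, the sublevel sets of $\Psi$ near $x_0$ lie in $\omega$ and its high-level sets $D$ fill the ball, so Proposition~\ref{thmprolongunique} propagates the vanishing of $q$ outward from $x_0$ within each such ball.

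On $\Tot$ the metric is flat, $\mathrm{Hess}(|x-x_0|^2)=2\,\mathrm{Id}$, and a single quadratic weight centered at a point $x_0\in\widetilde\omega$ of the control neighborhood of a face already yields $q\equiv0$ on the whole fundamental domain (the faces, where $q$ vanishes, absorbing the cutoff). On $S^3$, starting from centers $x_0$ in the equatorial neighborhood $\widetilde\omega\supset\{x_4\simeq0\}$, balls of radius $<\pi/2$ reach up to the poles; a short chain of recenterings in the already-covered region then passes the poles $x_4=\pm1$ (which are critical points of $x_4$ and so unreachable by a single latitude weight) and covers $S^3$. On $S^2\times S^1$ the two pieces of $\widetilde\omega$ are used in tandem: the tube $\omega_1\times S^1$ provides centers sweeping the $S^2$-directions from the equator, while the slab $S^2\times]0,\varepsilon[$ provides centers sweeping the $S^1$-direction, and geodesic-distance weights on the product (strongly pseudoconvex in balls of radius $<\pi/2$) chained from these centers exhaust $S^2\times S^1$, the two caps around the $S^2$-poles being reached exactly as on $S^3$.

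The main obstacle is that the $L^3$-potential absorption of Section~\ref{sectionuniquepot} requires \emph{strong} pseudoconvexity (\ref{strongconvex}), not merely the weak form (\ref{weakconvex}); for a transverse covector $\xi\in\ker\nabla\Psi$ condition (\ref{strongconvex}) reduces to $\mathrm{Hess}(\Psi)(\xi,\xi)>C|\xi|^2$, so the modified Hessian must be genuinely positive definite in every direction. This rules out weights depending on only part of the variables (a latitude function on $S^2$, or a function of $x_4$ on $S^3$, is degenerate in the transverse directions and only weakly pseudoconvex), and it is what forces the use of distance-squared weights together with the restriction to geodesic balls below the conjugate radius, hence the finite chaining argument. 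Verifying positivity of $\mathrm{Hess}(d^2)$ up to the conjugate radius on each factor, and arranging the cutoffs in the chaining so that no error term survives the $s\to\infty$ limit, are the two points that require care; everything else is a routine iteration of Proposition~\ref{thmprolongunique}.
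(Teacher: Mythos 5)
Your proposal inverts the geometry of the paper's argument in a way that creates a genuine gap. You place the centre $x_0$ of the weight inside $\widetilde\omega$ and try to propagate vanishing outward through a geodesic ball $B(x_0,r)$, $r<\pi/2$. But Proposition \ref{thmprolongunique} requires $q$ to be compactly supported in $\Omega$, and $q$ is not compactly supported in $B(x_0,r)$: you must introduce a cutoff $\chi$ near $\partial B(x_0,r)$. Since $\Psi=d(\cdot,x_0)^2$ is \emph{maximal} there, $e^{-2s\varphi}$ is maximal there as well, so the commutator term $\iint|[\Delta,\chi]q|^2e^{-2s\varphi}$ carries a weight at least as large as the left-hand side of (\ref{Carlemanstrong}) on $D$ and cannot be absorbed as $s\to\infty$. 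Your suggested fixes do not work: the "low-weight collar around $\omega$" is the collar around $x_0$, so cutting off there leaves $\chi q$ supported where $q$ is already known to vanish (the conclusion is vacuous), and at the first stage of your chain the only region where $q$ is known to vanish is $\widetilde\omega$ itself, which does not surround the outer boundary of the ball. This circularity — needing $q$ to vanish near $\partial\Omega$ in order to prove that it does — is exactly what the paper avoids: there, the vanishing of $q$ on $\widetilde\omega$ is used solely to obtain \emph{genuine} compact support in a suitable chart (extension by zero to $\R^3$ for $\Tot$, restriction to the open hemisphere $\{x_4<0\}$ for $S^3$, unfolding of the circle for $S^2\times S^1$), the critical point of $\Psi$ is placed deep inside that chart and far from $\widetilde\omega$, and the observation set $\omega$ of the Carleman estimate is a shrinking $\delta$-neighbourhood of that critical point on which $q$ need not vanish at all — its contribution is beaten by the weight comparison. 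No chaining is required; one lets $\delta\to0$.

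A second, smaller error: you assert that a function of $x_4$ on $S^3$ (or a latitude function on $S^2$) is "degenerate in the transverse directions and only weakly pseudoconvex", and on that basis you discard the paper's actual weights. This is false: by Lemma \ref{convexSn}, the intrinsic Hessian of the linear function $h=x_{n+1}$ restricted to $S^n$ is $-hg$, hence positive definite in \emph{every} direction on the open hemisphere $\{h<0\}$; it satisfies the strong condition (\ref{strongconvex}) there and therefore absorbs $L^\infty_tL^3_x$ potentials. You are computing the extrinsic (Euclidean) Hessian, which is zero, rather than the Riemannian one. Your general point that (\ref{strongconvex}) rather than (\ref{weakconvex}) is needed for the $L^3$ absorption is correct, but the conclusion you draw from it — that one is forced into distance-squared weights, conjugate-radius restrictions, and a finite chaining with cutoffs — is where the proof breaks down.
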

\subsubsection{$M=\Tot$}
We assume $q=0$ on $\widetilde{\omega}=\left\{x\in \R^3/(\theta_1 \Z\times \theta_2 \Z\times \theta_3\Z)  \left|\exists i\in \{1,2,3\}, x_i\in ]-\varepsilon,\varepsilon[+\theta_i\Z \right.\right\}$.\\
We define $\widetilde{q}$ on $\R^3$ by $\widetilde{q}(x)=q(x) $ if $x\in [0,\theta_1]\times[0,\theta_2]\times[0,\theta_3]$ and $\widetilde{q}(x)=0$ otherwise. $\widetilde{q}$ satisfies the same Schrödinger equation on $\R^3$ with compact support $K$. By translation, we can assume that $0$ is the center of the rectangle.\\
We use the function $\Psi=\left\|(x,y,z)\right\|^2+C$. $C$ is chosen large enough so that (\ref{assumCpsi}) is fulfilled on $K$. Let $\delta>0$ small. Outside of $\omega=B(0,\delta)$, $\Psi$ is stricly convex ( that is strongly pseudoconvex for the flat metric inherited from $\R^3$) and $\nabla\Psi \neq 0$. Then, assumptions (\ref{gradnonnul}) and (\ref{strongconvex}) are fulfilled.\\
We can apply Theorem \ref{thmprolongunique} with $\Omega = \R^3$, $\omega=B(0,\delta)$ and $D=B(0,2\delta)^c$. As $\delta$ is arbitrary, we get $\widetilde{q}=0$ everywhere and so $q=0$.
\subsubsection{$M=S^3$}
\begin{lemme}
\label{convexSn}
Let $S^n\subset \R^{n+1}$ be the unit sphere. Then, the function $h : (x_1,\troisp,x_{n+1})\mapsto x_{n+1}$ restricted to $S^n\cap \left\{x_{n+1}<0\right\}$ has stricly positive Hessian for the metric induced by $\R^{n+1}$. 
\end{lemme}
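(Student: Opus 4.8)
The plan is to compute the Hessian of the height function $h$ directly, exploiting the fact that on a Riemannian manifold the Hessian of a function is determined, as a symmetric bilinear form, by its values along geodesics. Concretely, for $X\in T_xS^n$ and the geodesic $\gamma$ issued from $x$ with $\dot\gamma(0)=X$, one has $Hess(h)(X,X)=\frac{d^2}{dt^2}\big|_{t=0}h(\gamma(t))$, because $\nabla_{\dot\gamma}\dot\gamma=0$ annihilates the first-order contribution $dh(\nabla_{\dot\gamma}\dot\gamma)$. Since $Hess(h)$ is symmetric, knowing the quadratic form $X\mapsto Hess(h)(X,X)$ suffices to decide its definiteness, and this reduces the whole statement to a one-variable computation.

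First I would recall that the geodesics of $S^n$, equipped with the metric induced from $\R^{n+1}$, are the great circles, so that for $X\in T_xS^n$, i.e. $\langle X,x\rangle=0$, the geodesic with $\dot\gamma(0)=X$ is explicitly
$$\gamma(t)=\cos(|X|t)\,x+\frac{\sin(|X|t)}{|X|}\,X.$$
One checks at once that $|\gamma(t)|=1$, that $\gamma(0)=x$, and that $\dot\gamma(0)=X$. Since $h$ is the restriction of the linear coordinate $x_{n+1}=\langle e_{n+1},\cdot\rangle$, evaluating gives
$$h(\gamma(t))=\cos(|X|t)\,x_{n+1}+\frac{\sin(|X|t)}{|X|}\,\langle e_{n+1},X\rangle.$$
Differentiating twice in $t$ and setting $t=0$ then yields $Hess(h)(X,X)=-x_{n+1}\,|X|^2$.

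The conclusion is then immediate: on $S^n\cap\{x_{n+1}<0\}$ one has $-x_{n+1}>0$, hence $Hess(h)(X,X)=-x_{n+1}\,|X|^2>0$ for every $X\neq0$, which is precisely the strict positivity of the Hessian claimed in the statement.

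As a cross-check, I would note that the same value follows from the submanifold formula $Hess_{S^n}(h)(X,Y)=\langle e_{n+1},\mathrm{II}(X,Y)\rangle$, valid because the ambient Hessian of the affine function $h$ vanishes; with the unit sphere's second fundamental form $\mathrm{II}(X,Y)=-\langle X,Y\rangle\,x$ relative to the outward normal $\nu(x)=x$, this gives $-x_{n+1}\langle X,Y\rangle$, consistent with the geodesic computation. There is no genuine obstacle here; the only point requiring care is the sign convention, namely verifying that differentiating $\cos$ twice produces the minus sign, so that the strictly negative values of $x_{n+1}$ on the lower hemisphere render the form positive definite rather than negative.
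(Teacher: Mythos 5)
Your proof is correct, and it ultimately rests on the same identity as the paper's: $Hess(h)=-h\,g$ on $S^n$, from which positivity of the Hessian on $\{x_{n+1}<0\}$ is immediate. The difference is that the paper simply cites this identity (Exercise 2.65 b) of Gallot--Hulin--Lafontaine), whereas you derive it from scratch by computing $\frac{d^2}{dt^2}\big|_{t=0}h(\gamma(t))$ along the great circles $\gamma(t)=\cos(|X|t)\,x+\frac{\sin(|X|t)}{|X|}X$, using that the second derivative of a function along a geodesic is exactly $Hess(h)(\dot\gamma,\dot\gamma)$, and you confirm the result via the second fundamental form $\mathrm{II}(X,Y)=-\langle X,Y\rangle x$. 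Both computations are carried out correctly, including the sign. What your route buys is self-containedness and an explicit verification of the sign convention; what the paper's citation buys is brevity. There is no gap in either.
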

\begin{proof}
$h$ defined on $\R^{n+1}$ is linear. Then, using Exercice 2.65 b) of \cite{GallotHulinLaf}, we get $Hess(h)= - h g$ where $g$ is the bilinear form of the Riemannian structure. Then, $Hess(h)$ is positive definite if and only if $h<0$.
\end{proof}
We assume $q=0$ on a neighborhood of $x_4=0$. Let $\delta>0$ small. We choose $\Omega=\{x\in S^3|x_4<0 \}$, $D=\{x\in S^3|x_4\in ]-1+2\delta,0[ \}$ and $\omega=S^3\cap \{x_4\in [-1,-1+\delta[ \}$. We use the function $\Psi=x_4+C$. $C$ is chosen large enough so that (\ref{assumCpsi}) is fulfilled on the support of $q$. On $\Omega \backslash ~\omega$, $\Psi$ is stricly convex thanks to Lemma \ref{convexSn} and $\nabla\Psi \neq 0$. Therefore, assumptions (\ref{gradnonnul}) and (\ref{strongconvex}) are fulfilled. As the support of $q$ is compact in $\Omega$, Theorem \ref{thmprolongunique} applies and we get $q=0$ on $D$. Since $\delta$ is arbitrary, we get $q=0$ on $S^3\cap \{x_4<0 \}$. The symmetry of the problem gives $q=0$ on $S^3$.
\subsubsection{$M=S^2\times S^1$}
Let $\omega_1\subset S^2$ be a neighbourhood of the equator $\left\{x_3=0\right\}$ and $\varepsilon>0$.\\
We assume $q=0$ on $\left(\omega_1\times S^1\right)\bigcup \left(S^2\times ]-\varepsilon,\varepsilon[\right)$.\\
The geometric situation is quite similar to the case of $\Tot$ : this is a product of manifolds and the weight function $\Psi$ will be the sum of two pseudoconvex weights in each coordinate.\\
The current point $x$ of $S^2$ will be denoted by its coordinates in $\R^3$ and the current point $y$ of $S^1=\Tu=\R/\Z$ by its coordinates in $\R$. Then, we can define $\widetilde{q}$ on the open set $\Omega=\{x\in S^2|x_3<0 \} \times \R$ by $\widetilde{q}(x,y)=q(x,y)$ if $y\in [0,1]$ and $0$ otherwise. $\widetilde{q}$ is then compactly supported and is solution of the same Schrödinger equation.\\
 We choose $\Psi(x,y)=x_3+y^2+C$ with $C$ large enough. $\Psi$ is definite positive everywhere and nonsingular everywhere outside of any $\omega= \{(x,y)\in S^2\times \R|x_3\in [-1,-1+\delta[ \textnormal{ and } y^2<\delta\}$ for $\delta>0$. Then, choosing $D= \left\{(x,y)\in S^2\times \R\left|x_3\in ]-1+3\delta,0[ \textnormal{ or } y^2>3\delta\right.\right\}$ and applying Theorem \ref{thmprolongunique} we get $\widetilde{q}=0$ on $D$. Therefore, $q=0$ on $S^2\times S^1$.
 
 \bigskip
 
{\it Acknowledgements.} The author deeply thanks his adviser Patrick G\'erard for attracting his attention to this problem and for helpful discussions and encouragements. He also thanks Belhassen Dehman for some explanations about his article \cite{HumDehLeb} and David Dos Santos Ferreira for enlightenments about unique continuation in low regularity.
\bibliographystyle{plain} 
\bibliography{biblio}
\end{document}